\numberwithin{equation}{section}
\numberwithin{figure}{section}
\newcommand{\rmnum}[1]{\uppercase\expandafter{\romannumeral #1}}
\theoremstyle{plain} 
\newtheorem{theorem}{Theorem}[section]
\newtheorem*{theorem*}{Theorem}
\newtheorem{lemma}[theorem]{Lemma}
\newtheorem*{lemma*}{Lemma}
\newtheorem*{corollary*}{Corollary}
\newtheorem*{proposition*}{Proposition}
\newtheorem{definition}[theorem]{Definition}
\newtheorem*{definition*}{Definition}
\newtheorem*{conjecture*}{Conjecture}
\newtheorem{assumption}{Assumption}
\theoremstyle{definition} 
\newtheorem{example}[theorem]{Example}
\newtheorem*{example*}{Example}
\newtheorem{remark}[theorem]{Remark}
\newtheorem*{remark*}{Remark}
\newcommand{\VV}{H_{\Lambda}}
\renewcommand{\r}{\mathrm}  
\newcommand{\cal}{\mathcal} 
\newcommand{\scr}{\mathscr} 
\newcommand{\wh}{\widehat}
\newcommand{\wt}{\widetilde}
\renewcommand{\txt}[1]{\text{\rm{#1}}}
\definecolor{darkred}{rgb}{0.9,0,0.3}
\definecolor{darkblue}{rgb}{0,0.3,0.9}
\def\comment#1{\ifthenelse{\isodd{\value{page}}}{\marginpar{\raggedright\scriptsize{\textcolor{darkred}{#1}}}}{\marginpar{\raggedleft\scriptsize{\textcolor{darkred}{#1}}}}}
\renewcommand{\P}{\mathbb{P}}
\newcommand{\E}{\mathbb{E}}
\newcommand{\R}{\mathbb{R}}
\newcommand{\C}{\mathbb{C}}
\newcommand{\N}{\mathbb{N}}
\newcommand{\Z}{\mathbb{Z}}
\newcommand{\cG}{{\mathcal G}}
\newcommand{\cW}{{\mathcal W}}
\newcommand{\cT}{\mathcal T}
\newcommand{\cI}{\mathcal{I}}
\newcommand{\cC}{\mathcal{C}}
\newcommand{\cR}{{\mathcal R}}
\newcommand{\cO}{{\mathcal{O}}}
\newcommand{\cE}{{\mathcal{E}}}
\newcommand{\fc}{\mathfrak c}
\newcommand{\ft}{{\mathfrak t}}
\newcommand{\fr}{{\mathfrak{r}}}
\newcommand{\fR}{\mathfrak R}
\newcommand{\fC}{\mathfrak C}
\newcommand{\fP}{\mathfrak P}
\newcommand{\fM}{\mathfrak M}
\newcommand{\fS}{\mathfrak S}
\newcommand{\fI}{\mathfrak I}
\newcommand{\fE}{\mathfrak E}
\newcommand{\fO}{\mathfrak O}
\newcommand{\sG}{\mathsf G}
\newcommand{\sM}{\mathsf M}
\newcommand{\sR}{\mathsf R}
\newcommand{\sC}{\mathsf C}
\newcommand{\sP}{\mathsf P}
\newcommand{\smm}{\mathsf m}
\newcommand{\scG}{\mathscr{G}}
\newcommand{\scT}{{\mathscr{T}}}
\newcommand{\scR}{{\mathscr{R}}}
\newcommand{\ii}{\mathrm{i}}
\newcommand{\dd}{\mathrm{d}}
\newcommand*{\deq}{\mathrel{\vcenter{\baselineskip0.65ex \lineskiplimit0pt \hbox{.}\hbox{.}}}=}
\renewcommand{\leq}{\le}
\renewcommand{\geq}{\ge}
\renewcommand{\epsilon}{\varepsilon}
\renewcommand{\tilde}{\wt}
\renewcommand{\bar}{\overline}
\renewcommand{\Bar}{\overline}
\newcommand{\qq}[1]{[\![{#1}]\!]}
\newcommand{\p}[1]{({#1})}
\newcommand{\pa}[1]{\left({#1}\right)}
\newcommand{\q}[1]{[{#1}]}
\newcommand{\qa}[1]{\left[{#1}\right]}
\newcommand{\ha}[1]{\left\{{#1}\right\}}
\newcommand{\abs}[1]{\lvert #1 \rvert}
\newcommand{\absa}[1]{\left\lvert #1 \right\rvert}
\newcommand{\norm}[1]{\lVert #1 \rVert}
\newcommand{\norma}[1]{\left\lVert #1 \right\rVert}
\newcommand{\avg}[1]{\langle #1 \rangle}
\newcommand{\avga}[1]{\left\langle #1 \right\rangle}
\newcommand{\diag}{\mathrm{diag}}
\DeclareMathOperator{\tr}{\mathrm{Tr}}
\DeclareMathOperator{\re}{\mathrm{Re}}
\DeclareMathOperator{\im}{\mathrm{Im}}
\newcommand{\dist}{\mathrm{dist}}
\newcommand{\spec}{\mathrm{spec}}
\newcommand{\bu}{{\bf{u}}}
\newcommand{\bv}{{\bf{v}}}
\newcommand{\bw}{{\bf{w}}}
\newcommand{\tLambda}{{\wt{\Lambda}}}
\newcommand{\hLambda}{{\wh{\Lambda}}}
\newcommand{\isize}{\mathit{Size}}
\newcommand{\sreplace}{\mathsf{Replace}}
\newcommand{\scut}[1]{\mathsf{Cut}_{#1}}
\newcommand{\splug}[1]{\mathsf{Plug}_{#1}}
\newcommand{\freplace}{\mathfrak{Replace}}
\newcommand{\fcut}[1]{\mathfrak{Cut}_{#1}}
\newcommand{\fplug}[1]{\mathfrak{Plug}_{#1}}
\newcommand{\finsert}[1]{\mathfrak{Insert}_{#1}}
\newcommand{\fexchange}{\mathfrak{Exchange}}
\newcommand{\fmerge}{\mathfrak{Merge}}
\newcommand{\fslash}[1]{\mathfrak{Slash}_{#1}}
\newcommand{\be}{\begin{equation}}
\newcommand{\ee}{\end{equation}}
\newcommand{\e}{{\varepsilon}}
\newcommand{\rd}{\mathrm{d}}
\newcommand{\bre}{\txt{\boldsymbol{\mathrm{e}}}}
\newcommand{\oo}{\mathrm{o}}
\newcommand{\OO}{\mathrm{O}}
\newcommand{\HS}{\txt{HS}}
\newcommand{\tsc}{{\mathrm{sc}}}
\newcommand{\bsum}[3]{\sum_{#1=1}^D\sum_{#2,#3\in \cI_{#1}}}
\newcommand{\dsum}[1]{\sum_{#1=1}^D}
\newcommand{\iab}{\alpha\beta}
\newcommand{\iba}{\beta\alpha}
\newcommand{\tl}{\tLambda}
\newcommand{\mm}{\ell}
\newcommand{\eq}{\overset{\E}{=}}
\newcommand{\kk}{l'}
\newcommand{\opr}[1]{\mathrm{O}_\prec\left({#1}\right)}
\numberwithin{equation}{section}
\begin{document}


\title{Localization-delocalization transition for a random block matrix model at the edge}

\author{Jiaqi Fan$^\star$}
\thanks{$^\star$Qiuzhen College, Tsinghua University, Beijing, China, \href{mailto:fanjq24@mails.tsinghua.edu.cn}{fanjq24@mails.tsinghua.edu.cn}}


\author{Bertrand Stone$^\dagger$}
\thanks{$^\dagger$Department of Mathematics, University of California, Los Angeles, Los Angeles, CA, USA, \href{mailto:bertrand.stone@math.ucla.edu}{bertrand.stone@math.ucla.edu}}


\author{Fan Yang$^\ddagger$}
\thanks{$^\ddagger$Yau Mathematical Sciences Center, Tsinghua University, and Beijing Institute of Mathematical Sciences and Applications, Beijing, China,
\href{mailto:fyangmath@mail.tsinghua.edu.cn}{fyangmath@mail.tsinghua.edu.cn}}


\author{Jun Yin$^\S$}
\thanks{$^\S$Department of Mathematics, University of California, Los Angeles, Los Angeles, CA, USA,
\href{mailto:jyin@math.ucla.edu}{jyin@math.ucla.edu}}


\begin{abstract}

Consider a random block matrix model consisting of $D$ random systems arranged along a circle, where each system is modeled by an independent $N\times N$ complex Hermitian Wigner matrix. Neighboring systems interact via an arbitrary deterministic $N\times N$ matrix $A$. 
In this paper, we extend the localization-delocalization transition previously established in \cite{stone2024randommatrixmodelquantum} for the bulk eigenvalue spectrum to the entire spectrum, including the spectral edges. Let $[E^-,E^+]$ denote the support of the limiting spectral density, and define $\kappa_E:=|E-E^+|\wedge |E-E^-|$ as the distance from a given energy $E \in [E^-, E^+]$ to the spectral edges. 
We show that for eigenvalues near $E$, the corresponding eigenvectors undergo a localization–delocalization transition when $\|A\|_{\mathrm{HS}}$ crosses the critical threshold $(\kappa_E + N^{-2/3})^{-1/2}$. In the delocalized phase, the extreme eigenvalues asymptotically follow the Tracy-Widom distribution, while in the localized phase, the edge eigenvalue statistics asymptotically match those of $D$ independent GUE ensembles, up to a deterministic shift.
Our results recover those of \cite{stone2024randommatrixmodelquantum} in the bulk regime, where $\kappa_E \asymp 1$, and further reveal the presence of mobility edges near $E^\pm$ when $1 \ll \|A\|_{\mathrm{HS}} \ll N^{1/3}$. Specifically, bulk eigenvectors corresponding to energies $E$ with $\kappa_E \gg \|A\|_{\mathrm{HS}}^{-2}$ are delocalized, while those with $\kappa_E \ll \|A\|_{\mathrm{HS}}^{-2}$ are localized.

\end{abstract}

\maketitle

{
\hypersetup{linkcolor=black}
\tableofcontents
}

\section{Introduction}

Since the seminal work of Anderson \cite{PhysRev.109.1492}, the phenomenon of Anderson localization/delocalization has been a fundamental framework for understanding the transport properties of electrons in disordered media. The localized and delocalized phases correspond to two distinct physical regimes, distinguished by the spatial behavior of the electron wave function. 
In the localized phase, wave functions are confined to finite spatial regions, suppressing quantum diffusion and resulting in insulating behavior. In contrast, the delocalized phase is characterized by spatially extended wave functions that enable macroscopic quantum transport, leading to conductivity. Over time, this phenomenon has been recognized as a universal feature of a broad class of disordered systems and has become a cornerstone of condensed matter physics, as well as a central topic in mathematical physics and related fields \cite{Lagendijk09Fifty,Abrahams201050years,Sheng2006Intro,lee1985disordered,thouless1974electrons,Anderson1978Local}.

Mathematically, Anderson \cite{PhysRev.109.1492} proposed studying localization through the following random Schr{\"o}dinger operator defined on the $d$-dimensional lattice $\Z^d$ (with the case $d=3$ being of particular physical relevance). This operator, commonly known as the \emph{Anderson model}, is given by:
\be\label{eq;Anderson}
H_{\mathrm{Anderson}}=-\lambda\Delta+ V, 
\ee
where $\Delta$ is the discrete Laplacian on $\Z^d$, $V$ is a random potential with i.i.d.~random diagonal entries, and $\lambda>0$ is a coupling constant that represents the reciprocal of the disorder strength. It is predicted that the Anderson model undergoes a localization-delocalization transition, depending on the energy, dimension, and disorder strength. 
More precisely, in dimensions $d=1$ and $d=2$, the Anderson model exhibits localization at all energies for any nonzero disorder strength $\lambda>0$ \cite{PRL_Anderson,mott1961theory,Borland1963TheNature}. 
In higher dimensions ($d\ge 3$), the behavior is more intricate. In the strong disorder regime (i.e., small $\lambda$), all eigenvectors are expected to be exponentially localized.
In contrast, in the weak disorder regime (i.e., large $\lambda$), it is conjectured that a sharp transition occurs between localized and delocalized phases as the energy crosses a critical threshold, known as the \emph{mobility edge} (see, e.g., \cite{Aizenman_book,Kirsch2007}): near the spectral edges, eigenvectors remain localized, but upon crossing the mobility edge into the bulk of the spectrum, the eigenvectors become delocalized. 
 


In dimension 1, Anderson localization has been rigorously established for a long time (see, e.g., \cite{Carmona1982Exp,David2002Local,JIMSPL1977FAIA,cmp/1103908590,ishii1973localization}). 
In higher dimensions $d\geq 2$, the first rigorous proof of localization was provided by Fr{\"o}hlich and Spencer \cite{frohlich1983absence} using multi-scale analysis (see also \cite{frohlich1985constructive,von1989new,spencer1988localization}). A simpler alternative proof, based on the fractional moment method, was later introduced by Aizenman and Molchanov \cite{aizenman1994localization,aizenman1993localization}. 
The localization result has also been extended to the more challenging case of singular or even discrete potentials \cite{bourgain2005localization,klein2012comprehensive,carmona1987anderson,ding2020localization,li2022anderson}. 
Despite these remarkable advances, the complete localization conjecture in dimension $d=2$ remains unsolved; current results only establish localization under strong disorder or for extreme energies near the spectral edges. 
In dimensions $d\ge 3$, the picture is even more incomplete: the existence of a delocalized phase has not yet been rigorously proved in any dimension, and establishing the existence of a mobility edge is even more challenging. 


To approach the delocalized regime and investigate the existence of mobility edges, one strategy is to study the Anderson model on lattices with simpler topology than $\Z^d$, which allows for more explicit analysis. A prominent example is the infinite $d$-regular tree with $d\ge 3$, also referred to as the Bethe lattice in the literature. For the Bethe lattice, the existence of a delocalized phase has been rigorously established in \cite{Bethe_PRL, Bethe_JEMS}, and the presence of a mobility edge was recently proved in  \cite{aggarwal2025mobilityedgeandersonmodel}.

The Bethe lattice can be viewed as an $\infty$-dimensional analogue of $\Z^d$. To understand Anderson delocalization and mobility edges in finite dimensions, one alternative approach is to consider some ``simpler" variants of the Anderson model---simpler in the sense of showing delocalization---that still capture its essential physical features. One such example is the celebrated \emph{random band matrix} (RBM) ensemble \cite{scalingabndCGMLIF1990PRL,ConJ-Ref2,ScalingPropertyBandMatrixFYMA1991PRL}, sometimes referred to as the \emph{Wegner orbital model} \cite{Wegner1,Wegner2, Wegner3}. 
This is a finite-volume model defined on a $d$-dimensional discrete torus of linear size $L\to \infty$. The RBM is a Wigner-type random matrix in which non-negligible hopping occurs only between sites whose distance is less than a specified band width $W \ll L$. 
Heuristically, the RBM and the Anderson model are believed to exhibit similar qualitative behavior when $\lambda \asymp  W$. In particular, the RBM is also expected to display a localization–delocalization transition as the band width $W$ increases, with mobility edges emerging for certain ranges of $W$. 

Significant progress has been made in understanding Anderson localization and delocalization for the RBM or Wegner orbital model. In dimension 1, delocalization has been proven under the sharp condition $W\gg L^{1/2}$ on the band width, assuming the random entries are Gaussian distributed \cite{yau2025delocalizationonedimensionalrandomband}. A similar result has also been established under a weaker condition $W\gg L^{3/4}$ without the Gaussian assumption \cite{PartI,PartII,Band1D_III}. 
A more detailed review of the advances regarding the delocalized phase of one-dimensional (1D) RBMs can be found in the references therein. The localization for 1D RBMs has been shown under the condition $W\ll L^{1/4}$, as established in a series of works \cite{Sch2009,Wegner,CS1_4,CPSS1_4}. 
The delocalization has been proved under the assumption $W\ge L^\e$ (for an arbitrarily small constant $\e>0$) for RBMs in dimension $d=2$ \cite{Band2D} and in dimensions $d\ge 7$ \cite{CFHJBulkBandAOP2024,yang2021delocalizationquantumdiffusionrandom,YYYTexpansion2022CMP}, again assuming Gaussian distribution for the random entries. 
However, the localization result for RBM in dimensions $d\ge 2$ remains absent from the literature.
Most of the aforementioned works have focused on the bulk regime of the RBM. Around the spectral edges, Sodin proved a remarkable result regarding a phase transition in the edge eigenvalue statistics of 1D RBM when $W$ crosses the threshold $L^{5/6}$ \cite{Sod2010}, a result that was later extended to higher dimensions in \cite{Band_Edge123}. 
However, the localization or delocalization of the edge eigenvectors of RBM has yet to be established in any dimension, and the mobility edge phenomenon (conjectured to exist in dimensions $1\le d\le 5$) remains unproven.

\subsection{Overview of the main results} 


To investigate the Anderson localization–delocalization transition and the presence of mobility edges from a random matrix theory perspective, we consider another variant of the Anderson model that naturally interpolates between the 1D Anderson model and the Wigner ensemble \cite{Wigner}.
More precisely, we study a random block matrix model introduced in \cite{stone2024randommatrixmodelquantum}. 
Fix any integer $D\ge 2$. We consider $D$ independent random subsystems, each modeled by an $N\times N$ Wigner matrix whose entries have mean zero, variance $N^{-1}$, and satisfy certain moment conditions. Without introducing interactions, this system is represented by a block-diagonal matrix $H$ with diagonal blocks being independent Wigner matrices $H_a$ for $a=1,\ldots, D$. 
To introduce interactions, we assume that neighboring subsystems are coupled via an arbitrary deterministic $N\times N$ matrix $A$. For simplicity, we impose periodic boundary conditions---that is, the subsystems are arranged in a cycle so that the first and $D$-th subsystems are also neighbors. The interaction Hamiltonian $\Lambda$ is then a block tridiagonal matrix, with off-diagonal blocks given by $A$ or $A^*$, reflecting the coupling between adjacent subsystems. The full system, incorporating both the random subsystems and their interactions, is denoted by $H_{\Lambda}$: 
\be\label{eq:def_model}
H_{\Lambda}=H+\Lambda.
\ee
In matrix notation, $H$ and $\Lambda$ are $D\times D$ block matrices defined as:
\be\label{matrix_H}
H = \begin{pmatrix}
H_1 & 0 & 0 & \cdots & 0 & 0 \\
0 & H_2 & 0 & \cdots & 0 & 0 \\
0 & 0 & H_3 & \cdots & 0 & 0 \\
\vdots & \vdots & \vdots & \ddots & \vdots & \vdots \\
0 & 0 & 0 & \cdots & H_{D-1} & 0 \\
0 & 0 & 0 & \cdots & 0 & H_D
\end{pmatrix},\quad \Lambda = \begin{pmatrix}
0 &A & 0 & \cdots & 0 &A^* \\
A^* & 0 &A & \cdots & 0 & 0 \\
0 & A^* & 0 & \cdots & 0 & 0 \\
\vdots & \vdots & \vdots & \ddots & \vdots & \vdots \\
0 & 0 & 0 & \cdots & 0 &A \\
A & 0 & 0 & \cdots & A^* & 0
\end{pmatrix}.
\ee
In the terminology of \cite{yang2025delocalizationgeneralclassrandom,truong2025localizationlengthfinitevolumerandom,Wegner}, this model is referred to as a (1D) \emph{block Anderson model} or a \emph{random block Schr{\"o}dinger operator}. Informally, $H$ can be interpreted as a block potential, where the i.i.d.~scalar potential in \eqref{eq;Anderson} is replaced by an i.i.d.~block potential. Meanwhile, the interaction term $-\lambda\Delta$ in \eqref{eq;Anderson} is replaced by a block matrix $\Lambda$, which governs the hopping between neighboring blocks.


In this paper, we assume that $H_{\Lambda}$ is a perturbation of $H$, i.e., $\|A\|\ll \E\|H\|\sim 1$. 
Hence, the limiting spectrum of $H_{\Lambda}$ can be viewed as a perturbation of that of $H$, which is governed by Wigner's semicircle law. 
A localization-delocalization transition for $H_{\Lambda}$ was established in \cite{stone2024randommatrixmodelquantum} within the bulk of the spectrum, specifically in the interval $[-2+\kappa,2-\kappa]$ for an arbitrarily small constant $\kappa>0$, as $\|A\|_{\HS}$ crosses the threshold 1. In this paper, we extend that result to the entire spectrum, with a particular focus on the edge regime,
and establish a full characterization of the localization–delocalization transition for the corresponding eigenvectors.
For simplicity of presentation, we define the index sets ${\cal I}_a:=\llbracket (a-1)N+1,aN\rrbracket$, $a \in \{ 1,\ldots,D\}$, for the subsystems, and let $\cal I:=\llbracket DN\rrbracket$ be the index set for the entire system. 
Hereafter, for any $n,m\in \R$, we denote $\llbracket n, m\rrbracket: = [n,m]\cap \Z$ and $\llbracket n\rrbracket:=\llbracket1, n\rrbracket$. 
We denote the eigenvalues of $H_{\Lambda}$ by $\lambda_1\geq \lambda_2\geq \cdots \geq \lambda_{DN}$ and the corresponding (unit) eigenvectors by ${\bf v}_1,\mathbf{v}_2,\ldots, {\bf v}_{DN}$. Given $k\in\cal I$, we denote \be\label{eq:rk}\mathfrak{r}\pa{k}:=k\wedge \pa{DN+1-k}.\ee 
Roughly speaking, we find that the localization-delocalization transition of the $k$-th eigenvector occurs at $\|A\|_{\HS}\sim N^{1/3}/\mathfrak{r}\pa{k}^{1/3}$:
\begin{itemize}
    \item \textbf{Delocalized phase:} 
    If $\|A\|_{\HS}\gg N^{1/3}/\mathfrak{r}\pa{k}^{1/3}$, then the $k$-th eigenvector $\bv_k$ is delocalized in the following sense: with probability $1-\oo\pa{1}$, \be\label{eq:intro_delocal}\sum_{i\in\cI_a}\absa{\bv_{k}\pa{i}}^2=D^{-1}+\oo\pa{1} \quad \text{for each block $\cI_a$}.
    \ee
    In other words, the $\ell_2$-mass of $\bv_k$ is approximately evenly distributed across the $D$ subsystems. Furthermore, if $\|A\|_{\HS}\gg N^{1/3}$, the edge eigenvalue statistics of $H_\Lambda$ asymptotically match those of the Gaussian Unitary Ensemble (GUE). 
    More precisely, let $\qa{E^-,E^+}$ denote the support of the limiting spectrum of $H_\Lambda$. Then, the largest (resp.~smallest) eigenvalue around $E^+$ (resp.~$E^-$) converges in distribution to the celebrated Tracy-Widom (TW) law \cite{TW,TW1} under the $(DN)^{2/3}$ scaling.  
        
    \item \textbf{Localized phase:} If $\|A\|_{\HS}\ll N^{1/3}/\mathfrak{r}(k)^{1/3}$, then the $k$-th eigenvector $\bv_k$ is concentrated in a single subsystem in terms of its $\ell_2$-mass. More precisely, with probability $1-\oo\pa{1}$, there exists a block $\cal I_a$ such that \smash{$\sum_{i\in\cI_a}\absa{\bv_{k}\pa{i}}^2=1+\oo\pa{1}$}. Furthermore, the $k$-th eigenvalue of $H_\Lambda$ differs from that of $H$ (up to a deterministic shift) by a negligible amount compared to the typical fluctuation scale of $\lambda_k$, which is $N^{-2/3}\mathfrak{r}(k)^{-1/3}$.  
\end{itemize}
Let $\kappa_E:=|E-E^+|\wedge |E-E^-|$ denote the distance of an energy level $E$ from the spectral edges. 
It is known that the typical distance of the $k$-th eigenvalue $\lambda_k$ from the spectral edges $E^\pm$ is of order $\kappa_{\lambda_k}\sim (\mathfrak{r}\pa{k}/N)^{2/3}$. Therefore, the results above can also be interpreted as follows. For a fixed interaction matrix $A$ satisfying $1\ll \|A\|_{\HS} \ll N^{1/3}$, the eigenvectors corresponding to eigenvalues within the edge regime, defined by $\left\{E\in \R:\kappa_E\ll \|A\|_{\HS}^{-2}\right\}$, are localized, while those corresponding to eigenvalues in the bulk regime, defined by $\left\{E\in[E^-,E^+]:\kappa_E\gg \|A\|_{\HS}^{-2}\right\}$, are delocalized. This characterizes a localization–delocalization transition as the energy level $E$ crosses the critical regime where \smash{$\kappa_E\sim \|A\|_{\HS}^{-2}$}. In particular, it implies the existence of mobility edges near $E^\pm$.

This paper focuses on a simplified setting where $D$ remains fixed as $N\to \infty$. However, to gain a deeper understanding of the Anderson localization/delocalization phenomenon, it is also important to consider the regime $D\to \infty$, where the random block matrix model becomes increasingly "non-mean-field" as $D$ grows. Such extensions have been studied in the context of block Anderson models  \cite{yang2025delocalizationgeneralclassrandom,truong2025localizationlengthfinitevolumerandom,Wegner}. Roughly speaking, assuming $W\ge D^\e$ for some constant $\e>0$, certain results on delocalization and the order of localization length were established in dimensions 1 and 2 in  \cite{truong2025localizationlengthfinitevolumerandom}, and in dimensions 7 and higher in \cite{yang2025delocalizationgeneralclassrandom}. Conversely, a localization result was proved in \cite{Wegner} for the case where the matrix $A$ is a scalar matrix. 

\begin{figure}[ht]
    \centering
    \begin{subfigure}[b]{0.40\textwidth}
        \includegraphics[width=\textwidth]{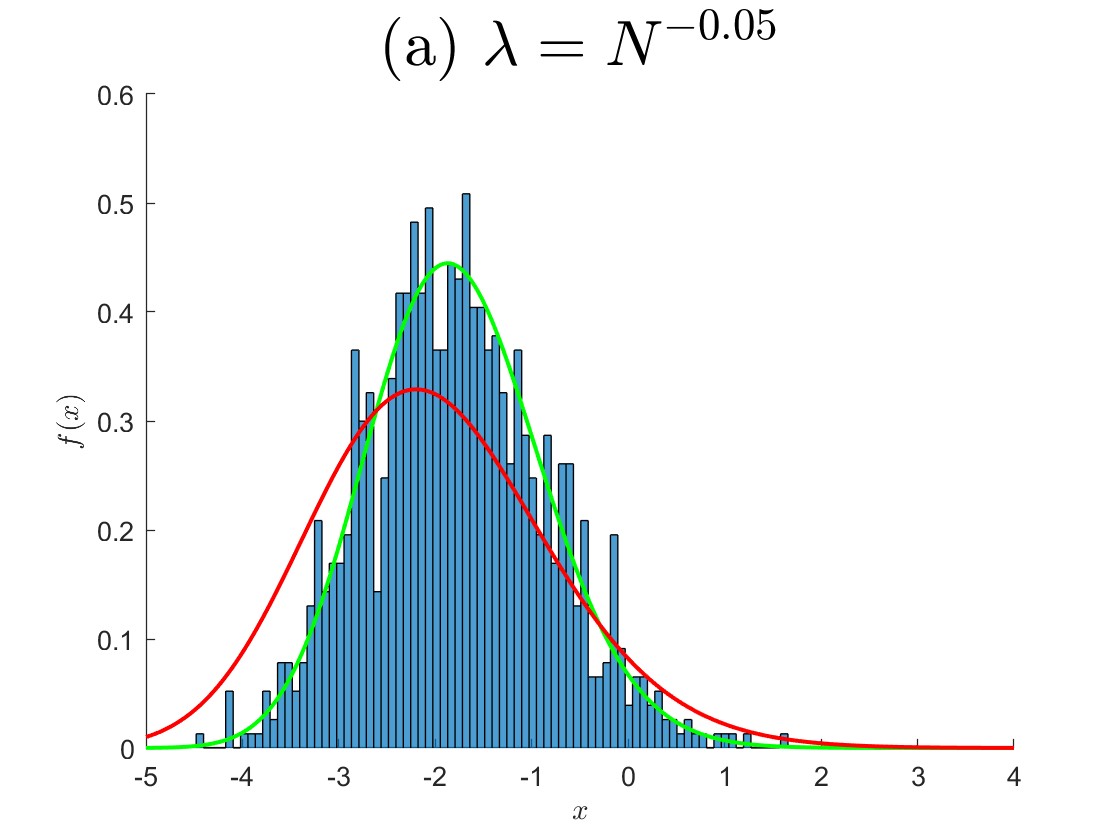}
    \end{subfigure}
    \begin{subfigure}[b]{0.40\textwidth}
        \includegraphics[width=\textwidth]{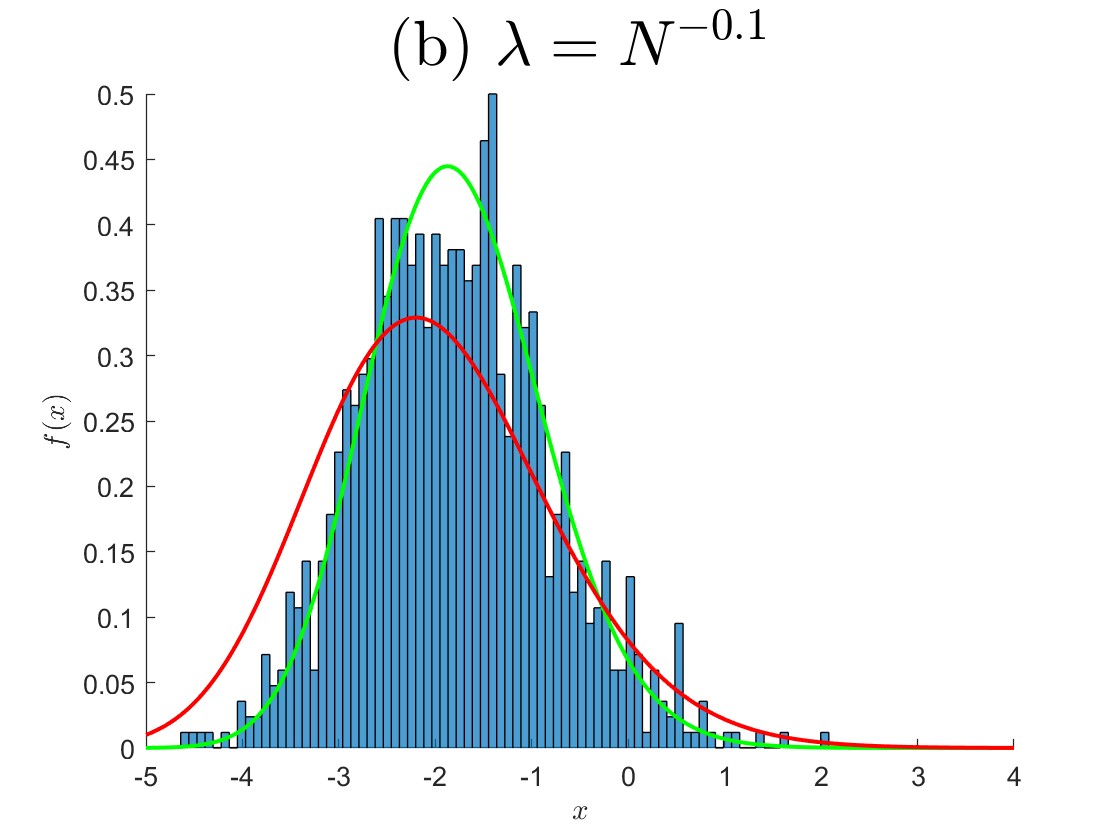}
    \end{subfigure}


    \begin{subfigure}[b]{0.40\textwidth}
        \includegraphics[width=\textwidth]{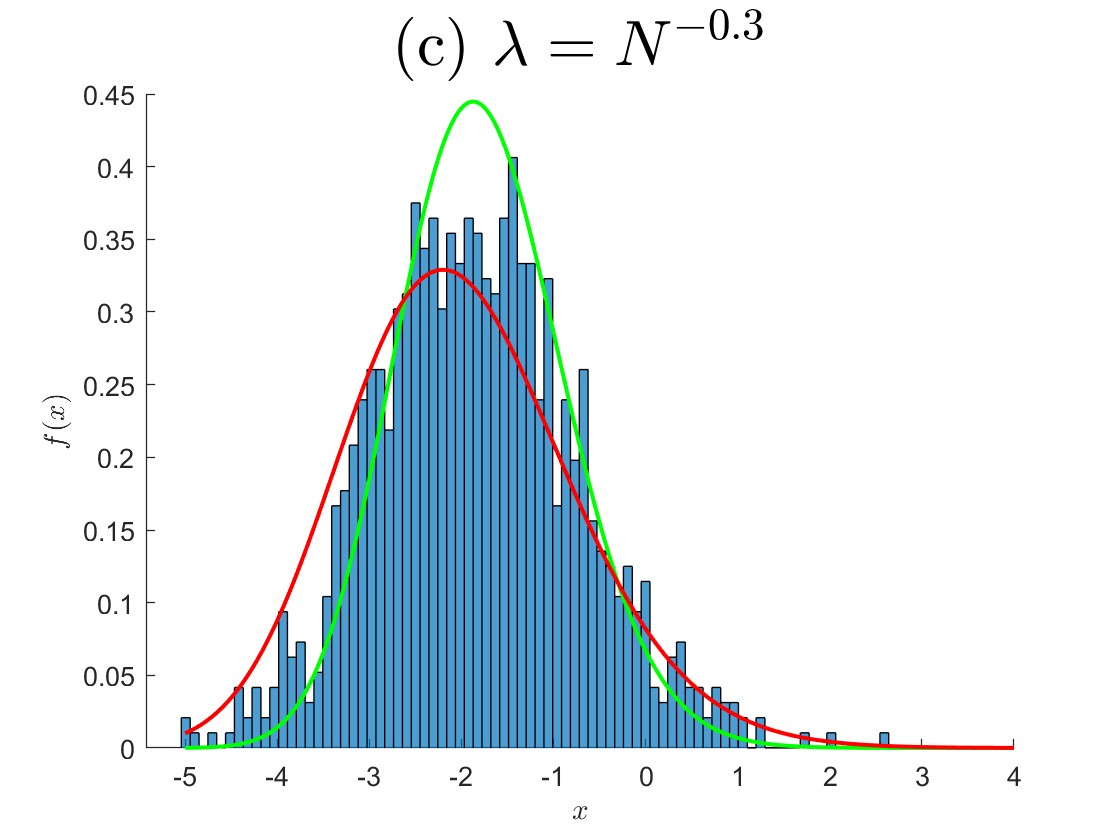}
    \end{subfigure}
    \begin{subfigure}[b]{0.40\textwidth}
        \includegraphics[width=\textwidth]{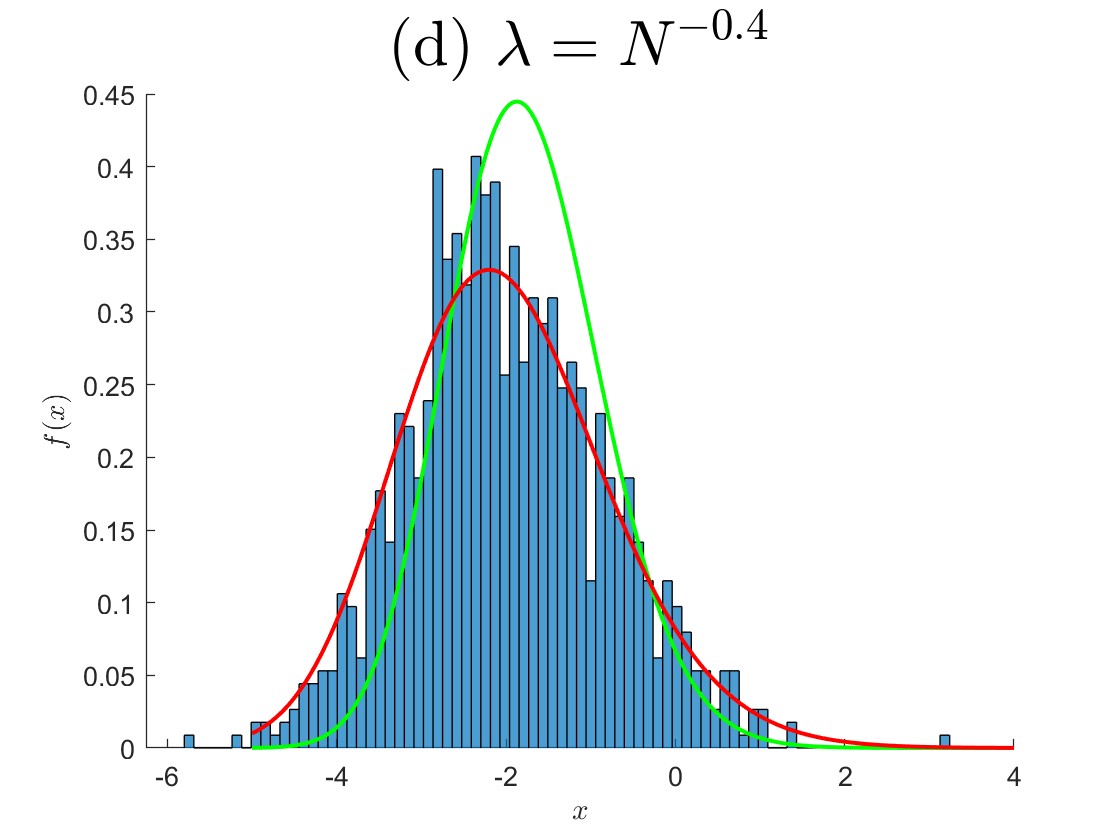}
    \end{subfigure}
    \caption{Distribution of the largest eigenvalue of $H_\Lambda$, where we take $N=400$ and $D=2$. 
    The normalized histograms in (a) and (b) display the simulated distribution of  \smash{$\gamma\pa{DN}^{2/3}\pa{\lambda_1-E^+}$} (where $\gamma$ is defined in \eqref{eq:def_gamma} below), while those in (c) and (d) show the simulated distribution of \smash{$\pa{DN}^{2/3}\pa{\lambda_1-E^+}$}. 
    The green curve plots the probability density function (PDF) for the TW-2 distribution, and the red curve plots the PDF for the maximum of two independent TW-2 distributions. 
    Note that the $\lambda=N^{-0.3}$ case does not align well with the red curve; we attribute this discrepancy to finite-$N$ effects.    }\label{Tracy_Widom_fig}
\end{figure}

Compared to \cite{yang2025delocalizationgeneralclassrandom,truong2025localizationlengthfinitevolumerandom,Wegner}, the present work provides a more comprehensive result in several respects. 
The delocalization results in \cite{yang2025delocalizationgeneralclassrandom,truong2025localizationlengthfinitevolumerandom} are restricted to the bulk of the spectrum, while \cite{Wegner} considers only the strong disorder regime, where no mobility edges arise. In contrast, our analysis covers the entire spectrum, including the spectral edges. Furthermore, the aforementioned works assume Gaussian-distributed blocks for the block potential, whereas we impose only general moment conditions on the entries of $H$. Finally, while \cite{yang2025delocalizationgeneralclassrandom,Wegner} assume the interaction matrix $A$ is proportional to the identity, we allow general $A$, subject only to bounds on $\|A\|$ and $\|A\|_{\HS}$. 
The main reason we are able to provide such a complete characterization of the localization-delocalization transition and the mobility edge is the availability of a sharp local law for the Green’s function (or resolvent) of $H_\Lambda$ under the simplifying assumption $D=\OO(1)$; see \Cref{lem_loc} below. 
This enables us to develop and exploit more intricate multi-resolvent local laws, which in turn allow us to establish localization or delocalization results across different parameter regimes for $\|A\|_{\HS}$. 
On the other hand, in the $D\to \infty$ case, establishing even a single-resolvent local law becomes a significant challenge.

Finally, we support our results with simulations. Let $\ha{H_a}_{a=1}^D$ be $D$ independent copies of $N\times N$ GUE, and let $A=\lambda I_N$, such that \smash{$\norm{A}_{\HS}=\lambda N^{1/2}$}. In \Cref{Tracy_Widom_fig}, we depict the distribution of the (centered and rescaled) largest eigenvalue $\lambda_1$ as $\lambda$ cross the transition threshold $\lambda=N^{-1/6}$. In the delocalized regime (plots (a) and (b)), the simulated distribution coincides with the TW-2 distribution. In contrast, in the localized regime (plots (c) and (d)), the distribution aligns with that of the maximum of $D$ independent TW-2 distributions, which represents the asymptotic distribution of the largest eigenvalue of $H$. 
In \Cref{mobility_edge_fig}, we illustrate the localization-delocalization transition from bulk energies to edge energies. In the bulk regime, the eigenvectors are delocalized in the sense of \eqref{eq:intro_delocal}. As the energy shifts from the bulk to the spectral edges, the $\ell_2$-mass of the eigenvector increasingly concentrated within a single block, indicating a transition to the localized phase. This demonstrates the mobility edge phenomenon predicted by our theory.

\begin{figure}[htb]
\begin{center}
\includegraphics[width=10cm]{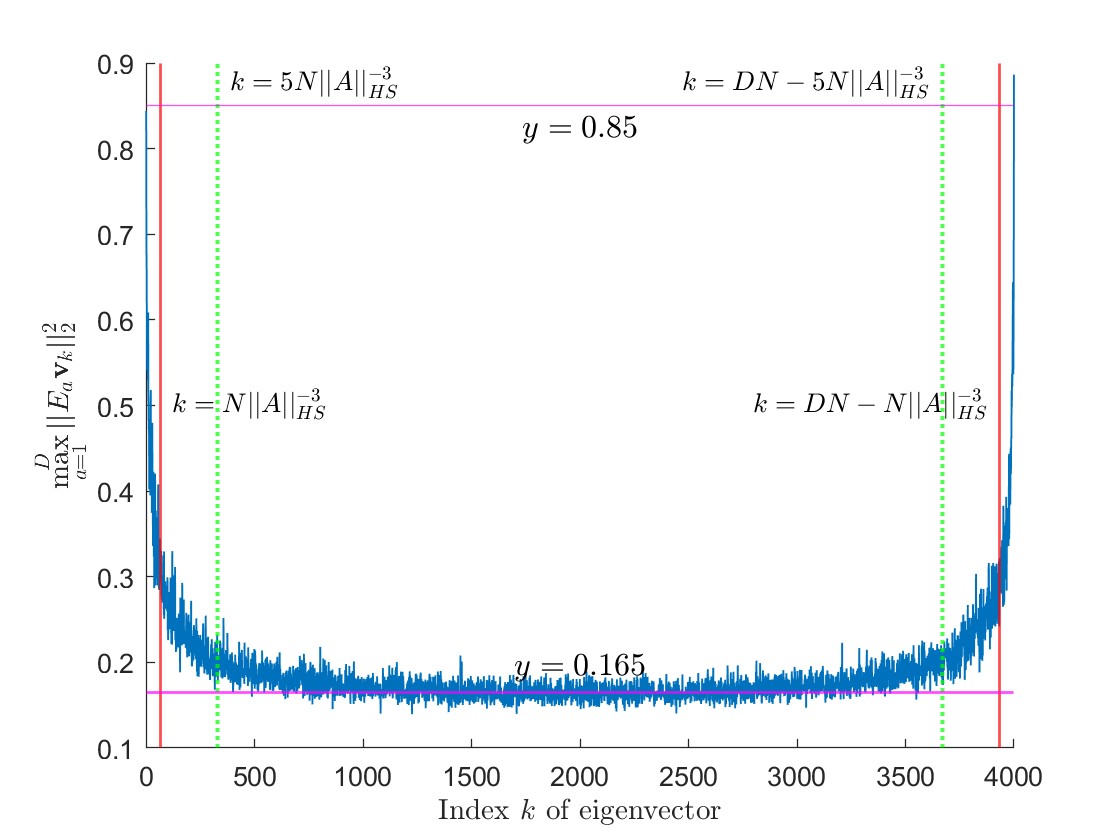}
\caption{Localization-delocalization transition across the entire spectrum. The horizontal axis represents the eigenvector index $k$, and the vertical axis shows the maximum squared  $\ell_2$-mass of $\bv_k$ over the $D$ blocks. We set $N=400$, $D=10$, and $\lambda=N^{-0.4}$, so that $\norm{A}_{\txt{HS}}=N^{1/10}$. The region between the green lines corresponds to delocalized energies, the regions between the red and green lines indicate transition regimes, and the regions outside the red lines represent localized energies. The purple lines illustrate the extent of localization or delocalization.}
\label{mobility_edge_fig}
\end{center}
\end{figure}

\subsection*{Organization of the remaining text}
In \Cref{sec:main_results}, we present the main results of this paper. In the delocalized phase, we state the delocalization of eigenvectors in \Cref{mix} and the Tracy-Widom statistics for the edge eigenvalues in \Cref{MixEV}. In the localized phase, we state the localization of eigenvectors in \Cref{theorem_localized_eigenvector_localization} and describe the eigenvalue statistics in \Cref{theorem_localized_eigenvalue}. 
The proofs of \Cref{mix,MixEV} are provided in \Cref{sec:delocalized_case_eigenvector,sec:delocalized_case_eigenvalue}, respectively, while \Cref{sec:localized_case} is devoted to the proofs of \Cref{theorem_localized_eigenvector_localization,theorem_localized_eigenvalue}. 
Additional auxiliary estimates used in the main proofs are collected in \Cref{appendix}.


\subsection*{Notations} 
To facilitate the presentation, we introduce some necessary notations that will be used throughout this paper. In this paper, we are interested in the asymptotic regime with $N\to \infty$. When we refer to a constant, it will not depend on $N$. Unless otherwise noted, we will use $C$ to denote generic large positive constants, whose values may change from line to line. Similarly, we will use $\epsilon$, $\delta$, $\tau$, $c$ etc.~to denote generic small positive constants. 
For any two (possibly complex) sequences $a_N$ and $b_N$ depending on $N$, $a_N = \OO(b_N)$ or $a_N \lesssim b_N$ means that $|a_N| \le C|b_N|$ for a constant $C>0$, whereas $a_N=\oo(b_N)$ or $|a_N|\ll |b_N|$ means that $\lim_{N\to \infty}|a_N| /|b_N| \to 0$. We say that $a_N\sim b_N$ if $a_N = \OO(b_N)$ and $b_N = \OO(a_N)$. For any $a,b\in\R$, we denote 
$a\vee b:=\max\{a, b\}$ and $a\wedge b:=\min\{a, b\}$. 
For an event $\Xi$, we let $\mathbf 1_\Xi$ or $\mathbf 1(\Xi)$ denote its indicator function. 
Given a vector $\mathbf v$, $\|\mathbf v\|\equiv \|\mathbf v\|_2$ denotes the Euclidean norm and $\|\mathbf v\|_p$ denotes the $\ell_p$-norm. 
Throughout this paper, we use ``$*$" to denote the Hermitian conjugate of a matrix.
Given a matrix $B = (B_{ij})$, we use $\|B\|$, $\|B\|_{\HS}$, and $\|B\|_{\max}:=\max_{i,j}|B_{ij}|$ to denote the operator, Hilbert-Schmidt, and maximum norms, respectively. We also adopt the notion of generalized entries: $B_\mathbf{uv}\equiv \mathbf u^* B \mathbf v$ for vectors $\bu,\bv$.

\medskip
\noindent {\bf Acknowledgement}.   
Fan Yang is supported in part by the National Key R\&D Program of China (No. 2023YFA1010400). 

\section{Main results}
\label{sec:main_results}

\subsection{The model and main results}

We consider the random block matrix model in \eqref{eq:def_model}. Fix any integer $D\geq 2$, let $H_1,H_2,\ldots, H_D$ be $D$ independent copies of $N\times N$ Wigner matrices, i.e., the entries of $H_a$ are independent (up to Hermitian symmetry $H=H^*$) random variables satisfying that 
\begin{equation}\label{eq:meanvar}
    \begin{aligned}
        \E(H_{a})_{ij}=0,\quad \E |(H_{a})_{ij}|^2 = {N}^{-1},\quad \forall a \in \qq{D},\ \ i,j \in \qq{N}.
    \end{aligned}
\end{equation}
For the definiteness of notation, we consider the complex Hermitian case in this paper, while the real case can be proved in the same way with some minor changes in notations. In the complex case, we assume additionally that 
\begin{equation}\label{eq:meanvar2}
    \begin{aligned}
        \E [(H_{a})^2_{ij}] = 0,\quad \forall a \in \qq{D},\ \ i\ne j \in \qq{N}.
    \end{aligned}  
\end{equation}
We assume that the diagonal entries are i.i.d.~real random variables and the entries above the diagonal are i.i.d.~complex random variables. Let $A$ be an arbitrary $N\times N$ (real or complex) deterministic matrix. Then, we consider the block random matrix model $H_{\Lambda}$ defined in \eqref{eq:def_model} with $H$ and $\Lambda$ given in \eqref{matrix_H}.

\begin{assumption}\label{main_assm}
Fix any integer $D\ge 2$, we consider the model \eqref{eq:def_model}, where $A$ is an arbitrary $N\times N$ deterministic matrix with $\|A\|\le N^{-\delta_A}$ for a constant $\delta_A>0$, and $H_1,H_2,\ldots, H_D$ are $D$ i.i.d.~$N\times N$ complex Hermitian Wigner matrices satisfying \eqref{eq:meanvar}, \eqref{eq:meanvar2}, and the following high moment condition: for any $p\in \N$, there exists a constant $C_p>0$ such that 
\begin{equation}\label{eq:highmoment}
    \begin{aligned}
       \E |H_{11}|^p  + \E |H_{12}|^p \leq C_pN^{-p/2}. 
    \end{aligned}
\end{equation}
\end{assumption}

Recall that the eigenvalues and corresponding eigenvectors of $H_{\Lambda}$ are denoted by $\lambda_1\geq \lambda_2\geq \cdots \geq \lambda_{DN}$ and ${\bf v}_1,\mathbf{v}_2,\ldots, {\bf v}_{DN}$. Let $p_{H_{\Lambda}}(\lambda_1,\ldots, \lambda_{DN})$ denote the joint \emph{symmetrized} probability density function of the eigenvalues of $H_{\Lambda}$. For any $1\le n \le DN$, define the $n$-point correlation function by
$$
p_{H_{\Lambda}}^{(n)}\left(\lambda_1, \ldots, \lambda_n\right)
:=\int_{\R^{DN-n}} p_{H_{\Lambda}}\left(\lambda_1, \ldots, \lambda_{DN}\right) \mathrm{d} \lambda_{n+1} \cdots \mathrm{d} \lambda_{DN}.
$$ 
Similarly, denote the eigenvalues of $H$ by $\lambda_1(H)\ge \cdots \ge \lambda_{DN}(H)$, and 
let $p_H^{(n)} $ represent the $n$-point correlation function of them. 
Recall that $\mathfrak{r}\pa{k}$ is defined in \eqref{eq:rk}. 
Now, we state our main results.

\begin{theorem}[Delocalized regime: eigenvectors]\label{mix} 
Under \Cref{main_assm}, given any $k\in\qq{1,DN}$, suppose there exists a constant $\varepsilon_A>0$ such that
\begin{equation}\label{eq:condA1}
\|A\|_{\HS}\ge N^{1/3+\varepsilon_A}\fr\pa{k}^{-1/3} .
\end{equation}
Then, there exists a constant $c>0$ such that 
\begin{equation}\label{eq:main_evector1}
\mathbb P\left( \max_{a\in\llbracket D\rrbracket}\left|{\bf v}_k^* E_{a}  {\bf v}_k-D^{-1}\right|\ge N^{-c}\right)\le N^{-c},
\end{equation}
where $E_a\in \C^{DN\times DN}$ denotes the block identity matrix restricted to $\cal I_a$, i.e., $(E_{a})_{ij}={\bf 1}(i=j\in {\cal I}_a)$.
\end{theorem}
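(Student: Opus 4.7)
The plan is to reduce the claim to a two-resolvent trace bound, and then to invoke a multi-resolvent local law at the edge eigenvalue spacing. Set $P_a := E_a - D^{-1} I_{DN}$. Using the spectral resolution $\operatorname{Im} G(\lambda_k + \ii\eta) = \sum_l \eta[(\lambda_k - \lambda_l)^2 + \eta^2]^{-1} \bv_l \bv_l^*$ and extracting the $l = m = k$ contribution of the resulting nonnegative quadratic form, one obtains the deterministic inequality
\[
|\bv_k^* P_a \bv_k|^2 \le \eta^2 \, \tr\bigl[P_a \operatorname{Im} G(\lambda_k + \ii\eta)\, P_a \operatorname{Im} G(\lambda_k + \ii\eta)\bigr],
\]
valid for every $\eta > 0$. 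A union bound over the $D = \OO(1)$ choices of $a$ then shows that \eqref{eq:main_evector1} reduces to controlling this two-resolvent trace at a single, well-chosen scale $\eta$.

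Next, using eigenvalue rigidity (a direct consequence of the single-resolvent local law of \Cref{lem_loc}), I would replace $\lambda_k$ by its classical location $\gamma_k$ up to negligible cost. A two-resolvent local law for $H_\Lambda$ should then yield
\[
\tr\bigl[P_a \operatorname{Im} G(z)\, P_a \operatorname{Im} G(z)\bigr] = \tr\bigl[P_a \operatorname{Im} M(z)\, P_a \operatorname{Im} M(z)\bigr] + \mathcal{E}(z, A)
\]
at $z = \gamma_k + \ii\eta$, where $M(z)$ is the deterministic matrix appearing in the local law. By the block-cyclic symmetry of the model, $M(z)$ has diagonal blocks of the form $m(z) I_N$ for some scalar Stieltjes-type function $m$, so the leading deterministic quantity is of order $(\operatorname{Im} m(z))^2\, N$. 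Choosing $\eta$ slightly below the local eigenvalue spacing $\eta^*_k := N^{-2/3} \mathfrak{r}(k)^{-1/3}$, say $\eta = N^{-\tau} \eta^*_k$ for a small $\tau > 0$, and using the uniform relation $\eta \operatorname{Im} m(\gamma_k + \ii\eta) \sim N^{-1}$ valid across the entire spectrum, the deterministic contribution is $\eta^2 (\operatorname{Im} m)^2 N \sim N^{-1}$, giving $|\bv_k^* P_a \bv_k| \lesssim N^{-1/2 + o(1)}$ as soon as $\eta^2 \mathcal{E}$ is subleading.

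The central difficulty, and the main novelty relative to \cite{stone2024randommatrixmodelquantum}, is establishing the two-resolvent local law with an error $\mathcal{E}(z,A)$ that remains controlled at the edge-scale spectral parameter $\eta \sim \eta^*_k$ when $\gamma_k$ lies near $E^\pm$. The hypothesis \eqref{eq:condA1} is exactly the regime in which $\eta^2 \mathcal{E}(\gamma_k + \ii\eta, A) \ll N^{-1}$, i.e., in which the multi-resolvent error is dominated by the $(\operatorname{Im} m)^2 N$ main term; crossing this threshold corresponds precisely to the localization–delocalization transition described in the introduction. Tracking the dependence on $\|A\|_{\HS}$ through a cumulant expansion and controlling the degeneration of $M(z)$ and its derivatives as $\operatorname{Re} z \to E^\pm$ is therefore the main technical content of the proof.
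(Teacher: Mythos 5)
Your reduction to the two‑resolvent trace $\eta^2\tr\bigl[P_a\operatorname{Im}G\,P_a\operatorname{Im}G\bigr]$ is indeed the right framework (it matches \eqref{eq:bdd_spec}), but there is a genuine gap in how you identify and size the deterministic part of that trace, and this is exactly where your outline stops tracking where the hypothesis \eqref{eq:condA1} actually enters.

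The deterministic approximation of a two‑resolvent trace is not obtained by a naive substitution $G\rightsquigarrow M$. For this ensemble, $\E\avg{G E_bG'E_{b'}}$ converges to $K_{bb'}=\bigl[(1-\wh M)^{-1}\wh M\bigr]_{bb'}$ (\Cref{def_wtM}, \Cref{key_lemma_delocalized_eigenvector}), not to $\wh M_{bb'}=D\avg{M E_bM'E_{b'}}$. The resummation factor $(1-\wh M)^{-1}$ is not negligible: by \eqref{1-M} its norm is $\sim\operatorname{Im}m/\eta$ for $z_1=\bar z_2$, so $K$ is amplified by a factor $\sim\operatorname{Im}m/\eta$ relative to your $\tr\bigl[P_a\operatorname{Im}M\,P_a\operatorname{Im}M\bigr]\sim N(\operatorname{Im}m)^2$. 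Plugging this back, $\eta^2\,\E\tr\bigl[P_a\operatorname{Im}G\,P_a\operatorname{Im}G\bigr]$ would naively be of order $N\eta\operatorname{Im}m\sim 1$ at the edge scale — i.e., $\OO(1)$, not $N^{-1}$ — and the bound $|\bv_k^*P_a\bv_k|\lesssim N^{-1/2+o(1)}$ you claim does not follow. What rescues the argument is a cancellation that your proposal never invokes: $P_a$ is block‑trace‑free, so when you write $\tr[P_a\operatorname{Im}G\,P_a\operatorname{Im}G]=N\sum_{b,b'}(P_a)_b(P_a)_{b'}\,L_{bb'}$, the vector $\bigl((P_a)_b\bigr)_b$ has zero sum and therefore annihilates the Perron--Frobenius mode of $\wh M$, which is precisely the mode responsible for the $\operatorname{Im}m/\eta$ blow‑up of $(1-\wh M)^{-1}$. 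What survives are the differences $K_{bb'}-K_{b''b'''}$, which by \eqref{eq:est_M-M}/\eqref{flat_M} are bounded by $N/\|A\|_{\HS}^2$, so the deterministic contribution is $\lesssim N^2\eta^2/\|A\|_{\HS}^2$. At $\eta\sim N^{-2/3+\e_L}\fr(k)^{-1/3}$ this is $N^{2\e_L-2\e_A}$, small \emph{because of} \eqref{eq:condA1}. Your proposal attributes \eqref{eq:condA1} to the multi‑resolvent \emph{error} term, but in the actual proof the error $\OO(N^{-1-c_L}\eta^{-2})$ contributes $N^{-c_L}$ regardless of $\|A\|_{\HS}$; the hypothesis is consumed entirely by the deterministic part via the difference bound. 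Without spotting this cancellation, there is nothing in your outline that uses \eqref{eq:condA1}, which is a reliable sign that a key step is missing.

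A secondary point: you take $\eta=N^{-\tau}\eta^*_k$ strictly \emph{below} the eigenvalue spacing. But rigidity only guarantees $|\lambda_k-\gamma_k|\prec\eta^*_k$, not $\ll\eta$; if $\eta\ll|\lambda_k-\gamma_k|$ the extraction of the $(k,k)$ term from the spectral decomposition no longer gives a prefactor $\gtrsim\eta^{-2}$, and the inequality $|\bv_k^*P_a\bv_k|^2\lesssim\eta^2\tr[\cdot]$ degrades. The paper takes $\eta$ slightly \emph{above} the spacing (see the constant $\e_L>0$ in \Cref{key_lemma_delocalized_eigenvector}), so that rigidity places $\lambda_k$ well inside the $\eta$‑window around $\gamma_k$.
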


We will define the limiting spectral density $\rho_N$ for the eigenvalues of $\VV$ in \eqref{eq:density_rhoN} below, and denote its support by $[E^-,E^+]$, where $E^\pm$ represent the spectral edges.
According to \cite[Lemma 4.3]{lee2015edge},
the density $\rho_N$ exhibits a square-root behavior near the edges $E^\pm$. Based on this behavior, we define the curvature parameters $\gamma_{\pm}$ at $E^\pm$ as:
\begin{equation}\label{eq:def_gamma}
    \begin{aligned}
        \lim_{E\uparrow E^+} \frac{\rho_N(E)}{\sqrt{E^+-E}}=\frac{\gamma_+^{3/2}}{\pi}, \quad \lim_{E\downarrow E^-} \frac{\rho_N(E)}{\sqrt{E-E^-}}=\frac{\gamma_-^{3/2}}{\pi}.
    \end{aligned}
\end{equation}

\begin{theorem}[Delocalized regime: eigenvalues]\label{MixEV}
In the setting of \Cref{mix}, let $O \in C_c^{\infty}\left(\mathbb{R}^n\right)$ be an arbitrary smooth, compactly supported function. If \eqref{eq:condA1} holds for $k=1$, then for any fixed $n\in \N$, there exists a constant $c>0$ so that 
\begin{align}
 &\left|	\E O\pa{\gamma_+ \pa{D N}^{2/3}\pa{E^+-\lambda_1},\ldots,\gamma_+ \pa{D N}^{2/3}\pa{E^+-\lambda_n}}\right.\nonumber \\
 &\left.-\E O\pa{ \pa{D N}^{2/3}\pa{2-\mu_1},\ldots, \pa{D N}^{2/3}\pa{2-\mu_n}}   \right|\leq N^{-c} ,
\end{align}
where $\mu_1\geq\mu_2\geq\cdots\geq\mu_n$ denote the largest $n$ eigenvalues of a $DN\times DN$ GUE. The corresponding edge universality result also holds for $\gamma_-(DN)^{2/3}(\lambda_{DN}-E^-,\ldots,\lambda_{DN-n}-E^- )$ at the left edge $E^-$.
\end{theorem}

\begin{remark}\label{rmk_only_edge}
The universality of eigenvalue statistics around any energy level $E \in [E^-, E^+]$ is expected to hold in the delocalized phase. In particular, this has been rigorously established in the bulk regime (i.e., for $E\in[E^-+\e, E^+-\e]$ with some small constant $\e>0$), as shown in \cite{stone2024randommatrixmodelquantum}. However, the local eigenvalue statistics in the transition regime between the spectral edges and the bulk (characterized by $N^{-2/3}\ll |E^+-E|\wedge |E-E^-|\ll 1$) have not yet been studied in the literature for GUE. Therefore, we focused only on the universality of the edge eigenvalue statistics in \Cref{MixEV}.
\end{remark}

\begin{theorem}[Localized regime: eigenvectors]\label{theorem_localized_eigenvector_localization}
Under \Cref{main_assm}, given any $k\in\qq{1,DN}$, suppose there exists a positive constant $\varepsilon_A$ such that
\begin{equation}\label{eq:condA2}
\|A\|_{\HS}\le N^{1/3-\varepsilon_A}\fr\pa{k}^{-1/3} . 
\end{equation}
Then, for any small constant $\varepsilon\in(0,2\e_A)$, there exists a constant $\varepsilon_0=\varepsilon_0\pa{\varepsilon}>0$ such that
\begin{equation}\nonumber
    \begin{aligned}
        \mathbb P\left(\max_{a=1}^D \|E_a{\bf v}_k\|^2 \le 1 - N^{-2/3+\varepsilon}\fr\pa{k}^{2/3}\norma{A}_{\HS}^2\right)\le N^{-\varepsilon_0}.
    \end{aligned}
\end{equation}
As a consequence, it implies that 
\begin{equation}\label{eq:main_evector2}
\mathbb P\left(\max_{a=1}^D \|E_a{\bf v}_k\|^2 \le 1 - N^{-c}\right)\le N^{-c}
\end{equation}
for a constant $c>0$ depending on $\e_A$.
\end{theorem}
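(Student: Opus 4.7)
The plan is to exploit the near-block-diagonal structure: since $\|\Lambda\|\le 2\|A\|\le 2N^{-\delta_A}$, the spectrum of $H_\Lambda$ is a small perturbation of that of the block-diagonal matrix $H$, whose eigenvectors are perfectly localized within a single block. I would track how much mass leaks from the ``native'' block of $\bv_k$ into its neighbours.

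First, by Weyl's inequality together with rigidity for each Wigner block $H_a$, with probability $1-N^{-c}$ there is a dominant block index $a_0=a_0(k)$ and an internal index $i_0$ such that $|\lambda_k-\mu^{(a_0)}_{i_0}|\le C\|A\|$ (where $\mu^{(a)}_j$, $\bu^{(a)}_j$ denote the eigenpairs of $H_a$), and $\lambda_k$ is separated from $\mathrm{spec}(H_b)$ for every $b\ne a_0$ by at least $N^{-2/3-\varepsilon}\fr(k)^{-1/3}$; this uses independence of the $D$ blocks together with anti-concentration of Wigner eigenvalues around any deterministic energy. Expanding $\bv_k=\sum_{a,j}c_{a,j}\wt{\bu}^{(a)}_j$ in the eigenbasis of $H$ (with $\wt\bu^{(a)}_j$ denoting $\bu^{(a)}_j$ embedded into block $a$), the eigenvalue equation $(H-\lambda_k)\bv_k=-\Lambda\bv_k$ combined with the block-off-diagonal form of $\Lambda$ gives, for each $(b,j)\ne(a_0,i_0)$,
\[
c_{b,j}=-\frac{(\bu^{(b)}_j)^{*}A\,\bv_k^{(b+1)}+(\bu^{(b)}_j)^{*}A^{*}\,\bv_k^{(b-1)}}{\mu^{(b)}_j-\lambda_k}.
\]
To leading order $\bv_k^{(a_0)}\approx \bu^{(a_0)}_{i_0}$, so for a neighbour $b$ the dominant matrix element is $(\bu^{(b)}_j)^{*}A\,\bu^{(a_0)}_{i_0}$, which is bounded by $N^{-1+\varepsilon}\|A\|_{\HS}$ with high probability by isotropic delocalization of independent Wigner eigenvectors (indeed $\mathbb E|\bu^{*}A\bu'|^{2}=N^{-2}\|A\|_{\HS}^{2}$ for independent Haar-uniform unit vectors).

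Summing these squared matrix elements against the spectral weights and using the single-block local law at the scale of the level spacing to bound $\sum_{j}(\mu^{(b)}_j-\lambda_k)^{-2}\lesssim N^{4/3+\varepsilon}\fr(k)^{2/3}$ (dominated by the closest $\mu^{(b)}_j$), one obtains
\[
\|E_b\bv_k\|^{2}\lesssim N^{-2+\varepsilon}\|A\|_{\HS}^{2}\cdot N^{4/3}\fr(k)^{2/3}=N^{-2/3+\varepsilon}\|A\|_{\HS}^{2}\,\fr(k)^{2/3},
\]
which, under the hypothesis $\|A\|_{\HS}\le N^{1/3-\varepsilon_A}\fr(k)^{-1/3}$, is at most $N^{-2\varepsilon_A+\varepsilon}$. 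Summing over the $D-1$ non-dominant blocks yields $\|E_{a_0}\bv_k\|^{2}\ge 1-N^{-c}$. Non-neighbour blocks, which appear for $D\ge 4$, enter only at the next order in $\|A\|_{\HS}$ through an intermediate neighbour and are controlled by iterating the same estimate; the intra-block contribution $\sum_{j\ne i_0}|c_{a_0,j}|^2$ is even smaller thanks to the extra factor $\|\bv_k^{(a_0\pm1)}\|^2\cdot\|A\|^2$.

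I expect the main obstacle to be twofold. First, the approximation $\bv_k^{(a_0)}\approx\bu^{(a_0)}_{i_0}$ above is circular as stated; a clean way around this is a bootstrap: hypothesise $\|E_{a_0}\bv_k\|^{2}\ge 1/2$, carry through the estimate to upgrade to $1-N^{-c}$, and then close the loop on the hypothesis. Second, and more delicately, the sum $\sum_{j}(\mu^{(b)}_j-\lambda_k)^{-2}$ is dominated by the smallest denominator, which sits at the very edge of the single-block local law's validity; ruling out the bad event that $\lambda_k$ falls anomalously close to some $\mu^{(b)}_j$ requires Wigner gap estimates combined with conditioning on $H_{a_0}$, exploiting the fact that $\lambda_k$ is essentially a function of $H_{a_0}$ (and $\Lambda$) up to an error $\ll$ level spacing. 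An alternative that sidesteps the bootstrap is to write $\|E_b\bv_k\|^{2}=\tfrac{1}{2\pi\ii}\oint_{\Gamma_k}\tr\bigl(E_b G_\Lambda(z)E_b\bigr)\,dz$ around a contour enclosing only $\lambda_k$ and apply the sharp (multi-)resolvent local law for $H_\Lambda$ referenced later in the paper; in either approach the quantitative control of small gaps is where the real work lies.
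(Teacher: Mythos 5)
Your plan is the right \emph{heuristic}, and it parallels the paper's proof at the level of the eigenvalue equation: writing, in the $D=2$ case, $\bu_k=-\mathcal G_1(\lambda_k)A\bw_k$ and estimating the spectral-decomposition sum. But there are two concrete gaps, and they are the whole content of the paper's proof.

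First, the matrix-element bound $|(\bu_j^{(b)})^*A\,\bv_k^{(b\pm1)}|\prec N^{-1}\|A\|_{\HS}$ is not available: it is an estimate for \emph{independent} Haar vectors, but $\bv_k^{(b\pm1)}$ depends on $H_b$ through both $\lambda_k$ and the block components. Your bootstrap treats this as a lower-order error, but the error is multiplied by $\|\mathcal G_b(\lambda_k)\|\|A\|\prec N^{2/3+\delta}\fr(k)^{1/3}\|A\|$, and since the only unconditional lower bound on $\|A\|$ available is $\|A\|\ge\|A\|_{\HS}/\sqrt N$, the error loop does \emph{not} contract under the hypothesis $\|A\|_{\HS}\le N^{1/3-\varepsilon_A}\fr(k)^{-1/3}$: the contraction would force the far stronger restriction $\|A\|\ll N^{-2/3}\fr(k)^{-1/3}$. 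The paper sidesteps the bootstrap entirely by converting the statement into an expectation of a two-resolvent trace $\E\tr[\im G_0\,\tLambda\,\im G_1\,\tLambda]$ (with one resolvent of $H$ and one of $H_\Lambda$), bounded in \Cref{lemma_localized_eigenvector_local_law} by a long cumulant expansion that handles the correlations without any independence assumption. Your alternative contour-integral suggestion $\oint\tr(E_b G_\Lambda E_b)\,\dd z$ is a single-resolvent quantity in $H_\Lambda$ alone and is not what is used; one genuinely needs a mixed $H$/$H_\Lambda$ two-resolvent loop to isolate the $k$-th eigenvector's block mass.

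Second, and decisively, you do not introduce the deterministic shift $\Delta_{\txt{ev}}=\re(z+m+1/m)\approx\gamma_k-\gamma_k^{\txt{sc}}$. The paper applies the resolvent at the shifted energy $\lambda_k-\Delta_{\txt{ev}}$ and replaces $\Lambda$ by $\tLambda=\Lambda-\Delta_{\txt{ev}}$, precisely so that the key deterministic factor satisfies the improved bound $\avg{\sM_0\tLambda\sM_1 E_a}=\OO(\im m\cdot\avga{\Lambda^2})$ (\Cref{lem_regular_estimate_localized_eigenvector}) instead of the unimproved $\OO(\avga{\Lambda^2})$. That extra $\im m\sim N^{-1/3}\fr(k)^{1/3}$ is exactly what cancels the $1/(N\eta)$ singularity near the edge and lets the two-resolvent estimate reach $N^{-5/3}\fr(k)^{2/3}\|A\|_{\HS}^2$. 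Without this cancellation the rigorous argument only works for $\|A\|_{\HS}\ll N^{1/6}/\fr(k)^{1/6}$, as the paper itself points out in its proof-ideas section; your back-of-envelope reaches the stated threshold only because it implicitly assumes the independence you have not justified. The shift is also what makes the level-repulsion step well-posed: the correct comparison is between $\lambda_k-\Delta_{\txt{ev}}\approx\lambda_k(H)$ and $\spec(H_b)$; the paper then reduces the bad event to a collision $\exists i,j:|\lambda_i^{(1)}-\lambda_j^{(2)}|\le 2N^{-2/3-\delta}\fr(k)^{-1/3}$, a statement entirely about the \emph{independent} matrices $H_1,H_2$ that no longer references $\lambda_k$ and therefore avoids circularity. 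Your statement that $\lambda_k$ is separated from $\spec(H_b)$ is ambiguous without the shift (since $\Delta_{\txt{ev}}$ is generically larger than the level spacing) and requires the quantitative eigenvalue localization of \Cref{theorem_localized_eigenvalue} to be made precise.
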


\begin{theorem}[Localized regime: eigenvalues]\label{theorem_localized_eigenvalue}
In the setting of \Cref{theorem_localized_eigenvector_localization}, for any constants $\varepsilon>0$ and $\varepsilon_0\in\pa{0,2\varepsilon}$, we have that
    \begin{equation}
        \label{eq:local_eigenvalue_perturb}\begin{aligned}
            \P\left( \left|\pa{\lambda_k-\gamma_k}-\pa{\lambda_k(H)-\gamma_{k}^{\tsc}}\right| \ge N^{-1+\varepsilon}\norma{A}_{\HS}\right)\le N^{-\varepsilon_0},
        \end{aligned}
    \end{equation}
where $\gamma_k$ and $\gamma_k^{\tsc}$ denote the $k$-th quantiles of $\rho_N$ and the semicircle law, respectively, as defined in \eqref{eq:gammak} below. From \eqref{eq:local_eigenvalue_perturb}, there exists a constant $c>0$ depending on $\e_A$ such that the following estimate holds:
\begin{equation}\label{eq:main_perurb}
    \begin{aligned}
        \P\left( \left|\pa{\lambda_k-\gamma_k}-\pa{\lambda_k(H)-\gamma_{k}^{\txt{sc}}}\right| \ge N^{-2/3-c}\fr\pa{k}^{-1/3}\right)\le N^{-c}.
    \end{aligned}
\end{equation}
As a consequence of \eqref{eq:main_perurb}, for any $k\in \qq{1,DN}$ such that \eqref{eq:condA2} holds, and for any fixed $n\in \N$ and smooth, compactly supported test function $O \in C_c^{\infty}\left(\mathbb{R}^n\right)$, there exists a constant $c>0$ depending on $\e_A$ such that
\begin{equation*}
    \begin{aligned}
        &\left|	\int_{\mathbb{R}^n} \mathrm{~d} \boldsymbol{\alpha}\; O(\boldsymbol{\alpha}) p_{H_{\Lambda}}^{(n)}\left(\gamma_k+\frac{\alpha_1}{\pa{DN}^{2/3}\fr\pa{k}^{1/3}  }, \ldots, \gamma_k+\frac{\alpha_n}{\pa{DN}^{2/3}\fr\pa{k}^{1/3}  }\right)\right.\\
        &\left.-\int_{\mathbb{R}^n} \mathrm{~d} \boldsymbol{\alpha}\; O(\boldsymbol{\alpha})p_{H}^{(n)}\left(\gamma_k^{\txt{sc}}+\frac{\alpha_1}{\pa{DN}^{2/3}\fr\pa{k}^{1/3}  }, \ldots, \gamma_k^{\txt{sc}}+\frac{\alpha_n}{\pa{DN}^{2/3}\fr\pa{k}^{1/3}  }\right)   \right|\leq N^{-c},
    \end{aligned}
\end{equation*}
where $\boldsymbol{\alpha}$ denotes $\boldsymbol{\alpha}=\left(\alpha_1, \ldots, \alpha_n\right)$.
\end{theorem}

\subsection{Local law of Green's function}

A basic tool in our proof is the local law for the Green's function (or resolvent) of $H_\Lambda$, defined by
\be\label{def_resolv}
G(z)\equiv G(z,H,\Lambda):=(H_\Lambda-z)^{-1},\quad z\in \mathbb C_+:=\{z\in \C:\im z>0\},
\ee
as we will state in \Cref{lem_loc} below. To state it, we first introduce some notations. Note the model \eqref{eq:def_model} can be regarded as a deformed generalized Wigner matrix. As $N\to \infty$, $G(z)$ converges to a deterministic matrix $M(z)\equiv M(z,\Lambda)$, which satisfies the \emph{matrix Dyson equation}: 
\be\label{def_M}
\left( \cal S(M) +z-\Lambda\right)M+I=0, 
\ee
where $\cal S(\cdot)$ is a linear operator acting on $DN\times DN$ matrices such that $\cal S(M)$ is a diagonal matrix with 
$$\cal S(M)_{ij}=\mathbf 1(i=j)\sum_x s_{ix}M_{xx} =\mathbf 1(i=j)D\avg{ME_a},\quad \forall i,j\in \cal I_a. $$
Here, $s_{ij}$ denotes the variance of the $(i,j)$-th entry of $H$:
\be\label{eq:sij}
s_{ij}=\E|H_{ij}|^2= N^{-1}\mathbf 1(i,j\in \cal I_a \  \text{for some } a \in \llbracket D\rrbracket),
\ee
and we define the variance matrix by $S=(s_{ij}:i,j\in \cal I)$. We will use $\langle B\rangle := (DN)^{-1}\tr B$ to denote the normalized trace of a $DN\times DN$ matrix $B$. 
Due to the block translation symmetry of $S$ and $\Lambda$, we see that $M$ is also block translationally invariant, which implies that $\cal S(M)$ should be a scalar matrix 
$\cal S(M)=mI$, where $m(z)$ is defined as $m(z):=\langle M(z)\rangle$.

\begin{remark}
When $D=2$, the block translation symmetry may not hold. In this case, we denote
$$M=\begin{pmatrix}
M_{(11)} & M_{(12)} \\
M_{(21)} & M_{(22)}
\end{pmatrix}.
$$
Then, we can derive directly from equation \eqref{def_M} that 
\be\label{eq:MD=2}
\begin{aligned}
    M_{(11)}=\frac{m+z}{AA^*-(m+z)^2},\quad &   M_{(22)}=\frac{m+z}{A^*A-(m+z)^2},\\  M_{(12)}=\frac{1}{AA^*-(m+z)^2}A,\quad & M_{(21)}=\frac{1}{A^*A-(m+z)^2}A^*,
\end{aligned} 
\ee
where $m(z)$ satisfies the self-consistent equation 
$m(z)=N^{-1}\tr M_{(11)}(z)=N^{-1}\tr M_{(22)}(z).$
\end{remark}

 \begin{definition}[Matrix limit of $G$]\label{defn_Mm}
 We define $m(z)\equiv m_N(z)$ as the unique solution to 
 \be\label{self_m}
m(z)= \big\langle \left(\Lambda -z- m(z)\right)^{-1} \big\rangle ,
\ee
 such that $\im m(z)>0$ whenever $z\in \C_+$.  
 Then, we define the matrix $M(z)\equiv M_N(z,\Lambda)$ as 
\be\label{def_G0}
M(z):= \left(\Lambda -z- m(z)\right)^{-1}.
 \ee
Since $\Lambda$ is Hermitian, we have that $m(\bar z)= \overline {m(z)}$ and $M(\bar z)=M(z)^*$.
 \end{definition}

Under this definition, $m(z)$ is actually the Stieltjes transform of a probability measure $\mu_{N}$, called the \emph{free convolution} of the empirical measure of $\Lambda $ and the semicircle law with density 
\be\label{eq:semidensity}\rho_{\tsc}(x)=\frac{1}{2\pi} \sqrt{4-x^2}\mathbf 1_{x\in [-2,2]}.\ee
Moreover, the probability density $\rho_{N}$ of $\mu_{N}$ is determined from $m(z)$ by \be\label{eq:density_rhoN}
\rho_{N}(x)=\pi^{-1}\lim_{\eta\downarrow 0}\im m(x+\ii \eta).
\ee
Under the assumption $\|A\|=\OO(N^{-\delta_A})$, \cite[Lemma 4.3]{lee2015edge} shows that the support of $\rho_{N}$ is a single interval $[E^-,E^+]$, where $|E^+-2|+|E^-+2|=\oo(1)$. Moreover, from \eqref{eq:msc} below, we have $|m(z)-m_{\tsc}(z)|=\oo(1)$, where $m_{\tsc}(z)$ is the Stieltjes transform of $\rho_{\tsc}$, given by
$m_{\tsc}(z)=(-z+\sqrt{z^2-4})/{2}$. 
For any $k\in\qq{1,DN}$, we denote by $\gamma_k$ and $\gamma_k^{\txt{sc}}$ the $k$-th quantiles of $\rho_N$ and $\rho_{\txt{sc}}$, respectively, defined as:
\begin{equation}\label{eq:gammak}
    \begin{aligned}
        \gamma_k:=\sup_{x\in \R}\ha{\int_{x}^{+\infty}\rho_N(E)\rd E\geq \frac{k-1/2}{DN}},\quad \gamma_k^{\txt{sc}}:=\sup_{x\in \R}\ha{\int_{x}^{+\infty}\rho_{\txt{sc}}(E)\rd E\geq \frac{k-1/2}{DN}}.
    \end{aligned}
\end{equation}
We define the distance from an energy $E$ to the spectral edges $E^\pm$ by $\kappa \equiv \kappa_E:=\absa{E^+ -E}\wedge \absa{E-E^-} $. Some basic properties of $m$ and the density $\rho_N$ are collected in \Cref{lemma_usual_properties_of_m_and_mu} in the appendix. In particular, the square-root behavior of $\rho_N$ described in \eqref{square_root_density} implies that
\begin{equation}\label{eq:kappa_gamma_k}
    \begin{aligned}
        \absa{\gamma_k-E^+}\sim {k}^{2/3}/N^{2/3},\quad \absa{\gamma_{DN+1-k}-E^-}\sim {k}^{2/3}/N^{2/3},\quad \forall k\in\qq{1,DN}.
    \end{aligned}
\end{equation}

To state the local law and streamline the presentation, in this paper, we adopt the following convenient notion of stochastic domination introduced in \cite{Average_fluc}. 


\begin{definition}[Stochastic domination and high probability event]\label{stoch_domination}
	{\rm{(i)}} Let
	\[\xi=\left(\xi^{(N)}(u):N\in\mathbb N, u\in U^{(N)}\right),\hskip 10pt \zeta=\left(\zeta^{(N)}(u):N\in\mathbb N, u\in U^{(N)}\right),\]
	be two families of non-negative random variables, where $U^{(N)}$ is a possibly $N$-dependent parameter set. We say $\xi$ is stochastically dominated by $\zeta$, uniformly in $u$, if for any fixed (small) $\tau>0$ and (large) $D>0$, 
	\[\mathbb P\bigg(\bigcup_{u\in U^{(N)}}\left\{\xi^{(N)}(u)>N^\tau\zeta^{(N)}(u)\right\}\bigg)\le N^{-D}\]
	for large enough $N\ge N_0(\tau, D)$, and we will use the notation $\xi\prec\zeta$. 
	If for some complex family $\xi$ we have $|\xi|\prec\zeta$, then we will also write $\xi \prec \zeta$ or $\xi=\OO_\prec(\zeta)$. 
	
	\vspace{5pt}
	\noindent {\rm{(ii)}} As a convention, for two deterministic quantities $\xi$ and $\zeta$, we will write $\xi\prec\zeta$ if and only if $|\xi|\le N^\tau |\zeta|$ for any constant $\tau>0$. 

 \vspace{5pt}
	\noindent {\rm{(iii)}} Let $A$ be a family of random matrices and $\zeta$ be a family of non-negative random variables. Then, we use $A=\OO_\prec(\zeta)$ to mean that $\|A\|\prec \zeta$, where $\|\cdot\|$ denotes the operator norm. 
	
	\vspace{5pt}
	\noindent {\rm{(iv)}} We say an event $\Xi$ holds with high probability (w.h.p.) if for any constant $D>0$, $\mathbb P(\Xi)\ge 1- N^{-D}$ for large enough $N$. More generally, we say an event $\Omega$ holds $w.h.p.$ in $\Xi$ if for any constant $D>0$,
	$\P( \Xi\setminus \Omega)\le N^{-D}$ for large enough $N$.
\end{definition}

\begin{lemma}[Local laws and rigidity of eigenvalues, Lemma 2.9 in \cite{stone2024randommatrixmodelquantum}]\label{lem_loc}
Under \Cref{main_assm}, for any small constant $\tau>0$, the following local laws hold uniformly in $z=E+\ii\eta $ with $|z|\le \tau^{-1}$ and $ \eta \ge N^{-1+\tau}$. 
\begin{itemize}
    \item  \textnormal{\textbf{Anisotropic local law:}} For any deterministic unit vectors $\mathbf{u}, \mathbf{v}\in \C^{DN}$, we have
\begin{equation}\label{eq:aniso_local}
 \left(G(z)-M(z)\right)_{\mathbf{u}\mathbf{v}}\prec \sqrt{\frac{\im m(z)}{N\eta}}+\frac{1}{N\eta}.
\end{equation}

\item  \textnormal{\textbf{Averaged local law:}} For any deterministic matrix $B \in \C^{DN\times DN}$ with $\|B\|\le 1$, we have 
\begin{equation}\label{eq:aver_local}
 \left\langle \left(G-M\right) B\right\rangle \prec \frac{1}{ N\eta } .
\end{equation}

\end{itemize}
As a consequence of \eqref{eq:aver_local} when $B=I$, we have the rigidity of eigenvalues:
\begin{equation}\label{eq:rigidity}
\abs{\lambda_k-\gamma_k} \prec N^{-2/3}\mathfrak{r}\pa{k}^{-1/3},\quad \forall 1\le k\le DN. 
\end{equation}
In addition, all the above estimates remain valid even if we do not assume identical distributions for the diagonal and off-diagonal entries of $H$.
\end{lemma}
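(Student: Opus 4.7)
The plan is to adapt the standard strategy for proving local laws for generalized Wigner matrices with a block-structured variance profile, encoded here by the matrix $S$ in \eqref{eq:sij}, combined with the deterministic perturbation $\Lambda$. The starting point is the identity $(H_\Lambda-z)G=I$, which can be rewritten as
\[
(\mathcal{S}(G)+z-\Lambda)\,G + I = (H + \mathcal{S}(G))\,G,
\]
so the problem reduces to showing that the random right-hand side is small. I would apply the cumulant expansion formula to the generalized entries $(HG)_{\mathbf{u}\mathbf{v}}=\sum_{ij}\bar u_i H_{ij}G_{j\mathbf{v}}$, using $\E|H_{ij}|^2=s_{ij}$ and the high-moment bound \eqref{eq:highmoment}: the quadratic-in-$H$ contribution cancels exactly against $-(\mathcal{S}(G)G)_{\mathbf{u}\mathbf{v}}$, and the remaining higher-cumulant terms are estimated via Ward's identity $\sum_j|G_{j\mathbf{v}}|^2=\eta^{-1}\im G_{\mathbf{v}\mathbf{v}}$. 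This yields, for $\Psi:=(\mathcal{S}(G)+z-\Lambda)G+I$, the entrywise control $|\Psi_{\mathbf{u}\mathbf{v}}|\prec\sqrt{\im m(z)/(N\eta)}+1/(N\eta)$, provided that $G$ is a priori known to stay bounded; the latter is obtained by a continuity/bootstrap argument in $\eta$, starting from $\eta\sim 1$ where the bound is trivial and propagating the estimate down to $\eta=N^{-1+\tau}$.

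The second ingredient is a stability analysis for the matrix Dyson equation \eqref{def_M}. Writing $G=M+D$ and inserting into the approximate equation produces a linearized equation of the schematic form $(\Id - M\,\mathcal{S}[\,\cdot\,]\,M)\,D = M\Psi + O(D^2)$. The linearized stability operator is uniformly invertible in the bulk but has a one-dimensional near-kernel at the spectral edges $E^\pm$. I would exploit the block-translation invariance of $S$ and $\Lambda$ (for $D\ge 3$; for $D=2$ the explicit formulas in \eqref{eq:MD=2} are used) to reduce the relevant eigenvalue problem of the stability operator to the scalar self-consistent equation \eqref{self_m} for $m(z)$, whose stability has already been analyzed in \cite{lee2015edge} and yields an inverse of norm $O((\kappa+\eta)^{-1/2})$ near the edges. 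Crucially, $\im m(z)\sim\sqrt{\kappa+\eta}$ in the same regime, so the edge degradation of the stability is exactly compensated by the shrinking of $\im m$, and the resulting bound on $G-M$ still matches the anisotropic target \eqref{eq:aniso_local}.

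To upgrade to the averaged law \eqref{eq:aver_local} with its sharper $1/(N\eta)$ rate, I would run a fluctuation-averaging argument: compute high moments of $\langle(G-M)B\rangle$ using the cumulant expansion and observe that, after averaging over the internal index against $B$, the $\sqrt{\im m/(N\eta)}$ fluctuation of a single resolvent entry is improved to $1/(N\eta)$ because of additional off-diagonal cancellations, exactly as in \cite{Average_fluc}. The rigidity \eqref{eq:rigidity} then follows from the averaged law by the standard Helffer--Sj\"ostrand contour representation of the eigenvalue counting function $\#\{k:\lambda_k\le E\}$ against $\int_{-\infty}^{E}\rho_N$, integrated along contours at distance $\sim N^{-1+\tau}$ from the real axis. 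I expect the main obstacle to be the edge stability: ensuring that the bootstrap in $\eta$ stays uniform across the whole spectrum, and that at each step the improvement of $\|\Psi\|$ is accurately transferred through the $(\kappa+\eta)^{-1/2}$ stability loss, requires precise matching of the stability and fluctuation scales, particularly in the transition regime $\kappa\sim\eta$ where the quadratic error $O(D^2)$ is the most delicate to close.
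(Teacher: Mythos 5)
This lemma is not proved in the present paper at all: it is imported verbatim from \cite{stone2024randommatrixmodelquantum}, as the lemma title itself indicates, and the authors offer no argument beyond the citation. So there is nothing in this manuscript to compare your proof against line by line; the comparison has to be against the standard proof pipeline that the cited reference (and the deformed-Wigner literature it builds on, notably \cite{lee2015edge}) uses.

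With that caveat, your sketch is a faithful outline of that pipeline and does not contain a structural error. You correctly identify the self-improving cumulant expansion of $(\mathcal S(G)+z-\Lambda)G+I$, the need for a stability analysis of the (here block-translation-invariant, hence effectively scalar) matrix Dyson equation, the compensating role of $\im m\sim\sqrt{\kappa+\eta}$ at the edge, fluctuation averaging for the sharper $1/(N\eta)$ rate, and Helffer--Sj\"ostrand for rigidity. Two places where the sketch is lighter than what a full proof requires: for the anisotropic bound \eqref{eq:aniso_local} one must handle arbitrary deterministic directions $\mathbf u,\mathbf v$, which requires a separate reduction from the entrywise/isotropic case (a polynomialization or moment argument over a net of directions, not merely the Ward identity step you describe); and for $D=2$ the block-translation invariance you invoke fails, so the stability operator must be diagonalized by hand using \eqref{eq:MD=2} rather than reduced to the scalar $m$-equation. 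Neither issue is fatal -- both are addressed in the cited source -- but they are precisely the places where ``essentially the standard proof'' has nonstandard content for this model.
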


From the local law \eqref{eq:aniso_local}, we can derive some more general estimates for products of resolvents, which will be stated as \Cref{lemma_necessary_estimates} in \Cref{appendix}. These estimates will serve as the basic tools for our proofs.

\subsection{Preliminaries}
In the main proofs, the perturbation matrix $\Lambda$ may evolve with parameter $t$. For convenience, we introduce the following notations.

\begin{definition}\label{def_moving_Lambda}
    Suppose $\Lambda_t:[a, b] \rightarrow \mathbb{C}^{D N \times D N}$ is a continuous map such that $\Lambda_t$ satisfies \Cref{main_assm} throughout the evolution. 
     We define $m_t(z)$ as the unique solution to 
 \be\nonumber
m_t(z)= \big\langle \left(\Lambda_t -z- m_t(z)\right)^{-1} \big\rangle ,
\ee
such that $\im m_t(z)>0$ whenever $z\in \C_+$.  
Then, we define $M_t=M_t\pa{z,\Lambda_t}$ as
\begin{equation}\nonumber
        \begin{aligned}
            M_t(z)=
            \left(\Lambda_t-z-m_t(z)\right)^{-1},
        \end{aligned}
    \end{equation}
noting that $m_t(z)=\left\langle M_t(z)\right\rangle$. 
The associated probability density is given by
    \begin{equation}\nonumber
        \begin{aligned}
            \rho_t(E)=\frac{1}{\pi} \lim _{\eta \downarrow 0} \operatorname{Im} m_t(E+i \eta).
        \end{aligned}
    \end{equation}
We denote the support of $\rho_t$ by $\left[E_t^{-}, E_t^{+}\right]$, where $E_t^{\pm}$ represent the spectral edges. For $z=E+\ii \eta$, we also define $ \kappa_t\equiv \kappa_t(E):=|E_t^{+}-E|\wedge |E-E_t^{-}|$. Finally, we define the quantiles $\gamma_k\pa{t}$ of $\rho_t$ as in \eqref{eq:gammak}.
\end{definition}




Our proofs rely on the following identity, derived directly from the definitions of $G$ and $M$ in \eqref{def_M}: 
\be\label{eq:G-M} 
G - M= - G(H+m)M= - M(H+m)G,
\ee
together with the complex cumulant expansion formula. We use the version stated in \cite[Lemma 7.1]{He:2017wm}. 
	\begin{lemma}(Complex cumulant expansion) \label{lem:complex_cumu}
		Let $h$ be a complex random variable all of whose moments exist. The $(p,q)$-cumulant of $h$ is defined as
		$$
		\mathcal{C}^{(p,q)}(h)\deq (-\ii)^{p+q} \cdot \left(\frac{\partial^{p+q}}{\partial {s^p} \partial {t^q}} \log \E e^{\mathrm{i}sh+\mathrm{i}t\bar{h}}\right) \bigg{|}_{s=t=0}\,.
		$$
		Let $f:\mathbb C^2 \to \C$ be a smooth function, and we denote its holomorphic  derivatives by
		$$
		f^{(p,q)}(z_1,z_2)\deq \frac{\partial^{p+q}}{\partial z_1^p \partial z_2^q} f(z_1,z_2)\,.
		$$ Then, for any fixed $l \in \N$, we have
		\begin{equation} \label{5.16}
		\E f(h,\bar{h})\bar{h}=\sum\limits_{p+q=0}^l \frac{1}{p!\,q!}\mathcal{C}^{(p,q+1)}(h)\E f^{(p,q)}(h,\bar{h}) + R_{l+1}\,,
		\end{equation}
		given all integrals in (\ref{5.16}) exist. Here, $R_{l+1}$ is the remainder term depending on $f$ and $h$, and for any $\tau>0$, we have the estimate
		$$
			\begin{aligned}
			R_{l+1}=&\, \OO(1)\cdot \E \big|h^{l+2}\mathbf{1}_{\{|h|>N^{\tau-1/2}\}}\big|\cdot \max\limits_{p+q=l+1}\big\| f^{(p,q)}(z,\bar{z})\big\|_{\infty} \\
			+&\,\OO(1) \cdot \E |h|^{l+2} \cdot \max\limits_{p+q=l+1}\big\| f^{(p,q)}(z,\bar{z})\cdot \mathbf{1}_{\{|z|\le N^{\tau-1/2}\}}\big\|_{\infty}\,.
			\end{aligned}
		$$
	\end{lemma}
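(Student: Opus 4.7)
The plan is to view \eqref{5.16} as a refined version of the classical moment-cumulant recursion, combined with a Taylor expansion to pass from monomial kernels to a general smooth $f$, and a truncation at scale $N^{\tau-1/2}$ to obtain the two-term form of $R_{l+1}$. I would proceed in three steps.

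\textbf{Step 1: Exact moment-cumulant recursion.} Start from the identity $\phi(s,t) = e^{\psi(s,t)}$, where $\phi(s,t) := \E e^{ish+it\bar h}$ and $\psi(s,t) := \log\phi(s,t) = \sum_{p+q\ge 1}\tfrac{(is)^p(it)^q}{p!q!}\,\mathcal{C}^{(p,q)}(h)$. Differentiating in $t$ and matching coefficients of $(is)^A(it)^B$ on both sides of $\partial_t\phi = \phi\cdot\partial_t\psi$ gives the exact recursion
\begin{equation*}
\E[h^A\bar h^{B+1}] = \sum_{p=0}^A\sum_{q=0}^B\binom{A}{p}\binom{B}{q}\,\mathcal{C}^{(p,q+1)}(h)\,\E[h^{A-p}\bar h^{B-q}].
\end{equation*}
Applied to monomial kernels $f(z,\bar z) = z^A\bar z^B$ with $A+B\le l$, this shows that \eqref{5.16} holds exactly with $R_{l+1}\equiv 0$.

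\textbf{Step 2: Reduction to the polynomial case.} For general smooth $f$, Taylor-expand about the origin to total degree $l$: $f(h,\bar h) = T_l(h,\bar h) + \rho(h,\bar h)$, with Lagrange residue $|\rho|\lesssim |h|^{l+1}\max_{p+q=l+1}\|f^{(p,q)}\|_\infty$ (over the relevant domain). Multiply by $\bar h$, take expectations, and apply Step 1 to every monomial moment in $T_l$. Reordering the double sum and reindexing via $(a',b') := (a-p,b-q)$, one identifies the inner sums as the degree-$(l-p-q)$ Taylor polynomials of the derivatives $f^{(p,q)}$ at the origin:
\begin{equation*}
\E[\bar h\,T_l(h,\bar h)] = \sum_{p+q\le l}\frac{\mathcal{C}^{(p,q+1)}(h)}{p!q!}\,\E\big[T_{l-p-q}(f^{(p,q)})(h,\bar h)\big].
\end{equation*}
Replacing each $T_{l-p-q}(f^{(p,q)})$ by the full $f^{(p,q)}$ and collecting the resulting Taylor residues of $f^{(p,q)}$ with the original residue $\E[\bar h\,\rho(h,\bar h)]$ into a single term yields \eqref{5.16} with a combined error $R_{l+1}$.

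\textbf{Step 3: Two-term bound via truncation.} To obtain the claimed form of $R_{l+1}$, split every contributing expectation using $\chi := \mathbf{1}_{|h|\le N^{\tau-1/2}}$ and $\bar\chi := 1-\chi$. On $\{\chi=1\}$, bound each Lagrange residue pointwise using the local derivative norm $\max_{p+q=l+1}\|f^{(p,q)}\mathbf{1}_{|\cdot|\le N^{\tau-1/2}}\|_\infty$; then combine with $|\mathcal{C}^{(p,q+1)}(h)|\lesssim \E|h|^{p+q+1}$ and $\E|h|^{p+q+1}\cdot\E|h|^{l-p-q+1}\lesssim \E|h|^{l+2}$ (which uses the moment bound \eqref{eq:highmoment}) to produce the second summand in the stated bound. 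On $\{\bar\chi=1\}$, estimate crudely by the global sup-norm $\max_{p+q=l+1}\|f^{(p,q)}\|_\infty$, and use $|h|\ge N^{\tau-1/2}$ on this event to reduce every power $\E[|h|^k\bar\chi]$ (with $k\le l+2$) to $\E[|h|^{l+2}\bar\chi]$, producing the first summand.

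\textbf{Main obstacle.} The principal bookkeeping challenge lies in Step 2: verifying that after swapping summations and reindexing, the inner sums really do identify as Taylor polynomials of the derivatives $f^{(p,q)}$, so that the aggregate error becomes a clean sum of Taylor residues rather than miscellaneous remainder terms. Once this identification is made, Step 3 is a routine bulk/tail decomposition. A minor subtlety is combining the cumulant estimate $|\mathcal{C}^{(p,q+1)}|\lesssim \E|h|^{p+q+1}$ with the Taylor residue estimate to conclude an overall $\E|h|^{l+2}$ factor, which uses the sub-Gaussian-type scaling of $h$ implicit in \Cref{main_assm}.
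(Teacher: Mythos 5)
The paper does not prove this lemma itself---it is imported verbatim from He and Knowles (Lemma 7.1 of \cite{He:2017wm})---so there is no in-paper proof to match against. Judged on its own, your argument is correct and follows essentially the standard route: an exact moment--cumulant recursion for monomials (Step 1), Taylor expansion of $f$ about the origin followed by a reindexing that identifies the inner sums as Taylor polynomials of the derivatives $f^{(p,q)}$ (Step 2), and a bulk/tail split at scale $N^{\tau-1/2}$ (Step 3). I checked the reindexing $(a',b')=(a-p,b-q)$; it does produce $\E[T_{l-p-q}(f^{(p,q)})(h,\bar h)]$ inside the $(p,q)$-sum, and the Lagrange residues are evaluated on the segment $t\mapsto(th,t\bar h)$, which stays on the anti-holomorphic diagonal, so the sup-norms $\|f^{(p,q)}(z,\bar z)\|_\infty$ are the correct ones. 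The one place I would push back is the closing remark that Step 3 ``uses the sub-Gaussian-type scaling of $h$ implicit in \Cref{main_assm}.'' It does not need to: the lemma as stated holds for an arbitrary complex random variable with all moments finite, and the only inequality you actually require on the tail event, namely
\begin{equation*}
\E\big[|h|^{p+q+1}\big]\cdot\E\big[|h|^{l-p-q+1}\,\mathbf 1\{|h|>N^{\tau-1/2}\}\big]\le \E\big[|h|^{l+2}\,\mathbf 1\{|h|>N^{\tau-1/2}\}\big],
\end{equation*}
is an instance of Chebyshev's sum (FKG) inequality for the two non-decreasing functions $x\mapsto x^{p+q+1}$ and $x\mapsto x^{l-p-q+1}\mathbf 1\{x>N^{\tau-1/2}\}$ of $x=|h|$; the same iterated inequality also justifies $|\mathcal C^{(p,q+1)}(h)|\lesssim\E|h|^{p+q+1}$ and the bulk estimate $\E|h|^{p+q+1}\,\E|h|^{l-p-q+1}\le\E|h|^{l+2}$. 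Replacing your ``use $|h|\ge N^{\tau-1/2}$ to raise the power'' heuristic by this positive-correlation argument removes the spurious dependence on \eqref{eq:highmoment} and proves the lemma in the generality in which the paper cites it.
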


With assumptions \eqref{eq:meanvar}, \eqref{eq:meanvar2}, and \eqref{eq:highmoment}, we can show that for $i, j\in \cI$,
$$ \mathcal{C}^{(0,1)}(H_{ij})=\mathcal{C}^{(1,0)}(H_{ij})=0,\quad \mathcal{C}^{(1,1)}(H_{ij}) =s_{ij},\quad \mathcal{C}^{(0,2)}(H_{ij})=\mathcal{C}^{(2,0)}(H_{ij})=s_{ij}\delta_{ij},$$
and that for any fixed $p,q\in \N$ with $p+q\ge 3$, there exists a constant $C>0$ such that
\be\label{eq:bdd_cumu}
\max_{i,j\in \cI}|\mathcal{C}^{(p,q)}(H_{ij})| \le \left(CN\right)^{-(p+q)/2}.
\ee
We also adopt the following notation from \cite[equation (42)]{cipolloni-erdos2021}.

\begin{definition}\label{def:underline}
Suppose that $f$ and $g$ are matrix-valued functions. Define
\be
\underline{g(H)Hf(H)} := g(H)Hf(H)-\widetilde{\E}g(H)\widetilde{H}(\partial_{\widetilde{H}}f)(H) - \widetilde{\E}(\partial_{\widetilde{H}}g)(H)\widetilde{H}f(H),
\ee
where $\widetilde{H}$ is an indepdent copy of $H$, $\widetilde{\E}$ denotes the partial expectation with respect to $\wt H$, and $(\partial_{\widetilde{H}}f)(H)$ denotes the directional derivative of the function $f$ in the direction $\wt H$ at the point $H$, i.e.,
\be
[(\partial_{\widetilde{H}}f)(H)]_{xy} = (\widetilde{H}\cdot \nabla f(H))_{xy} := \sum_{\alpha,\beta\in \cI}\widetilde{H}_{\alpha\beta}\frac{\partial f(H)_{xy}}{\partial H_{\alpha\beta}}.
\ee
\end{definition}

The terms subtracted from $g(H)Hf(H)$ are precisely the second-order term in the cumulant expansion. In particular, if all entries of $H$ are Gaussian, we have $\E\underline{g(H)Hf(H)} = 0$. Moreover, if we take $g(H) = I$ and $f(H) = G$, we have that
\be\label{eq:HGline}
\underline{HG} = HG + \widetilde{\E}[\widetilde{H}G\widetilde{H}]G,\quad \text{with}\quad \widetilde{\E}[\widetilde{H}G\widetilde{H}] = \sum_{a=1}^D D\langle GE_a\rangle E_a.
\ee

We will frequently use the Cauchy-Schwarz inequality and the following Ward's identity to bound various quantities involving the resolvent.

\begin{lemma}[Ward's identity]\label{lem-Ward}
Let $\cal A$ be a Hermitian matrix. Define its resolvent as $R(z):=(\cal A-z)^{-1}$ for any $z= E+ \ii \eta\in \C_+$. Then, we have 
    \be\label{eq_Ward0}
    \begin{split}
\sum_x \overline {R_{xy'}}  R_{xy} = \frac{R_{y'y}-\overline{R_{yy'}}}{2\ii \eta},\quad
\sum_x \overline {R_{y'x}}  R_{yx} = \frac{R_{yy'}-\overline{R_{y'y}}}{2\ii \eta}.
\end{split}
\ee
As a special case, if $y=y'$, we have
\be\label{eq_Ward}
\sum_x |R_{xy}|^2 =\sum_x |R_{yx}|^2 = \frac{\im R_{yy}}{ \eta}.
\ee
\end{lemma}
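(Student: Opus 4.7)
The plan is to reduce both identities to the first resolvent identity $R(z) - R(w) = (z-w) R(z) R(w)$ applied with $w = \bar z$. The Hermiticity of $\cal A$ gives $R(z)^{*} = R(\bar z)$, hence $\overline{R_{ij}(z)} = R_{ji}(\bar z)$, which lets me eliminate the complex conjugates in both sums in favor of evaluations of the resolvent at $\bar z$.

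First I would rewrite the left-hand side of the first identity as a single matrix-product entry:
\[
\sum_x \overline{R_{xy'}(z)}\, R_{xy}(z) \;=\; \sum_x R_{y'x}(\bar z)\, R_{xy}(z) \;=\; [R(\bar z)R(z)]_{y'y}.
\]
The resolvent identity at $w=\bar z$ yields $R(\bar z)R(z) = (R(z)-R(\bar z))/(z-\bar z) = (R(z)-R(\bar z))/(2\ii\eta)$. Taking the $(y',y)$ entry and using $R_{y'y}(\bar z) = \overline{R_{yy'}(z)}$ produces $(R_{y'y}(z) - \overline{R_{yy'}(z)})/(2\ii\eta)$, which is the first identity.

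The second identity follows by the mirror calculation: $\sum_x \overline{R_{y'x}(z)}\, R_{yx}(z) = \sum_x R_{yx}(z) R_{xy'}(\bar z) = [R(z)R(\bar z)]_{yy'}$. Since $R(z)$ and $R(\bar z)$ commute (they are both functions of the same Hermitian $\cal A$), the same first-resolvent-identity step applies and yields $(R_{yy'}(z)-\overline{R_{y'y}(z)})/(2\ii\eta)$. For the special case $y=y'$, I would simply substitute into either identity and use $R_{yy}(z) - \overline{R_{yy}(z)} = 2\ii\, \im R_{yy}(z)$ to obtain $\im R_{yy}(z)/\eta$.

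There is no genuine obstacle here: the lemma is pure linear algebra, with all the work done by the Hermiticity of $\cal A$ and the first resolvent identity. The only point requiring minimal care is tracking the index swap when moving the complex conjugate through the relation $\overline{R_{ij}(z)} = R_{ji}(\bar z)$, so that the correct off-diagonal entry appears on the right-hand side.
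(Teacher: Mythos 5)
Your proof is correct, and it is the standard argument for Ward's identity: rewrite the sums as matrix-product entries of $R(\bar z)R(z)$ (resp.\ $R(z)R(\bar z)$) using $R(z)^{*}=R(\bar z)$, and then apply the first resolvent identity with $w=\bar z$. The paper states the lemma without proof, noting only that it follows from a simple algebraic calculation; your reduction is exactly that calculation, and the index bookkeeping you flag as the one delicate step is carried out correctly.
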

\begin{proof}
These identities follow directly from the algebraic identity $R-R^*=2\ii \eta RR^*=2\ii \eta R^*R$.
\end{proof} 

\subsection{Proof ideas}\label{sec_idea}

In this subsection, we outline the core ideas underlying the proof of our main theorems. Without loss of generality, we assume that $k\in\qq{1,DN/2}$ so that  $\fr\pa{k}=k$.

\medskip

\paragraph{\bf Delocalized regime}

Our proofs in the delocalized phase largely follow the framework developed in \cite{stone2024randommatrixmodelquantum} for the bulk of the eigenvalue spectrum, with necessary modifications in the regime near the spectral edges. By Markov's inequality, the delocalization estimate \eqref{eq:main_evector1} follows directly from the second moment bound $\E[\|E_a \bv_k\|^2-D^{-1}]^2\le N^{-\delta}$ for some constant $\delta>0$ depending on $\varepsilon_A$.
Using the spectral decomposition of $G(z)$ and the eigenvalue rigidity \eqref{eq:rigidity}, the proof can reduce to establishing the two-resolvent bound:
\be\label{eq:normtrace}
\E \avg{\im G(z) (E_a -D^{-1}) \im G(z) (E_a -D^{-1})} \le N^{-1-\delta}\eta^{-2}, \quad \forall a\in \qq D, 
\ee
where $z=\gamma_k+\ii\eta$ and $\eta=N^{-2/3+\varepsilon}k^{-1/3}$, with $\e>0$ an arbitrarily small constant.  
Similar to \cite{stone2024randommatrixmodelquantum}, we prove \eqref{eq:normtrace} using the \emph{characteristic flow} method---a dynamic approach for estimating resolvents along a flow of the spectral parameter $z$, which corresponds to the characteristic flow of the underlying complex Burgers equation.  
This method was first introduced in \cite{lee2015edge} and has since been applied to various models \cite{HL2019_PTRF,Landon2024_PTRF,Adhikari2020_PTRF,Adhikari2023_PTRF,LS_2022,Bourgade_JEMS} to establish single-resolvent local laws (or closely related quantities), as well as more general multi-resolvent local laws, as in  \cite{BF_LQG,CEH2023,CEX2023,CGLJ2024_AJMP,erdHos2024eigenstate,cipolloni2024mesoscopic,campbell2024spectraledgenonhermitianrandom,erdos2024cuspuniversalitycorrelatedrandom}. 
It consists of three main steps:
\begin{enumerate}
\item[(1)] establishing a global law for $G(z)$ when $z$ lies away from the limiting spectrum $[E^-,E^+]$;

\item[(2)] propagating the estimates from large scales of $\im z$ to smaller scales along the characteristic flow, while introducing a Gaussian component into the original matrix model;

\item[(3)] eliminating the Gaussian component using a Green's function comparison argument.
\end{enumerate}
Steps (1) and (3) follow almost identically to the approach in \cite{stone2024randommatrixmodelquantum}. In Step (2), to extend the argument of \cite{stone2024randommatrixmodelquantum} to the spectral edge regime, it is crucial to carefully track the factors involving $\im m(z)$ in the estimates. This allows us to cancel certain singularities arising near the spectral edges; see \Cref{sec:delocalized_case_eigenvector} for further details.

After establishing the delocalization of the edge eigenvectors in \Cref{mix}, we can then prove \Cref{MixEV} by adopting an idea from \cite{CFHJBulkBandAOP2024}. 
Specifically, we utilize the estimate \eqref{eq:main_evector1}---referred to as a \emph{quantum unique ergodicity} estimate in \cite{CFHJBulkBandAOP2024}---to facilitate the Green's function comparison in the classical three-step strategy for proving the universality of eigenvalue statistics (see \cite{Erds2017ADA} for a review of this strategy).
Our argument closely resembles that in \cite{stone2024randommatrixmodelquantum}. However, near the spectral edges, we must conduct a comparison argument for a more complex function of $G(z)$, which requires a deeper exploration of its algebraic structures. For more details, see \Cref{sec:delocalized_case_eigenvalue}.

\medskip

\paragraph{\bf Localized regime} 
Despite the similarities to \cite{stone2024randommatrixmodelquantum} concerning the proofs in the delocalized phase, the proofs for the localized phase are significantly more challenging and technically demanding in our context, particularly near the spectral edges. 
In the remainder of this subsection, we will focus on explaining the key ideas behind the proofs of \Cref{theorem_localized_eigenvector_localization,theorem_localized_eigenvalue}.
The detailed proof will be presented in \Cref{sec:localized_case}.

For the proof of \Cref{theorem_localized_eigenvalue}, we define a sequence of interpolating matrices as 
\be\label{eq:VVtheta}
\VV(t):=H+t \Lambda, \quad t\in [0,1], \quad \text{with}\quad \VV(0)=H, \quad \VV(1)=\VV.
\ee
By standard perturbation theory for eigenvalues, we have 
$\lambda_k'(t)=\bv_k(t)^* \Lambda \bv_k(t)$, where $\lambda_k(t)$ denotes the $k$-th eigenvalue of $\VV(t)$, and $\bv_k(t)$ represents the corresponding eigenvector.
Thus, we can control the difference between the $k$-th eigenvalues of $H_\Lambda$ and $H$ by bounding $\bv_k(t)^* \Lambda \bv_k(t)$ for each $t\in[0,1]$. It is desirable to show that this quantity is much smaller than the typical fluctuation $N^{-2/3}k^{-1/3}$ of $\lambda_k$. This holds true within the bulk of the limiting spectrum, as shown in \cite{stone2024randommatrixmodelquantum}. 
However, it fails in the edge regime, where the perturbation $\Lambda$ induces a non-negligible shift in the quantiles $\gamma_k$. Incorporating this shift, given by $\gamma_k-\gamma_k^{\mathrm{sc}}$, we have that 
\begin{equation}\label{eq:lambdak-gammak}
    \begin{aligned}
  \E\absa{\pa{\lambda_k-\gamma_k}-\pa{\lambda_k\pa{H}-\gamma_k^\txt{sc}}}^2&=\E\absa{\int_{0}^1\left[\lambda_k'\pa{t}-\gamma_k'\pa{t}\right]\,\rd t}^2\leq  \int_{0}^1\E\absa{\lambda_k'\pa{t}-\gamma_k'\pa{t}}^2\,\rd t\\  &=\int_{0}^1\E\absa{\bv_k^*\pa{\Lambda-\gamma_k'\pa{t}}\bv_k}^2\,\rd t,
    \end{aligned}
\end{equation}
where $\gamma_k\pa{t}$ is the quantile defined as in \Cref{def_moving_Lambda} with $\Lambda_t=t\Lambda$. 
Let $z_t=\gamma_k\pa{t}+\ii\eta$, where $\eta=N^{-2/3+\varepsilon}k^{-1/3}$ for an arbitrarily small constant $\e>0$. By applying the spectral decomposition of $G_t=(\VV(t)-z_t)^{-1}$ along with the rigidity estimate for $\lambda_k(t)-\gamma_k(t)$, we can obtain that (see \eqref{bdwgy} below)
\begin{equation}\label{eq:2Gloop}
    \begin{aligned}     \E\absa{\bv_k(t)^*\pa{\Lambda-\gamma_k'\pa{t}}\bv_k(t)}^2\prec N\eta^2\E\avga{\pa{\im G_t}\pa{\Lambda-\gamma_k'\pa{t}}\pa{\im G_t}\pa{\Lambda-\gamma_k'\pa{t}}}.
    \end{aligned}
\end{equation}
Hence, to bound \eqref{eq:lambdak-gammak}, it suffices to control the right-hand side (RHS) of \eqref{eq:2Gloop}, which we refer to as a two-resolvent loop. 
One technical challenge in the proof is that  $\gamma_k'\pa{t}$ takes a complicated and implicit form. Fortunately, under the assumption \eqref{eq:condA2}, we can approximate $\gamma_k'\pa{t}$ with a more explicit quantity
\begin{equation*}
\Delta\pa{t}:=\frac{\avga{M_t\p{z_t}\Lambda M_t^*\p{z_t}}}{\avga{M_t\p{z_t}M_t^*\p{z_t}}},
\end{equation*}
with an error that is much smaller than the typical fluctuation $N^{-2/3}k^{-1/3}$. Here, $M_t$ is defined as in  \Cref{def_moving_Lambda} with $\Lambda_t=t\Lambda$. This expression allows us to derive a key deterministic cancellation (as detailed in the estimate \eqref{regular_estimate_localized_eigenvalue} below), which is crucial for establishing the following two-resolvent estimate for some constant $C>0$ that does not depend on $\e$:
\begin{equation}\label{eq:2-resolven1}
        \E\avga{\pa{\im G_t}\pa{\Lambda-\Delta\pa{t}}\pa{\im G_t}\pa{\Lambda-\Delta\pa{t}}}\prec N^{C\varepsilon}N^{-5/3}k^{2/3}\norma{A}_{\HS}^2.
\end{equation}
Substituting this into \eqref{eq:2Gloop} and subsequently into \eqref{eq:lambdak-gammak} yields  
$$\E\absa{\pa{\lambda_k-\gamma_k}-\pa{\lambda_k\pa{H}-\gamma_k^\txt{sc}}}^2\prec N^{-2+(C+2)\e}\|A\|_{\HS}^2.$$
Together with Markov's inequality, this completes the proof of \Cref{theorem_localized_eigenvalue} since $\e$ is arbitrary.


For the proof of \Cref{theorem_localized_eigenvector_localization}, we adopt a similar idea as in \cite[Section 7]{stone2024randommatrixmodelquantum}, but we need to incorporate the shift of the quantiles $\gamma_k-\gamma_k^{\mathrm{sc}}$, as inspired by the above discussion for the proof of \Cref{theorem_localized_eigenvalue}. 
To illustrate this idea, we consider the case $D=2$ for simplicity. By \Cref{theorem_localized_eigenvalue}, we know that $\lambda_k-\gamma_k + \gamma_k^{\mathrm{sc}}$ is a small perturbation of $\lambda_k(H)$ compared to the typical fluctuation $N^{-2/3}k^{-1/3}$.
Without loss of generality, suppose that $\lambda_k(H)$ is the eigenvalue of the block $H_1$. 
Then, by the level repulsion estimates for the Wigner matrix $H_2$ (see e.g.,  \cite{BENIGNI2022108109}), we know that the eigenvalue spectrum of $H_2$ is separated from $\lambda_k-\gamma_k + \gamma_k^{\mathrm{sc}}$ by a distance of order $N^{-2/3}k^{-1/3}$ with probability $1-\oo(1)$.
Suppose the $k$-th eigenvector of $\VV$ can be written as $\bv_k=(\bu_k^\top, \bw_k^\top)^\top$, where $\bu_k,\bw_k\in \C^N$.
From the eigenvalue equation $\VV \bv_k =\lambda_k \bv_k$, we get 
\begin{equation*}
        \begin{pmatrix}
            H_1 & A\\
            A^* & H_2
        \end{pmatrix}\begin{pmatrix}
    \bu_k\\
    \bw_k
\end{pmatrix}
-\left(\gamma_k - \gamma_k^{\mathrm{sc}}\right) \begin{pmatrix}
    \bu_k\\
    \bw_k
\end{pmatrix}
=\pa{\lambda_k- \gamma_k + \gamma_k^{\mathrm{sc}} }\begin{pmatrix}
    \bu_k\\
    \bw_k
\end{pmatrix},
\end{equation*}
which implies 
\begin{equation}\label{eq:bwk}
\bw_k=-\cG_2\pa{\lambda_k- \Delta_k}\pa{A^*\bu_k - \Delta_k\bw_k},
\quad \bu_k=-\cG_1 \pa{\lambda_k- \Delta_k} \pa{A\bw_k- \Delta_k\bu_k}.
\end{equation}
Here, we denote $\Delta_k:=\gamma_k -\gamma_k^{\mathrm{sc}}$ and $\cG_i (z) :=\pa{H_i-z}^{-1}$ as the resolvent of $H_i$ for $i\in\{1,2\}$. 


One insight from \cite{stone2024randommatrixmodelquantum} is that in the localized regime, $A$ is a small perturbation, so $H_2$ and $\bu_k$ should be nearly independent. This implies that when $\dist(\lambda_k-\Delta_k,\spec(H_2)) \gtrsim N^{-2/3}k^{-1/3}$, $\|\cG_2\pa{\lambda_k- \Delta_k}\pa{A^*\bu_k}\|$ should be small, while the other term $\|\cG_2\pa{\lambda_k- \Delta_k}\pa{ \Delta_k\bw_k}\|$ is also small since $\Delta_k$ represents a small shift. However, this argument cannot reach the optimal threshold for $\|A\|_{\HS}$. 
If we were to naively apply the strategy from \cite{stone2024randommatrixmodelquantum} to bound $\|\cG_2\pa{\lambda_k- \Delta_k}\pa{A^*\bu_k}\|$, we would get expressions that are properly bounded only when $\|A\|_{\HS}\ll N^{1/6}/k^{1/6}$. 
To address this issue, we need to bound the term $\|\cG_2\pa{\lambda_k- \Delta_k}\pa{A^*\bu_k - \Delta_k\bw_k}\|$ as a whole, so that the leading terms cancel in the proof. This cancellation leads to the critical threshold $\|A\|_{\HS}\ll N^{1/3}/k^{1/3}$.

Let $G_0(z):=(H-z)^{-1}$ denote the resolvent of $H$, and let $z=\gamma_k+\ii\eta$, where $\eta=N^{-2/3+\e}k^{-1/3}$ for an arbitrarily small constant $\e>0$. By applying the spectral decompositions of $G$ and $G_0$ along with the eigenvalue rigidity estimate for $\lambda_k$ and the level repulsion estimates for Wigner matrices, we can bound the vectors in \eqref{eq:bwk} as (see \eqref{eq:boundE1} below):  
\begin{equation}
\label{eq:ukwk}    \E\pa{\norm{\bu_k}^2\wedge\norm{\bw_k}^2}\prec N\E\avga{\pa{\im G_0\pa{z-\Delta_{k}}}\pa{\Lambda-\Delta_{k}}\pa{\im G (z) }\pa{\Lambda-\Delta_{k}}}.
\end{equation}
One technical issue is that the shift $\Delta_k$ also takes on a complicated and implicit form. 
However, under \eqref{eq:condA2}, we can approximate it with the following quantity, with an error that is much smaller than the typical fluctuation $N^{-2/3}k^{-1/3}$:
\begin{equation}\label{eq:shiftEV}     \Delta_{\txt{ev}}\equiv \Delta_{\txt{ev}}(z)=\mathrm{Re}\pa{z+m (z) +\frac{1}{m (z) }}.
\end{equation}
Again, this expression enables us to derive a key deterministic cancellation (as we will discuss in \eqref{intro_regular_bound_eigenvector} below), which is crucial for establishing the following two-resolvent estimate for some constant $C>0$ that does not depend on $\e$: 
\begin{equation}\label{eq:2-resolven2}
    \begin{aligned}
        \E\avga{\pa{\im G_0\pa{z-\Delta_{\txt{ev}}}}\pa{\Lambda-\Delta_{\txt{ev}}}\pa{\im G (z) }\pa{\Lambda-\Delta_{\txt{ev}}}}\prec N^{C\varepsilon}N^{-5/3}k^{2/3}\norma{A}_{\HS}^2.
    \end{aligned}
\end{equation}
Applying this estimate to \eqref{eq:ukwk} will complete the proof of \Cref{theorem_localized_eigenvector_localization}.

The main technical challenge for our proofs within the localized regime is to establish the two-resolvent estimates \eqref{eq:2-resolven1} and \eqref{eq:2-resolven2}. These two estimates have similar forms, and their proofs are nearly identical. For the sake of discussion, we will focus on the estimate \eqref{eq:2-resolven2}.  
To bound the left-hand side (LHS) of \eqref{eq:2-resolven2}, we will expand it using \eqref{eq:G-M} and the cumulant expansion in \Cref{lem:complex_cumu}, following a specific  expansion strategy developed in \cite{stone2024randommatrixmodelquantum}.  
To illustrate this, denote \smash{$\wt \Lambda=\Lambda-\Delta_\txt{ev}$}, $z_1 \equiv z =\gamma_k +\ii \eta
$ with $\eta=N^{-2/3+\varepsilon}k^{-1/3}$, and $z_0=z_1-\Delta_{\txt{ev}}$. We abbreviate that $G_0\equiv G_0\pa{z_0}$, $m_0\equiv m_{\mathrm{sc}}(z_0)$, $M_0\equiv m_0 I$, and $G_1\equiv G_1(z_1)$, $M_1\equiv M(z_1)$, $m_1=\avga{M_1}$. 
Using $\im G=\frac{1}{2\ii}\pa{G-G^*}$, we can decompose the LHS of \eqref{eq:2-resolven2} into four parts:
\begin{equation}\label{imaginary_part_4_fold_linearization}
    \begin{aligned}
\avg{\im G_0\cdot\tLambda\cdot\im G_1\cdot\tLambda}=-\frac 14 \left(\avg{G_0\tLambda G_1\tLambda}+\avg{G_0^*\tLambda G_1^*\tLambda}-\avg{G_0^*\tLambda G_1\tLambda}-\avg{G_0\tLambda G_1^*\tLambda}\right).
    \end{aligned}
\end{equation}
Next, we expand these terms using the following identities:
\be\label{eq:id_G0G1}
\begin{aligned}
&G_0=M_0-G_0\pa{H+m_{0}}M_0=M_0-M_0\pa{H+m_{0}}G_0 \, ,\\ 
&G_1=M_1-G_1\pa{H+m_1}M_1=M_1-M_1\pa{H+m_1}G_1 \, .    
\end{aligned}
\ee
In each step, we apply \eqref{eq:id_G0G1} to a carefully selected $G_0$ or $G_1$ entry, generating a more deterministic term with $G_0$ or $G_1$ replaced by $M_0$ or $M_1$, along with a term that factors out an $H$ entry. We then apply the cumulant expansion \eqref{5.16} to the latter term with respect to the $H$ entry. 
This yields a linear combination of leading terms that are ``more deterministic", higher-order terms whose sizes are reduced compared to the original expression by a factor of $N^{-c}$ for some constant $c>0$, and some negligible error terms corresponding to the remainder term $R_{l+1}$ in \eqref{5.16}. 
If a leading term becomes ``deterministic enough" (in a sense we will describe in \Cref{subsection:proof_of_lemma_localized_eigenvector_local_law_and_lemma_localized_eigenvalue_local_law} below) or if a higher-order term has sufficiently small size, then we will stop the expansion. 
Otherwise, we continue the process by selecting another $G_0$ or $G_1$ entry according to a specific rule, decomposing it as in \eqref{eq:id_G0G1}, and applying the cumulant expansion again.  
By repeating this procedure for $\OO(1)$ many steps, we finally obtain a linear combination of higher-order terms that can be directly bounded, along with some leading terms that are ``deterministic enough".


Compared to the proof in \cite{stone2024randommatrixmodelquantum}, which focuses on the bulk regime, our proof in the edge regime is much more involving and delicate due to the possibly diverging factor $\|A\|_{\HS}$ (recall \eqref{eq:condA2}) when $k$ is small. To cancel these singular factors, as has been done in many previous works addressing local laws of random matrices near spectral edges (see e.g., \cite{erdHos2012rigidity}),  
we need to obtain additional small factors $\im m(z)$, that arise from the vanishing spectral density near edges. This adds significant technical complexity to the proof in several ways.

One major technical challenge involves estimating the leading terms from our expansion strategy that are ``deterministic enough". In the bulk regime, these leading terms can be bounded directly, as demonstrated in \cite{stone2024randommatrixmodelquantum}. However, in our setting, the main leading terms will include additional powers of $N^{1/3}/k^{1/3}$, which makes the estimate too weak for our proof. 
Thus, we must explicitly enumerate these troublesome terms and identify cancellations in them. 
One type of cancellation arises from the polarization identity in \eqref{imaginary_part_4_fold_linearization}---in some expressions from the expansions, a leading term containing $M_0$ (or $M_1$) cancels with a corresponding term that has the same form but with $M_0$ (or $M_1$) replaced by $M_0^*$ (or $M_1^*$), resulting in an extra $\im m_0$ or $\im m_1$ factor. 
Another type of cancellation occurs in expressions that include a factor of the form \smash{$\avg{\sM_0\wt \Lambda \sM_1E_a}$}, where $a\in\qq D$, $\sM_0\in \ha{m_\txt{sc}(z_0)I,\bar{m}_\txt{sc}(z_0)I}$, and  $\sM_1\in \ha{M(z_1),M^*(z_1)}$.
For this factor, we have the following estimate (see \Cref{lem_regular_estimate_localized_eigenvector} below for the proof):
\begin{equation}\label{intro_regular_bound_eigenvector}
    \begin{aligned}
        \avg{\sM_0\wt \Lambda \sM_1E_a}=\OO\pa{\im m_1\cdot \avga{\Lambda^2}}.
    \end{aligned}
\end{equation} 
We remark that without introducing the shift  $\Delta_{\mathrm{ev}}$, the correct bound for $\avg{\sM_0 \Lambda \sM_1E_a}$ is of order $\OO(\avga{\Lambda^2})$, as indicated by the estimate \eqref{add_one_more_Lambda} below. The introduction of the shift $\Delta_{\mathrm{ev}}$ results in a cancellation that improves the bound by an additional factor of $\im m_1$. 
Finally, we mention that similar improved estimates have been discussed in a series of works \cite{CEH2023,erdHos2024eigenstate,CGLJ2024_AJMP,CGEHJD2024_JFA} concerning the proofs of certain optimal multi-resolvent local laws via the characteristic flow method, where it is referred to as a \emph{regularity condition}. However, our estimate in \eqref{intro_regular_bound_eigenvector} has a somewhat different basis than the regularity conditions presented in those works.

Another technical challenge involves managing the cumulant expansions and a more intricate expansion strategy. Similar to \cite{stone2024randommatrixmodelquantum}, we divide the terms from the cumulant expansion \eqref{5.16} into two parts: the leading part with $p+q=1$ (which corresponds to an application of Gaussian integration by parts) and the remaining higher-order cumulant terms. 
Our treatment of the Gaussian integration by parts terms largely follows the approach in \cite{stone2024randommatrixmodelquantum}, with the additional need to exploit the cancellation mechanisms discussed above. 
On the other hand, unlike in \cite{stone2024randommatrixmodelquantum}, the higher-order cumulant terms with $p+q>1$ in our setting cannot be handled as straightforwardly through direct estimation. 
While the higher-order cumulant terms with $p+q\ge 3$, despite their complicated structure, can still be estimated directly, the $p+q=2$ terms cannot be controlled using the desired bounds and thus require a more delicate analysis. 
We need to further expand these terms using \eqref{eq:id_G0G1} and \eqref{5.16} according to a newly designed expansion strategy. 
These expansions again yield higher-order terms that can be directly bounded, along with some leading terms that are ``deterministic enough". Estimating the leading terms is particularly involved, as it requires tracking their detailed structures and exploring the cancellations mentioned earlier. For more details on the argument, readers can refer to \Cref{subsection:proof_of_claim_estimate_remainder_terms}.

\section{Delocalized phase: eigenvectors}\label{sec:delocalized_case_eigenvector}

In this section, we prove \Cref{mix}. For convenience, we will consider the case $k\in\qq{1,DN/2}$ such that $\fr\pa{k}=k$. We begin by defining the following notations.

\begin{definition}
    \label{def_wtM} 
Define the spectral domain $\mathbf D(\tau):=\{z=E+\ii \eta\in \C :|z|\le \tau^{-1}, |\eta| \ge N^{-1+\tau}\}$ for an arbitrarily small constant $\tau>0$.
For $z_1,z_2\in \mathbf D(\tau)$, we define the $D\times D$ matrices $\wh M$ and $L$ as
\begin{align}\label{def_ML}
    \wh M_{ab}(z_1,z_2,\Lambda):=D\langle M(z_1) E_{a} M(z_2) E_{b}\rangle ,\quad L_{ab}(z_1,z_2, H, \Lambda):= D\langle G(z_1)E_{a}G(z_2)E_{b}\rangle , 
\end{align}
for  $a,b\in \llbracket D\rrbracket$, and define the $D\times D$ matrix $K$ as
\begin{align}\label{def_K}   K(z_1,z_2, \Lambda ):= \left[1-\wh M(z_1,z_2,\Lambda)\right]^{-1} \wh M(z_1,z_2,\Lambda).
\end{align}
\end{definition}

For ease of presentation, we introduce the following simplified notations: given a matrix-valued function (e.g., $G$, $M$, \smash{$\wh M$}, $L$, and $K$) of $z$, we use subscripts to indicate its dependence on the spectral parameters. For example, we will denote $G_i:= G(z_i,H,\Lambda)$, $M_{i}:=M(z_{i},\Lambda),$ \smash{$\wh M_{(1,2 )}:=\wh M(z_{1 },z_{2 } ,\Lambda )$}, $L_{(1,2)}:=L(z_{1 },z_{2 }, H ,\Lambda ),$ and $K_{(1,2 )}:=K(z_{1 },z_{2 } ,\Lambda )$. We also need the following notations that are similar to those in \Cref{def_wtM} but with three $z$ arguments.

\begin{definition}\label{defLK3}
Define the $D\times D\times D$ tensors $L$ and $K$ as
$$
\left[L(z_1, z_2, z_3, H, \Lambda)\right]_{a_1 a_2 a_3}:= D\langle G_1E_{a_1}G_2E_{a_2}G_3E_{a_3}\rangle,
$$
$$\left[K(z_1, z_2, z_3, \Lambda)\right]_{a_1 a_2 a_3} = \sum_{b_1, b_2, b_3}\big(I - \wh M_{(1,2)}\big)^{-1}_{a_1b_1}\big(I - \wh M_{(2,3)}\big)^{-1}_{a_2b_2}\big(I - \wh M_{(3,1)}\big)^{-1}_{a_3b_3}D\langle M_1E_{b_1}M_2E_{b_2}M_3E_{b_3}\rangle ,
$$
for $a_1,a_2,a_3\in \qq D$. Here, we have abused the notations a little bit and still use $L$ and $K$ to denote these tensors. Moreover, we will also abbreviate these notations as $L_{(1,2,3)}$ and $K_{(1,2,3)}$.
\end{definition} 

\subsection{Proof of \Cref{mix}}
Since the proof is similar to that in the bulk regime  \cite{stone2024randommatrixmodelquantum}, we will outline only the main differences from the proof in \cite{stone2024randommatrixmodelquantum}, without writing the full details. The key is to establish the following lemma. 

\begin{lemma}\label{key_lemma_delocalized_eigenvector}
Take $z=E+\ii \eta\in \mathbf D\pa{\tau}$ with $E=\gamma_k\in \qa{E^{-},E^+}$ and $\eta\sim N^{-2/3+\varepsilon_L}k^{-1/3}$ for a small constant $\varepsilon_L>0$ (recall that we assume $k\in\qq{1,DN/2}$). Under the assumptions of \Cref{mix}, there exists a constant $c_L>0$ (depending on $\varepsilon_L,\delta_A,\varepsilon_A$) such that 
\be\label{main_EGG}
\max_{z_1,z_2\in\{z,\overline z\}}\max_{a,b\in \qq{D}}\left|\left(\E L_{(1,2)}- K_{(1,2)}\right)_{ab}\right|
=\OO\pa{N^{-1-c_L}\eta^{-2}} .
\ee
\end{lemma}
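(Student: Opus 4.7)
The plan is to adapt the three-step characteristic flow strategy used in the bulk regime of \cite{stone2024randommatrixmodelquantum} to the edge, with careful bookkeeping of factors of $\im m(z)$ that cancel the singular powers appearing in intermediate bounds. At a high level, I would first establish \eqref{main_EGG} at a large initial scale of $\eta$, then flow down to the target scale $\eta \sim N^{-2/3+\varepsilon_L}k^{-1/3}$ along a characteristic equation, and finally remove the Gaussian component added by the flow via a Green's function comparison.

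For the base case, at $\eta_0 \sim 1$ the anisotropic local law in \Cref{lem_loc}, combined with a direct expansion of $L_{(1,2)} - K_{(1,2)}$ using \eqref{eq:G-M}, would give \eqref{main_EGG} with a comfortable exponent: $K_{(1,2)}$ is precisely the deterministic resummation that inverts $I - \wh M$ when one iterates $G - M = -G(H+m)M$, so after one application of the cumulant expansion the leading remainder is of order $N^{-1}\eta_0^{-2}$ times a factor that is small uniformly away from the spectrum.

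Next, I would run the Ornstein--Uhlenbeck flow $dH_t = -\tfrac{1}{2}H_t\,dt + dB_t$ (acting only on the block-diagonal part, with $\Lambda$ held fixed), where $B_t$ is a block-diagonal Hermitian Brownian motion whose entry variances match $S$, and let the two spectral parameters evolve along the characteristic ODE $\partial_t z_{j,t} = -m_t(z_{j,t}) - z_{j,t}/2$ for $j \in \{1,2\}$, chosen so that $M_t(z_{j,t})$ is essentially stationary in $t$. Applying Ito's formula to $\E L_{(1,2)}(t; z_{1,t}, z_{2,t})$ and reorganizing the resulting second-order terms via the resolvent identity produces a matrix ODE of the schematic form $\partial_t(\E L - K) = \mathfrak{L}_t(\E L - K) + \mathcal{F}_t$, where $\mathfrak{L}_t$ is contractive in a norm adapted to $\wh M_{(1,2)}$, and the forcing $\mathcal{F}_t$ collects the higher-cumulant and boundary-in-$t$ corrections. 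At the edge, each such correction must be shown to carry an extra factor of $\im m_t(z_{j,t}) \sim \sqrt{\kappa_t + \eta_t} \sim k^{1/3}N^{-1/3}$ compared with the bulk; this gain, exhibited by an algebraic cancellation analogous to the regularity mechanism in \eqref{intro_regular_bound_eigenvector}, is exactly what is needed to absorb the singular $\eta^{-1}$ factors that otherwise appear when one reduces $\avg{(\im G)B(\im G)B}$-type quantities via Ward's identity, and it allows the Gronwall integration from $\eta_0$ down to the target $\eta$ to close with only a uniform loss of $N^{-c_L}$.

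The third step removes the Gaussian component added by the flow: write the difference between $\E L_{(1,2)}$ for the flowed matrix $H_T$ and for a moment-matched non-Gaussian matrix as a Lindeberg-type integral of derivatives along an interpolation parameter, and bound each contribution using \Cref{lem:complex_cumu} together with the multi-resolvent bounds in \Cref{lemma_necessary_estimates}. The main obstacle throughout is the propagation step: one must verify term by term that every contribution generated by Ito's formula, and by subsequent cumulant expansions of the forcing $\mathcal{F}_t$, carries enough powers of $\im m_t$ to cancel the vanishing-density singularities at the edge. This requires a careful bookkeeping argument that is technically lighter than, but structurally parallel to, the expansion strategy for the localized regime developed later in the paper, and it is where all the dependence of $c_L$ on $(\varepsilon_L,\delta_A,\varepsilon_A)$ is finally fixed.
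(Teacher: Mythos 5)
Your three-step plan (global law, characteristic flow with $\im m$-tracking, Green's-function comparison) matches the paper's high-level strategy, but your Step~1 contains a gap that breaks Step~3. You propose to start the flow at $\eta_0 \sim 1$ and Gronwall all the way down to the target $\eta \sim N^{-2/3+\varepsilon_L}\fr(k)^{-1/3}$. This takes flow time $T \asymp 1$, and by \eqref{eq:Gauss_div} the flowed matrix then carries a $\Theta(1)$ Gaussian component. At that point a Lindeberg/moment-matching comparison back to the original $H$ gains nothing: $H$ and a matrix with an $\Theta(1)$ Gaussian part differ in fourth moments only at order $N^{-2}$, so in the notation of \Cref{main_lemma_com} one has $\delta = 0$ and the comparison estimate \eqref{eq:cpmparison} degenerates to $N^{-1}\eta^{-2}$ — exactly the size of the quantity you are trying to improve. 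The paper deliberately keeps the flow \emph{short}, $t_c - t_0 \sim N^{-\varepsilon_g}$, so that the Gaussian component has strength $N^{-\varepsilon_g/2}$ and the moment-matched $\wt H$ satisfies \eqref{comH} with $\delta = \varepsilon_g > 0$. The price, which your proposal does not account for, is that the initial estimate must then be established at the much smaller scale $\eta_{t_0} \gtrsim N^{-1/3+\tau_e}$; this is itself nontrivial and occupies \Cref{lemma_G_large_2}, whose proof hinges on a cumulant expansion in which the factors $\bar m^4 + |m|^4 + |m|^2 m^2 + \ldots$ cancel against each other using $1 - |m|^2 \sim \eta/\im m$. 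Your citation of the ``regularity mechanism'' \eqref{intro_regular_bound_eigenvector} identifies the wrong cancellation — that one is specific to the localized-phase two-resolvent lemmas; the cancellation governing the delocalized base case lives in \Cref{lemma_G_large_2}.

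A secondary discrepancy: you hold $\Lambda$ fixed during the flow, whereas the paper also evolves $\Lambda_t = e^{-(t-t_0)/2}\Lambda_{t_0}$ (see \Cref{flow}). The joint scaling makes $H_t + \Lambda_t$ a genuine OU process and delivers the exact scaling identities $\partial_t M_t = \tfrac{1}{2}M_t$ and $\partial_t \wh M_{(1,2),t} = \wh M_{(1,2),t}$ of \Cref{lem_prop_flow}, which in turn give the Riccati equation for $K_{(1,2),t}$ and the operator bound $\|\cal T_t\|_{op} \le 1 + \oo(1)$ used in the Gronwall step. With $\Lambda$ fixed, the drift in It\^o's formula acquires an extra $\E\avg{G\Lambda G B}$-type term; since $\|\Lambda\|=\oo(1)$ this is not fatal, but the characteristic equation $\dot z_t = -z_t/2 - m_t$ you wrote is the one adapted to the paper's decaying $\Lambda_t$, not to a fixed $\Lambda$, so you would need to re-derive the characteristics and re-verify the Riccati/contraction structure in your convention.
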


As discussed in the proof of \cite[Theorem 2.2]{stone2024randommatrixmodelquantum}, \Cref{key_lemma_delocalized_eigenvector} actually implies a slightly stronger estimate than \eqref{eq:main_evector1}: for some constant $c>0$,
\begin{equation}\label{eq:extend:main_evector1}
\mathbb P\left(\max _{i,j\in \qq{k-N^c,k+N^c}}\max_{a\in \qq D}
\left| {\bf v}_i^* (E_a-D^{-1}){\bf v}_j \right|\ge N^{-c}\right) \le  N^{-c}.
\end{equation}
This estimate will play a key role in the proof of \Cref{MixEV}. 
Now, we provide the proof of \Cref{mix} and \eqref{eq:extend:main_evector1} using \Cref{key_lemma_delocalized_eigenvector}. It is similar to the proof of Theorem 2.2 in \cite{stone2024randommatrixmodelquantum}, and for the convenience of the readers, we will repeat the argument here.

\begin{proof}[\bf Proof of \Cref{mix} and equation \eqref{eq:extend:main_evector1}]

Recall that we assume $k\in\qq{1,DN/2}$ without loss of generality. For $z=E+\ii \eta$, using the spectrum decomposition of $\im G(z)$,
we get that for any $DN\times DN$ matrix $B$,
$$
\tr \left[ \im G (z) B\im G(z)  B^* \right]=\eta^2\sum_{i,j\in \cI}\frac{|\bv^*_i B \bv_j|^2}{|\lambda_i-z|^2 |\lambda_j-z|^2 }.
$$
In particular, choosing $B=E_{a}-D^{-1}I$ and $z_k=\gamma_k+\ii\eta$ with $\eta=N^{-2/3+\varepsilon_L}k^{-1/3}$,  and using the rigidity of eigenvalues in \eqref{eq:rigidity}, we get from the above equation that for any constant $c\in (0,\e_L/100)$, 
\be\label{eq:bdd_spec}
 \max_{i,j\in \qq{k-N^c,k+N^c}}|\bv^*_i (E_{a}-D^{-1}) \bv_j|^2 \prec \eta^2 \tr \left[ \im G (z_k) (E_{a}-D^{-1}I)\im G(z_k) (E_{a}-D^{-1}I) \right].
\ee
Choosing $z_1=z_k$, $z_2=\bar z_k$ and applying \eqref{main_EGG}, we can estimate the expectation of the RHS of \eqref{eq:bdd_spec} as
\begin{align}
   &-\frac{1}{4}\eta^2 \E\tr \Big[ (G_1-G_2)\Big(E_{a}-D^{-1}\sum_b E_b\Big)(G_1-G_2) \Big(E_{a}-D^{-1}\sum_{b'} E_{b'}\Big) \Big] \nonumber\\
   &=N\eta^2 \bigg(\E\cal L_{aa} - \frac{2}{D} \sum_{b=1}^D \E\cal L_{ab} + \frac{1}{D^2} \sum_{b,b'=1}^D \E\cal L_{bb'}\bigg)\nonumber\\
   &=N\eta^2 \bigg(\cal K_{aa} - \frac{2}{D} \sum_{b=1}^D \cal K_{ab} + \frac{1}{D^2} \sum_{b,b'=1}^D \cal K_{bb'}\bigg) + \OO\left(N^{-c_L}\right),\label{eq:bdd_spec2}
\end{align}
where the $D\times D$ matrices $\cal L$ and $\cal K$ are defined as $\cal L:=(L_{(1,2)}+L_{(2,1)}-L_{(1,1)}-L_{(2,2)})/4$ and $\cal K:=(K_{(1,2)}+K_{(2,1)}-K_{(1,1)}-K_{(2,2)})/4$, respectively. 
On the other hand, by \eqref{flat_M}, we have that for $i,j\in \{1,2\}$, 
\be\label{eq:est_M-M}
\max_{a,b,a',b'\in \qq D}\left|\left(K_{(i,j)}\right)_{ab}-\left(K_{(i,j)}\right)_{a'b'}\right|=\OO \left({N}/{\|A\|_{\HS}^2}\right).
\ee
With \eqref{eq:est_M-M} and the condition \eqref{eq:condA1}, we obtain that
\be\label{eq:bdd_spec3} N\eta^2 \bigg(\cal K_{aa} - \frac{2}{D} \sum_{b=1}^D \cal K_{ab} + \frac{1}{D^2} \sum_{b,b'=1}^D \cal K_{bb'}\bigg) \lesssim N^{-2\e_A + 2\e_L} .
\ee
Combining \eqref{eq:bdd_spec}, \eqref{eq:bdd_spec2}, and \eqref{eq:bdd_spec3}, we obtain that for any small constant $\e>0$, 
\be\label{eq:bdd_spec4} \E  \max_{i,j\in \qq{k-N^c,k+N^c}}|\bv^*_i (E_{a}-D^{-1}) \bv_j|^2 \le N^{-c_L +\e} +N^{-2\e_A + 2\varepsilon_L+\e}.
\ee
If we take $\varepsilon_L<\e_A/2$ and $\e< (c_L\wedge \e_A)/2$, this gives that 
$$ \E  \max_{i,j\in \qq{k-n^c,k+n^c}}|\bv^*_i (E_{a}-D^{-1}) \bv_j|^2 \le N^{-c_L/2} +N^{-\e_A/2}.$$
Then, applying Markov's inequality and a simple union bound over $a\in \qq D$ concludes \eqref{eq:extend:main_evector1}. Taking $i=j=k$, we obtain \eqref{eq:main_evector1}.
\end{proof}

\subsection{Proof of \Cref{key_lemma_delocalized_eigenvector}}
The remainder of this section is devoted to the proof of \Cref{key_lemma_delocalized_eigenvector}. We begin by introducing the characteristic flow, a key tool that enables the propagation of resolvent bounds from large to small scales in the spectral parameter $\eta$.

\begin{definition}[Characteristic flow]\label{flow} 
Given a starting time $t_0\in \R$ and initial values $(z_{t_0},\Lambda_{t_0})$, we define flows of $z$ and $\Lambda$ as 
\be\label{eq:deterflow} \frac{\dd}{\dd t}z_t=-\frac12 z_t- \langle M_t\rangle,\quad   \frac{\dd}{\dd t}\Lambda_t=-\frac12 \Lambda_t,\quad t\ge t_0, 
\ee
where $M_t:= M(z_t, \Lambda_t)$ is the solution to \eqref{def_M} with $z$ and $\Lambda$ replaced by $z_t$ and $\Lambda_t$. Let $t_c:=\inf\{t\ge t_0: \im z_{t_c}=0\}$ be the first time $\im z_t$ vanishes. We also introduce the function $Z:\C\times \C^{DN\times DN}\to\C^{DN\times DN} $ as $Z(z ,\Lambda ):=z I - \Lambda$ and abbreviate that $Z_t:=Z(z_t , \Lambda_t )$. Note that $Z_t$ satisfies
\be\label{eq:Zt}
 \frac{\dd}{\dd t}Z_t=-\frac12 Z_t- m_t,\quad \text{where}\quad m_t:=\langle M_t\rangle .
\ee
Given the initial random matrix $H_{t_0}$ satisfying \Cref{main_assm} with diagonal blocks $(H_a)_{t_0}$, $a\in \qq D$, we define the flow $H_t$ as a $DN\times DN$ random matrix with diagonal blocks $(H_a)_{t}$ being matrix-valued OU processes 
\be\label{eq:Ht} \dd (H_a)_t = -\frac12(H_a)_t\dd t + \frac{1}{\sqrt{N}}\dd (B_a)_t,\ee
where $(B_a)_t$, $a\in \qq D$, are independent complex Hermitian matrix Brownian motions (i.e., $\sqrt{2} \re (B_a)_{ij}$ and $ \sqrt{2} \im (B_a)_{i j}$, $i<j$, and $(B_a)_{i i}$ are independent standard Brownian motions and $(B_a)_{j i}=(\overline{B}_a)_{i j}$). In particular, for each $t\ge t_0$, $(H_a)_t$ has the same law as  
\be\label{eq:Gauss_div}
e^{-(t-t_0)/2}\cdot  H_a^{(0)}+ \sqrt{1-e^{-(t-t_0)}}\cdot H_a^{(g)},
\ee
where $H_a^{(g)}$, $a\in \qq D$, are i.i.d.~GUE. Then, we define the Green's function flow 
$G_t=\left(H_t+\Lambda_t-z_t\right)^{-1}.$
Finally, with $(z_{i})_t$, $i\in \{1,2,3\}$, $\Lambda_t$, $H_t$, and $M_t$, we can define 
$$ \wh M_{(1,2),t}=\wh M((z_{1})_t,(z_{2})_t,\Lambda_t),\quad L_{(1,2),t}=L((z_{1})_t,(z_{2})_t,H_t, \Lambda_t),\quad K_{(1,2),t}=K((z_{1})_t,(z_{2})_t,\Lambda_t)$$ 
as in \Cref{def_wtM}, and define
$$L_{(1,2,3),t}=L((z_{1})_t, (z_{2})_t, (z_{3})_t, H_t, \Lambda_t),\quad K_{(1,2,3),t}=K((z_{1})_t, (z_{2})_t, (z_{3})_t, \Lambda_t)$$ 
as in \Cref{defLK3}. 
\end{definition}

We now collect some basic properties of the characteristic flow in \Cref{flow}. 
\begin{lemma}
[Basic properties for the flow]
\label{lem_prop_flow} 
Under \Cref{flow}, the following properties hold for $t\in \qa{t_0,t_c}$.

\begin{itemize}
    \item If $t_c-t=\oo(1)$, we have that (recall $m_t$ defined in \eqref{eq:Zt})
    \be\label{t_eta}
t_c-t=\frac{\im z_t}{\im m_t}(1+\oo(1)).
 \ee

 \item $M_t$ satisfies the following equation:
 \be\label{devM}
  \frac{\dd }{\dd t} M( z_t, \Lambda_t)=\frac{1}{2}M( z_t, \Lambda_t).  
\ee
From this equation, we easily see that \begin{equation}\label{im_m_t_sim_through_evolution}
    \begin{aligned}
        \im m_t\sim \im m_{t_0} \quad \text{whenever}\quad t-t_0=\OO\pa{1} \, .
    \end{aligned}
\end{equation}

 \item {\bf Conjugate flow}: We have $Z_t^*=Z(\bar z_t, \Lambda_t)$, $M_t^*=M(\bar z_t, \Lambda_t)$, and $\overline m_t=m_t(\bar z_t,\Lambda_t)$. Moreover, they satisfy the following equations under the conjugate flows $(\bar z_t, \Lambda_t)$:
\be\label{Conflow}
 \frac{\dd}{\dd t}Z(\bar z_t, \Lambda_t )=-\frac12 Z(\bar z_t, \Lambda_t )- m_t(\bar z_t, \Lambda_t ) ,\quad \frac{\dd}{\dd t} M( \bar z_t, \Lambda_t)=\frac{1}{2}M( \bar z_t, \Lambda_t).
\ee

   \item  For any $   (z_{i})_{t}\in \{z_t, \bar z_t\}$, $i\in\{1,2,3\}$, $\wh M_{(1,2),t}$ and $K_{(1,2),t}$ satisfy the equations
   \be\label{eq:whMK}
    \frac{\dd}{\dd t}\wh M_{(1,2),t} =\wh M_{(1,2),t},\quad \frac{\dd}{\dd t} K_{(1,2),t}= \left(K_{(1,2),t}\right)^2+K_{(1,2),t},
    \ee
     and $K_{(1,2,3),t}$ satisfies the following equation for any $a_1,a_2,a_3\in \qq D$: 
    \begin{align}
        \frac{\dd}{\dd t} (K_{(1,2,3),t})_{a_1 a_2 a_3}= \frac{3}{2}K_{(1,2,3),t} + \sum_{a=1}^D &\left[(K_{(1,2),t})_{a_1 a}(K_{(1,2,3),t})_{a a_2 a_3}  +(K_{(2,3),t})_{a_2 a}(K_{(1,2,3),t})_{a_1 a a_3} \right. \nonumber\\
    &\left.+ (K_{(3,1),t})_{a_3 a}(K_{(1,2,3),t})_{a_1 a_2 a}\right] .   \label{eq:whMK3} 
    \end{align} 
 
\end{itemize} 
  
\end{lemma}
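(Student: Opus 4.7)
The strategy is to establish the four bullets in sequence, with the second bullet---the identity $\dot M_t = M_t/2$---serving as the key input from which the others follow by short computations.

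For the second bullet, I would differentiate the matrix Dyson equation $(\Lambda_t - z_t - m_t)M_t = I$ in $t$ to obtain $\dot M_t = -M_t\pa{\dot\Lambda_t - \dot z_t - \dot m_t}M_t$. Substituting $\dot\Lambda_t = -\Lambda_t/2$ and $\dot z_t = -z_t/2 - m_t$ from \eqref{eq:deterflow}, and then using $\Lambda_t - z_t = M_t^{-1} + m_t$ to eliminate $\Lambda_t - z_t$, simplifies the bracket to $-M_t^{-1}/2 + m_t/2 - \dot m_t$ and yields
\[
\dot M_t \;=\; \tfrac{1}{2}M_t + \pa{\dot m_t - \tfrac{m_t}{2}}M_t^2.
\]
Taking the normalized trace gives $(\dot m_t - m_t/2)(1 - \avga{M_t^2}) = 0$. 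Since $m_t$ is analytic on $\C_+$ and a short computation shows $\partial_z m_t = \avga{M_t^2}/(1 - \avga{M_t^2})$ must be finite wherever $\im z_t > 0$, one has $\avga{M_t^2} \neq 1$ on $[t_0, t_c)$, hence $\dot m_t = m_t/2$, and \eqref{devM} follows. Integrating gives $m_t = e^{(t-t_0)/2} m_{t_0}$, and in particular \eqref{im_m_t_sim_through_evolution} holds for $t - t_0 = O(1)$.

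For the first bullet, take the imaginary part of $\dot z_t = -z_t/2 - m_t$ and integrate from $t$ to $t_c$, where $\im z_{t_c} = 0$, to get
\[
\im z_t \;=\; \int_t^{t_c}\pa{\tfrac{1}{2}\im z_s + \im m_s}\,ds.
\]
Using continuity of $\im m_s$ at $t_c^-$ (where, away from edge singularities, it is bounded below), the integrand is asymptotic to $\im m_t$ as $t_c - t \to 0$; meanwhile $\tfrac{1}{2}\int_t^{t_c}\im z_s\,ds = O\pa{(t_c - t)\im z_t}$ is of lower order because $\im z_s$ is monotone decreasing to $0$. Rearranging gives \eqref{t_eta}. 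For the conjugate-flow statement, use that $m$ is the Stieltjes transform of a real-valued density, so $\overline{m(z,\Lambda)} = m(\bar z, \Lambda)$ and $\overline{M(z,\Lambda)} = M(\bar z,\Lambda)$; complex-conjugating the flow equations in \Cref{flow} then yields the first identity in \eqref{Conflow}, and the second follows from applying the second bullet to this conjugate flow.

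For the last bullet, all identities are algebraic given $\dot M_i = M_i/2$ for $i \in \{1,2,3\}$. Direct differentiation of $\wh M_{(1,2),t}[a,b] = D\avga{M_1 E_a M_2 E_b}$ yields $\dot{\wh M}_{(1,2),t} = \wh M_{(1,2),t}$. Writing $R_{(1,2),t} := (I - \wh M_{(1,2),t})^{-1}$ so that $K_{(1,2),t} = R_{(1,2),t} - I$, differentiating $R_{(1,2),t}^{-1} = I - \wh M_{(1,2),t}$ gives $\dot R_{(1,2),t} = R_{(1,2),t}\wh M_{(1,2),t}R_{(1,2),t} = R_{(1,2),t}^2 - R_{(1,2),t}$; substituting $R_{(1,2),t} = I + K_{(1,2),t}$ yields $\dot K_{(1,2),t} = K_{(1,2),t}^2 + K_{(1,2),t}$. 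For the tensor $K_{(1,2,3),t}$, apply the product rule to its definition: the $\dot M$'s contribute $\tfrac{3}{2}K_{(1,2,3),t}$, while each $\dot R_{(i,j),t} = K_{(i,j),t}R_{(i,j),t}$ contracts the free index of $K_{(i,j),t}$ with the corresponding index of $K_{(1,2,3),t}$ through the definition (the ``$I$-part'' of $R_{(i,j),t} = I + K_{(i,j),t}$ being absorbed into the existing $R$-factors defining $K_{(1,2,3),t}$, while the $K_{(i,j),t}$-part produces the claimed nonlinear sum), giving exactly \eqref{eq:whMK3}.

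The only mildly non-obvious step is the spectral argument $\avga{M_t^2} \neq 1$ used in the second bullet; this is the standard non-degeneracy of the matrix Dyson equation on $\C_+$, so no serious obstacle arises.
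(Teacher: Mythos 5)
Your proposal is correct, and on the algebraic parts (the derivation of $\dot M_t = M_t/2$, the conjugate flow, and the $\wh M$ and $K$ equations) you take what is essentially the canonical route; the paper itself only writes out the proof of \eqref{t_eta} and cites the remaining items to \cite[Lemma 4.5]{stone2024randommatrixmodelquantum}, so there is nothing substantive to compare there. The one place where you genuinely diverge from the paper's written proof is the first bullet, and it is worth noting: the paper sets $q_t := \im z_t/\im m_t$, uses $\dot m_t = m_t/2$ and $\dot z_t = -z_t/2 - m_t$ to compute the single ODE $q_t' = -q_t - 1$, and then solves exactly to get $q_t = e^{t_c - t} - 1$, from which \eqref{t_eta} is immediate. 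Your route---integrate $\dot\eta_s = -\eta_s/2 - \im m_s$ from $t$ to $t_c$, estimate $\int_t^{t_c}\im m_s\,ds \sim (t_c - t)\,\im m_t$ via $\im m_s = e^{(s-t)/2}\im m_t$, and absorb $\tfrac12\int_t^{t_c}\eta_s\,ds \le \tfrac12(t_c-t)\eta_t$ by a small bootstrap---also works and is equally elementary, but requires the extra (implicit) self-consistency step to conclude $\eta_t = o(\im m_t)$, whereas the paper's closed-form solution of the linear ODE gives the asymptotics with no bootstrap. Two small remarks on your write-up: (1) the parenthetical ``away from edge singularities'' in your treatment of $\int_t^{t_c}\im m_s\,ds$ is a red herring---once $\dot m_s = m_s/2$ is known, $\im m_s$ grows exponentially and is trivially continuous, regardless of how close the energy is to the edge; (2) in the tensor computation your sentence about absorbing the ``$I$-part'' of $R = I + K$ is not the cleanest way to say it---it is simpler to use $\dot R_{(i,j)} = K_{(i,j)}R_{(i,j)}$ directly (which follows from $\dot R = R\wh M R$ and $R\wh M = R - I = K$), so that each differentiated $R$-factor contracts a $K$ against the full $K_{(1,2,3)}$ without any splitting.
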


\begin{proof}
In the following, we prove only \eqref{t_eta}, as the remaining properties have already been established in \cite[Lemma 4.5]{stone2024randommatrixmodelquantum}. Denoting $\eta_t:=\im z_t$ and $q_t:=\eta_t/\im m_t$, we get from \eqref{eq:deterflow} and \eqref{devM} that
    \begin{equation}\nonumber
        \begin{aligned}
            q_t'=\frac{1}{\pa{\im m_t}^2}\pa{\eta_t'\im m_t-\eta_t \im m_t'}= \frac{1}{\pa{\im m_t}^2}\pa{\pa{-\frac{\eta_t}{2}-\im m_t}\im m_t-\eta_t \frac{\im m_t}{2}}=-q_t-1.
        \end{aligned}
    \end{equation}
    Then, we can derive \eqref{t_eta} by solving this differential equation.
\end{proof}

To prove \Cref{key_lemma_delocalized_eigenvector} for $z=E+\ii \eta$ with $E=\gamma_k$ and $\eta\sim N^{-2/3+\varepsilon_L}k^{-1/3}$, we need to construct a characteristic flow starting at $z_{t_0}$ and terminating at $z_{t_f}=z$. Then, we will establish a sufficiently sharp bound at $z_{t_0}$ and propagate it along the flow to $z_{t_f}=z$. From \eqref{eq:Gauss_div}, propagating bounds along the flow introduces a small GUE component of magnitude \smash{$\sqrt{1-\r e^{t_f-t_0}}\sim \sqrt{t_f-t_0}$}. To get the corresponding result for the original matrix, we invoke a comparison argument. For this purpose, we need the Gaussian component to be small. Consequently, we select $t_f-t_0\sim N^{-\varepsilon_g}$ for some small constant $\varepsilon_g>0$. On the other hand, by \eqref{t_eta}, \eqref{eq:kappa_gamma_k}, and \eqref{square_root_density} below, $t_f$ satisfies 
\[t_c-t_f\sim \eta/\im m (z) \sim \pa{N^{-1/3+\varepsilon_L}k^{-2/3}}\wedge \pa{N^{-1/3+\varepsilon_L/2}k^{-1/6}}\ll N^{-\varepsilon_g},\] 
which yields that $t_c-t_0\sim N^{-\varepsilon_g}$.
We now list the key lemmas leading to the proof of \Cref{key_lemma_delocalized_eigenvector}. We begin with the large $\eta$ estimates in \Cref{lemma_G_large,lemma_G_large_2}. 
In these estimates, $\norm{\cdot}_2$ denotes the $\ell_2$-norm, where matrices and tensors are viewed as vectors (i.e., for matrices, this coincides with the Hilbert–Schmidt norm).

\begin{lemma}\label{lemma_G_large}
    In the setting of \Cref{mix}, let $z=E+\ii \eta \in \bf D(\tau)$ with $\eta\gtrsim N^{-1/3}$ and $\eta/\im m (z) \sim N^{-\varepsilon_g}$ for a small constant $\e_g\in\pa{0,\delta_A/4}$. 
    If $z_1,z_2,z_3\in\ha{z,\bar{z}}$ are not all equal, then we have
    \begin{align}\label{largG_Ent}
            \left\|L_{(1,2)}-K_{(1,2)}\right\|_2 &\prec N^{-1} \eta^{-2} \cdot\Big\|\left(1-\widehat{M}_{\pa{1,2}}\right)^{-1}\Big\|,\\
            \left\|L_{(1,2,3)}-K_{(1,2,3)}\right\|_2 &\prec N^{-1}\eta^{-3}N^{\varepsilon_g}.\label{largG_3G_1}
    \end{align}
    On the other hand, if $z_1=z_2=z_3$, then we have 
    \begin{equation}\label{largG_3G_2}
        \begin{aligned}
            \left\|L_{(1,2,3)}-K_{(1,2,3)}\right\|_2 \prec N^{-1}\eta^{-3}\pa{\frac{1}{\im m (z) }\wedge N^{\varepsilon_g}}.
        \end{aligned}
    \end{equation}
More generally, we can prove the following extension of \eqref{largG_Ent}: given an arbitrary deterministic matrix $B$ with $\|B\|\le 1$, we have 
    \begin{equation}\label{largG_Ent_BB}
        \begin{aligned}
            \avga{G_1E_aG_2B}=\sum_{x=1}^D\pa{1-\wh M_{\pa{1,2}}}^{-1}_{ax}\avga{M_1E_xM_2B}+\opr{N^{-1}\eta^{-2}\cdot\Big\|\pa{1-\wh M_{\pa{1,2}}}^{-1}\Big\|}.
        \end{aligned}
    \end{equation}
\end{lemma}

\begin{proof}
    The proof of \cref{lemma_G_large} follows a similar approach to that in the proof of \cite[Lemma 4.2]{stone2024randommatrixmodelquantum}, with some minor modifications.
    More specifically, the proof of \cite[Lemma 4.2]{stone2024randommatrixmodelquantum} relies on the local laws in \cite[Lemma 2.11]{stone2024randommatrixmodelquantum}, which can be replaced by our estimate \eqref{entprodG} in \Cref{lemma_necessary_estimates} below. Additionally, whenever we need to bound the operator norm of \smash{$\p{1-\wh M_{\pa{1,2}}}^{-1}$}, we will apply \eqref{1-M} and \eqref{1-M-2} from \Cref{lemma_usual_properties_of_m_and_mu}, instead of the bounds in  \cite[Lemma A.1]{stone2024randommatrixmodelquantum}. 
    We omit the details for brevity.
\end{proof}


\begin{lemma}\label{lemma_G_large_2}
In the setting of \Cref{mix}, let $z=E+\ii \eta$ with $\eta\gtrsim N^{-1/3+\tau_e}$ and $\eta/\im m (z) \sim N^{-\varepsilon_g}$ for some small constants $\tau_e,\e_g>0$. If $\varepsilon_g<\p{1/8}\wedge \p{\delta_A/4}$, then for any $z_1,z_2\in\ha{z,\bar{z}}$, we have that
    \begin{equation}\label{largG_1G}
        \begin{aligned}
            \max_{a\in \qq D}\absa{\E\avga{\pa{G (z) -M (z) }E_a}}\prec N^{-1}\pa{\im m (z) }^{-1},
        \end{aligned}
    \end{equation}
    \begin{equation}\label{largG_Ex}
        \begin{aligned}
            \left\|\E L_{(1,2)}-K_{(1,2)}\right\|_2\prec N^{-1}\eta^{-2}\pa{N^{-\tau_e\wedge \varepsilon_g}}.
        \end{aligned}
    \end{equation}
\end{lemma}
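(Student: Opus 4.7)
The plan is to obtain both estimates by applying the complex cumulant expansion \eqref{5.16} to the $H$-dependence of the relevant quantities, following the general scheme of the proof of \Cref{lemma_G_large} (which mirrors \cite[Lemma 4.2]{stone2024randommatrixmodelquantum}), but now taking the expectation $\E$ at the outset so that additional cancellations can be exploited. The key structural input is the identity $G-M=-G(H+m)M=-M(H+m)G$ from \eqref{eq:G-M}, which reduces both targets to controlling $\E$ of expressions of the form $\avga{\cdots H M \cdots}$. The first-order (Gaussian integration by parts) contributions in the cumulant expansion of such terms reorganize, via the block variance structure $s_{ij}=N^{-1}\mathbf 1(i,j\in\cI_b)$, into linear combinations involving $\wh M_{(1,1)}$ or $\wh M_{(1,2)}$, producing self-consistent systems that can be inverted using the bounds on $\|(I-\wh M)^{-1}\|$ from \Cref{lemma_usual_properties_of_m_and_mu} in the appendix (in particular \eqref{1-M} and \eqref{1-M-2}).

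Concretely, for \eqref{largG_1G} I would begin from
\[\E\avga{(G-M)E_a}=-\E\avga{GHME_a}-m\,\E\avga{GME_a}\]
and expand the first term by \eqref{5.16}. The first-order cumulant contribution equals $-D\sum_b\E\bigl[\avga{GE_b}\avga{GE_b ME_a}\bigr]$; substituting the decompositions $\avga{GE_b}=m/D+\avga{(G-M)E_b}$ and $\avga{GE_bME_a}=\wh M_{ba}/D+\avga{(G-M)E_bME_a}$, the deterministic cross-terms cancel against $-m\,\E\avga{GME_a}$ by way of $\sum_b\wh M_{ba}=D\avga{M^2E_a}$, leaving the linear system
\[(I-\wh M_{(1,1)})\bigl(\E\avga{(G-M)E_c}\bigr)_{c\in\qq D}=\mathcal{E}_1,\]
where $\mathcal{E}_1$ is the sum of a bilinear $(G-M)$ residual bounded by Cauchy--Schwarz and the averaged local law as $\OO_\prec(N^{-2}\eta^{-2})$, plus higher-cumulant corrections bounded via \eqref{eq:bdd_cumu} and the anisotropic estimates in \Cref{lemma_necessary_estimates}. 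Inverting through \eqref{1-M} yields the claim. For \eqref{largG_Ex}, the analogous argument starts from
\[\E L_{(1,2)}-K_{(1,2)}=D\,\E\avga{(G_1-M_1)E_a G_2 E_b}+D\,\E\avga{M_1 E_a(G_2-M_2)E_b},\]
where we iteratively expand via $G_i-M_i=-G_i(H+m_i)M_i$ and apply the cumulant expansion. The resulting self-consistent equation $(I-\wh M_{(1,2)})(\E L_{(1,2)}-K_{(1,2)})=\mathcal{E}_2$ has error $\mathcal{E}_2$ composed of (i) single-resolvent fluctuation factors bounded via \eqref{largG_1G}, producing a gain $N^{-\varepsilon_g}$ (since $\im m\sim\eta N^{\varepsilon_g}$), and (ii) higher-cumulant ($p+q\geq 2$) corrections bounded via \eqref{eq:bdd_cumu} and \Cref{lemma_necessary_estimates}, which in view of $\eta\gtrsim N^{-1/3+\tau_e}$ contribute $\OO_\prec(N^{-\tau_e/2})$. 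Taking the minimum of the two gains and inverting via \eqref{1-M-2} yields the desired $N^{-\tau_e\wedge\varepsilon_g}$ improvement.

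The hardest part will be the combinatorial bookkeeping of the iterated expansion and the verification that the deterministic cross-terms cancel exactly. Because the target rate in \eqref{largG_1G} is already smaller than the naive local-law rate $(N\eta)^{-1}$ by the factor $\eta/\im m=N^{-\varepsilon_g}$, any uncanceled deterministic residual of the wrong order would immediately destroy the bound---an issue that is amplified near the spectral edge where $\im m$ is small. Relatedly, one must ensure that the operator norm of $(I-\wh M_{(1,2)})^{-1}$ does not absorb the gains of $N^{-\varepsilon_g}$ or $N^{-\tau_e/2}$ captured in $\mathcal{E}_2$; this is precisely where the hypotheses $\eta\gtrsim N^{-1/3+\tau_e}$ and $\varepsilon_g\in(0,1/8\wedge\delta_A/4)$ become essential, as they keep the condition number of $I-\wh M_{(1,2)}$ under control through the bounds collected in \Cref{lemma_usual_properties_of_m_and_mu}.
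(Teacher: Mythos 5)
Your sketch captures the correct high-level scheme---cumulant-expand in $H$, collect the first-order contributions into a linear system in $\widehat M$, invert via \Cref{lemma_usual_properties_of_m_and_mu}---and for \eqref{largG_1G} it is essentially right, modulo bookkeeping: the dominant error is not the bilinear $(G-M)$ residual $\OO_\prec(N^{-2}\eta^{-2})$ you name but the fourth-cumulant ($\kappa^{(2,2)}$, i.e.\ $p+q=3$) correction, whose explicit form in \eqref{eq:Exp_G1-M1} carries a factor $(1-\avga{M^2})^{-1}=\OO((\im m)^{-1})$ by \eqref{1-M-3} and is itself already of the claimed order $N^{-1}(\im m)^{-1}$; and the relevant inversion bound is \eqref{1-M-2}/\eqref{1-M-3}, not \eqref{1-M}.

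For \eqref{largG_Ex} there is a genuine gap in the case $z_1=\bar z_2$. You assert the higher-cumulant ($p+q\ge2$) corrections can be bounded directly by $\OO_\prec(N^{-\tau_e/2})$; this is false. The $\kappa^{(2,2)}$ contributions from \Cref{lem:fourthcumu1} and \Cref{lem:fourthcumu2}, when inserted into \eqref{ELK}, assemble into a term of the form $N^{-1}\kappa^{(2,2)}\cdot[\text{scalar in }m]\cdot K_{ab}$, where the scalar is a priori of size $(\im m)^{-1}$ (through $(1-\avga{M^2})^{-1}$) and $|K_{ab}|\lesssim\im m/\eta$ by \eqref{1-M}; after solving the linear system---which introduces one more factor $\|(1-\widehat M_{(1,2)})^{-1}\|\sim\im m/\eta$---the naive estimate gives $\OO(\im m\cdot N^{-1}\eta^{-2})$. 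In the bulk $\im m(z)\sim1$, so this merely saturates $N^{-1}\eta^{-2}$ and destroys the claimed $N^{-\tau_e\wedge\varepsilon_g}$ gain. The paper's proof instead \emph{computes} the scalar combination explicitly, replaces every $M$ and $\diag(M)$ by $m$ up to $\OO(N^{-\delta_A/2})$ via \eqref{eq:msc}, and uses the algebra of the scalar self-consistent equation together with $1-|m|^2\sim\eta/\im m$ from \eqref{estimate_1_minus_abs_m_2} to show that
\begin{equation*}
\bar m^4+|m|^4+|m|^2m^2+|m|^2\bar m^2+\frac{m^4|m|^2}{1-\avga{M^2}}+\frac{\bar m^6}{1-\avga{(M^*)^2}}
=\OO\!\left(\frac{\eta}{(\im m)^3}+\frac{N^{-\delta_A/2}}{(\im m)^2}\right),
\end{equation*}
a gain of $\eta/(\im m)^2$ over the naive $\OO((\im m)^{-1})$. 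Only this cancellation produces, after inverting, the term $\OO(N^{-1}\eta^{-1}(\im m)^{-1})=N^{-1}\eta^{-2}\cdot N^{-\varepsilon_g}$; it is absent from your proposal and cannot be recovered by direct estimation. The hypothesis $\varepsilon_g<\delta_A/4$ is used exactly at this point, to ensure $N^{-\delta_A/2}\ll\eta/\im m$.
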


The proof of \Cref{lemma_G_large_2} follows a similar approach to that in the proof of \cite[Lemma 4.3]{stone2024randommatrixmodelquantum}. We will outline its proof in \Cref{proof_of_lemma_G_large_2}.
Now, the proof of \Cref{key_lemma_delocalized_eigenvector} will be based on the following key lemmas, Lemmas \ref{lem_flow_1}--\ref{lem_flow_3}, which control the evolutions of the relevant quantities (i.e., $L_{(1,2), t}-K_{(1,2), t}$, $L_{(1,2,3), t}-K_{(1,2,3), t}$, and $\left\langle\left(G_t-M_t\right) E_a\right\rangle$) along the characteristic flow defined in \Cref{flow}.


\begin{lemma}\label{lem_flow_1}
Suppose that $H_{t_0}$ and $ \Lambda_{t_0}$ satisfy the assumptions (for $H$ and $\Lambda$) in \Cref{mix}. Consider the characteristic flow in  \Cref{flow} with $z_{t_0}=E_{t_0}+\ii \eta_{t_0} \in \bf D\pa{\tau}$, where \smash{$\eta_{t_0}\gtrsim N^{-1/3+\tau_e}$} for a constant $\tau_e>0$ and $t_c-t_0 \sim N^{-\epsilon_g}$ for a constant $\varepsilon_g\in \pa{0,\p{1/8}\wedge \p{\delta_A/4}}$. Define 
\[t_m:=\inf\ha{t\geq t_0: N\eta_t\im m\pa{z_t}\leq N^{C_0\varepsilon_g}}\] 
for an absolute constant $C_0>6$. Then, for any $(z_1)_t, (z_2)_t\in \{z_t, \bar z_t\}$ and all $t\in [t_0,t_m]$, we have that
    \begin{equation}\label{flow_1_result}
        \begin{aligned}
            \left\|L_{(1,2), t}-K_{(1,2), t}\right\|_2 \prec \frac{\left(t_c-t_0\right)^2}{\left(t_c-t\right)^2}\left\|L_{(1,2), t_0}-K_{(1,2), t_0}\right\|_2+\frac{1}{N\left(t_c-t\right)^2\left(\operatorname{Im} m_t\right)^2}.
        \end{aligned}
    \end{equation}
Combining this with \eqref{t_eta}, \eqref{im_m_t_sim_through_evolution}, and \eqref{largG_Ent}, and applying the estimates \eqref{1-M} and \eqref{1-M-2} below to bound \smash{$\|\p{1-\wh M_{\pa{1,2}}}^{-1}\|$}, we obtain that for all $t\in\qa{t_0,t_m}$,
    \begin{equation}\label{flow_1_result_2}
        \begin{aligned}
            \left\|L_{(1,2), t}-K_{(1,2), t}\right\|_2 \prec \frac{N^{\epsilon_g}}{N\left(t_c-t\right)^2\left(\operatorname{Im} m_t\right)^2}\ .
        \end{aligned}
    \end{equation}
\end{lemma}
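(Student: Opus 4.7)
The proof propagates the global estimate \eqref{largG_Ent} from the initial time $t_0$ down to $t$ by comparing the stochastic evolution of $L_{(1,2),t}$ with the deterministic ODE \eqref{eq:whMK} for $K_{(1,2),t}$. Applying It\^o's formula to $G_t=(H_t+\Lambda_t-z_t)^{-1}$ and using the choice \eqref{eq:deterflow} of the characteristic curves together with the matrix Dyson equation \eqref{def_M}, one finds that the drift from $\partial_t z_t,\partial_t\Lambda_t$ and the drift from the OU equation \eqref{eq:Ht}, combined with the It\^o correction involving $\mathcal{S}$, conspire to cancel all leading terms except those matching the structure $K^2+K$. After applying Gaussian integration by parts to the Brownian increments and using the cumulant expansion \eqref{5.16} for the non-Gaussian corrections to the entry-wise distribution, the outcome is an equation of the schematic form
\begin{equation*}
\frac{d}{dt}(L_{(1,2),t})_{ab} = \sum_{c}(L_{(1,2),t})_{ac}(L_{(1,2),t})_{cb} + (L_{(1,2),t})_{ab} + (\mathcal{E}_t)_{ab} + (d\mathcal{M}_t)_{ab},
\end{equation*}
where $\mathcal{E}_t$ collects residual drift terms expressible through 3-resolvent averages $L_{(1,2,i),t}$ with $i\in\{1,2\}$ and higher-order cumulant remainders, and $\mathcal{M}_t$ is a martingale.

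\textbf{Linearization and Duhamel.} Subtracting the $K$-ODE \eqref{eq:whMK}, the difference $D_t:=L_{(1,2),t}-K_{(1,2),t}$ satisfies
\begin{equation*}
\frac{d}{dt}D_t = D_t + K_{(1,2),t}D_t + D_t K_{(1,2),t} + D_t^2 + \mathcal{E}_t + d\mathcal{M}_t.
\end{equation*}
By the first identity in \eqref{eq:whMK} one has $\wh M_{(1,2),t}=e^{t-t_0}\wh M_{(1,2),t_0}$, and the relation $I+K_{(1,2),t}=(I-\wh M_{(1,2),t})^{-1}$ lets us diagonalize the linear part in the eigenbasis of $\wh M_{(1,2),t_0}$. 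For each eigenvalue $\mu$, the corresponding scalar equation has integrating factor $e^{t-t_0}(1-\mu)^{2}(1-e^{t-t_0}\mu)^{-2}$, which via \eqref{t_eta} and \eqref{im_m_t_sim_through_evolution} is comparable to $(t_c-t_0)^2/(t_c-t)^2$ uniformly over the relevant range of $\mu$. Duhamel's formula applied to the initial bound \eqref{largG_Ent} then reproduces the first term on the right of \eqref{flow_1_result}.

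\textbf{Forcing and closure.} The forcing $\mathcal{E}_t$ is controlled, at each $t\in[t_0,t_m]$, by the 3-resolvent estimates \eqref{largG_3G_1}--\eqref{largG_3G_2}, yielding $\|\mathcal{E}_t\|_2\prec N^{-1}\eta_t^{-3}$; integrating this against the fundamental solution above and using the relation $\eta_t\sim(t_c-t)\im m_t$ from \eqref{t_eta} produces the second term $(N(t_c-t)^2(\im m_t)^2)^{-1}$ of \eqref{flow_1_result}. The martingale $\mathcal{M}_t$ is bounded in high probability by Burkholder--Davis--Gundy, with the quadratic variation estimated via \eqref{eq:aniso_local}, which remains valid inside the stopping region $\{t\le t_m\}$ where $N\eta_t\im m_t\ge N^{C_0\varepsilon_g}$. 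The self-interaction $D_t^2$ is absorbed by a bootstrap: one posits a priori that $\|D_t\|_2$ does not exceed twice the right-hand side of \eqref{flow_1_result}, observes that then $\|D_t^2\|_2$ is genuinely smaller than $\|\mathcal{E}_t\|_2$, and recovers a strictly improved bound, closing the argument.

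\textbf{Main obstacle.} The heart of the proof is the drift identification: one must verify that after substituting $\mathcal{S}(G_t)=m_t I+\OO_\prec((N\eta_t)^{-1})$ via the averaged law \eqref{eq:aver_local} in the Dyson equation \eqref{def_M}, and expanding the non-Gaussian contributions by \eqref{5.16}, every residual term is either a higher-order cumulant remainder of size $\OO_\prec(N^{-c})$ (cf.\ \Cref{estimate_reminder_terms_cumulant_expansion}) or genuinely a 3-resolvent average covered by \Cref{lemma_G_large}. The coupling between the two copies of $G$ induced by the matrix Brownian increment must produce exactly the $L^{2}$ structure matching the $K$-ODE, and tracking the block variance pattern \eqref{eq:sij} together with the $D\times D$ tensor contractions in \Cref{def_wtM,defLK3} is the principal bookkeeping difficulty.
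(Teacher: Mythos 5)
Your plan reproduces the paper's high-level strategy almost exactly: It\^o formula plus the characteristic flow, cancellation of the leading drift against the $K$-ODE, Duhamel for the linear part, BDG for the martingale, and a bootstrap for the quadratic self-interaction. The paper instead works with the rescaled quantities $\widetilde L_t=(t_c-t)L_t$, $\widetilde K_t=(t_c-t)K_t$ and runs Gr\"onwall via the Peano--Baker series after proving the uniform operator bound $\|\mathcal T_t\|_{op}\le 1+\oo(1)$, rather than diagonalizing in the eigenbasis of $\widehat M_{(1,2),t_0}$ and solving the scalar ODEs. Your diagonalization is a legitimate alternative, since the block--circulant structure makes $\widehat M_{(1,2),t}$ sharing a fixed eigenbasis along the flow; however, your one-line assertion that the integrating factor is ``comparable to $(t_c-t_0)^2/(t_c-t)^2$ uniformly over the relevant range of $\mu$'' is precisely where the paper expends its real effort. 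The Perron eigenvalue gives that rate, but for the other eigenvalues $d_l=d_1-a_l-ib_l$ one must use the signs/magnitudes $a_l\ge0$, $a_l+|b_l|=\oo(1)$ from \eqref{eq:otherM}--\eqref{eq:akbk2} (and, for the $z_1=z_2$ case with $\kappa_t=\oo(1)$, relation \eqref{eq:otherM2}) to verify the bound holds for every pair $(i,j)$; this is exactly what the operator-norm estimate in the paper certifies. Leaving this unargued is a gap, though a fillable one.

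There is a sharper, numerical problem with your forcing estimate. You bound $\|\mathcal E_t\|_2\prec N^{-1}\eta_t^{-3}$ and claim the Duhamel integral produces $(N(t_c-t)^2(\im m_t)^2)^{-1}$. Track the actual residual drift at the level of the untilded $L_t$: it is of the schematic form $D\sum_a\langle(G_1-M_1)E_a\rangle[L_{(1,2,1),t}]_{xya}$. By the averaged law \eqref{eq:aver_local}, $\langle(G-M)E_a\rangle\prec(N\eta_t)^{-1}$, and by \eqref{entprodG} applied to a three-resolvent average, $L_{(1,2,1)}\prec\im m_t/\eta_t^2$, yielding $\|\mathcal E_t\|_2\prec\im m_t\cdot N^{-1}\eta_t^{-3}$ with the crucial $\im m_t$ factor. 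Under \eqref{t_eta} this is $\prec (N(t_c-t)^3(\im m_t)^2)^{-1}$, and integrating against $(t_c-t)^2/(t_c-s)^2$ gives exactly the second term of \eqref{flow_1_result}. With your weaker bound $N^{-1}\eta_t^{-3}=(N(t_c-t)^3(\im m_t)^3)^{-1}$, the same integral gives $(N(t_c-t)^2(\im m_t)^3)^{-1}$, which is off by a factor $(\im m_t)^{-1}$ -- large near the edge, and fatal to the conclusion. Note also that you cited \eqref{largG_3G_1}--\eqref{largG_3G_2} here, but those control $L_{(1,2,3)}-K_{(1,2,3)}$; the quantity entering the forcing is $L_{(1,2,1)}$ itself, for which the relevant rough bound is \eqref{entprodG}, not the sharper $L-K$ estimates used in \Cref{lem_flow_2}.
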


\begin{lemma}\label{lem_flow_2}
    Under the assumptions of \Cref{lem_flow_1}, let $\left(z_1\right)_t,\left(z_2\right)_t,\left(z_3\right)_t \in$ $\left\{z_t, \bar{z}_t\right\}$ for $t\in \qa{t_0,t_c}$. Then, we have that for all $t\in [t_0,t_m]$,
    \begin{equation}\label{flow_3G_res}
        \begin{aligned}
            \left\|L_{(1,2,3), t}-K_{(1,2,3), t}\right\|_2 \prec \frac{\left(t_c-t_0\right)^3}{\left(t_c-t\right)^3}\left\|L_{(1,2,3), t_0}-K_{(1,2,3), t_0}\right\|_2+\frac{N^{\epsilon_g}}{N\left(t_c-t\right)^3\left(\operatorname{Im} m_t\right)^3}.
        \end{aligned}
    \end{equation}
Combining this with \eqref{t_eta}, \eqref{im_m_t_sim_through_evolution}, \eqref{largG_3G_1}, and \eqref{largG_3G_2}, we obtain that for all $t\in\qa{t_0,t_m}$,
    \begin{equation}\label{flow_3G_res_2}
        \begin{aligned}
            \left\|L_{(1,2,3), t}-K_{(1,2,3), t}\right\|_2 \prec \frac{N^{\epsilon_g}}{N\left(t_c-t\right)^3\left(\operatorname{Im} m_t\right)^3} \ .
        \end{aligned}
    \end{equation}
\end{lemma}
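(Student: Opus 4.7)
The plan is to mirror the strategy used for the two-resolvent bound \eqref{flow_1_result}: derive a stochastic evolution equation for $L_{(1,2,3),t}$ along the characteristic flow, subtract the deterministic ODE \eqref{eq:whMK3} for $K_{(1,2,3),t}$, and apply a Gr\"onwall-type argument to the resulting linear equation for $E_t := L_{(1,2,3),t} - K_{(1,2,3),t}$.

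First, I will apply It\^o's formula to $G_t = (H_t - Z_t)^{-1}$ using \eqref{eq:deterflow}--\eqref{eq:Ht}. The characteristic flow is designed so that the drift from $dZ_t$ combines with the Ornstein--Uhlenbeck drift of $H_t$ to produce
\begin{equation*}
dG_t = \tfrac{1}{2}G_t\,dt - \langle M_t\rangle G_t^2\,dt - \tfrac{1}{\sqrt{N}}G_t\,dB_t\,G_t + (\text{It\^o correction})\,dt,
\end{equation*}
where the It\^o correction comes from the quadratic variation of the matrix Brownian motion and contributes insertions of the form $\sum_a D\langle G_t E_a G_t\,\cdot\,\rangle E_a$, precisely the structure of \eqref{eq:HGline}. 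Applying the product rule to the trace loop $L_{(1,2,3),t}$ then yields (a) the scaling drift $\frac{3}{2}L_{(1,2,3),t}$ from the $\frac{1}{2}G_t$ term in each of the three slots; (b) cross contributions that, after one application of the underline identity from \Cref{def:underline} together with \eqref{eq:HGline}, reproduce the three sums over cyclic pairs in \eqref{eq:whMK3} modulo lower-order errors; and (c) a martingale differential $dM_t$ from the $dB_t$ factor.

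Next, I use the two-resolvent bound \eqref{flow_1_result_2} to replace each $L_{(i,j),t}$-type factor in the It\^o drift by $K_{(i,j),t}$ at the cost of a tensor-norm error $\OO_\prec(N^{\epsilon_g}/[N(t_c-t)^2(\im m_t)^2])$. Subtracting the ODE \eqref{eq:whMK3} yields
\begin{equation*}
\frac{d}{dt}E_t = \cal L_t(E_t) + \cal F_t + \frac{dM_t}{dt},
\end{equation*}
where $\cal L_t$ is the linear operator whose coefficients are $\frac{3}{2}I$ plus the cyclic $K_{(i,j),t}$ factors appearing on the right-hand side of \eqref{eq:whMK3}, and $\cal F_t$ collects the replacement errors together with three- and four-resolvent loop remainders produced by the underline identity. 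Using $\|K_{(i,j),t}\|\lesssim (t_c-t)^{-1}$, which follows from \eqref{1-M}, \eqref{1-M-2}, and \eqref{t_eta}, a standard Gr\"onwall estimate for $\cal L_t$ gives a fundamental tensor $\Phi(t,s)$ with $\|\Phi(t,s)\|\lesssim (t_c-s)^3/(t_c-t)^3$. Variation of constants then produces the two terms of \eqref{flow_3G_res}: the first from propagating the initial data $E_{t_0}$, and the second from integrating $\cal F_t$ together with the quadratic variation of $M_t$ (controlled by BDG applied to four-resolvent loop bounds, which in turn follow from Ward's identity \eqref{eq_Ward} and \Cref{lemma_necessary_estimates}).

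I expect the main obstacle to be the careful bookkeeping of $\im m_t$ factors near the spectral edge. For mixed choices of $(z_i)_t \in \{z_t, \bar z_t\}$, both the It\^o correction and the BDG estimate of the martingale produce four-resolvent loop quantities that \emph{a priori} carry too many $(\im m_t)^{-1}$ factors; applying Ward's identity \eqref{eq_Ward} at the right step to convert an extra $G$ into $\im G$, and then using $\eta_t \sim (t_c-t)\im m_t$ from \eqref{t_eta} to rebalance the remaining powers, is what matches the denominator $(t_c-t)^3(\im m_t)^3$ in \eqref{flow_3G_res}. Once \eqref{flow_3G_res} is established, inserting the large-$\eta$ initial bounds \eqref{largG_3G_1} or \eqref{largG_3G_2} at $t=t_0$ with $t_c - t_0 \sim N^{-\epsilon_g}$ immediately produces \eqref{flow_3G_res_2}.
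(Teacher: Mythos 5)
Your proposal follows essentially the same strategy the paper uses for this lemma (which the paper only sketches, stating it follows by ``analogous adaptations'' of the detailed proof of \Cref{lem_flow_1}): apply It\^o's formula to the three-resolvent loop along the characteristic flow, subtract the deterministic evolution \eqref{eq:whMK3}, upgrade the resulting $L_{(i,j),t}$ factors to $K_{(i,j),t}$ using \eqref{flow_1_result_2}, apply Duhamel/Gr\"onwall to the remaining linear operator, and control the martingale via BDG together with the local-law/Ward-identity estimates of \Cref{lemma_necessary_estimates}. You correctly identify both the two-resolvent bound as a crucial input and the $\im m_t$ bookkeeping near the edge as the delicate point. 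The one place where your argument is slightly more optimistic than what the paper actually proves is the claim that $\|K_{(i,j),t}\|\lesssim (t_c-t)^{-1}$ directly yields $\|\Phi(t,s)\|\lesssim(t_c-s)^3/(t_c-t)^3$: in the two-resolvent case the paper does not rely on the crude operator-norm estimate but instead proves the sharper spectral bound $\|\cal T_t\|_{op}\le 1+\oo(1)$ via the explicit diagonalization of $\widehat M_t$ in \eqref{Tnorm}--\eqref{eq:akbk2}; the same refinement (applied to the three-copy linear operator) is what actually pins down the exponent $3$ in \eqref{Phinorm}'s analogue, and you would need to reproduce that spectral argument rather than the naive norm bound.
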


\begin{lemma}\label{lem_flow_1.5}
    Under the assumptions of \Cref{lem_flow_1}, we have that for all $t\in [t_0,t_m]$,
    \begin{equation}\label{flow_1G_res}
        \begin{aligned}
            \max _{a \in \llbracket D \rrbracket}\left|\mathbb{E}\left\langle\left(G_t-M_t\right) E_a\right\rangle\right| \prec \frac{t_c-t_0}{t_c-t} \max _{a \in \llbracket D \rrbracket}\left|\mathbb{E}\left\langle\left(G_{t_0}-M_{t_0}\right) E_a\right\rangle\right|+\frac{N^{\epsilon_g}}{N^2\left(t_c-t\right)^2\left(\operatorname{Im} m_t\right)^3}.
        \end{aligned}
    \end{equation}
Combining this with \eqref{t_eta}, \eqref{im_m_t_sim_through_evolution}, and \eqref{largG_1G}, and using the definition of $t_m$, we obtain that for all $t\in\qa{t_0,t_m}$,
    \begin{equation}\label{flow_1G_res_2}
        \begin{aligned}
            \max _{a \in \llbracket D \rrbracket}\left|\mathbb{E}\left\langle\left(G_t-M_t\right) E_a\right\rangle\right| \prec& \frac{N^{-\varepsilon_g}}{N\pa{t_c-t}\im m_t}+\frac{N^{\epsilon_g}}{N^2\left(t_c-t\right)^2\left(\operatorname{Im} m_t\right)^3}\sim \frac{N^{-\varepsilon_g}}{N\pa{t_c-t}\im m_t}.
        \end{aligned}
    \end{equation}
\end{lemma}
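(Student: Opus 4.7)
The plan is to derive a closed linear matrix ODE for the vector $u(t)=(u_c(t))_{c\in\qq{D}}$ with $u_c(t):=\E\avga{(G_t-M_t)E_c}$ along the characteristic flow, and then solve it via Duhamel's principle using \Cref{lem_flow_1} to control the forcing.

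First, I would apply Itô's formula to $\avga{G_tE_c}$ using the OU dynamics \eqref{eq:Ht} for $H_t$ together with the deterministic flows \eqref{eq:deterflow} for $z_t$ and $\Lambda_t$. Writing $G_t^{-1}=H_t+\Lambda_t-z_t$, its drift equals $-\tfrac12 G_t^{-1}+m_t$ and its Brownian part is $N^{-1/2}dB_t$; the Itô correction uses $\widetilde{\E}[\widetilde HG\widetilde H]=\sum_aD\avga{GE_a}E_a$ from \eqref{eq:HGline}. After taking normalized trace against $E_c$, expectation, and using $\avga{G_t^2E_c}=D^{-1}\sum_aL_{ac,t}$, I obtain
\begin{equation*}
\tfrac{d}{dt}\E\avga{G_tE_c}=\tfrac12\E\avga{G_tE_c}-D^{-1}m_t\sum_a\E L_{ac,t}+\sum_a\E[\avga{G_tE_a}L_{ac,t}].
\end{equation*}
The deterministic analog $\tfrac{d}{dt}\avga{M_tE_c}=\tfrac12\avga{M_tE_c}$ is consistent via the MDE identities $\mathcal{S}(M_t)=m_tI$ (block-translation invariance) and $\sum_a\avga{M_tE_a}\wh M_{ac,t}=D^{-1}m_t\sum_a\wh M_{ac,t}$. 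Subtracting and decomposing $\E\avga{G_tE_a}=D^{-1}m_t+u_a(t)$ and $\E L_{ac,t}=\wh M_{ac,t}+(\E L_{ac,t}-\wh M_{ac,t})$, the two $D^{-1}m_t\sum_a(\E L_{ac,t}-\wh M_{ac,t})$ contributions cancel exactly, leaving the closed linear system
\begin{equation*}
\tfrac{d}{dt}u_c(t)=\tfrac12u_c(t)+\sum_a\wh M_{ac,t}\,u_a(t)+f_c(t),
\end{equation*}
with forcing $f_c(t)=\sum_au_a(t)(\E L_{ac,t}-\wh M_{ac,t})+\sum_a\mathrm{Cov}(\avga{G_tE_a},L_{ac,t})$.

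Next, because the matrices $\wh M_t$ for different $t$ commute (they are scalar multiples of $\wh M_{t_0}$ by \eqref{eq:whMK}), the fundamental solution of the homogeneous part is $Y_{t,s}=\exp\int_s^t(\tfrac12I+\wh M_r^T)\,dr$. All eigenvalues of $\wh M_r$ remain at most $1$ for $r\leq t_c$ (the dangerous eigenvalue $\mu_r$ grows exponentially and hits $1$ at $t_c$ with $1-\mu_r\sim t_c-r$), so $\|Y_{t,s}\|=\OO(1)$ uniformly for $t_0\leq s\leq t\leq t_m$, and in particular $\|Y_{t,s}\|\lesssim(t_c-s)/(t_c-t)$. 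Duhamel's formula gives $u_c(t)=\sum_a[Y_{t,t_0}]_{ca}u_a(t_0)+\int_{t_0}^t\sum_a[Y_{t,s}]_{ca}f_a(s)\,ds$. A bootstrap controlled by the stopping time $t_m$ yields $|u_a(s)|\prec(N(t_c-s)\im m_s)^{-1}$ for $s\in[t_0,t]$; combining this with \eqref{flow_1_result_2} and a Cauchy--Schwarz estimate on the covariance (using $\|L_{(1,2),s}-K_{(1,2),s}\|_2$) produces $|f_a(s)|\prec N^{\varepsilon_g}/(N^2(t_c-s)^3(\im m_s)^3)$. The integral $\int_{t_0}^t(t_c-s)^{-3}\,ds\sim(t_c-t)^{-2}$ together with $\im m_s\sim\im m_t$ from \eqref{im_m_t_sim_through_evolution} produces the second term of \eqref{flow_1G_res}, while the initial-condition term is trivially $\leq C(t_c-t_0)/(t_c-t)\max_a|u_a(t_0)|$. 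Finally, \eqref{flow_1G_res_2} follows by substituting \eqref{largG_1G} at $t_0$ and using $t_c-t_0\sim N^{-\varepsilon_g}$, $\im m_{t_0}\sim\im m_t$; the definition of $t_m$ and $C_0>4$ ensure the first term dominates the second on $[t_0,t_m]$.

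The main obstacle is the bootstrap closure: the forcing $f_c$ contains $u$ itself, so one must verify that the accumulated influence $\int_{t_0}^t\|\E L_{\cdot\cdot,s}-\wh M_{\cdot\cdot,s}\|_2\,ds$ stays sub-polynomial. This is precisely the role of the stopping time $t_m$ in conjunction with \eqref{flow_1_result_2}: since $N\eta_s\im m_s\geq N^{C_0\varepsilon_g}$ gives $(t_c-s)(\im m_s)^2\geq N^{C_0\varepsilon_g-1}$, the integral is $\prec N^{(1-C_0)\varepsilon_g}$, closing the bootstrap. A secondary technical point is the covariance term, where the Hilbert--Schmidt (rather than entrywise) form of \Cref{lem_flow_1} is needed when averaging fluctuations across $a\in\qq{D}$.
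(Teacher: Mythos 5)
Your overall strategy (It\^o formula for $\avga{G_t E_c}$, subtract the deterministic flow to get a linear ODE for $u_c(t)=\E\avga{(G_t-M_t)E_c}$, solve via Duhamel and close a bootstrap controlled by $t_m$) is the right one, and your derivation of the ODE
\[
\tfrac{d}{dt}u_c=\tfrac12u_c+\sum_a\wh M_{ac,t}u_a+f_c(t),\qquad
f_c=\sum_au_a(\E L_{ac,t}-\wh M_{ac,t})+\sum_a\mathrm{Cov}\bigl(\avga{G_tE_a},L_{ac,t}\bigr),
\]
is correct. But there is a genuine error in the bootstrap closure. You claim $|f_a(s)|\prec N^{\varepsilon_g}/(N^2(t_c-s)^3(\im m_s)^3)$, citing \eqref{flow_1_result_2}, and in your ``obstacle'' paragraph you bound $\int_{t_0}^t\|\E L_s-\wh M_s\|_2\,ds\prec N^{(1-C_0)\varepsilon_g}$. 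Both claims conflate $\E L-\wh M$ with $\E L-K$. Decompose $\E L-\wh M=(\E L-K)+(K-\wh M)$. The piece $\E L-K$ is what \eqref{flow_1_result_2} controls, and it indeed produces the small contribution $|u|\cdot N^{\varepsilon_g}/(N(t_c-s)^2(\im m_s)^2)\prec N^{\varepsilon_g}/(N^2(t_c-s)^3(\im m_s)^3)$. But $K-\wh M=\wh MK$ is \emph{not} small: by \eqref{1-M-2} its operator norm is only $\lesssim (\im m_s)^{-1}\wedge N^{\varepsilon_g}$, so $\sum_au_a(K-\wh M)_{ac}\prec N^{\varepsilon_g}/(N(t_c-s)\im m_s)$, which is \emph{larger} than your claimed $|f_a(s)|$ by a factor $N(t_c-s)^2(\im m_s)^2$ — a factor that can be as large as $N^{C_0\varepsilon_g}$ near $t_m$. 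Likewise $\int_{t_0}^t\|K_s-\wh M_s\|\,ds$ is of order $1$ (not $N^{(1-C_0)\varepsilon_g}$), since $\|K_s-\wh M_s\|$ can saturate at $N^{\varepsilon_g}$ over an interval of length $t_c-t_0\sim N^{-\varepsilon_g}$.

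The fix requires recognizing that the coefficient of $u$ in the exact ODE is $\E L$, not $\wh M$: regrouping gives
\[
\tfrac{d}{dt}u_c=\tfrac12u_c+\sum_au_aK_{ac,t}+\tilde f_c(t),\qquad
\tilde f_c=\sum_au_a(\E L_{ac,t}-K_{ac,t})+\sum_a\mathrm{Cov},
\]
so the homogeneous generator is $\tfrac12I+K_t^{\mathsf T}$, not $\tfrac12I+\wh M_t^{\mathsf T}$. With this generator the fundamental solution is no longer $\OO(1)$: since the eigenvalues $\lambda_l(K_r)=\wh d_l(r)/(1-\wh d_l(r))$ satisfy $\re\lambda_l(K_r)\le(1+\oo(1))/(t_c-r)$ (exactly the computation behind \eqref{Tnorm}--\eqref{Phinorm} in the proof of \Cref{lem_flow_1}), Gr\"onwall gives $\|\Phi(t;s)\|\prec(t_c-s)/(t_c-t)$. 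This is precisely the propagation factor in \eqref{flow_1G_res}, and it is what the paper's approach produces (an ``analogous adaptation'' of \Cref{lem_flow_1}'s proof, phrased there via the rescaled quantity $(t_c-t)u_t$ and the Peano--Baker operator). Alternatively one can keep your $\wh M$ generator and run Gr\"onwall on the $u\cdot(K-\wh M)$ term separately, using $\int_{t_0}^t\|K_s-\wh M_s\|\,ds\lesssim N^{\varepsilon_g}(t_c-t_0)\lesssim1$ to get an extra $\OO(1)$ factor; but that uses $\|K\|\lesssim N^{\varepsilon_g}$ from \eqref{1-M-2}, not \eqref{flow_1_result_2}, and is not what your write-up invokes.
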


\begin{lemma}\label{lem_flow_3}
    Under the assumptions of \Cref{lem_flow_1}, we have that for all $t\in [t_0,t_m]$,
    \begin{equation}\label{flow_3_result}
        \begin{aligned}
            \left\|\mathbb{E} L_{(1,2), t}-K_{(1,2), t}\right\|_2 \prec&~ \frac{\left(t_c-t_0\right)^2}{\left(t_c-t\right)^2}\left\|\mathbb{E} L_{(1,2), t_0}-K_{(1,2), t_0}\right\|_2\\
            &~+\frac{N^{-\epsilon_g}}{N\left(t_c-t\right)^2\left(\operatorname{Im} m_t\right)^2}+\frac{N^{2 \epsilon_g}}{N^2\left(t_c-t\right)^3\left(\operatorname{Im} m_t\right)^4}.
        \end{aligned}
    \end{equation}
    Combining this with \eqref{t_eta}, \eqref{im_m_t_sim_through_evolution}, and \eqref{largG_Ex}, and using the definition of $t_m$, we obtain that for all $t\in\qa{t_0,t_m}$,
    \begin{equation}\label{flow_3_result_2}
        \begin{aligned}
            \left\|\mathbb{E} L_{(1,2), t}-K_{(1,2), t}\right\|_2\prec&~ \frac{N^{-\tau_e\wedge\epsilon_g}}{N\left(t_c-t\right)^2\left(\operatorname{Im} m_t\right)^2}+\frac{N^{-\epsilon_g}}{N\left(t_c-t\right)^2\left(\operatorname{Im} m_t\right)^2}+\frac{N^{2 \epsilon_g}}{N^2\left(t_c-t\right)^3\left(\operatorname{Im} m_t\right)^4}\\
            \sim&~ \frac{N^{-\tau_e\wedge\epsilon_g}}{N\left(t_c-t\right)^2\left(\operatorname{Im} m_t\right)^2}.
        \end{aligned}
    \end{equation}
\end{lemma}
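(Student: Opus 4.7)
The plan is to mirror the characteristic-flow analysis already carried out in \Cref{lem_flow_1}, \Cref{lem_flow_2}, and \Cref{lem_flow_1.5}, now applied to the expectation $\mathbb{E} L_{(1,2),t}$. Taking expectation kills the martingale part of the It\^o evolution of $L_{(1,2),t}$ that was the dominant fluctuation controlled in \Cref{lem_flow_1}, and replaces it by deterministic forcing terms whose sizes are governed by the sharper bounds \eqref{flow_1G_res_2} on $\mathbb{E}\langle(G-M)E_a\rangle$ and \eqref{flow_3G_res_2} on $\mathbb{E} L_{(1,2,3),t}-K_{(1,2,3),t}$. The starting point is to differentiate $G_1 E_a G_2 E_b$ along the flow: the $-\tfrac{1}{2}H_t\,\dd t$ drift from \eqref{eq:Ht} combined with $\dd z_t = -\tfrac{1}{2}z_t - \langle M_t\rangle$ from \eqref{eq:deterflow} produces a drift linear in $L_{(1,2),t}$, while the quadratic variation between the two $G$ factors generates, after using \eqref{eq:HGline} and then the cumulant expansion \Cref{lem:complex_cumu} for the non-Gaussian contributions, a combination of: (a) a term $\widehat M_{(1,2),t}$ acting linearly on $L_{(1,2),t}$; (b) genuine three-resolvent loops $L_{(1,2,3),t}$; (c) one-loop corrections involving $\langle(G_t - M_t)E_a\rangle$ produced by linearizing $\sum_a D\langle G_t E_a\rangle E_a$ around $\langle M_t\rangle I$; and (d) negligible higher-order cumulant remainders bounded by \Cref{estimate_reminder_terms_cumulant_expansion}.

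Taking expectation and subtracting the deterministic ODE \eqref{eq:whMK} satisfied by $K_{(1,2),t}$, the quantity $\Delta_t := \mathbb{E} L_{(1,2),t} - K_{(1,2),t}$ satisfies a linear inhomogeneous equation of the schematic form
\begin{equation*}
\partial_t \Delta_t \;=\; \mathcal{L}_t\,\Delta_t \;+\; \mathcal{F}_t,
\end{equation*}
where $\mathcal{L}_t$ is a linear operator built from $I$, $\widehat M_{(1,2),t}$, and $K_{(1,2),t}$, and $\mathcal{F}_t$ is the forcing assembled from pieces (b), (c), (d). The crucial algebraic fact is $K_{(1,2),t} + I = (I - \widehat M_{(1,2),t})^{-1}$, which makes the propagator of $\mathcal{L}_t$ behave like $(I-\widehat M_{(1,2),t})^{-2}$; by \eqref{1-M} and \eqref{1-M-2} its operator norm blows up like $(t_c-t)^{-2}$, and its Duhamel kernel from $s$ to $t$ is exactly $(t_c-s)^2/(t_c-t)^2$, reproducing the amplification factor already present in \eqref{flow_1_result}. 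This identifies the first term in \eqref{flow_3_result} as the contribution of the initial data $\Delta_{t_0}$.

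The remaining task is to integrate $\mathcal{F}_s$ against this Duhamel kernel from $t_0$ to $t$. Substituting the one-loop bound \eqref{flow_1G_res_2} into piece (c) and integrating yields the middle term $N^{-\epsilon_g}/(N(t_c-t)^2(\operatorname{Im} m_t)^2)$ of \eqref{flow_3_result}; substituting the three-loop bound \eqref{flow_3G_res_2} into piece (b) and integrating yields the last term $N^{2\epsilon_g}/(N^2(t_c-t)^3(\operatorname{Im} m_t)^4)$; the higher-order cumulant remainders in (d) are absorbed into these after taking $\epsilon_g$ small enough. The step I expect to be the main obstacle is the bookkeeping inside the quadratic-variation term: because $\sum_a D\langle G_t E_a\rangle E_a$ from \eqref{eq:HGline} is itself random, one must carefully separate the deterministic part $\langle M_t\rangle I$, which cancels against $\partial_t K_{(1,2),t}$, from its fluctuation, which produces the $\mathbb{E}\langle(G-M)E_a\rangle$ piece in $\mathcal{F}_t$. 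Correctly routing the first part so as to reconstruct $K_{(1,2)}^2 + K_{(1,2)}$ on the nose, while isolating the second part so that the $N^{-\epsilon_g}$ gain of \eqref{flow_1G_res_2} over \eqref{flow_1_result_2} survives the Duhamel integration, is where the technical care enters.
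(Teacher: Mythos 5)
Your overall plan — take expectation to kill the martingale, propagate along the same Peano--Baker/Duhamel kernel as in \Cref{lem_flow_1}, and upgrade the forcing using the deterministic one-loop bound \eqref{flow_1G_res_2} and the three-loop bound \eqref{flow_3G_res_2} — is the right skeleton, and your observation that $(K_{(1,2),t}+I)=(I-\widehat M_{(1,2),t})^{-1}$ and that the effective kernel for the \emph{untilded} $\Delta_t$ scales like $(t_c-s)^2/(t_c-t)^2$ is correct. But there is a genuine gap: you declare that $\Delta_t := \mathbb E L_{(1,2),t}-K_{(1,2),t}$ satisfies a \emph{linear} inhomogeneous equation and you list the forcing as (b) three-loop, (c) one-loop, (d) higher-order cumulants, with no mention of the nonlinear term. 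In the underlying It\^o evolution, the drift of $\widetilde L_t=(t_c-t)L_t$ contains the quadratic piece $\frac{1}{t_c-t}\widetilde L_t^2$ coming from the cross quadratic variation of the two resolvents in the loop; after subtracting the ODE for $\widetilde K_t$ and taking expectation, this leaves a genuine forcing $\frac{1}{t_c-t}\E\bigl[(\widetilde L_t-\widetilde K_t)^2\bigr]$ that does \emph{not} reduce to $\bigl(\E\widetilde L_t-\widetilde K_t\bigr)^2$. That variance term, bounded via the high-probability estimate \eqref{flow_1_result_2}, is what actually produces the last term $N^{2\varepsilon_g}/\bigl(N^2(t_c-t)^3(\operatorname{Im} m_t)^4\bigr)$. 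Your invocation of \eqref{flow_3G_res_2} only yields an $N^{\varepsilon_g}$ power, not $N^{2\varepsilon_g}$, so without this piece the stated bound cannot be obtained.

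A second, smaller imprecision: the three-resolvent loop and the $\langle(G_t-M_t)E_a\rangle$ factor do not enter the drift as independent summands — they appear as a \emph{product} $\langle(G_{i,t}-M_{i,t})E_a\rangle\,[L_{(1,2,1),t}]_{xya}$. To apply the improved deterministic one-loop bound \eqref{flow_1G_res_2} to the first factor you must first split $L_{(1,2,1),t}=K_{(1,2,1),t}+\bigl(L_{(1,2,1),t}-K_{(1,2,1),t}\bigr)$: the deterministic $K$-part pairs with the improved $\E\langle(G-M)E_a\rangle$ and gives the middle term $N^{-\varepsilon_g}/\bigl(N(t_c-t)^2(\operatorname{Im} m_t)^2\bigr)$; the fluctuating remainder, paired with the standard $\prec(N\eta)^{-1}$ bound for $\langle(G-M)E_a\rangle$, is absorbed into the last term. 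Presenting them as if the two bounds are plugged independently into two separate ``pieces'' obscures the fact that a decomposition of the product is required, and that the $\E$ cannot be distributed across the two factors.
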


The proof of the above lemmas, Lemmas \ref{lem_flow_1}--\ref{lem_flow_3}, will be described in \Cref{proof_of_lem_flow}. 
With these lemmas, we immediately obtain \Cref{key_lemma_delocalized_eigenvector} for matrices with small Gaussian components, specifically the \emph{Gaussian divisible matrices}. This is stated as the following lemma.

\begin{lemma}\label{main_lemma_G}
In the setting of \Cref{mix}, suppose $H_a$, $a\in \qq D$, take the form
\be\label{Gcom_a}
H_a = \sqrt{1-N^{-\e_g}}\cdot H_a^{(0)}+ {N^{-\e_g/2}} H_a^{(g)},
\ee
where $H_a^{(0)}$ are independent Wigner matrices satisfying the assumptions for $H_a$ in \Cref{main_assm}, and $H_a^{(g)}$ are i.i.d.~GUE satisfying \eqref{eq:meanvar} and \eqref{eq:meanvar2}. Then, for small enough constant $\e_g>0$ (depending on $\delta_A$ and $\e_A$) and $z=E+\ii\eta $ with $E=\gamma_k$ for some $k\leq DN/2$, there exists an absolute constant $C>8\vee C_0$ such that 
\be\label{main_G_es}
\|\E L_{(1,2)}- K_{(1,2)}\|_{2}\prec N^{-1-\e_g}\eta^{-2},\quad \forall \  N^{-2/3+C\e_g }k^{-1/3} \le \eta \le N^{-C\e_g }\ , \ z_1,z_2\in \{z, \bar z\}.
\ee
\end{lemma}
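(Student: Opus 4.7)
\medskip

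\noindent\textbf{Proof plan.} The strategy is to run the characteristic flow of \Cref{flow} from a sufficiently large scale $\eta_{t_0}$ down to the target scale $\eta_{t_f}=\eta$, and combine the large-scale input from \Cref{lemma_G_large_2} with the propagation estimate \eqref{flow_3_result_2} of \Cref{lem_flow_3}. Concretely, set $t_f-t_0:=-\log(1-N^{-\e_g})\sim N^{-\e_g}$, take $\Lambda_{t_0}:=e^{(t_f-t_0)/2}\Lambda$, take the diagonal blocks $(H_a)_{t_0}$ to be independent copies of $H_a^{(0)}$ from \eqref{Gcom_a}, and choose $z_{t_0}$ so that the characteristic flow satisfies $z_{t_f}=z$ and $\Lambda_{t_f}=\Lambda$. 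By \eqref{eq:Gauss_div}, $(H_a)_{t_f}\eqdist\sqrt{1-N^{-\e_g}}\,H_a^{(0)}+N^{-\e_g/2}H_a^{(g)}$, matching exactly the distribution in \eqref{Gcom_a}, so $(L_{(1,2),t_f},K_{(1,2),t_f})\eqdist (L_{(1,2)},K_{(1,2)})$ in law. Note also that $\Lambda_{t_0}$ still satisfies $\|\Lambda_{t_0}\|\le N^{-\delta_A/2}$ and $\|\Lambda_{t_0}\|_{\HS}$ satisfies \eqref{eq:condA1} since $e^{(t_f-t_0)/2}=1+\oo(1)$; hence all hypotheses are preserved through the flow.

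\medskip

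\noindent The second step is to verify that the flow lands inside the good range for the propagation lemmas. Using \eqref{t_eta} together with $t_c-t_f\sim \eta/\im m(z)$, \eqref{im_m_t_sim_through_evolution}, and the fact that $\im m(z)\sim \sqrt{\kappa_E+\eta}\sim\sqrt{(k/N)^{2/3}+\eta}$ near the right edge, one obtains
\begin{equation*}
t_c-t_0\sim N^{-\e_g},\qquad \eta_{t_0}\sim N^{-\e_g}\cdot \im m(z),\qquad \im m_{t_0}\sim \im m(z).
\end{equation*}
A short case analysis split at $\eta\lessgtr \kappa_E$ shows $\im m(z)\gtrsim N^{-1/3+c_1 C\e_g}$ for some constant $c_1>0$, so for $C$ large enough one has $\eta_{t_0}\ge N^{-1/3+\tau_e}$ with $\tau_e:=(c_1C-1)\e_g>0$, which is precisely the scale required by \Cref{lemma_G_large_2}. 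The same case analysis shows
\begin{equation*}
N\eta\cdot\im m(z)\gtrsim N^{C\e_g}\ge N^{C_0\e_g}
\end{equation*}
provided $C\ge C_0$, which ensures $t_f\le t_m$ in the notation of \Cref{lem_flow_1}; this is needed to invoke \Cref{lem_flow_3}.

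\medskip

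\noindent With these two verifications in hand, the proof is a two-line assembly. Apply \eqref{largG_Ex} at time $t_0$ to get
\begin{equation*}
\|\E L_{(1,2),t_0}-K_{(1,2),t_0}\|_2\prec N^{-1}\eta_{t_0}^{-2}\cdot N^{-\tau_e\wedge\e_g}.
\end{equation*}
Feed this into \eqref{flow_3_result_2} at $t=t_f$ and use $(t_c-t_f)^2(\im m_{t_f})^2\sim \eta^2$ from \eqref{t_eta} to obtain
\begin{equation*}
\|\E L_{(1,2),t_f}-K_{(1,2),t_f}\|_2\prec \frac{N^{-\tau_e\wedge \e_g}}{N(t_c-t_f)^2(\im m_{t_f})^2}\sim \frac{N^{-\tau_e\wedge \e_g}}{N\eta^2}\prec N^{-1-\e_g}\eta^{-2},
\end{equation*}
once $C$ is chosen so that $\tau_e\ge \e_g$. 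Since $(L_{(1,2),t_f},K_{(1,2),t_f})\eqdist(L_{(1,2)},K_{(1,2)})$, this is exactly \eqref{main_G_es}.

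\medskip

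\noindent\textbf{Main obstacle.} The only genuinely delicate point is the quantitative bookkeeping in the edge regime: one must show that after choosing $C$ large compared to $C_0$ (and $\tau_e\ge \e_g$), the initial scale $\eta_{t_0}$ remains above $N^{-1/3+\tau_e}$ and simultaneously $t_f$ stays below $t_m$, uniformly across the two regimes $\eta\gtrless\kappa_E$. Once the square-root behaviour $\im m(z)\sim \sqrt{\kappa_E+\eta}$ is used carefully together with the rigidity scaling $\kappa_{\gamma_k}\sim(k/N)^{2/3}$ from \eqref{eq:kappa_gamma_k}, both conditions reduce to the single requirement $C\ge 2C_0$ (say), and the rest of the argument is a mechanical concatenation of \Cref{lemma_G_large_2} and \Cref{lem_flow_3}.
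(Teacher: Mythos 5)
Your proposal reproduces the paper's proof: set up the characteristic flow so that the endpoint matches the Gaussian-divisible ensemble \eqref{Gcom_a}, verify $\eta_{t_0}\gtrsim N^{-1/3+\tau_e}$ with $\tau_e=(C/3-1)\e_g\ge\e_g$ for $C>8$ (using $\im m(z)\sim\sqrt{\kappa+\eta}$ and $\kappa\sim(k/N)^{2/3}$), check $N\eta\,\im m(z)\gtrsim N^{C\e_g}$, and feed the $t_0$-scale input from \Cref{lemma_G_large_2} into the propagation estimate \Cref{lem_flow_3}. The one step stated too tersely is the $t_f\le t_m$ check: since $t_m$ is a first-hitting time, you need $N\eta_t\,\im m_t>N^{C_0\e_g}$ \emph{uniformly} over $t\in[t_0,t_f]$, which the paper makes explicit via the chain $N\eta_t\,\im m_t\gtrsim N(t_c-t)(\im m_t)^2\gtrsim N(t_c-t_f)(\im m_{t_f})^2\sim N\eta\,\im m(z)$ (using \eqref{t_eta}, \eqref{im_m_t_sim_through_evolution}, and $t_c-t\ge t_c-t_f$); an endpoint check at $t=t_f$ does not by itself certify this without the monotonicity observation.
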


\begin{proof}
For $z=E+\ii \eta$ with $E=\gamma_k$ and $N^{-2/3+C\varepsilon_g}k^{-1/3}\le \eta \le N^{-C\varepsilon_g}$, by \eqref{eq:kappa_gamma_k} and \eqref{square_root_density} below, we have that $\im m (z) \sim \sqrt{\kappa+\eta}$ and $\kappa\sim k^{2/3}/N^{2/3}$. Let $t_f=0$ and $t_0=t_f- N^{-\e_g}/2$. Then, we can find initial values $z_{t_0}$ and $\Lambda_{t_0}$ such that $z_{t_f}=z$ and $\Lambda_{t_f}=\Lambda$ at $t=t_f$. In fact, we can first solve the second equation in \eqref{eq:deterflow} as \smash{$\Lambda_t=e^{(t_f-t)/2}\Lambda$} and then plug it into the first equation in \eqref{eq:deterflow}. In the resulting equation, the RHS is a locally Lipschitz function in $t$ and $z$, so there exists a solution $z_{t_0}$ at $t=t_0$.

At $t=t_f$, we have $\im m_{t_f}(z_{t_f})=\im m (z) \sim \sqrt{\kappa+\eta}$. Thus, by \eqref{t_eta}, we know that 
\[t_c-t_f\sim \eta/\im m_{t_f}\pa{z_{t_f}}\lesssim \sqrt{\eta}\le N^{-C\varepsilon_g/2} ,\] 
which also implies that $t_c-t_0=(t_f-t_0)(1+\oo(1))=N^{-\e_g}(1/2+\oo(1))$. Using \eqref{t_eta} again and the fact that $\im m_{t_0}\pa{z_{t_0}}\sim \im m_{t_f}\pa{t_f}=\im m (z) $ by \eqref{im_m_t_sim_through_evolution}, we get
    \begin{equation}\nonumber
        \begin{aligned}
            \eta_{t_0}\sim N^{-\varepsilon_g}\im m (z) \gtrsim N^{-\varepsilon_g}\sqrt{k^{2/3}/N^{2/3}+N^{-2/3+C\varepsilon_g}k^{-1/3}}\gtrsim N^{-1/3+\pa{C/3-1}\varepsilon_g}.
        \end{aligned}
    \end{equation}
 Since $C>8$, this implies $\eta_{t_0}\gtrsim N^{-1/3+\varepsilon_g}$.

To complete the proof using \eqref{flow_3_result_2} from \Cref{lem_flow_3}, we only need to check that $t_f\leq t_m$. It suffices to prove that $N\eta_t\im_t\pa{z_t}> N^{C\varepsilon_g}$ for all $t\in \qa{t_0,t_f}$. In fact, by \eqref{t_eta}, \eqref{im_m_t_sim_through_evolution}, and \eqref{square_root_density}, we have that 
    \begin{equation}\nonumber
        \begin{aligned}
            N\eta_t\im m_t\pa{z_t}\gtrsim& N\pa{t_c-t}\pa{\im m_t\pa{z_t}}^2\gtrsim N\pa{t_c-t_f}\pa{\im m_{t_f}\pa{z_{t_f}}}^2\sim N \eta\im m (z) \\
            \gtrsim& N\cdot \pa{N^{-2/3+C\varepsilon_g}k^{-1/3}}\cdot \pa{k^{1/3}/N^{1/3}}=N^{C\varepsilon_g}\gg N^{C_0\varepsilon_g},\quad \forall t\in\qa{t_0,t_f}.
        \end{aligned}
    \end{equation}
    Thus, we conclude that $t_f\leq t_m$, thereby completing the proof of \Cref{main_lemma_G} using \Cref{lem_flow_3}. 
\end{proof}


With \Cref{main_lemma_G}, we can employ a standard Green's function comparison argument to deduce \eqref{main_EGG} for the original model, as shown by the following lemma. 
The proof of \Cref{main_lemma_com} is identical to that of \cite[Lemma 3.4]{stone2024randommatrixmodelquantum} and is therefore omitted here.

\begin{lemma}\label{main_lemma_com}
Let $H$ and $\wt H$ be two matrices satisfying \Cref{main_assm}. Suppose they satisfy the following moment-matching conditions: for $i,j\in \cal I$ and integers $l,\kk\ge 0$,  
\be\label{comH0}
\E (H_{ij})^l (H^*_{ij})^{\kk} - \E (\wt H_{ij})^l (\wt H^*_{ij})^{\kk}=0 \ \ \text{for} \ \ l+\kk\le 3, 
\ee
and there exists a constant $\delta \in (0,1/2)$ such that 
\be\label{comH}
\left|\E (H_{ij})^l (H^*_{ij})^{\kk} - \E (\wt H_{ij})^l (\wt H^*_{ij})^{\kk}\right| \lesssim N^{-2-\delta}  \ \ \text{for} \ \ l+\kk= 4. 
\ee
Then, for any $z\in \mathbf D(\tau)$, $z_1,z_2\in \{z, \bar z\}$, and $a,b \in \llbracket D\rrbracket$, we have
\be\label{eq:cpmparison}
\mathbb E \langle G_1E_{a}G_2E_{b}\rangle-\mathbb E \langle \wt G_1E_{a}\wt G_2E_{b}\rangle\prec N^{-1-\delta}\eta^{-2},
\ee
where $\wt G_i \equiv G(z_i,\wt H,\Lambda)$, $i\in \{1,2\}$, denote the Green's functions of $\wt H$.
\end{lemma}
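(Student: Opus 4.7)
The plan is to run a standard Lindeberg-type swapping argument, comparing $\E F(H)$ and $\E F(\wt H)$ for the test function $F(H) := \langle G_1 E_a G_2 E_b\rangle$ one matrix entry at a time. Enumerate the independent scalar entries of $H$ coming from the $D$ diagonal blocks as $h_1,\ldots,h_K$ with $K = \OO(DN^2)$, and define hybrid matrices $H^{(k)}$ that agree with $\wt H$ on $h_1,\ldots,h_k$ and with $H$ on the remaining entries, so that $H^{(0)}=H$ and $H^{(K)}=\wt H$. The target difference then telescopes into a sum over $k\in\qq K$ of the one-swap increments $\E[F(H^{(k-1)}) - F(H^{(k)})]$.

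For each $k$, write $H^{(k-1)}$ and $H^{(k)}$ as the common matrix $\wh H^{(k)}$ (in which the $k$-th entry is set to zero) plus the contribution of that single scalar entry, and Taylor-expand $F$ conditionally on $\wh H^{(k)}$ in that scalar variable up to fourth order. The basic identity $\partial_{H_{ij}} G = -G E_{ij} G$ together with its Hermitian counterpart implies that the $p$-th derivative of $F$ is a sum of normalized traces of alternating products of $G_1$, $G_2$ with elementary matrices $E_{ij}$, $E_{ji}$ inserted. Taking expectations, the matching hypothesis \eqref{comH0} annihilates all contributions of order $\le 3$, the bound \eqref{comH} controls the fourth-order discrepancy by $N^{-2-\delta}$ times the corresponding fourth derivative, and the fifth-order Taylor remainder is handled by the high-moment condition \eqref{eq:highmoment} via a standard cutoff argument analogous to the remainder estimate in \Cref{lem:complex_cumu}.

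The main technical input is a uniform derivative bound of the form $|\partial_{h_k}^p F| \prec N^{-1}\eta^{-2}$ for $p\in\{4,5\}$, which when summed over $K = \OO(N^2)$ entries and multiplied by the $N^{-2-\delta}$ moment discrepancy yields exactly the target $N^{-1-\delta}\eta^{-2}$. Writing $F = (DN)^{-1}\sum_{x\in \cI_a,\, y\in \cI_b}(G_1)_{xy}(G_2)_{yx}$ and differentiating $p$ times produces a sum of products of $G$-entries in which two indices are pinned to $i,j$ (from the elementary matrices brought down) and the remaining internal indices are summed. Repeated Cauchy--Schwarz combined with Ward's identity \eqref{eq_Ward} replaces each internal summation by a factor $\im m/\eta \lesssim \eta^{-1}$, and the anisotropic local law \eqref{eq:aniso_local} controls the remaining pinned entries of $G$ by $\opr{1}$; together with the overall $(DN)^{-1}$ prefactor this yields the desired $N^{-1}\eta^{-2}$ bound. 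A mild subtlety is that $\wh H^{(k)}$ has one zero entry and so does not literally satisfy \Cref{main_assm}, but a standard resampling argument shows that \eqref{eq:aniso_local} and the Ward-type bounds continue to hold for $\wh H^{(k)}$ with the same stochastic-domination constants.

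The main obstacle is purely the bookkeeping required to verify the $N^{-1}$ saving in the derivative bound: naively, a fourth derivative produces terms involving six $G$-entries against only a single $(DN)^{-1}$ trace normalization, so one might expect only $|F^{(4)}| \prec \eta^{-2}$. The saving comes from the fact that, despite the proliferation of indices, all summed indices remain internal to the same trace, so Ward's identity on the internal sums and the anisotropic local law on the pinned entries together preserve the original $(DN)^{-1}$ factor without further loss. Once this bound is established, summing the telescoping increments yields $|\E F(H) - \E F(\wt H)| \prec K \cdot N^{-2-\delta}\cdot N^{-1}\eta^{-2} \prec N^{-1-\delta}\eta^{-2}$, completing the proof.
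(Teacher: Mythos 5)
Your proposal is the standard Lindeberg swapping argument for Green function comparison, which is exactly the approach taken in the reference the paper defers to (\cite[Lemma 3.4]{stone2024randommatrixmodelquantum}); the paper itself omits the proof. The central arithmetic is right: after $p$ derivatives in one scalar entry $h_{ij}$, the normalized trace $F=(DN)^{-1}\tr(G_1E_aG_2E_b)$ becomes $(DN)^{-1}$ times a sum of products of $p$ off-diagonal chain entries of total $G$-length $p+2$, and Cauchy--Schwarz combined with Ward's identity at the pinned endpoints plus $\|G\|\le\eta^{-1}$ for the interior gives a total $\eta$-loss of $\eta^{-(p+2)+p}=\eta^{-2}$ regardless of $p$; the $(DN)^{-1}$ prefactor survives because $i,j$ are pinned, not summed. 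Summing $\OO(N^2)$ telescoping increments, the order-$4$ discrepancy $N^{-2-\delta}$ yields $N^{-1-\delta}\eta^{-2}$, and since $\delta<1/2$ the fifth-order Taylor tail contributes only $N^{-3/2}\eta^{-2}\ll N^{-1-\delta}\eta^{-2}$. Two small points worth keeping in mind when you write this out: (a) the ``$N^{-1}$ saving'' you flag is not something to be rescued after differentiation --- it is just the normalization of $\langle\cdot\rangle$ and is never at risk, the actual issue being that the $\eta$-power does not accumulate, which your Ward/Cauchy--Schwarz counting settles; and (b) the local laws for the hybrid matrices with one entry zeroed are covered by the final sentence of \Cref{lem_loc} (no identical-distribution assumption is needed), so you can invoke that directly rather than a separate resampling argument.
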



\begin{proof}[\bf Proof of \Cref{key_lemma_delocalized_eigenvector}]
     Given the matrix $H$ considered in \Cref{key_lemma_delocalized_eigenvector}, we can construct another random matrix \smash{$\wt H$} satisfying the setting in \Cref{main_lemma_G} and such that the moment-matching conditions \eqref{comH0} and \eqref{comH} hold with $\delta=\e_g$ (see e.g., Lemma 6.5 in \cite{erdHos2012bulk}).  
     By \Cref{main_lemma_G}, as long as we choose $\e_g$ small enough such that $C\e_g \le \varepsilon_L \le 2/3-C\e_g$, there is 
     $$D\mathbb E \langle \wt G_1E_{a}\wt G_2E_{b}\rangle- (K_{(1,2)})_{ab}
     \prec N^{-1-\e_g}\eta^{-2},$$
     for $\eta = N^{-2/3+ \varepsilon_L} k^{-1/3}$. On the other hand, by \Cref{main_lemma_com}, we have that 
     $$\mathbb E \langle  G_1E_{a} G_2E_{b}\rangle- \mathbb E \langle \wt G_1E_{a}\wt G_2E_{b}\rangle \prec N^{-1-\e_g}\eta^{-2}.$$
     Combining the above two estimates, we conclude \Cref{key_lemma_delocalized_eigenvector} by choosing $c_L=\e_g$.
\end{proof}

\subsection{Proof of \Cref{lemma_G_large_2}}\label{proof_of_lemma_G_large_2}

For any $z_1, z_2\in \{z, \overline{z}\}$, we abbreviate that 
$$\wh M\equiv \wh M_{(1,2)},\quad L\equiv L_{(1,2)},\quad K\equiv K_{(1,2)},\quad \text{and}\quad \wt M\equiv \wh M_{(2,1)},\quad \wt L\equiv L_{(2,1)}, \quad \wt K\equiv K_{(2,1)}.$$
Moreover, given any deterministic matrix $B\in \C^{DN\times DN}$, we denote 
$$L_{ab}(B):=D\langle G_1 E_a G_2 E_b B\rangle,\quad K_{ab}(B):=\sum_x (1-\wh M)^{-1}_{ax}D\langle M_1 E_x M_2 E_b B\rangle. $$
Similarly, we define \smash{$\wt L_{ab}(B)$ and $\wt K_{ab}(B)$} by exchanging $1$ and $2$. Applying the identity 
\begin{equation}\nonumber
    \begin{aligned}
        G - M = -M(m + H)G = -M\underline{HG} + M(\widetilde{\E}[\widetilde{H}G\widetilde{H}] - m)G
    \end{aligned}
\end{equation}
to $G_2$ in $L_{ab}=D\avga{G_1E_aG_2E_b}$ and using the notation in \Cref{def:underline}, we can show that
\begin{align}
\begin{split}
L_{ab} = &\,D\langle G_1E_aM_2E_b\rangle - D\langle \underline{G_1E_aM_2HG_2E_b}\rangle \\
&+ D \sum_{x=1}^D \langle G_1E_aM_2E_x\rangle L_{xb} + D^2\sum_{x=1}^D \langle (G_2 - M_2) E_x\rangle \langle G_1E_aM_2E_xG_2E_b\rangle
\end{split}\label{eq:aftertrace}
\end{align}
through a direct computation. Taking the expectation on both sides of \eqref{eq:aftertrace}, we obtain that
\begin{align}
\E L_{ab} &= \wh{M}_{ab} + D\E\langle (G_1-M_1)E_aM_2E_b\rangle - D\E\langle \underline{G_1E_aM_2HG_2E_b}\rangle + \sum_{x=1}^D \wh M_{ax} \E L_{xb} \nonumber\\
&\quad + D\sum_{x=1}^D \E\langle (G_1 - M_1)E_aM_2E_x\rangle L_{xb} + D^2\sum_{x=1}^D \E\langle (G_2-M_2) E_x\rangle \langle G_1E_aM_2E_xG_2E_b\rangle \nonumber\\
&= \wh{M}_{ab} + D\E\langle (G_1-M_1)E_aM_2E_b\rangle - D\E\langle \underline{G_1E_aM_2HG_2E_b}\rangle + \sum_{x=1}^D \wh M_{ax} \E L_{xb} \nonumber\\
&\quad + D\sum_{x=1}^D \E\langle (G_1 - M_1)E_aM_2E_x\rangle K_{xb} + D\sum_{x=1}^D \E\langle (G_2-M_2) E_x\rangle \wt K_{ba}(M_2E_x)\nonumber\\
&\quad+\OO_\prec \pa{N^{-2}\eta^{-3}\big\|\big({1-\wh M}\big)^{-1}\big\|},\label{ELK}
\end{align}
where we used the averaged local law \eqref{eq:aver_local} and the two-resolvents local laws \eqref{largG_Ent} and \eqref{largG_Ent_BB} in the above derivation. Now, the proof of \Cref{lemma_G_large_2} is based on \eqref{ELK} and the following two lemmas. The proofs of \Cref{lem:fourthcumu1,lem:fourthcumu2} are nearly the same as those of \cite[Lemmas 4.13 and 4.14]{stone2024randommatrixmodelquantum}. 
More precisely, as explained in the proof of \Cref{lemma_G_large}, we will use \eqref{entprodG} to replace the resolvent estimates in \cite[Lemma 2.11]{stone2024randommatrixmodelquantum} and use \eqref{1-M} and \eqref{1-M-2} instead of those in \cite[Lemma A.1]{stone2024randommatrixmodelquantum} to bound the operator norm of \smash{$\p{1-\wh M}^{-1}$}. Hence, we again omit further details.


\begin{lemma}\label{lem:fourthcumu1}
In the setting of \Cref{lemma_G_large_2}, we have that     
 \begin{align}
&- D\E\langle \underline{G_1E_aM_2HG_2E_b}\rangle=\opr{N^{-3/2}\eta^{-2} +N^{-2}\eta^{-2}\big\|\big({1-\wh M}\big)^{-1}\big\|} \nonumber\\ &+\frac{D\kappa^{(2, 2)}}{N}\sum_{x=1}^D \left[  { \langle \diag(M_2)^2E_x\rangle }\wt K_{ba}(M_2\diag(M_2)E_x) +\langle M_1 \diag(M_2)E_x\rangle  \wt K_{bx}(\diag(M_1E_aM_2))\right]\nonumber\\
&+\frac{D\kappa^{(2, 2)}}{N}\sum_{x=1}^D   \left[\langle M_1E_aM_2 \diag(M_1)E_x\rangle  \wt K_{bx}(\diag(M_1))+\langle M_1E_aM_2 \diag(M_2)E_x\rangle  \wt K_{bx}(\diag(M_2))\right]  ,\label{eq:G1HG2}
    \end{align}
where $\kappa^{(2, 2)}$ is the normalized $(2,2)$-cumulant of $h_{12}$, defined as $\kappa^{(2, 2)}:=N^2\cal C_{12}^{(2,2)}$, and $\diag(B)$ is the diagonal matrix consisting of the diagonal entries of the given matrix $B$.  
\end{lemma}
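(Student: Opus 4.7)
The plan is to expand $\E\langle \underline{G_1 E_a M_2 H G_2 E_b}\rangle$ using the complex cumulant expansion (\Cref{lem:complex_cumu}) applied entrywise to $H$. Writing the trace cyclically, $\langle G_1 E_a M_2 H G_2 E_b\rangle = (DN)^{-1}\sum_{\alpha,\beta} H_{\alpha\beta}(G_2 E_b G_1 E_a M_2)_{\beta\alpha}$, I would pull out $H_{\alpha\beta}$ (or its conjugate) via \eqref{5.16}. The underline in \Cref{def:underline} is designed to subtract precisely the $(p+q)=1$ (Gaussian) contribution of the expansion, so $\E\langle\underline{\cdot}\rangle$ becomes a sum over $(p,q+1)$-cumulants of $H_{\alpha\beta}$ with $p+q\geq 2$, i.e., cumulants of order $\geq 3$. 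The goal is then to isolate the $(2,2)$-cumulant contribution and absorb all other cumulants, as well as the remainder $R_{l+1}$, into the stated error term (which by \Cref{estimate_reminder_terms_cumulant_expansion} is standard).

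For the third-cumulant contribution ($p+q=2$), each cumulant $\mathcal C^{(p,q+1)}$ is bounded by $N^{-3/2}$ through \eqref{eq:bdd_cumu}. The derivative $\partial^{(p,q)}$ applied to the $G_1,G_2$ chain produces products of three resolvent entries linking $\alpha,\beta$. Combined with the fact that the variance matrix $s_{\alpha\beta} = N^{-1}\mathbf 1(\alpha,\beta\in \cal I_a)$ is constant within blocks and $\E H_{\alpha\beta}^2 = 0$ off-diagonal, the double-sum over $\alpha,\beta$ followed by the anisotropic local law \eqref{eq:aniso_local}, the entrywise bound \eqref{entprodG}, and Ward's identity \eqref{eq_Ward} shows these terms are bounded by $\OO_\prec(\eta^{-2} N^{-3/2})$. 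Similarly, for $p+q\geq 3$, cumulants are of size $N^{-2}$ or smaller, and the induced expressions contribute at most $\OO_\prec(\eta^{-2}N^{-3/2})$, which is absorbed in the error.

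The main leading contribution comes from the $(p,q+1)=(2,2)$ cumulant, i.e., $(p,q)=(2,1)$ (together with the symmetric $(1,2)$ obtained by conjugation). Here $\partial^{(2,1)}$ applied to $(G_2 E_b G_1 E_a M_2)_{\beta\alpha}$ together with the explicit $H$ factor produces a finite number of three- and four-resolvent products; after using the anisotropic local law to replace short $G$-segments by $M$-segments and invoking the two-resolvent estimate \eqref{largG_Ent_BB} for the remaining longer $G_2 E_{\cdot} G_2$ segment, each product decouples into (i) a fully deterministic $M$-trace that collapses to a $\diag(M_j)$ factor via the block-diagonal structure of $s_{\alpha\beta}$, and (ii) a $\wt K$-factor arising from the $(1-\wh M_{(2,1)})^{-1}$ inverse generated by the two-resolvent local law. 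Enumerating the four distinct index-contraction topologies—determined by which of the three differentiations contracts into the $M_1$/$E_a$/$M_2$ insertion and which stays free—yields exactly the four terms appearing on the right-hand side of \eqref{eq:G1HG2}, with coefficient $D\kappa^{(2,2)}/N$ (the extra $D$ coming from the free block-index sum, the $1/N$ from $s_{\alpha\beta}=1/N$, and the normalized cumulant $\kappa^{(2,2)}$).

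The main obstacle will be the bookkeeping of Step~2: correctly enumerating the four surviving topologies, checking that no additional leading terms arise from $(3,0),(0,3),(2,1),(1,2)$ third-cumulants (here the vanishing $\E H_{\alpha\beta}^2 = 0$ assumption is essential), and verifying that each application of \eqref{largG_Ent_BB} produces exactly the claimed $\wt K_{ba}(\cdot)$ or $\wt K_{bx}(\cdot)$ structure. Since this mirrors the argument for \cite[Lemma 4.13]{stone2024randommatrixmodelquantum}, only two adjustments are required for the edge regime: the multi-resolvent bounds \eqref{entprodG} replace the bulk-regime product bound of \cite{stone2024randommatrixmodelquantum}, and the operator-norm bounds \eqref{1-M}--\eqref{1-M-2} replace \cite[Lemma A.1]{stone2024randommatrixmodelquantum} whenever $\|(1-\wh M_{(1,2)})^{-1}\|$ appears; the latter is precisely the source of the $\eta^{-2} N^{-2}\|(1-\wh M_{(1,2)})^{-1}\|$ contribution in the stated error term.
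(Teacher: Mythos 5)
Your overall approach matches the paper's (which simply cites \cite[Lemma 4.13]{stone2024randommatrixmodelquantum} and notes that \eqref{entprodG} replaces the bulk-regime resolvent bounds and \eqref{1-M}, \eqref{1-M-2} replace the bulk operator-norm estimate): isolate the $(2,2)$-cumulant as the leading correction, and show everything else is absorbed in the stated error.

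There is, however, an internal inconsistency in your write-up that needs fixing before the sketch becomes a proof. In the second paragraph you assert that for $p+q\ge 3$ the cumulant terms contribute at most $\OO_\prec(\eta^{-2}N^{-3/2})$ and are absorbed in the error, and then in the very next paragraph you declare the $(p,q)=(2,1)$ term---which is a $p+q=3$ term---to be the \emph{main leading contribution}, producing the four explicit $D\kappa^{(2,2)}/N\cdot \wt K(\cdot)$ terms. These two sentences contradict each other, and the lemma statement itself shows which one must give: since by \eqref{1-M} or \eqref{1-M-2} the factor $\|(1-\wh M_{(1,2)})^{-1}\|$ can be as large as $\eta^{-1}\im m$, the $(2,2)$-cumulant terms are of order $N^{-1}\cdot\|(1-\wh M_{(1,2)})^{-1}\|$, which near the edge (where $\eta\sim N^{-1/3}$, say) is parametrically larger than $\eta^{-2}N^{-3/2}$. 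So the $(2,2)$-cumulant cannot be absorbed in the error and must appear explicitly, exactly as in \eqref{eq:G1HG2}. The correct statement is: treat $p+q\geq 4$ (and the non-$(2,2)$ fourth cumulants $\mathcal C^{(3,1)},\mathcal C^{(1,3)},\mathcal C^{(4,0)},\mathcal C^{(0,4)}$, whose derivative structure forces off-diagonal resolvent entries and hence smallness) as error terms, and single out $\mathcal C^{(2,2)}$ as the sole surviving fourth-order contribution. Your third paragraph already does this correctly, so this is a slip in the bookkeeping rather than a conceptual error; nonetheless as written the argument is self-inconsistent and would not establish why the stated $\wt K$-terms survive the absorption you just claimed.
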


\begin{lemma}\label{lem:fourthcumu2}
In the setting of \Cref{lemma_G_large_2}, let $B$ be an arbitrary deterministic matrix with $\|B\|\le 1$. Then, we have that
    \be\label{eq:Exp_G1-M1}
    \begin{aligned}
    \E \langle(G_1 - M_1)B\rangle 
    =&~\frac{\kappa^{(2,2)}  \langle \diag(M_1)^2 \rangle }{N}\left[ \langle M_1BM_1\diag(M_1)\rangle   +   \frac{ \langle M_1^2 \diag(M_1)\rangle}{1-\langle M_1^2\rangle } \langle M_1^2B\rangle\right]\\
    &~ + \OO_\prec\qa{\pa{\frac{1}{\im m (z) }\wedge N^{\varepsilon_g}}\cdot{\pa{ N^{-3/2}\eta^{-1}+N^{-2}\eta^{-2}}}}.
    \end{aligned}
    \ee
\end{lemma}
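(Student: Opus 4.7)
The plan is to start from the resolvent identity
\begin{equation*}
G_1 - M_1 = -M_1 \underline{HG_1} + \sum_{a=1}^D D\avg{(G_1-M_1)E_a}\, M_1 E_a G_1,
\end{equation*}
which follows from combining \eqref{eq:G-M} with the underline decomposition \eqref{eq:HGline} and the block-translation identity $\cal S(M_1)=m_1 I$. Taking $\avg{B\,\cdot\,}$ and expectation yields
\begin{equation*}
\E\avg{(G_1-M_1)B} = -\E\avg{BM_1 \underline{HG_1}} + \sum_a D\, \E\!\left[\avg{(G_1-M_1)E_a}\avg{M_1 E_a G_1 B}\right].
\end{equation*}
In the second sum I would split $\avg{M_1 E_a G_1 B}=\avg{M_1 E_a M_1 B}+\avg{M_1 E_a (G_1-M_1)B}$. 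The piece containing $(G_1-M_1)$ is a product of two centered fluctuations and is bounded using the two-resolvent local law \eqref{largG_Ent_BB} together with the operator-norm estimate $\|(1-\wh M_{(1,1)})^{-1}\|\lesssim 1/\im m(z)\wedge N^{\varepsilon_g}$ from \eqref{1-M}--\eqref{1-M-2}, producing an error of size $\OO_\prec\!\left((1/\im m(z)\wedge N^{\varepsilon_g})\cdot \eta^{-2}N^{-2}\right)$. The remaining piece $\avg{(G_1-M_1)E_a}\avg{M_1 E_a M_1 B}$ is deferred: it will be closed against a matching contribution from the fourth-cumulant step.

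The main work is the cumulant expansion of $-\E\avg{BM_1\underline{HG_1}}=-(DN)^{-1}\sum_{ij}(BM_1)_{ji}\E\underline{(HG_1)_{ij}}$ via \Cref{lem:complex_cumu} applied to each $H_{ij}$. The underline by construction cancels the $(p,q+1)=(1,1)$ (Gaussian) term, so the first nonzero contribution comes from third cumulants; by \eqref{eq:bdd_cumu} these give a prefactor $\OO(N^{-3/2})$, and the resulting three-resolvent structure at coincident spectral parameters is controlled using the three-$G$ local law \eqref{largG_3G_2} after one Ward reduction \eqref{eq_Ward}, producing an error $\OO_\prec\!\left((1/\im m(z)\wedge N^{\varepsilon_g})\cdot\eta^{-1}N^{-3/2}\right)$. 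The $(3,1)$ and $(1,3)$ off-diagonal cumulants vanish by \eqref{eq:meanvar2}, while diagonal-entry contributions are smaller by a factor $N^{-1}$ and are absorbed.

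The deterministic leading contribution will come from the fourth $(2,2)$-cumulant of off-diagonal entries, equal to $\kappa^{(2,2)}/N^2$ on entries $(i,j)$ in a common block. Expanding $\partial_{ij}^2\partial_{ji}^2$ of $\underline{(HG_1)_{ij}}$ yields finitely many patterns, each a product of four $G_1$-entries at coordinates involving $i$ and $j$. Replacing every $G_1$ factor by $M_1$ via \eqref{eq:aniso_local} (with the errors absorbed into the $\OO_\prec$ bound via \eqref{largG_Ent_BB}) and carrying out the $i,j$ sums subject to $s_{ij}^2 = N^{-2}\mathbf{1}(\text{same block})$, I expect to recover---using $\sum_i (M_1)_{ii}^2 = DN\avg{\diag(M_1)^2}$ and block-translation symmetry---two structural contributions of order $\kappa^{(2,2)}/N$. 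The first is directly $\avg{\diag(M_1)^2}\avg{M_1 B M_1 \diag(M_1)}$; the second has the form $\avg{\diag(M_1)^2}\avg{M_1^2\diag(M_1)}\cdot X$ with $X$ a scalar expression involving $\E\avg{(G_1-M_1)E_a}\avg{M_1 E_a M_1 B}$ that, when combined with the deferred sum from the first step, closes into a scalar Dyson equation with coefficient $(1-\avg{M_1^2})^{-1}$ and yields the second displayed term $\avg{M_1^2 B}\cdot\avg{M_1^2\diag(M_1)}/(1-\avg{M_1^2})$. Higher cumulants $(p+q+1\ge 5)$ contribute at strictly lower order and are absorbed.

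The hard part will be the combinatorial bookkeeping in the $(2,2)$-cumulant step: among the many $\partial^2\bar\partial^2$ patterns one must separate those whose index contractions collapse onto the diagonal and produce $\diag(M_1)$-type structures (via $\sum_i M_{ii}^2$) from those giving trace-type structures $\avg{M_1^2}$, and then carry out the scalar Dyson closure that is responsible for the $(1-\avg{M_1^2})^{-1}$ factor. A secondary technical point is the edge-dependent prefactor $1/\im m(z)\wedge N^{\varepsilon_g}$ in the error: it is forced by the saturation of $\|(1-\wh M_{(1,1)})^{-1}\|$ near the spectral edge as in \eqref{1-M}, \eqref{1-M-2}, and it appears identically in the three-resolvent bound \eqref{largG_3G_2} used in the third-cumulant step and in the two-resolvent bound used in processing the deferred sum, which explains why the same factor multiplies both terms of the stated error.
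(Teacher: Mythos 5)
Your overall strategy---the combination
$G_1-M_1=-M_1\underline{HG_1}+\sum_a D\avg{(G_1-M_1)E_a}M_1E_aG_1$, cumulant expansion of the underlined term, and a scalar self-consistent closure responsible for the $(1-\langle M_1^2\rangle)^{-1}$ factor---is the right structure and is precisely what the paper does (the paper itself defers the details to the companion lemmas in \cite{stone2024randommatrixmodelquantum} and simply replaces the ancillary resolvent and $(1-\wh M)^{-1}$ estimates with \eqref{entprodG} and \eqref{1-M}--\eqref{1-M-3}). Your derivation of the starting identity is correct, the attribution of the main error-bound prefactor to $(1-\langle M_1^2\rangle)^{-1}$ via \eqref{1-M-3} is right, and the logic ``deterministic part $F(B)$ plus $\E\avg{G_1-M_1}\avg{M_1^2B}$; solve with $B=I$; back-substitute'' is exactly how the two terms of \eqref{eq:Exp_G1-M1} arise. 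Your description of the closure is slightly muddled, though: the $(1-\langle M_1^2\rangle)^{-1}$ does not appear by being ``closed against a matching contribution from the fourth-cumulant step.'' It comes purely from solving the scalar Dyson equation for $\E\avg{G_1-M_1}$ (e.g.\ by taking $B=E_a$, using translation invariance to get $\E\avg{(G_1-M_1)E_a}=D^{-1}\E\avg{G_1-M_1}$, and solving); the fourth cumulant supplies only the inhomogeneous term $F(I)=\frac{\kappa^{(2,2)}\langle\diag(M_1)^2\rangle}{N}\langle M_1^2\diag(M_1)\rangle$.

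There is one genuine error in the proposal: the claim that the $(3,1)$ and $(1,3)$ off-diagonal cumulants ``vanish by \eqref{eq:meanvar2}.'' The assumption \eqref{eq:meanvar2}, $\E[H_{ij}^2]=0$ for $i\neq j$, is a second-moment condition and places no constraint on the fourth-order cumulants $\mathcal C^{(3,1)}$, $\mathcal C^{(1,3)}$, $\mathcal C^{(0,4)}$ (nor on the third-order cumulants $\mathcal C^{(2,1)}$, $\mathcal C^{(0,3)}$), all of which may be nonzero under Assumption~\ref{main_assm}. The correct reason these do not contribute to the leading term is structural, not assumed: in the $(p,q)=(2,1)$ pattern (giving $\mathcal C^{(2,2)}$) the unbalanced $z_1/z_2$ derivatives produce only diagonal resolvent entries, so that after replacing $G_1\to M_1$ the $\alpha$- and $\beta$-sums factorize into $\sum_\alpha(M_1)_{\alpha\alpha}^2$-type traces of size $\OO(N)$ each, yielding $N^{-1}\cdot N^{-2}\cdot N^2=N^{-1}$; whereas the patterns corresponding to $\mathcal C^{(3,1)}$, $\mathcal C^{(1,3)}$, $\mathcal C^{(0,4)}$ necessarily insert at least one off-diagonal factor $(M_1)_{\alpha\beta}$, which for $\alpha\neq\beta$ inside a common block is $\OO(\|A\|^2)$, and/or produce oscillatory sums such as $\sum_{\alpha,\beta\in\mathcal I_a}(M_1BM_1)_{\beta\alpha}=\mathbf 1_a^\top M_1BM_1\mathbf 1_a=\OO(N)$ whose size is much smaller than the worst-case $\ell^1$-bound $N^{3/2}$. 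One must estimate these contributions via these cancellations, not by asserting they vanish. The same caveat applies to the analogous third-cumulant patterns you discuss; there, the saving is again the $\mathbf 1^\top(\cdot)\mathbf 1$ quadratic-form bound rather than a vanishing of cumulants.
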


We abbreviate $M=M (z) $ and $m=m (z) $ in the following derivation. By \eqref{1-M-3} below, we have that
\begin{equation}\nonumber
    \begin{aligned}
        \absa{1-\avga{M_1^2}}^{-1}\lesssim \pa{\im m}^{-1}.
    \end{aligned}
\end{equation}
Then, we get from \eqref{eq:Exp_G1-M1} that
\begin{equation}\label{proof_largG_1G}
    \begin{aligned}
        \absa{\E \langle(G_1 - M_1)B\rangle }\prec \pa{\im m}^{-1}\cdot  \pa{N^{-1}+\eta^{-1}N^{-3/2}+\eta^{-2}N^{-2}}\sim N^{-1}\pa{\im m}^{-1},
    \end{aligned}
\end{equation}
which gives \eqref{largG_1G}. It remains to show \eqref{largG_Ex}.

We first consider the case $z_1=z_2\in \ha{z,\bar{z}}$. Applying \eqref{1-M-2}, \eqref{eq:G1HG2}, and \eqref{proof_largG_1G} to \eqref{ELK}, we get that
\begin{equation}\nonumber
    \begin{aligned}
        \E L_{ab} &= \wh{M}_{ab} + \sum_{x=1}^D \wh M_{ax} \E L_{xb}+ \opr{N^{-1}\pa{\im m}^{-2}+N^{-2+\varepsilon_g}\eta^{-3}+N^{-3/2}\eta^{-2}}.
    \end{aligned}
\end{equation}
Solving for $\E L_{ab}$ and using \eqref{1-M-2} again, we obtain that
\begin{equation}\nonumber
    \begin{aligned}
        \E L_{ab}= K_{ab}+\opr{N^{-1+\varepsilon_g}\pa{\im m}^{-2}+N^{-2+2\varepsilon_g}\eta^{-3}+N^{-3/2+\varepsilon_g}\eta^{-2}}
        = K_{ab}+\opr{N^{-1-\varepsilon_g}\eta^{-2}}.
    \end{aligned}
\end{equation}

Next, we consider the case $z_1=\bar{z}_2\in \ha{z,\bar{z}}$. Without loss of generality, suppose that $z_1=\bar{z}_2=z$. Plugging \eqref{eq:G1HG2} and \eqref{eq:Exp_G1-M1} back into \eqref{ELK}, using \eqref{proof_largG_1G} to control the term $D\E\avga{\pa{G_1-M_1}E_aM_2E_b}$, and applying \eqref{1-M} to bound the operator norm of \smash{$\p{1-\wh M}^{-1}$}, we obtain that
\begin{align}
\E L_{ab}
&= \wh{M}_{ab}+ \sum_{x=1}^D \wh M_{ax} \E L_{xb} +\OO_\prec \pa{N^{-2}\eta^{-4}\im m+N^{-1}\pa{\im m}^{-1}+N^{-3/2}\eta^{-2}}\nonumber\\
&\quad+\frac{D\kappa^{(2, 2)}}{N}\sum_{x=1}^D \left[  { \langle \diag(M_2)^2E_x\rangle }\wt K_{ba}(M_2\diag(M_2)E_x) +\langle M_1 \diag(M_2)E_x\rangle  \wt K_{bx}(\diag(M_1E_aM_2))\right]\nonumber\\
&\quad+\frac{D\kappa^{(2, 2)}}{N}\sum_{x=1}^D   \left[\langle M_1E_aM_2 \diag(M_1)E_x\rangle  \wt K_{bx}(\diag(M_1))+\langle M_1E_aM_2 \diag(M_2)E_x\rangle  \wt K_{bx}(\diag(M_2))\right] \nonumber\\
&\quad+\frac{D\kappa^{(2,2)}  \langle \diag(M_1)^2 \rangle }{N}\sum_{x=1}^D\left[ \langle M_1E_aM_2E_xM_1\diag(M_1)\rangle   +   \frac{ \langle M_1^2 \diag(M_1)\rangle}{1-\langle M_1^2\rangle } \langle M_1^2E_aM_2E_x\rangle\right]K_{xb}\nonumber\\
&\quad+\frac{D\kappa^{(2,2)}  \langle \diag(M_2)^2 \rangle }{N}\sum_{x=1}^D\left[ \langle M_2E_xM_2\diag(M_2)\rangle   +   \frac{ \langle M_2^2 \diag(M_2)\rangle}{1-\langle M_2^2\rangle } \langle M_2^2E_x\rangle\right]\wt K_{ba}(M_2E_x).
\end{align}
To simplify the expression, we first replace all $M_i$, $i\in\ha{1,2}$, in the second and third lines and all $\mathrm{diag}\pa{M_i}$, $i\in\ha{1,2}$, in the last two lines with $m_i=m(z_i),$ up to an error of order $\OO\pa{N^{-\delta_A/2}}$ by the estimate \eqref{eq:msc} below. This leads to that:

\begin{align}
    \mathbb{E} L_{a b}
    = &~ \widehat{M}_{a b}+\sum_{x=1}^D \widehat{M}_{a x} \mathbb{E} L_{x b}+\opr{N^{-\delta_A / 2}(N \eta)^{-1}+N^{-1}(\operatorname{Im} m)^{-1}+N^{-3 / 2}\eta^{-2} +N^{-2}\eta^{-4} \im m} \nonumber\\
    &~ +\frac{\kappa^{(2,2)}}{N}\left[{\overline{m}}^4+\left|m\right|^4+\left|m\right|^2 m^2+\left|m\right|^2{\overline{m}}^2\right] K_{a b} +\frac{\kappa^{(2,2)}}{N}\left[\frac{m^4\left|m\right|^2}{1-\left\langle M^2\right\rangle}+\frac{{\overline{m}}^6}{1-\langle\left(M^*\right)^2\rangle}\right] K_{a b}\label{pre_cancellation},
\end{align}
where we also used the bounds \eqref{1-M}, \eqref{1-M-2}, \eqref{1-M-3}, and \eqref{proof_largG_1G} in the above derivation. By \eqref{equation_z_im_m} and \eqref{eq:msc}, we have that
\begin{equation}\label{estimate_1_minus_abs_m_2}
    \begin{aligned}
        1-\left\langle M^* M\right\rangle=1-|m|^2+\OO\big(N^{-\delta_A / 2}\big) ,\quad \text{and}\quad 1-\left\langle M^* M\right\rangle=\frac{\eta}{\eta+\operatorname{Im} m} \sim \frac{\eta}{\operatorname{Im} m}.
    \end{aligned}
\end{equation}
Together with the assumption $\eta / \operatorname{Im} m \sim N^{-\epsilon_g} \gg N^{-\delta_A / 2}$, it implies that $1-|m|^2=\pa{1+\oo\pa{1}}\pa{1-\avga{MM^*}} \sim {\eta}/{\operatorname{Im} m}$. With \eqref{eq:msc}, \eqref{1-M-3}, and \eqref{estimate_1_minus_abs_m_2}, we then obtain that
\begin{align*}
        &~\bar{m}^4 +|m|^4+|m|^2 m^2+|m|^2 \bar{m}^2+\frac{m^4|m|^2}{1-\left\langle M^2\right\rangle}+\frac{\bar{m}^6}{1-\langle\left(M^*\right)^2\rangle} \\
        =&~ 1+m^2+\frac{m^4}{1-\left\langle M^2\right\rangle}+\bar{m}^2+\bar{m}^4+\frac{\bar{m}^6}{1-\langle\left(M^*\right)^2\rangle}+\OO\left(\frac{\eta}{(\operatorname{Im} m)^2}\right) \\
        =&~ \frac{\bar{m}^4}{1-\left\langle M^2\right\rangle}+\frac{\bar{m}^6}{1-\langle\left(M^*\right)^2\rangle}+\OO\left(\frac{\eta}{(\operatorname{Im} m)^2}+\frac{N^{-\delta_A / 2}}{\operatorname{Im} m}\right) \\
        =&~ \bar{m}^4\frac{\left(1-|m|^2\right)\left(1+|m|^2\right)}{\left(1-\left\langle M^2\right\rangle\right)(1-\langle\left(M^*\right)^2\rangle)}+\OO\left(\frac{\eta}{(\operatorname{Im} m)^2}+\frac{N^{-\delta_A / 2}}{(\operatorname{Im} m)^2}\right) = \OO\left(\frac{\eta}{(\operatorname{Im} m)^3}+\frac{N^{-\delta_A / 2}}{(\operatorname{Im} m)^2}\right).
    \end{align*}
Plugging this back into \eqref{pre_cancellation} and using $\absa{K_{ab}}\lesssim \im m/\eta$ by \eqref{1-M}, we get that
\begin{equation}\nonumber
    \begin{aligned}
        \mathbb{E} L_{a b}=&\widehat{M}_{a b}+\sum_{x=1}^D \widehat{M}_{a x} \mathbb{E} L_{x b} +\OO_{\prec}\left(N^{-1}(\operatorname{Im} m)^{-2}+N^{-\delta_A / 2}(N \eta)^{-1}(\operatorname{Im} m)^{-1}+ N^{-3 / 2}\eta^{-2}+N^{-2}\eta^{-4} \im m\right) .
    \end{aligned}
\end{equation}
Solving for $\mathbb{E} L_{(1,2)}$ and using \eqref{1-M} again, we obtain that
\begin{equation}\nonumber
    \begin{aligned}
        \mathbb{E} L_{a b}=K_{a b}+\OO_{\prec}\left(\p{N \eta}^{-1}(\operatorname{Im} m)^{-1}+N^{-\delta_A / 2} N^{-1} \eta^{-2}+N^{-3 / 2} \eta^{-3} \operatorname{Im} m+N^{-2} \eta^{-5} \pa{\operatorname{Im} m}^2\right).
    \end{aligned}
\end{equation}
This completes the proof of \eqref{largG_Ex} for the case $z_1=\bar{z}_2\in\ha{z,\bar{z}}$ by using $\eta / \operatorname{Im} m \sim N^{-\epsilon_g}$ and $\eta \gtrsim N^{-1 / 3+\tau_e}$.

\subsection{Proof of \Cref{lem_flow_1,lem_flow_2,lem_flow_1.5,lem_flow_3}}\label{proof_of_lem_flow}

In this subsection, we present the proofs of \Cref{lem_flow_1,lem_flow_2,lem_flow_1.5,lem_flow_3}, based on an extension of the arguments used in the proofs for \cite[Lemmas 4.6--4.9]{stone2024randommatrixmodelquantum}.
Since the proofs of these lemmas share a similar structure, we provide a detailed proof only for \Cref{lem_flow_1} to avoid redundancy. The remaining three lemmas follow from analogous---and in some cases simpler---adaptations of the corresponding arguments in \cite{stone2024randommatrixmodelquantum}.

Let $\mathbf{B}_t=\left(b_{i j}(t)\right)_{i,j\in \cI}$ be a $D  \times D $ block matrix Brownian motion consisting of the diagonal blocks $(B_a)_t$ in \eqref{eq:Ht}. Then, by \eqref{eq:Ht}, $H_t=(h_{ij}(t))_{i,j\in \cI}$ satisfies the equation 
$$
\dd h_{i j}=-\frac{1}{2} h_{i j} \dd t+\frac{1}{\sqrt{N}} \dd b_{i j}(t) ,
$$
with initial data $H_{t_0}$. Let $F$ be any function of $t$ and $H$ with continuous second-order derivatives. Then, by It{\^o}'s formula, we have that
\be\label{Ito}
\dd F=\partial_t F \dd t+ \sum_{a=1}^D \sum_{l,\kk \in \mathcal{I}_a} \partial_{h_{l\kk}} F \dd h_{l\kk}+\frac{1}{2 N} \sum_{a=1}^D \sum_{l,\kk \in \mathcal{I}_a} \partial_{h_{l\kk}}\partial_{ h_{\kk l}}  F \dd t.
\ee
We will apply this equation to functions of the resolvents $G_{i,t}\equiv (G_i)_t =(H_t-Z_{i,t})^{-1}$ with $Z_{i,t}=(z_{i})_{t}-\Lambda_t$ for $z_{i}\in \{z, \bar z\}$. 
Using the formula (with the simplified notation $\partial_{l\kk}\equiv \partial_{h_{l\kk}}$)
\begin{equation} \label{Ito_pro_1}
        \partial_{l_1\kk_1} \left(G_{i,t}\right)_{l_2\kk_2}= -\left(G_{i,t}\right)_{l_2l_1}\left(G_{i,t}\right)_{\kk_1\kk_2},\quad l_2,\kk_2 \in \cI, \ l_1,\kk_1\in \cal I_a, \ a\in \qq D,
\end{equation}
we can easily obtain the following identities (with $M_{i,t}\equiv (M_i)_t$):  
\begin{align} \label{Ito_pro_2}
&        \partial_{t}  G_{i,t}= G_{i,t}  \left(\frac{\dd}{\dd t}Z _{i,t}\right) G_{i,t},\quad \text{with}\quad \frac{\dd}{\dd t}Z_{i,t}=-\frac12 Z_{i,t}-\langle M_{i,t}\rangle; \\
 &\sum_{a=1}^D \sum_{l,\kk \in \mathcal{I}_a}  h_{l\kk}\partial_{l\kk} G_{i,t}  =-G_{i,t}H_t G_{i,t}=-G_{i,t}-G_{i,t}Z_{i,t}G_{i,t}; \label{Ito_pro_3}\\
 &       \sum_{l,\kk \in \mathcal{I}_a} \partial_{l\kk} \left(G_{i,t}\right)_{l_1\kk_1} \cdot \partial_{\kk l} \left(G_{i',t}\right)_{l_2\kk_2}  = \left(G_{i,t}E_{a}G_{i',t}\right)_{ l_1\kk_2} \left(G_{i',t}E_{a}G_{i,t}\right)_{l_2\kk_1},\quad l_1,\kk_1,l_2,\kk_2 \in \cI. \label{Ito_pro_4}
\end{align}

\begin{proof}[\bf Proof of \Cref{lem_flow_1}]
    For simplicity of notations, we abbreviate $\wh M_{(1,2),t}$, $L_{(1,2),t}$, and $K_{(1,2),t}$ as $\wh M_{t}$, $L_t$, and $K_t$, respectively. Moreover, we denote $z_t=E_t+\ii \eta_t$ and
\be\label{eq:wtL12t}
\wt L_t\equiv \wt L_{(1,2),t}:= (t_c-t) L_t, \quad \wt K_t\equiv \wt K_{(1,2),t}:= (t_c-t) K_t \, .
\ee
Using Itô's formula \eqref{Ito} and the identities \eqref{Ito_pro_1}--\eqref{Ito_pro_4}, we can calculate that for $ x,y \in \qq D$, 
\begin{align*}
\dd (\widetilde{L}_{t})_{xy} &= - (L_t)_{xy} \dd t  + \frac{1}{\sqrt{N}} \sum_{a=1}^D \sum_{ l,\kk \in \mathcal{I}_a} \partial_{ l\kk} (\widetilde{L}_{t})_{x y} \dd b_{ l\kk} + D(t_c-t)\left\langle G_{1,t} E_x G_{2,t} E_y\right\rangle \dd t \\
&+ D^2(t_c-t) \sum_{a=1}^D\left\langle G_{1,t} E_x G_{2,t} E_a\right\rangle\left\langle G_{2,t} E_y G_{1,t} E_a\right\rangle \dd t  \\
& + D^2(t_c-t) \sum_{a=1}^D
 \left\langle \left(G_{1,t}-M_{1,t} \right) E_a\right\rangle\left\langle G_{1,t} E_x G_{2,t} E_y G_{1,t} E_a\right\rangle \dd t\\
 & + D^2(t_c-t) \sum_{a=1}^D \left\langle \left(G_{2,t}-M_{2,t} \right) E_a\right\rangle\left\langle G_{2,t} E_y G_{1,t} E_x G_{2,t} E_a\right\rangle \dd t  .
 \end{align*}
 Using the definitions of $\wt L_t$ and $L_{(1,2,3),t}$, we can rewrite the above equation as 
 \begin{align}
\dd (\widetilde{L}_{t})_{xy}&= \frac{1}{\sqrt{N}} \sum_{a=1}^D \sum_{ l,\kk \in \mathcal{I}_a} \partial_{ l\kk} (\widetilde{L}_{t})_{x y} \dd b_{ l\kk} + \left(1 - \frac{1}{t_c-t}\right) (\widetilde{L}_{t})_{x y} \dd t + \frac{1}{t_c-t} \sum_{a=1}^D (\widetilde{L}_{t})_{x a} (\widetilde{L}_{t})_{a y} \dd t  \nonumber\\
&+ D(t_c-t) \sum_{a=1}^D \left\{\left\langle \left(G_{1,t}-M_{1,t} \right) E_a\right\rangle [L_{(1,2,1),t}]_{xya} + \left\langle \left(G_{2,t}-M_{2,t} \right) E_a\right\rangle [L_{(2,1,2),t}]_{yxa} \right\} \dd t . \label{ItowtL}
\end{align}
Next, with the averaged local law \eqref{eq:aver_local} and the estimate \eqref{entprodG}, we can bound the last term by
\begin{equation}\nonumber
    \begin{aligned}
        \OO_\prec\pa{(t_c-t)\cdot N^{-1}\eta_t^{-3}\im m_t}=\OO_\prec\pa{N^{-1}(t_c-t)^{-2}\pa{\im m_t}^{-2}},
    \end{aligned}
\end{equation}
where we used $\eta_t/\im m_t\sim t_c-t$ by \eqref{t_eta}. Hence, we can rewrite \eqref{ItowtL} as 
\be\label{mainwtL}
\begin{aligned}
\dd  \widetilde{L}_{t }
=&~ \frac{1}{\sqrt{N}} \sum_{a=1}^D \sum_{ l,\kk \in \mathcal{I}_a} \partial_{ l\kk} \widetilde{L}_{t } \dd  b_{ l\kk} +\left[  \left(1 - \frac{1}{t_c-t}\right) \widetilde{L}_{t} + \frac{1}{t_c-t} (\widetilde{L}_{t })^2   \right] \dd  t\\
&~ +\OO_\prec\left(N^{-1}(t_c-t)^{-2}\pa{\im m_t}^{-2}\right)\dd t .
\end{aligned}
\ee
On the other hand, by \eqref{eq:whMK}, we see that $\wt K_t$ satisfies the following equation:
\be\label{mainwtK}
\frac{\dd}{\dd t} \wt K_t= \left(1 - \frac{1}{t_c-t}\right) \widetilde{K}_{t} + \frac{1}{t_c-t} (\widetilde{K}_{t })^2, 
\ee
 which matches the drift term in \eqref{mainwtL}.

 We now study the martingale term in \eqref{mainwtL}, which is denoted as $\cal L_t$: 
$$ \dd\mathcal{L}_t= \frac{1}{\sqrt{N}}\sum_{a=1}^D\sum_{l,\kk\in\cal I_a}\partial_{l\kk}\wt L_{t}\dd b_{l\kk} \quad \text{with}\quad \cal L_{t_0}=0.
$$
The quadratic variation of $(\cal L_t)_{xy}$, $x,y \in \qq D$, is given by
\begin{align}\label{eq:quad_var}
[ \cal L_{xy}]_t &= \frac{1}{N}\int_{t_0}^t \sum_{a=1}^D\sum_{l,\kk\in\cal I_a}|\partial_{l\kk}(\wt L_s)_{xy}|^2\dd s.
\end{align}
Using \eqref{Ito_pro_1}, we can calculate the integrand as
\begin{align*}
\sum_{a=1}^D\sum_{l,\kk\in\cal I_a}|\partial_{l\kk}(\wt L_s)_{xy}|^2  =& \frac{(t_c-s)^2}{N^2}\sum_{a=1}^D\sum_{l,\kk\in\cal I_a}\Big(\left|(G_{1,s}E_xG_{2,s}E_yG_{1,s})_{\kk l}\right|^2 + \left|(G_{2,s}E_yG_{1,s}E_xG_{2,s})_{\kk l}\right|^2\\
&\qquad \qquad \qquad + 2\re\left[(G_{1,s}E_xG_{2,s}E_yG_{1,s})_{\kk l}\overline{(G_{2,s}E_yG_{1,s}E_xG_{2,s})_{\kk l}}\right]\Big)\\
=& \frac{D(t_c-s)^2}{N }\sum_{a=1}^D \Big( \langle G_{1,s}E_xG_{2,s}E_yG_{1,s}E_aG_{1,s}^*E_yG_{2,s}^*E_xG_{1,s}^*E_a\rangle\\
&\qquad \qquad \qquad +  \langle G_{2,s}E_yG_{1,s}E_xG_{2,s}E_aG_{2,s}^*E_xG_{1,s}^*E_yG_{2,s}^*E_a\rangle\\
&\qquad \qquad \qquad + 2 \re\langle G_{1,s}E_xG_{2,s}E_yG_{1,s}E_aG_{2,s}^*E_xG_{1,s}^*E_yG_{2,s}^*E_a\rangle\Big).
\end{align*}
Applying \eqref{t_eta} and the estimate \eqref{entprodG} below, we obtain that if $t_0 \le s \le t_m$, then
\be\label{eq:quad_integrand}
\sum_{a=1}^D\sum_{l,\kk\in\cal I_a}|\partial_{l\kk}(\wt L_s)_{xy}|^2  \prec \frac{|t_c-s|^2}{N }\cdot \frac{\im m_s}{\eta_s^5} \lesssim \frac{1}{N(t_c-s)^3\pa{\im m_s}^4} .
\ee
With a standard continuity argument, we obtain that this estimate holds uniformly in $s\in [t_0 , t_m]$ (i.e., we first show that \eqref{eq:quad_integrand} holds uniformly in $t$ belonging to an $N^{-C}$-net of $[t_0 , t_m]$ and then extend it uniformly to the whole interval using the Lipschitz continuity in $t$). Plugging \eqref{eq:quad_integrand} into \eqref{eq:quad_var}, we get the estimate
\be\label{eq:Ltxy}
[ \cal L_{xy}]_t  \prec \frac{1}{N^2(t_c-t)^2\pa{\im m_t}^4}, \quad \text{if}\quad t_0 \le t \le t_m.
\ee
On the other hand, we have the trivial bound $|[ \cal L_{xy}]_t |\le N$ by using $\|G_{i,t}\|\le \eta_t^{-1} \ll N$ for $t\in [t_0 , t_m]$. Together with \eqref{eq:Ltxy} and \Cref{stoch_domination}, it implies that for any constant $c>0$ and fixed $p\in \N$,
\[\mathbb E \left|[ \cal L_{xy}]_t\right|^p \le \left(\frac{N^c}{N^2(t_c-t)^2\pa{\im m_t}^4}\right)^p, \quad \forall \ t \in [t_0 , t_m]. \]
Applying the Burkholder-Davis-Gundy inequality, we obtain a $p$-th moment bound on $\sup_{s\in[ t_0,t] }\left|(\cal L_s)_{xy}\right|$. Then, applying Markov's inequality yields that for any $t\in [t_0 , t_m]$ and $x,y \in \qq D$,
\be\label{CIest}
\sup_{s\in[ t_0,t] }\left|(\cal L_s)_{xy}\right| \prec \frac{1}{  N(t_c-t)\pa{\im m_t}^2 }.
\ee

Inserting \eqref{CIest} back to \eqref{mainwtL}, we obtain that for any $t\in [t_0 , t_m]$ and $x,y \in \qq D$,
\be\label{mainwtL2}
 \widetilde{L}_{t }- \widetilde{L}_{t_0 }
=\int_{t_0}^{t} \left[\left(1 - \frac{1}{t_c-s}\right) \widetilde{L}_{s}  + \frac{1}{t_c-s}   (\widetilde{L}_{s} )^2 \right] \dd s+\OO_\prec \left(\frac1{N(t_c-t)\pa{\im m_t}^2 }\right) .
\ee
On the other hand, by \eqref{mainwtK}, we have
\be\label{mainwtL3} 
 \widetilde{K}_{t }- \widetilde{K}_{t_0 }
=\int_{t_0}^{t} \left[\left(1 - \frac{1}{t_c-s}\right) \widetilde{K}_{s}  + \frac{1}{t_c-s} (\widetilde{K}_{s})^2\right] \dd s.
\ee
For simplicity, we introduce the notation $ \wt \Delta_t:= \wt L_t-\wt K_t$ and define the linear operator 
$\cal T_t$ acting on $D\times D$ matrices as 
\be\label{eq:TtV}
\cal T_t(V):=\wt K_t  V+V\wt K_t - [1-(t_c-t)] V,\quad V\in \C^{D\times D}.
\ee
Then, subtracting \eqref{mainwtL3} from \eqref{mainwtL2}, we obtain that
\begin{align}
\widetilde{\Delta}_{t }- \widetilde{\Delta}_{t_0} &=\int_{t_0}^{t} \left[\left(1 - \frac{1}{t_c-s}\right) \widetilde{\Delta}_{s}  + \frac{1}{t_c-s} \left(\widetilde{K}_{s}\widetilde{\Delta}_{s}+ \widetilde{\Delta}_{s}\widetilde{K}_{s} + (\widetilde{\Delta}_{s})^2\right)\right] \dd s  +\OO_\prec \left(\frac1{N(t_c-t)\pa{\im m_t}^2 }\right)\nonumber \\
&=\int_{t_0}^{t} \left( \cal T_s( \widetilde{\Delta}_s)  +  (\widetilde{\Delta}_s)^2\right) 
 \frac{\dd s}{t_c-s}+\cal E_{t},\label{wtLKint}
\end{align}
where $\cal E_{t}$ is a $D\times D$ random matrix satisfying that $\|\cal E_{t}\|_{\HS} \prec [N(t_c-t)\pa{\im m_t}^2]^{-1} $ uniformly in $t\in [t_0 , t_m]$.
Denoting ${\wh \Delta}_t:=\wt\Delta_t-\cal E_t$ and noticing that $\cal E_{t_0}=0$, we can rewrite \eqref{wtLKint} as
\be\label{eq:deltat}
{{\wh \Delta}}_{t} - {\wh \Delta}_{t_0} = \int_{t_0}^{t} \left( \mathcal{T}_s({\wh \Delta}_s)+ \mathcal{T}_s(\mathcal{E}_s) + ({\wh \Delta}_s + \mathcal{E}_s)^2 \right) \frac{\dd s}{t_c-s }.
\ee
Let $\Phi\left(t ; t_0\right)$ be the standard Peano-Baker series corresponding to the linear operator $\cal  T_t/(t_c-t)$, i.e., it is the unique solution to the following linear integral equation
\be\label{eq:PBintegral}
\Phi \left(t ; t_0\right)=\mathbf{1}+\int_{t_0}^t \frac{\cal T_s}{t_c-s}\circ \Phi \left(s ; t_0\right) \mathrm{d} s ,
\ee
where $\mathbf{1}$ denotes the identity operator. By Duhamel's principle, the solution ${\wh \Delta}_t$ to \eqref{eq:deltat} can be written as 
\be\label{LKE}
{\wh \Delta}_{t }
=\Phi \left(t  ; t_0\right) {\wh \Delta}_{t_0}+\int_{t_0}^t \Phi \left(t ; s\right) \left(\frac{\mathcal{T}_s(\mathcal{E}_s) + ({\wh \Delta}_s + \mathcal{E}_s)^2}{t_c-s} \right) \mathrm{d} s.\ee

Suppose the space $\mathbb{C}^{D\times D}$ of $D\times D$ matrices is equipped with the Hilbert-Schmidt norm. Then, we claim that, as a linear operator on $\mathbb{C}^{D\times D}$, 
$\mathcal T_t$ has operator norm at most $1 + \oo(1)$:
\be\label{Tnorm} 
 \|\cal T_t\|_{op}\le 1+\oo(1).
 \ee
Before proving this estimate, we first use it to prove \eqref{flow_1_result}.
With \eqref{Tnorm}, we get from \eqref{eq:PBintegral} that 
\[\frac{\dd}{\dd t}\|\Phi(t;s)\|_{op}\le \frac{1+\oo(1)}{t_c-t} \|\Phi(t;s)\|_{op}.\]
Using Gr{\"o}nwall's inequality, we conclude that for $t_0\le s \le t\le t_m $,
\be\label{Phinorm} 
\|\Phi \left(t ; s\right)\|_{op}\prec  \frac{t_c-s}{t_c-t}  .
\ee
Applying \eqref{Tnorm} and \eqref{Phinorm} to \eqref{LKE} and using the bound on $\|\cal E_t\|_{\HS}$, we obtain that 
\[
\|{{\wh \Delta}}_{t}\|_{2}\prec \frac{t_c-t_0}{t_c-t} \|{{\wh \Delta}}_{t_0}\|_2 +\frac{1}{t_c-t}\int_{t_0}^t \|{{\wh \Delta}}_s+\cal E_s\|_2^2 \dd s+\int_{t_0}^t \frac{\dd s}{N(t_c-t)\pa{t_c-s}\pa{\im m_t}^2},\]
where we also used that $\im m_s\sim \im m_{t}$ by \eqref{im_m_t_sim_through_evolution}. From this estimate, writing ${\wh \Delta}_t=\wt{\Delta}_t-\cal E_t$, we obtain that for $t_c-t_0\sim N^{-\e_g}$ and $t_0\le t\le t_m$,
\be\label{eq:wtLKt}
\|\wt {{\Delta}}_{t }\|_2\prec \frac{t_c-t_0}{t_c-t} \|\wt {{\Delta}}_{t_0}\|_2 +\frac{1}{t_c-t}\int_{t_0}^t \|\wt{{\Delta}}_s\|_2^2 \dd s+\frac{1}{N(t_c-t)\pa{\im m_t}^2}.
\ee
By \eqref{largG_Ent}, \eqref{1-M}, and \eqref{1-M-2}, we have 
\[\pa{\im m_t}^2\|\widetilde{{\Delta}}_{t_0 }\|_2\prec \pa{\im m_t}^2\frac{t_c-t_0}{N\eta_{t_0}^{2}} \frac{\im m_{t_0}}{\eta_{t_0}} \lesssim \frac{N^{\e_g}}{N\pa{t_c-t_0}^{2}}\prec N^{-1+3\e_g},\]
where we used \eqref{t_eta} and \eqref{im_m_t_sim_through_evolution} in the second step. Then, from \eqref{eq:wtLKt}, we derive the the following self-improving estimate for $t\in [t_0,t_m]$ when $C_0>4$:  
\be\label{eq:self_imp}
\sup_{ s \in [t_0, t]}N(t_c-s)\pa{\im m_s}^2\|\wt{\Delta}_s\|_2\prec N^{3\e_g} \ \Rightarrow \ N(t_c-t)\pa{\im m_t}^2\|\wt{\Delta}_{t}\|_2\prec N^{2\e_g}+N^{(6-C_0)\e_g} \, ,
\ee
where we also used that \smash{$N\pa{t_c-t}\pa{\im m_t}^2\gtrsim N\eta_t\im m_t\geq N^{C_0\varepsilon_g}$} by \eqref{t_eta} and the definition of $t_m$. Moreover, defining the stopping time \smash{$T=\inf_{t\ge t_0} \{N(t_c-t)\pa{\im m_t}^2\|\widetilde{{\Delta}}_t\|_2\ge  N^{2\e_g+\e}\}$} for a constant $0<\e<\e_g$, we obtain from \eqref{eq:wtLKt} that 
\[
\|\wt {{\Delta}}_{t }\|_2\prec \frac{t_c-t_0}{t_c-t} \|\wt {{\Delta}}_{t_0}\|_2
 +\frac{1}{N(t_c-t)\pa{\im m_t}^2},\]
if $t\le T$ and $t_0\le t \le t_m$ with $C_0>6$. Now, applying a standard continuity argument with \eqref{eq:self_imp} gives that $T\ge t_m$ with high probability when $C_0>6$ and hence concludes the desired result \eqref{flow_1_result}.

Finally, we prove the bound \eqref{Tnorm}. By the estimate \eqref{1-M-2} below, we have
\be\label{eq:wtKt}
\| \wt K_t\|=(t_c-t)\| K_t\|\le (t_c-t)\| (1-\wh M_t)^{-1}\|\|\wh M_t\|\lesssim (t_c-t)\pa{\im m_t}^{-1} 
\ee
in the case where $(z_1)_t=(z_2)_t \in \{z_t,\bar z_t\}$. In this setting, if $E_t\in\q{E_t^- +\pa{\log N}^{-1},E_t^+ -\pa{\log N}^{-1}}$, then applying \eqref{square_root_density} yields the estimate $\| \wt K_t\|\lesssim \pa{t_c-t}\sqrt{\log N}$, from which the bound \eqref{Tnorm} follows immediately.
If, on the other hand, $E_t \notin [E_t^-, E_t^+]$, then by \eqref{t_eta} and \eqref{square_root_density}, we have
$t_c-t\sim \eta_t/\im m_t\sim \sqrt{\kappa_t+\eta_t}\geq \sqrt{\kappa_t}$, which implies that $\kappa_t=\oo\pa{1}$. Thus, it remains to consider the following two cases:
\begin{enumerate}
    \item $(z_1)_t=(\bar z_2)_t \in \{z_t,\bar z_t\}$;
    \item $(z_1)_t=(z_2)_t\in \ha{z_t,\bar{z}_t}$ with $\kappa_t=\oo\pa{1}$.
\end{enumerate}
In both case, since \smash{$\wh M_t$} is a circulant matrix, it has an eigendecomposition \smash{$\wh M_t = U_t D_t U_t^*$}, where $D_t$ is the diagonal matrix of eigenvalues and $U_t$ is a $D\times D$ unitary matrix. Then, \smash{$\wt K_t$} can be written as 
\[\wt K_t = U_t\Xi_t U_t^*,\quad \Xi_t:=(t_c-t)\frac{D_t}{1-D_t}.\]
Now, we define the linear operator $\wt{\cal T}_t$ as 
\[
\wt{\cal T}_t(V):=\Xi_t  V+V\Xi_t - [1-(t_c-t)] V,\quad V\in \C^{D\times D}.
\]
It is easy to see ${\cal T}_t(V)= U_t[\wt{\cal T}_t(U_t^* VU_t)]U_t^*$, which implies that $\|\cal T_t\|_{op}=\|\wt{\cal   T }_t\|_{op}$. From the definition of $\wt{\cal T}_t$, we see that 
  \be\label{T2aa}
  \|\wt{\cal T}_t\|_{op} \le 
    \max_{l,\kk\in \qq D} \left|(\Xi_t)_{ll}+(\Xi_t)_{\kk\kk}-1\right| + |t_c-t|.
  \ee
It remains to estimate the eigenvalues of $\wt K_t$. 

In case (i), since the entries of \smash{$\wh M_t$} are all non-negative, it has a Perron–Frobenius eigenvalue 
\[d_1=\frac{\im m_t(z_t)}{\im m_t(z_t) + \eta_t}\]
by equation \eqref{sumwtM} below. Moreover, by equation \eqref{eq:otherM}, the eigenvalues $d_l$ of \smash{$\wh M_t$} satisfy $d_l=d_1-a_l-\ii b_l$, $l \in \qq D$, for some $a_l\ge 0$ and $a_l+|b_l|=\oo(1)$. Thus, 
\begin{align}
(\Xi_t)_{ll}+(\Xi_t)_{\kk\kk}-1 & = (t_c-t)\left[\frac{d_1-a_l-\ii b_l}{(1-d_1)+a_l+\ii b_l} + \frac{d_1-a_\kk-\ii b_\kk}{(1-d_1)+a_{\kk}+\ii b_{\kk}}\right]-1 \nonumber\\
& =\frac{\eta_t}{\eta_t + a'_l + \ii b'_l} + \frac{\eta_t}{\eta_t + a'_{\kk} + \ii b'_{\kk}} -1 +\oo(1),\label{estimate_eigenvalue_tilde_K_t}
\end{align} 
where we used \eqref{t_eta} in the second step and abbreviated that $a'_l:=(\im m_t+\eta_t)a_l$ and $b'_l:=(\im m_t+\eta_t)b_l$. 
Together with the simple fact $|1/(1+z)-1/2|\le 1/2$ when $\re z\ge 0$, this equation implies $|(\Xi_t)_{ll}+(\Xi_t)_{\kk\kk}-1|\le 1+\oo(1)$. Plugging it into \eqref{T2aa} concludes \eqref{Tnorm} for case (i). 

The proof of \eqref{Tnorm} for case (ii) is similar. We only need to replace decomposition $d_l=d_1-a_l-\ii b_l$ with the decomposition \smash{$\wh d_l=d_1-\wh a_l -\ii \wh b_l$} in \eqref{eq:otherM2}, and bound the first term on the RHS of \eqref{T2aa} using the same argument as that in \eqref{estimate_eigenvalue_tilde_K_t}. 
Here, we again utilize the facts that \smash{$\wh a_l\geq 0$ and $\wh a_l+\abs{\wh b_l}=\oo\pa{1}$}, as shown in equation \eqref{eq:akbk2} below. This completes the proof of \eqref{Tnorm}.
\end{proof}

\section{Delocalized phase: eigenvalues}\label{sec:delocalized_case_eigenvalue}

Consider the matrix OU process ${H_{\Lambda}}(t)=H_t +\Lambda$, where $H_t=(h_{ij}(t))_{i,j\in \cI}$ satisfies the OU equation 
\be\label{defVt}
\dd h_{i j}=-\frac{1}{2} h_{i j} \dd t+\frac{1}{\sqrt{DN}} \dd b_{i j}(t),\quad \text{with}\quad H_{0}=H,
\ee
where \smash{$B_t=(b_{ij}(t))_{i,j\in \cI}$} denotes a Hermitian matrix whose upper triangular entries are independent complex Brownian motions with variance $t$. 
We denote the Green's function of $H_{\Lambda}\pa{t}$ by \smash{$G_t (z) :=\pa{H_{\Lambda}\pa{t}-z}^{-1}$}. Let $M_t(z)$ be the solution to the matrix Dyson equation \eqref{def_M} with the operator $\cal S$ replaced by $\cal S_t$: 
\[ \cal S_t(M_t):=e^{-t}\cal S(M_t)+(1-e^{-t}) \langle M_t\rangle.\]
Note that the self-consistent equation \eqref{self_m} for $m_t(z):=\langle M_t(z)\rangle$ is unchanged, so we always have $m_t(z)=m(z)$ and $M_t(z)=M(z)$ as given by \eqref{def_G0}.

\Cref{MixEV} follows immediately from the next two lemmas, Lemmas \ref{CVtG} and \ref{CVtV}. 

\begin{lemma}\label{CVtG} 
Under the assumptions of Theorem \ref{MixEV}, suppose $\mathfrak{t}= N^{-1/3+\fc}$ for a constant $\fc \in (0,1/10)$. Then, for any fixed $n\in \N$, there exist a constant $c_n=c_n(\fc,\delta_A,\varepsilon_A)>0$ such that 
\begin{align}
&\left|	\E O\pa{\gamma_+  \pa{DN}^{2/3}\pa{E^+-\lambda_1^{\ft}},\ldots,\gamma_+ \pa{DN}^{2/3}\pa{E^+-\lambda_n^{\ft}}}\right.\nonumber\\
&\left.-\E O\pa{ \pa{DN}^{2/3}\pa{2-\mu_1},\ldots, \pa{DN}^{2/3}\pa{2-\mu_n}}   \right|\leq N^{-c_n}   , \label{ls_CVtG}
\end{align}
where $\lambda_1^{\ft}\geq\cdots\geq \lambda_n^{\ft}$ and $\mu_1\geq\cdots\geq \mu_n$ denote respectively the largest $n$ eigenvalues of ${H_{\Lambda}}(\ft)$ and a $DN\times DN$ GUE. 
The corresponding result also holds for $\gamma_-(DN)^{2/3}(\lambda_{DN}^{\ft}-E^-,\ldots,\lambda_{DN-n}^{\ft}-E^- )$ at the left edge $E^-$. 
\end{lemma}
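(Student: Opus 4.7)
The plan is to prove the Gaussian-divisible edge universality \eqref{ls_CVtG} via a short-time Dyson Brownian motion (DBM) argument at the edge, with the quantum unique ergodicity (QUE) estimate \eqref{eq:extend:main_evector1} (established in the proof of \Cref{mix}) providing the essential control over the $\Lambda$-dependent drift. The key simplification is that the OU flow \eqref{defVt} is driven by a full $DN\times DN$ Brownian motion (rather than a block-structured one), so the eigenvalue diffusion coefficient is automatically standard DBM; QUE is only needed to tame the deterministic shift caused by $\Lambda$. Since $\mathfrak{t} = N^{-1/3+\mathfrak{c}}$ lies comfortably above the edge-relaxation time $N^{-1/3}$, the local edge statistics of $H_\Lambda(\mathfrak{t})$ should equilibrate to GUE up to the deterministic edge shift and density rescaling $\gamma$ defined in \eqref{eq:def_gamma}.

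First, I would derive the SDE for $\lambda_k(t)$. Applying It\^o's formula to \eqref{defVt} together with second-order perturbation theory, and using that $B_t$ has uniform entry-variance $t/(DN)$, one obtains
\[
d\lambda_k(t) = \frac{dW_k(t)}{\sqrt{DN}} + \frac{1}{DN}\sum_{j\ne k}\frac{dt}{\lambda_k(t)-\lambda_j(t)} - \tfrac{1}{2}\bigl(\lambda_k(t) - \bv_k(t)^*\Lambda\bv_k(t)\bigr)\,dt,
\]
where $(W_k(t))_k$ are standard Brownian motions. The only non-standard feature is the $\Lambda$-dependent drift $\bv_k(t)^*\Lambda\bv_k(t)$, which the QUE bound \eqref{eq:extend:main_evector1}---applied to the intermediate matrix $H_\Lambda(t)$, which satisfies the hypotheses of \Cref{mix} uniformly in $t\in[0,\mathfrak{t}]$---identifies with the deterministic edge-shift derivative $\partial_t E^+_t = \avg{\Lambda M_t(E^+_t)}$ from \eqref{derivative_of_edge}, up to additive error $N^{-c}$. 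After absorbing this drift into the deterministic centering $E^+_t$, the SDE for $\lambda_k(t)$ coincides with that of a reference OU-GUE up to a fluctuation of size $N^{-c}$ per eigenvalue.

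Next, I would couple $(\lambda_k(t))$ with the eigenvalues $(\mu_k(t))$ of a $DN\times DN$ OU-GUE started from a configuration with matching initial edge profile (so that $\gamma(DN)^{2/3}(E^+-\lambda_k(0))$ and $(DN)^{2/3}(2-\mu_k(0))$ agree up to the rigidity error \eqref{eq:rigidity}). A Bourgade--Erd\H{o}s--Yau-type edge coupling estimate (in the spirit of \cite{CFHJBulkBandAOP2024}), which requires only (i) square-root edge density of $\rho_N$ (ensured by \Cref{lemma_usual_properties_of_m_and_mu}), (ii) matching diffusion coefficients (trivial here), and (iii) QUE-controlled drift matching, would then show that for each $k\le n$, $\gamma(DN)^{2/3}(E^+-\lambda_k(\mathfrak{t}))$ and $(DN)^{2/3}(2-\mu_k(\mathfrak{t}))$ differ by at most $N^{-c_n}$ in distribution. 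Since $(\mu_k(\mathfrak{t}))$ is distributed as GUE eigenvalues, the Tracy-Widom comparison \eqref{ls_CVtG} follows by a smoothing argument against the test function $O$.

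The main obstacle is to control the QUE error uniformly along the flow in the sense needed for the DBM coupling, and to absorb the edge singularities $1/\im m(z)\sim N^{1/3}$ that arise in the auxiliary multi-resolvent estimates. For the former, I would combine \Cref{mix} on a $t$-grid of spacing $N^{-C}$ with Lipschitz continuity of $\bv_k(t)^*\Lambda\bv_k(t)$ in $t$ (deducible from standard perturbation bounds together with \eqref{eq:rigidity}). For the latter, as in the localized-regime analysis (see the discussion around \eqref{intro_regular_bound_eigenvector}), the polarization identity $\im G=(G-G^*)/(2\ii)$ combined with the edge-regularization $\Lambda\mapsto \Lambda-\partial_t E^+_t\cdot I$ gains an extra factor $\im m$ that precisely cancels the singularity; together with the $N^{-\mathfrak{c}}$-smallness from the Gaussian divisibility, this closes the bound. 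The left-edge analog at $E^-$ is proved identically by the symmetry of the construction.
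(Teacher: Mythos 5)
Your route is genuinely different from the paper's, and it has a gap at its center. The paper proves \Cref{CVtG} as a black-box application of Landon--Yau edge universality for Dyson Brownian motion started from $\eta_*$-regular initial data: writing $H_\Lambda(\ft)\stackrel{d}{=}V+\sqrt{1-e^{-\ft}}\,W$ with $V=e^{-\ft/2}H+\Lambda$ and $W$ a GUE independent of $H$, it verifies $\eta_*$-regularity of $V$ from the single-resolvent local law \eqref{eq:aniso_local} and square-root density \eqref{square_root_density}, invokes \cite[Theorem 2.2]{landon2017edgestatisticsdysonbrownian}, and then compares the random free-convolution edge and rescaling $(E^+_{\mathrm{fc},\ft},\gamma_{\mathrm{fc}}^{\ft})$ to $(E^+,\gamma)$. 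At no point is QUE used for this lemma; in the paper, \eqref{eq:extend:main_evector1} is reserved for the Green's-function comparison step \Cref{rescomY} (i.e.\ for \Cref{CVtV}). Your plan instead derives the eigenvalue SDE for the OU flow and tries to match its drift against a reference GUE dynamics, pushing the QUE input into the DBM step.

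The problem is your handling of the $\Lambda$-dependent drift $\bv_k(t)^*\Lambda\bv_k(t)$. You claim QUE identifies it (up to $N^{-c}$) with the ``deterministic edge-shift derivative $\partial_t E_t^+ = \avg{\Lambda M_t(E_t^+)}$ from \eqref{derivative_of_edge}.'' This fails for two independent reasons. First, \eqref{derivative_of_edge} is derived for the scaling flow $\Lambda_t=f(t)\Lambda$, where $\partial_t E_t^{\pm}=f'(t)\avg{\Lambda M_t(E_t^{\pm})}$; under the OU flow \eqref{defVt}, $\Lambda$ is held fixed (so $f'\equiv 0$) and, as the paper notes at the start of \Cref{sec:delocalized_case_eigenvalue}, the self-consistent equation \eqref{self_m} is invariant in $t$, giving $m_t=m$, $M_t=M$, and hence $E_t^+\equiv E^+$ --- the edge does not move, and $\partial_t E_t^+=0$. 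Second, QUE in the form \eqref{eq:extend:main_evector1} controls $\bv_k^*(E_a-D^{-1})\bv_k$, i.e.\ the distribution of $\ell^2$-mass among the \emph{diagonal} blocks. It says nothing about the \emph{off}-diagonal block overlaps that make up $\bv_k^*\Lambda\bv_k=\sum_a \bv_k^*E_a\Lambda E_{a\pm 1}\bv_k$, so the ``QUE-controlled drift matching'' you invoke is not available. Without an alternative mechanism for centering this drift (e.g.\ a two-resolvent local law for $\avg{G\Lambda G\Lambda}$ of the type the paper develops in \Cref{sec:localized_case}, but at the much harder delocalized edge), the coupling step does not close. The cleaner path is the one the paper takes: package the entire drift structure inside the free-convolution evolution and cite the existing DBM-from-regular-data theorem, reserving QUE for the moment-matching/comparison half of the argument.
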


\begin{proof} 
Note that $H_t$ in \eqref{defVt} has law 
\be\label{eq:eq_in_law_H}
H_t \stackrel{d}{=} e^{-t/2}\cdot H+ \sqrt{1-e^{-t}}\cdot W,
\ee
where $\stackrel{d}{=}$ means ``equal in distribution" and $W$ is a $DN\times DN$ GUE independent of $H$. Let $V=\r e^{-\ft/2}H+\Lambda$.  Using the local laws in \Cref{lem_loc} and the estimate \eqref{square_root_density} below, we can check that $V$ satisfies the $\eta_*$-regular condition in the sense of \cite[Definition 2.1]{landon2017edgestatisticsdysonbrownian}. 
Then, applying \cite[Theorem 2.2]{landon2017edgestatisticsdysonbrownian}, we obtain that
\begin{equation}\label{delocalized_first_approximation_edge_statistics}
    \begin{aligned}
        &\left|	\E O\pa{\gamma_{\txt{fc}}^\ft \pa{DN}^{2/3}\pa{E_{\txt{fc},\ft}^+-\lambda_1^{\ft}},\ldots, \gamma_{\txt{fc}}^{\ft} \pa{DN}^{2/3}\pa{E_{\txt{fc},\ft}^+-\lambda_n^{\ft}}}\right.\\
        &\left.-\E O\pa{ \pa{DN}^{2/3}\pa{2-\mu_1},\ldots, \pa{DN}^{2/3}\pa{2-\mu_n}}   \right|\leq N^{-c}
    \end{aligned}
\end{equation}
for some constant $c>0$. Here, \smash{$\gamma_{\txt{fc}}^\ft$ and $E_{\txt{fc},\ft}^+$} are defined analogously to $\gamma_+$ and $E^+$, with the limiting density $\rho_N$ in their definitions replaced by $\rho_{\txt{fc},\ft}$, which is the probability density for the free convolution of the empirical spectral distribution of $V=\r e^{-\ft/2}H+\Lambda$ and the semicircle law generated by $\sqrt{1-\r e^{-\ft}}W$. In particular, \smash{$\gamma_{\txt{fc}}^\ft$ and $E_{\txt{fc},\ft}^+$} are random, depending on $V$. To be more precise, denoting $G_V (z) :=\pa{V-z}^{-1}$, we define the Stieltjes transform of $\rho_{\txt{fc},\ft}$, denoted by $m_{\txt{fc},\ft} (z) $, as the unique solution to 
\begin{equation}\nonumber
    \begin{aligned}
        m_{\txt{fc},\ft} (z) =\avga{G_V\pa{z+\pa{1-\r e^{-\ft}}m_{\txt{fc},\ft} (z) }},\quad \text{with}\quad \im m_{\txt{fc},\ft} (z) \ge 0.
    \end{aligned}
\end{equation}
Then, $\gamma_{\txt{fc}}^\ft$ and $E_{\txt{fc},\ft}^+$ are defined by (2.11) and (2.12) in \cite[Lemma 2.3]{landon2017edgestatisticsdysonbrownian}. 

By a similar argument as that in \cite[Section 6.1]{CorrelatedBandAOP}, we can establish that $\abs{\gamma_{\txt{fc}}^\ft-\gamma_+}\leq N^{-\varepsilon}$ and $\abs{E_{\txt{fc},\ft}^+-E^+}\leq N^{-2/3-\varepsilon}$ with high probability for some constant $\varepsilon>0$, which, together with \eqref{delocalized_first_approximation_edge_statistics}, concludes \eqref{ls_CVtG}.
\end{proof}

\begin{lemma}\label{CVtV} 
Under the assumptions of Theorem \ref{MixEV}, there exists a constant $\fc>0$ depending on $\e_A$ and $\delta_A$ such that the following holds for $\ft= N^{-1/3+\fc}$.  
For any fixed $n\in \N$, there exists a constant $c_n=c_n(\fc,\delta_A,\e_A)$ such that
\begin{align}
&\left|	\E O\pa{ \pa{DN}^{2/3}\pa{E^+-\lambda_1^{\ft}},\ldots, \pa{DN}^{2/3}\pa{E^+-\lambda_n^{\ft}}}\right.\nonumber\\
&\left.-\E O\pa{ \pa{DN}^{2/3}\pa{E^+-\lambda_1},\ldots, \pa{DN}^{2/3}\pa{E^+-\lambda_n}}   \right|\leq N^{-c_n}.\label{ls_CVtV}
\end{align}
The corresponding result also holds at the left edge $E^-$.
\end{lemma}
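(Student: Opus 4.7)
My plan is to establish \eqref{ls_CVtV} by a Green's function comparison along the Ornstein--Uhlenbeck flow \eqref{defVt}, using the quantum unique ergodicity (QUE) estimate from \Cref{mix} (more precisely, its extension \eqref{eq:extend:main_evector1}) as the crucial input. The first step is to rewrite the top-edge eigenvalue counting statistic in terms of Green's function traces: introduce a smooth cutoff $\theta_E(x)$ approximating $\mathbf 1_{[E,\infty)}(x)$ on scale $\eta_0 = N^{-2/3-\epsilon}$, and let $\mathcal{X}_{E}(t) := \tr \theta_E(H_{\Lambda}(t))$. By the Helffer--Sj\"ostrand formula, $\mathcal{X}_{E}(t)$ can be expressed as a contour integral of $\tr G_t(x+\ii\eta)$ against a compactly supported density, and the rigidity estimate \eqref{eq:rigidity} plus standard arguments show that the joint law of the rescaled top eigenvalues of $H_{\Lambda}(t)$ is determined, up to $\OO(N^{-c})$, by $\E O(\mathcal{X}_{E_1}(t),\ldots,\mathcal{X}_{E_n}(t))$ for test points $E_i = E^+ - \alpha_i (DN)^{-2/3}$.

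The second step is to differentiate this smoothed observable along the OU flow. Applying It\^o's formula and then the complex cumulant expansion of \Cref{lem:complex_cumu} to the resulting drift, the contribution of the $p+q=1$ cumulants cancels exactly against the It\^o correction $\frac{1}{2N}\sum_{ij}\partial_{h_{ij}}\partial_{h_{ji}}$, because the OU process \eqref{defVt} preserves variances. Hence
\begin{equation*}
\partial_t \E O(\mathcal{X}_{E_1}(t),\ldots,\mathcal{X}_{E_n}(t)) = \sum_{p+q\ge 2} \sum_{i,j\in\cal I} c^{(p,q)}_{ij}(t)\, \E \bigl[\partial^{p}_{h_{ij}}\partial^{q}_{h_{ji}}\bigl(\partial_{h_{ij}}\mathcal{X}\cdot O'(\mathcal{X})\bigr)\bigr] + R,
\end{equation*}
where $R$ is a negligible remainder from the cumulant expansion and each $c^{(p,q)}_{ij}(t) = \OO(N^{-(p+q+1)/2})$ by \eqref{eq:bdd_cumu}. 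Every derivative produces either a factor $O^{(k)}(\mathcal{X})$ (which is harmless after a dyadic decomposition of the Helffer--Sj\"ostrand integral) or an additional Green's function entry. The goal is to integrate this identity over $t\in[0,\ft]$ and show the total change is $N^{-c_n}$.

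The hard part is controlling the $p+q\ge 2$ terms uniformly in $t\in[0,\ft]$. A naive combination of the entrywise local law \eqref{eq:aniso_local} and Ward's identity is far too weak at the edge: each Green's function factor contributes a singular $\im m(z)\sim\sqrt{\kappa+\eta}$ that vanishes for $\eta\ll N^{-2/3}$, and the cutoff $\theta_E$ forces us to probe scales below the typical eigenvalue spacing. This is exactly the phenomenon alluded to in the paper's own description (``a more complex function of $G(z)$, which requires a deeper exploration of its algebraic structures''). The resolution, following the strategy of \cite{CFHJBulkBandAOP2024}, is to use \eqref{eq:extend:main_evector1}, which implies that for any $a\in\qq D$ and spectral parameter $z$ near the edge with $\eta\sim N^{-2/3+\e}\fr(k)^{-1/3}$,
\begin{equation*}
\avg{\im G_t(z) (E_a - D^{-1}) \im G_t(z) (E_b - D^{-1})} \prec N^{-c} \cdot \frac{\im m_t(z)}{\eta}.
\end{equation*}
Inserting this improved two-resolvent bound, together with the three-resolvent version derived from \eqref{flow_3G_res_2}, into every term of the cumulant expansion produces an extra $N^{-c}$ factor beyond what the local laws alone provide. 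After book-keeping, the integrand is $\OO(N^{-c}\ft^{-1})$, so integrating over $[0,\ft]$ with $\ft = N^{-1/3+\fc}$ yields the claimed bound \eqref{ls_CVtV}.

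To close the argument, I would fix $\epsilon\ll\fc\ll \varepsilon_A\wedge\delta_A$, repeat the construction at the left edge $E^-$ verbatim, and combine with \Cref{CVtG} to conclude \Cref{MixEV}. The main technical effort is thus concentrated in step two: a careful algebraic rewriting of the higher-order cumulant terms so that each Green's function entry appears paired with an $(E_a - D^{-1})$-factor to which \eqref{eq:extend:main_evector1} can be applied, and a corresponding careful tracking of how many $\im m^{-1}$ singularities are generated versus how many $\im m$ factors the QUE cancellation produces.
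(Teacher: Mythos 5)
Your high-level framework (Green's function comparison along the OU flow, It\^o plus cumulant expansion, QUE as the key input) is the same as the paper's, but you make a critical error in identifying where the dominant term lives. You assert that the $p+q=1$ cumulants cancel exactly against the It\^o correction ``because the OU process preserves variances.'' This is false for the flow \eqref{defVt}: its invariant measure is $DN\times DN$ GUE, with entry variance $(DN)^{-1}$ for \emph{all} entries, whereas the initial matrix $H$ has variance $s_{xy}=N^{-1}\mathbf 1(x,y\in\text{same block})$ and \emph{zero} variance between blocks. The flow therefore homogenizes the variance profile, $\E|h_{xy}(t)|^2=e^{-t}s_{xy}+(1-e^{-t})(DN)^{-1}$, and the second-order cumulant contribution leaves behind a nonvanishing drift term proportional to $\sum_{x,y}\bigl((DN)^{-1}-s_{xy}\bigr)\partial_{xy}\partial_{yx}F$. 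Writing $(DN)^{-1}-s_{xy}$ as a linear combination of the projectors $E_a-D^{-1}$ is precisely what produces the observables $\avg{\im G_t(E_a-D^{-1})\cdots}$ to which the QUE estimate \eqref{eq:extend:main_evector1} is applied (this is the term $\mathscr F_2$ in the paper). The $p+q\ge2$ cumulant terms $\cal F_r$, $r\ge3$, which you single out as the hard part, are in fact the routine ones: each extra cumulant order gains a factor $N^{-1/2}$ and they are controlled by local laws alone without QUE, after a single summation over $x,y$ and integration over $[0,\ft]$.

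As a consequence, the technical program you sketch for step two --- pairing every Green's function entry with an $(E_a-D^{-1})$ factor inside the $p+q\ge2$ terms and invoking QUE there --- is aimed at the wrong target; those terms don't need, and wouldn't naturally exhibit, that structure. Meanwhile the genuinely dominant $p+q=1$ term (which does have the $(E_a-D^{-1})$ structure for free, straight from the variance difference) would remain unbounded in your argument, since you have declared it zero. You would need to (1) correct the variance bookkeeping to recover the $\mathscr F_2$-type drift term, (2) apply QUE to it, and (3) downgrade the $p+q\ge2$ terms to the routine estimate $\cal F_r\prec N^{-r/2+5/3+O(\sigma)}$, which after integrating over $[0,\ft]$ with $\ft=N^{-1/3+\fc}$ is already $N^{-c}$. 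A secondary, smaller issue: the It\^o correction for \eqref{defVt} is $\frac{1}{2DN}\sum\partial_{xy}\partial_{yx}$, not $\frac{1}{2N}\sum$; with the paper's normalization the discrepancy factor is $(DN)^{-1}-s_{xy}$, not $N^{-1}-s_{xy}$.
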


The remainder of this section is devoted to the proof of \Cref{CVtV}. Following an argument analogous to that in \cite[Section 17]{Erds2017ADA}, it suffices to establish the following correlation function comparison theorem.

\begin{lemma}[Green‘s function comparison theorem at the edge]\label{rescomY}
Under the assumptions of Theorem \ref{MixEV}, let $G$ and $G_{\ft}$ denote the resolvents of ${H_{\Lambda}}$ and ${H_{\Lambda}}(\ft)$, respectively. 
Let $F: \mathbb{R}^n \rightarrow \mathbb{R}$ be a function whose derivatives satisfy the following bound: for any fixed $l\in \Z_+$, there exists a constant $C_l>0$ such that
\begin{equation}\label{derivative_bound_F}
    \begin{aligned}
        \max_{\absa{\alpha}=1,2,\ldots,l}\max_x\left|F^{(\alpha)}(x)\right|(|x|+1)^{-C_l} \leqslant C_l.
    \end{aligned}
\end{equation}
Denote $\wh m=\avga{G}$ and $\wh m_t=\avga{G_t}$ for any $t\in \qa{0,\ft}$. Then, there exists a constant $\sigma_0>0$ such that for any constant $0<\sigma<\sigma_0$, and for any sequences of real numbers $\ha{E_1\pa{i}}_{i=1}^n$ and $\ha{E_2\pa{t}}_{i=1}^n$ satisfying
\begin{equation}\nonumber
    \begin{aligned}
        \left|E_1\pa{i}-E^+\right| \leqslant N^{-2 / 3+\sigma}, \quad\left|E_2\pa{i}-E^+\right| \leqslant N^{-2 / 3+\sigma}, \quad {i=1,2,\ldots,n},
    \end{aligned}
\end{equation}
setting $\eta=N^{-2 / 3-\sigma}$, we have
\begin{equation}\label{2comVVt}
    \begin{aligned}
        \Bigg|\mathbb{E} &F\left({DN} \int_{E_1\pa{1}}^{E_2\pa{1}} \mathrm{~d} y \operatorname{Im} \wh m(y+\ii \eta),\ldots,DN \int_{E_1\pa{n}}^{E_2\pa{n}} \mathrm{~d} y \operatorname{Im} \wh m(y+\ii \eta)\right)\\
        &-\mathbb{E} F\left(DN \int_{E_1\pa{1}}^{E_2\pa{1}} \mathrm{~d} y \operatorname{Im} \wh m_{\ft}(y+\ii \eta),\ldots,DN \int_{E_1\pa{n}}^{E_2\pa{n}} \mathrm{~d} y \operatorname{Im} \wh m_{\ft}(y+\ii \eta)\right)\Bigg| \lesssim N^{-\delta}
    \end{aligned}
\end{equation}
for some small constant $\delta>0$ depending only on $\delta_A,\ \varepsilon_A$, and the constants $C_l$.
\end{lemma}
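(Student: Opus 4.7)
The plan is to interpolate between $H_\Lambda = H_\Lambda(0)$ and $H_\Lambda(\ft)$ via the OU flow \eqref{defVt} and bound $|\Phi(\ft)-\Phi(0)|$ where
$$\Phi(t) := \mathbb E\, F\bigl(X_t^{(1)},\ldots,X_t^{(n)}\bigr), \quad X_t^{(k)} := DN\int_{E_1(k)}^{E_2(k)}\im\wh m_t(y+\ii\eta)\,\rd y.$$
This reduces \eqref{2comVVt} (with one of the two occurrences of $\wh m_\ft$ understood as $\wh m = \wh m_0$) to $\int_0^\ft |\Phi'(t)|\,\rd t \lesssim N^{-\delta}$. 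Since $\partial_{h_{ij}}(G_t)_{kk} = -(G_t)_{ki}(G_t)_{jk}$ and $(G_t^2)_{ji}=\frac{d}{dz}(G_t)_{ji}$, antidifferentiation in $y$ yields
$$\partial_{h_{ij}} X_t^{(k)} = \im\!\qB{(G_t)_{ji}(E_1(k)+\ii\eta) - (G_t)_{ji}(E_2(k)+\ii\eta)},$$
a difference of two entries of $G_t$ at spectral parameters at the very edge of the local-law regime. I would first fix $\sigma<\sigma_0$ small enough that \Cref{lem_loc}, \Cref{key_lemma_delocalized_eigenvector}, and the QUE bound \eqref{eq:extend:main_evector1} all apply uniformly in $t\in[0,\ft]$ at scale $\eta = N^{-2/3-\sigma}$.

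The main computation is Itô's formula applied to $G_t(z)=(H_\Lambda(t)-z)^{-1}$, giving
$$\Phi'(t) = \sum_{i,j\in\cal I}\mathbb E\qB{-\tfrac12 h_{ij}(t)\,\partial_{ij}F + \tfrac{1}{2DN}\,\partial_{ij}\partial_{ji}F},$$
with $\partial_{ij}\equiv\partial_{h_{ij}}$. A complex cumulant expansion (\Cref{lem:complex_cumu}) of the drift term produces an $(1,1)$-cumulant piece $\tfrac12\,\cal C^{(1,1)}(h_{ij}(t))\,\mathbb E[\partial_{ij}\partial_{ji}F]$; since $\cal C^{(1,1)}(h_{ij}(t)) = e^{-t}s_{ij}+(1-e^{-t})/(DN)$ with $s_{ij}$ from \eqref{eq:sij}, combining with the Itô correction leaves a residual proportional to $e^{-t}\qa{1/(DN)-s_{ij}}$ supported on same-block pairs $(i,j)$. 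Summing these residuals and rewriting via \eqref{Ito_pro_1}, one obtains expressions of schematic form $\sum_a F^{(k)}(X_t)\,\avga{G_t(z_1)E_a G_t(z_2)E_a}$ with $z_1,z_2\in\{E_\alpha(k)+\ii\eta\}_{\alpha=1,2}$ (plus analogous mixed $G/G^*$ variants). By the two-resolvent local law \Cref{key_lemma_delocalized_eigenvector} together with the near-constancy \eqref{eq:est_M-M} of the entries of $K_{(1,2)}$, these traces are asymptotically independent of $a$, so the sum against the $a$-independent outer factor cancels at leading order; this is the same QUE mechanism underlying \eqref{eq:extend:main_evector1}.

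The higher-cumulant contributions $(p+q\ge 2)$ are treated directly. By \eqref{eq:bdd_cumu}, each $(p,q)$-cumulant carries $(CN)^{-(p+q)/2}$, while each additional $\partial_{ij}$ landing on a resolvent entry produces two $G$-factors; summing over $(i,j)$ via Ward's identity (\Cref{lem-Ward}) yields factors of $\im\wh m_t(z)/\eta$. The main obstacle will be the singular factor $1/\eta = N^{2/3+\sigma}$ at the edge, which the naive bound cannot absorb. I would overcome it by extracting additional $\im m_t\sim\sqrt{\kappa+\eta}$ factors (using \eqref{square_root_density}) through the polarization identity $\im G = (G-G^*)/(2\ii)$ and through cancellations in $\wh M_{(1,2)}$ from \Cref{def_wtM}; this is the edge analogue of the regularity-type cancellation \eqref{intro_regular_bound_eigenvector} used in the localized analysis, though simpler here since no shift $\Delta_{\mathrm{ev}}$ is required. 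After this accounting, $|\Phi'(t)|\lesssim N^{-1-c_0}$ for some $c_0>0$ depending on $\delta_A,\varepsilon_A,\sigma$, and integrating over $t\in[0,\ft]$ with $\ft=N^{-1/3+\fc}$ yields $|\Phi(\ft)-\Phi(0)|\lesssim N^{-4/3+\fc-c_0}\leq N^{-\delta}$ for $\fc$ and $\delta$ chosen sufficiently small in terms of $\delta_A,\varepsilon_A$ and the constants $C_l$ from \eqref{derivative_bound_F}.
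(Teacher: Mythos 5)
Your overall strategy (interpolate via the OU flow, Itô plus cumulant expansion, then exploit the QUE mechanism for the leading term) is identical to the paper's. But there is a concrete gap in the way you identify the objects that arise after summing the Itô/variance residual $e^{-t}\p{\tfrac{1}{DN}-s_{xy}}\partial_{xy}\partial_{yx}F$ over $x,y$. The second derivative $\partial_{xy}\partial_{yx}F(A_t)$ has two pieces. The $F''$ piece produces precisely what you write, namely two-resolvent traces schematically of the form $\avg{\im G(D^{-1}-E_a)\im G E_a}$, and for that piece your mechanism (two-resolvent local law plus near-constancy of $K$, i.e.\ \eqref{eq:est_M-M}, to cancel the $E_a$-sum) is plausible. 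However, the $F'$ piece comes from $\partial_{xy}(G_t)_{xy}=-(G_t)_{xx}(G_t)_{yy}$ and, after the block summation, produces \emph{products of single-resolvent traces}: these are the terms $Y_1$ and $Y_2$ in the paper, $\avg{\im G_i(D^{-1}-E_a)}\avg{(G_i-M_i)E_a}$, multiplied by an extra factor $N$ coming from $N^2\cdot N^{-1}$. Your schematic form $\sum_a F^{(k)}(X_t)\avg{GE_aGE_a}$ omits them.

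This omission matters. The averaged single-resolvent local law \eqref{eq:aver_local} only gives $\avg{\im G(D^{-1}-E_a)}\prec (N\eta)^{-1}\sim N^{-1/3+\sigma}$, and plugging that into $Y_1,Y_2$ yields $\mathscr F_2\prec N^{1/3+O(\sigma)}$, which does not vanish after integrating over $\ft=N^{-1/3+\fc}$. The two-resolvent local law and the near-constancy of $K_{(1,2)}$ do not give you an improved bound on this \emph{single}-trace quantity either. The paper has to invoke the eigenvector QUE estimate \eqref{eq:extend:main_evector1} together with eigenvalue rigidity \eqref{eq:rigidity} \emph{through a spectral decomposition} (see \eqref{GEaD2}--\eqref{GEaD4}) to upgrade this to $\avg{\im G(D^{-1}-E_a)}\prec N^{-2/3-c+O(\sigma)}$, which is what makes $\mathscr F_2\prec N^{1/3-c/3+O(\sigma)}$ and hence integrable. (The same spectral-decomposition route is in fact what the paper also uses for the $F''$ terms via \eqref{GEaD1} and \eqref{GEaD3}; using the two-resolvent local law directly there would additionally require justification below the scale $\eta\sim N^{-2/3+\e_L}k^{-1/3}$ at which \Cref{key_lemma_delocalized_eigenvector} is stated, since your $\eta=N^{-2/3-\sigma}$ is smaller.) So you need to add the $F'$ contribution to your accounting and bound it via the eigenvector QUE estimate extended to $H_\Lambda(t)$ (cf.\ \Cref{lem:samefort}), rather than only via $K$-cancellations.
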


Note that we have only proved \Cref{mix} for ${H_{\Lambda}}$, but it can be extended to any ${H_{\Lambda}}(t)$ with $t\in [0,\ft]$. (Heuristically, adding a GUE component will ``help" the QUE of eigenvectors, so there is no essential difficulty in making this extension.) We will bound the LHS of \eqref{2comVVt} using \Cref{lem:samefort}.

\begin{lemma}\label{lem:samefort}
For any $t\in [0,\ft]$, under the assumptions of \Cref{lem_loc}, the local laws \eqref{eq:aniso_local} and \eqref{eq:aver_local} hold with $G$ replaced by $G_t$, and the eigenvalue rigidity estimate \eqref{eq:rigidity} holds for the eigenvalues of ${H_{\Lambda}}(t)$. Moreover, under the assumptions of \Cref{mix}, the QUE estimate \eqref{eq:extend:main_evector1} holds for the eigenvectors of ${H_{\Lambda}}(t)$. 
\end{lemma}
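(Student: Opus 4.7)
The proof rests on the observation that $H_\Lambda(t)=H_t+\Lambda$ sits in the same structural class as $H_\Lambda$. The OU dynamics \eqref{defVt} preserves mean-zero, independence (up to Hermitian symmetry), and the moment condition \eqref{eq:highmoment}, and hence also the cumulant bound \eqref{eq:bdd_cumu}, uniformly in $t\in[0,\ft]$. The time-$t$ variance matrix $s^{(t)}_{ij}=e^{-t}s_{ij}+(1-e^{-t})/(DN)$ remains block-translation-invariant, and, as already noted in the paragraph preceding the lemma, the associated matrix Dyson equation has the same solution $M_t(z)=M(z)$.

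Given this, the proof of \Cref{lem_loc} (i.e., Lemma 2.9 of \cite{stone2024randommatrixmodelquantum}) applies to $G_t$ with only cosmetic change. Indeed, the second-order cumulant identity \eqref{eq:HGline} becomes
\begin{equation*}
\widetilde{\E}[\wt H_t G\wt H_t] = e^{-t}\sum_{a=1}^D D\langle G E_a\rangle E_a + (1-e^{-t})\langle G\rangle I,
\end{equation*}
and both summands collapse to $\langle M\rangle I$ when $G$ is replaced by the block-translation-invariant $M$; the leading contributions are therefore unchanged and the correction produced by the extra $(1-e^{-t})$-weighted piece is of the same order as the usual error terms. Combined with the fact that the stability bounds \eqref{1-M}--\eqref{1-M-3} for $1-\wh M$ depend only on $M$, this yields \eqref{eq:aniso_local}--\eqref{eq:aver_local} for $G_t$, and the rigidity \eqref{eq:rigidity} then follows from the averaged local law by the standard Helffer--Sj\"ostrand contour argument.

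For the delocalization estimate \eqref{eq:extend:main_evector1} for the eigenvectors of $H_\Lambda(t)$, I would retrace the argument of \Cref{sec:delocalized_case_eigenvector} with $H$ replaced by $H_t$. The spectral-decomposition reduction from \eqref{eq:extend:main_evector1} to the two-resolvent bound \Cref{key_lemma_delocalized_eigenvector} uses only the local law from Step 1 and the rigidity \eqref{eq:rigidity}, so it transfers unchanged. The proof of \Cref{key_lemma_delocalized_eigenvector} itself consists of three ingredients---large-$\eta$ estimates (Lemmas \ref{lemma_G_large}--\ref{lemma_G_large_2}), characteristic-flow propagation (Lemmas \ref{lem_flow_1}--\ref{lem_flow_3}), and Green's function comparison (\Cref{main_lemma_com})---each of which depends only on the structural properties collected in the first paragraph. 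For the comparison step one constructs a Gaussian-divisible matrix $\wt H_t$ of the form \eqref{Gcom_a} whose first three moments match those of $H_t$ and whose fourth moments agree up to $\OO(N^{-2-\delta})$; since the cumulants of $H_t$ satisfy \eqref{eq:bdd_cumu} uniformly in $t$, such a matching is achievable by Lemma 6.5 of \cite{erdHos2012bulk}, and the comparison then proceeds verbatim.

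The single point requiring attention---which I do not view as a genuine obstacle---is verifying that the extra term $(1-e^{-t})\langle G\rangle I$ in the time-$t$ Gaussian-integration-by-parts identity does not disrupt the delicate edge cancellations carried out in \Cref{proof_of_lemma_G_large_2} and \Cref{proof_of_lem_flow}. Since this term is proportional to the identity, it commutes with every projector $E_a$ and, after subtracting its deterministic counterpart $\langle M\rangle I$, contributes a multiple of $\langle G-M\rangle I$ that is controlled by the averaged local law; the resulting correction is of order $(N\eta)^{-1}$ and is therefore admissible at every step of the expansion scheme. Once this bookkeeping is in place, the whole proof pipeline of \Cref{sec:delocalized_case_eigenvector} applies to $H_\Lambda(t)$, completing the plan.
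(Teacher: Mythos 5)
Your proposal is correct and takes essentially the same route as the paper: the paper simply cites Lemma~6.4 of \cite{stone2024randommatrixmodelquantum} for the local laws and rigidity of $H_\Lambda(t)$ and asserts that \eqref{eq:extend:main_evector1} ``is similar to that for \Cref{mix},'' whereas you spell out precisely why the transfer works---namely that the time-$t$ self-energy only acquires the extra term $(1-e^{-t})\langle G\rangle I$, which collapses to the same deterministic value $\langle M\rangle I$ and injects errors of order $(N\eta)^{-1}$ compatible with every step of the Section~\ref{sec:delocalized_case_eigenvector} argument. Your elaboration is accurate and fills in the justification the paper omits, but it does not constitute a genuinely different approach.
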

\begin{proof}
    The estimates \eqref{eq:aniso_local}--\eqref{eq:rigidity} have been proved in Lemma 6.4 of \cite{stone2024randommatrixmodelquantum}. The proof of \eqref{eq:extend:main_evector1} is similar to that for \Cref{mix}, and we omit the details.
\end{proof}


\begin{proof}[\bf Proof of \Cref{rescomY}]
We provide the proof for $n=1$; the general case follows by a similar argument. For ease of presentation, we denote 
\begin{equation}\nonumber
    \begin{aligned}
        A_t:=DN \int_{E_1}^{E_2}  \operatorname{Im} \wh m_{t}(E+\ii \eta)\mathrm{~d} E ,\quad t\in \qa{0,\ft}.
    \end{aligned}
\end{equation}
Recall that we have $M_t(z)\equiv M(z)$ and $m_t(z)\equiv m(z)$ for all $t\in\qa{0,\ft}$. Then, by the averaged local law \eqref{eq:aver_local} for $H_\Lambda\pa{t}$ (as shown in \Cref{lem:samefort}) and the estimate \eqref{square_root_density} below, we obtain the rough estimate: 
\begin{equation}\label{rough_estimate_A_t}
    \begin{aligned}
        \absa{A_t}\prec N \int_{E_1}^{E_2} \pa{\im m\pa{E+\ii \eta}+\frac{1}{N\eta}} \rd E\lesssim N \int_{E_1}^{E_2}\sqrt{\absa{E-E^+}+\eta}\ \rd E+N^{2\sigma}\lesssim N^{2\sigma}.
    \end{aligned}
\end{equation}
To prove \eqref{2comVVt}, we apply It{\^o}'s formula and get that 
\begin{align*}
    &\partial_{t}\E  F\pa{A_t}  = \frac{1}{2DN}\E\sum_{x,y\in \cI} \partial_{xy}\partial_{yx}F\pa{ A_t} - \frac{1}{2}\E\sum_{x,y\in \cI} h_{xy}(t)\partial_{xy}F\pa{ A_t},
\end{align*}
where $\partial_{xy}$ denotes the partial derivative $\partial/\partial_{h_{xy}(t)}$. Then, applying the cumulant expansion from \Cref{lem:complex_cumu} to the second term on the RHS, we get that 
\begin{align}\label{eq:prodnGt}
    &\partial_{t}\E F\pa{ A_t}  = \frac{e^{-t}}{2}\E\sum_{x,y\in \cI} \left( \frac{1}{DN} - s_{xy}\right)\partial_{xy}\partial_{yx} F\pa{A_t} + \sum_{r=3}^l \cal F_r + \cal E_{l+1} ,
\end{align}
where we used that $E|h_{xy}(t)|^2=e^{-t}s_{xy}+(1-e^{-t})(DN)^{-1}$ by \eqref{eq:eq_in_law_H} (recall that $s_{xy}$ was defined in \eqref{eq:sij}). Here, $\cal F_r$ denotes the sum of all terms involving cumulants \smash{$\cal C^{(m,n)}(h_{xy}(t))$} with $m+n=r$, and $\cal E_{l+1}$ is the remainder term. By \eqref{derivative_bound_F}, we can choose $l$ sufficiently large so that the reminder satisfies $ \cal E_{l+1} \lesssim 1$. 

To estimate \eqref{eq:prodnGt}, we begin by analyzing the derivatives of $F(A_t)$. Abbreviating $G_i \equiv  G_t(E_i + \ii \eta)$, we can write that  
\begin{align}\label{partial_xy_F_A_t}
\partial_{xy}F\pa{A_t}&=-F'\pa{A_t}\int_{E_1}^{E_2} \pa{ \im G_t^2}_{yx}\pa{E+\ii \eta}\  \rd E=-F'\pa{A_t}\pa{ \pa{\im G_2}_{yx}- \pa{\im G_1}_{yx}},\\
\label{partial_xy_partial_yx_F_A_t}
        \partial_{xy}\partial_{yx}F\pa{A_t}&= F''\pa{A_t}\pa{\pa{\im G_2}_{yx}-\pa{\im G_1}_{yx}}\pa{\pa{\im G_2}_{xy}-\pa{\im G_1}_{xy}}\\
        & +F'\pa{A_t}\im \pa{\pa{G_2}_{xx}\pa{G_2}_{yy}-\pa{G_1}_{xx}\pa{G_1}_{yy}}.\nonumber
\end{align}
By continuing to differentiate $F(A_t)$ as described above, we obtain, for any fixed $m, n \geq 0$, that
\begin{equation}\nonumber
    \begin{aligned}
        \partial_{xy}^{m}\partial_{yx}^{n}F\pa{A_t}=\sum_{\alpha=1}^{m+n}F^{\pa{\alpha}}\pa{A_t}\sum_{p\in \mathscr{I}_{\alpha}}\Pi_{p} \, ,
    \end{aligned}
\end{equation}
where $\mathscr{I}_{\alpha}$ denotes the set of all possible terms associated with $F^{(\alpha)}$ in the expansion, and $\sup_{\alpha}\absa{\mathscr{I}_{\alpha}}=\OO\pa{1}$. 
For each $\alpha\in\qq{1,m+n}$ and $p\in\mathscr{I}_{\alpha}$, the term $\Pi_p$ is of the following form for some deterministic coefficient $c_p=\OO(1)$ and fixed integer $d_p\ge 1$: 
\begin{equation}\nonumber
    \begin{aligned}
        \Pi_p=c_p\prod_{u=1}^{d_p}\pi_p^u ,
    \end{aligned}
\end{equation}
where each $\pi_p^u$ is either of the form $\pi_p^{u}=\pa{\im G_i}_{\star\star}$ (if $l_{p,u} = 1$), or 
\smash{\(\pi_p^u=\im \left(\pa{G_{i_1}}_{\star\star}\cdots\right.\allowbreak\left.\pa{G_{i_{l{p,u}}}}_{\star\star}\right)\)}
if $l_{p,u} \geq 2$. Here, each $\star$ represents either $x$ or $y$, and each $i_*$ is an index in $\{1,2\}$. It is easy to verify by induction that
\[\sum_{u=1}^{d_p}l_{p,u}=m+n.\] 
By the anisotropic local law \eqref{eq:aniso_local} for $H_\Lambda(t)$ (as shown in \Cref{lem:samefort}) and the estimate \eqref{square_root_density}, we have
\begin{align}
        \absa{ \pa{\im G_i}_{\star_1\star_2}}&\prec \im m\pa{E_i+\ii \eta}+\sqrt{\frac{\im m\pa{E_i+\ii \eta}}{N\eta}}+\frac{1}{N\eta}\lesssim N^{-\frac 1 3+\sigma},\label{single_resolvent_local_law_bound}\\
        \absa{\im [\pa{ G_i}_{\star_1\star_2}]} &\lesssim \big|\im [\pa{ G_i}_{\bu_+\bu_+}]\big|+\big|\im [\pa{ G_i}_{\bu_-\bu_-}]\big| =\abs{ \pa{\im G_i}_{\bu_+\bu_+}}+\abs{ \pa{\im G_i}_{\bu_-\bu_-}}\prec N^{-\frac 1 3+\sigma},\label{single_resolvent_local_law_bound2}
\end{align}
where $\star_1,\star_2\in \{x,y\}$, $\bu_{\pm}:=\mathbf{ e}_{\star_1}\pm \mathbf{e}_{\star_2}$, and in the second equation, we applied the polarization identity. These directly imply that $\absa{\pi_p^u}\prec N^{-1/3+\sigma}$. Combining this bound with the structure of $\partial_{xy}^m \partial_{yx}^n F(A_t)$ described above, as well as the estimate \eqref{rough_estimate_A_t} and the condition \eqref{derivative_bound_F}, we conclude that
\begin{equation}\nonumber
    \begin{aligned}        \absa{\partial_{xy}^{m}\partial_{yx}^{n}F\pa{A_t}}\prec N^{-\frac 1 3+2C_{m+n}\sigma+\sigma}.
    \end{aligned}
\end{equation}
Then, for the terms $\cal F_r$ with $r\ge 3$, it is easy to check that
\begin{equation}\label{eq:Fk}
    \begin{aligned}
        \cal F_r \prec N^{-r/2+5/3+\wt C_l\sigma},\quad 3\le r\le l,
    \end{aligned}
\end{equation}
for a constant $\wt C_l>0$ that does not depend on $\sigma$.

It remains to bound the first term on the RHS of \eqref{eq:prodnGt}. We rewrite \eqref{partial_xy_partial_yx_F_A_t} as
\begin{equation}\nonumber
    \begin{aligned}
        \partial_{xy}\partial_{yx}F\pa{A_t}&=  F''\pa{A_t}\qa{\im \pa{G_2-G_1}}_{yx}\qa{\im \pa{G_2-G_1}}_{xy}\\
        & +F'\pa{A_t} \left(\pa{\im G_2}_{xx}\pa{G_2}_{yy}+\pa{G_2}_{xx}\pa{\im G_2}_{yy}-2\ii \pa{\im G_2}_{xx}\pa{\im G_2}_{yy}\right.\\
        &\left.\qquad\qquad -\pa{\im G_1}_{xx}\pa{G_1}_{yy}-\pa{G_1}_{xx}\pa{\im G_1}_{yy}+2\ii \pa{\im G_1}_{xx}\pa{\im G_1}_{yy}\right).
    \end{aligned}
\end{equation}
Thus, we can write the first term on the RHS of \eqref{eq:prodnGt} as $e^{-t}/2$ times 
\begin{equation}\nonumber
    \begin{aligned}
        \mathscr{F}_2&:= D\sum_{a\in \qq D} F''\pa{A_t}\avga{\im \pa{G_2-G_1}\cdot\pa{D^{-1}-E_a}\cdot\im \pa{G_2-G_1}\cdot E_a}\\
        & +2D^2N\sum_{a\in\qq D} F'\pa{A_t} \left(\avga{\im G_2\cdot\pa{D^{-1}-E_a}}\avga{G_2E_a}-\ii \avga{\im G_2\cdot\pa{D^{-1}-E_a}}\avga{\im G_2\cdot E_a}\right.\\
        &\left.\qquad\qquad\qquad -\avga{\im G_1\cdot\pa{D^{-1}-E_a}}\avga{G_1E_a}+\ii \avga{\im G_1\cdot\pa{D^{-1}-E_a}}\avga{\im G_1\cdot E_a}\right).
    \end{aligned}
\end{equation}
Using the block translation invariance of $M_t$ and the fact that $\sum_{a}(D^{-1}-E_a)=0$, we can rewrite $\mathscr{F}_2$ as
    \begin{align}
        & \mathscr{F}_2=D\sum_{a\in \qq D} F''\pa{A_t}\avga{\im \pa{G_2-G_1}\cdot\pa{D^{-1}-E_a}\cdot\im \pa{G_2-G_1}\cdot E_a}\nonumber\\
        & +2D^2N\sum_{a\in\qq D} F'\pa{A_t} \left(\avga{\im G_2\cdot\pa{D^{-1}-E_a}}\avga{\pa{G_2-M_2}E_a}-\ii \avga{\im G_2\cdot\pa{D^{-1}-E_a}}\avga{\im \pa{G_2-M_2}\cdot E_a}\right.\nonumber\\
        &\left.\qquad\qquad -\avga{\im G_1\cdot\pa{D^{-1}-E_a}}\avga{\pa{G_1-M_1}E_a}+\ii \avga{\im G_1\cdot\pa{D^{-1}-E_a}}\avga{\im \pa{G_1-M_1}\cdot E_a}\right),
    \end{align}
where $M_i\equiv M_t\pa{E_i+\ii \eta}$ for $i\in\{1,2\}$. It remains to bound the following terms for $i,j\in \{1,2\}$: 
\begin{align*}
	X(i,j;a)&:=F''\pa{A_t}\avga{\im G_i\cdot\pa{D^{-1}-E_a}\cdot\im G_j\cdot E_a},\\
	Y_1({i;a})&:=F'\pa{A_t} \avga{\im G_i\cdot\pa{D^{-1}-E_a}}\avga{\pa{G_i-M_i}E_a},\\
    Y_2({i;a})&:=F'\pa{A_t} \avga{\im G_i\cdot\pa{D^{-1}-E_a}}\avga{\im \pa{G_i-M_i}\cdot E_a}.
\end{align*}

With the average local law \eqref{eq:aver_local} and the bounds \eqref{derivative_bound_F}, \eqref{rough_estimate_A_t}, \eqref{single_resolvent_local_law_bound}, and  \eqref{single_resolvent_local_law_bound2}, 
we get the following rough bounds on $X$ and $Y$:
\begin{align}\label{eq:roughXY}
	X(i,j;a) \prec N^{1/3+2\sigma+2C_2\sigma},\quad 
	Y_1({i;a}) \prec N^{-2/3+2\sigma+2C_2\sigma},\quad
    Y_2({i;a}) \prec N^{-2/3+2\sigma+2C_2\sigma}.
\end{align}
To improve these estimates, we consider the eigendecompositions 
   \begin{align}
        \avga{\im G_i\cdot\pa{D^{-1}-E_a}\cdot\im G_j\cdot E_a} &=\frac{1}{DN}\sum_{r,s=1}^{DN}\eta^2 \frac{  {\bf v}_r^* (D^{-1}-E_a){\bf v}_s \cdot  {\bf v}_s^* E_a{\bf v}_r}
       {\pa{(\lambda_r-E_i)^2+\eta^2}\pa{(\lambda_s-E_j)^2+\eta^2}},\label{GEaD1}\\
        \avga{\im G_i\cdot\pa{D^{-1}-E_a}}
      & =\frac{1}{DN}\sum_{r=1}^{DN}\eta\frac{\bv_r^*\pa{D^{-1}-E_a}\bv_r}{\pa{\lambda_r-E_i}^2+\eta^2},\label{GEaD2}
   \end{align}
where $\lambda_k\equiv \lambda_k(t)$ and $\bv_k\equiv \bv_k(t)$ denote the eigenvalues and eigenvectors of $H_t + \Lambda$, respectively. Using the eigenvalue rigidity estimate \eqref{eq:rigidity}, the fact \eqref{eq:kappa_gamma_k}, and the QUE estimate \eqref{eq:extend:main_evector1} for ${H_{\Lambda}}(t)$ (as established in \Cref{lem:samefort}), we can bound \eqref{GEaD1} as follows: with probability $1-\OO(N^{-c})$, 
\begin{align}
\eqref{GEaD1}
&\lesssim \frac{1}{N}\left(\sum_{r,s\le N^\e} \frac{N^{-c} }{\eta^2} +\sum_{r\le N^\e, N^\e < s\le N^c} \frac{N^{-c} }{\pa{s/N}^{4/3}} + \sum_{N^\e < r, s\le N^c} \frac{N^{-c} \eta^2}{\pa{r/N}^{4/3}\pa{s/N}^{4/3}}\right. \nonumber\\
&    +\left. \sum_{N^\e < r\le N^c,s\ge N^{c}}\frac{\eta^2}{\pa{r/N}^{4/3}\pa{s/N}^{4/3}}+\sum_{r\le N^\e,s\ge N^{c}}\frac{1}{\pa{s/N}^{4/3}} +\sum_{r\ge N^c,s\ge N^{c}}\frac{\eta^2}{\pa{r/N}^{4/3}\pa{s/N}^{4/3}}\right)\nonumber \\
&\lesssim N^{1/3-c+2\sigma+2\e}+N^{1/3-c+2\varepsilon/3}+N^{1/3-c-2\sigma-2\varepsilon/3}+N^{1/3-c/3-2\sigma-\varepsilon/3}+N^{1/3-c/3+\varepsilon}+N^{1/3-2c/3-2\sigma},\nonumber\\
&    \lesssim N^{1/3-c/3+2\sigma+2\varepsilon},\label{GEaD3}
\end{align}
provided the positive constants $\sigma$ and $\varepsilon$ satisfy $0<\sigma+\e < c/6$. Similarly, we can bound \eqref{GEaD2} as 
\begin{equation}\label{GEaD4}
\P\left(\left|\avga{\im G_i\cdot\pa{D^{-1}-E_a}}\right| \ge N^{-1/3-c/3+\sigma+\e}\right)  \lesssim N^{-c} .
\end{equation}
Combining \eqref{GEaD3} and \eqref{GEaD4} with \eqref{eq:aver_local}, \eqref{derivative_bound_F}, and \eqref{rough_estimate_A_t}, we obtain that 
\[\mathbb P\left( \absa{\mathscr{F}_2}\ge N^{1/3-c/3+2\sigma+3\e+2C_2\sigma} \right)\lesssim N^{-c}.\] 
Together with the rough bound \eqref{eq:roughXY}, it yields that 
\be\label{eq:F2}
\E\absa{\mathscr{F}_2} \lesssim N^{1/3-c/3+2\sigma+3\e+2C_2\sigma} +  N^{1/3+2\sigma + 2C_2 \sigma +\varepsilon}\cdot N^{-c} \le  2N^{1/3-c/3+2\sigma+3\e+2C_2\sigma }.
\ee

Finally, by choosing the constants $\sigma$ and $\varepsilon$ sufficiently small depending on $c$, and integrating \eqref{eq:F2} and \eqref{eq:Fk} over $t\in [0, \ft]$, we complete the proof of \Cref{rescomY}.
\end{proof}

\section{Localized phase}\label{sec:localized_case}

In this section, we present the proof of \Cref{theorem_localized_eigenvector_localization} and \Cref{theorem_localized_eigenvalue}. Again, without loss of generality, we only consider the case $k\le DN/2$, while the other case $k> DN/2$ can be treated analogously. 
As discussed in \Cref{sec_idea}, the key step in the proof is to establish the two-resolvent estimates, namely \Cref{lemma_localized_eigenvector_local_law} and \Cref{lemma_localized_eigenvalue_local_law} below. 

\subsection{Localized regime: eigenvectors}

We begin by proving the localization of eigenvectors, \Cref{theorem_localized_eigenvector_localization}.
Throughout the following proof, we fix $k\le DN/2$ and set 
\be\label{eq:z1}
z_1\equiv z_1(k,\e):=E+\r i \eta,\quad \text{with}\quad E=\gamma_k, \ \ 
\eta = N^{-2/3+\varepsilon}k^{-1/3},
\ee
for a sufficiently small constant $\e>0$. 
As mentioned previously in \eqref{eq:shiftEV}, an appropriate shift for the spectral parameter is required, defined as
\begin{equation}\label{definition_shift_eigenvector}
z_0\equiv z_0(k,\e):=z_1-\Delta_{\txt{ev}},\quad \text{with}\quad 
\Delta_{\txt{ev}} \equiv   \Delta_{\txt{ev}}(z_1):=\re \pa{z_1+m(z_1)+\frac{1}{m(z_1)}}.
\end{equation}
We will abbreviate $M\equiv M(z_1)$, $M_{\txt{sc}}\equiv M_{\txt{sc}}(z_0):=m_{\txt{sc}}(z_0)I$, $m\equiv m(z_1)$, and $m_{\txt{sc}}\equiv m_{\txt{sc}}(z_0)$.
By the estimate \eqref{modification_shift_eigenvector} below, we know that $\gamma_k -\Delta_{\txt{ev}}$ is approximately equal to $\gamma_{k}^{\txt{sc}}$ up to a negligible error $\oo(N^{-2/3}k^{-1/3})$, which implies that:
\begin{equation}\label{sim_m_sc_and_m}
    \begin{aligned}
        \im m_{\txt{sc}}(z_0)\sim \im m(z_1).
    \end{aligned}
\end{equation}
Furthermore, the shift $\Delta_{\txt{ev}}$ plays a crucial role in the proof by introducing a key cancellation that gives the estimate \eqref{regular_estimate_localized_eigenvector} in the following lemma.
 \begin{lemma}\label{lem_regular_estimate_localized_eigenvector}
     Under the assumptions of \Cref{theorem_localized_eigenvector_localization}, the following bounds hold for any $a\in \qq{D}$ and $\sM_0\in\ha{M_{\txt{sc}}(z_0),M_{\txt{sc}}^*(z_0)}$, $\sM_1\in\ha{M(z_1),M^*(z_1)}$:   \begin{align}\label{shift_bound_localized_eigenvector}
\Delta_{\txt{ev}}(z_1)&=\OO\pa{\avga{\Lambda^2}},\\
\avg{\sM_0\tLambda\sM_1E_a}&=\im m(z_1)\cdot \OO\pa{\avga{\Lambda^2}} , \label{regular_estimate_localized_eigenvector}
\end{align}
where $z_0$ and $z_1$ are defined in \eqref{eq:z1} and \eqref{definition_shift_eigenvector}, respectively, and $\tLambda$ is defined as $\tLambda:=\Lambda-\Delta_{\txt{ev}}$. 
 \end{lemma}
 \begin{proof}
 Using equation \eqref{self_m} and the fact that $\avga{\Lambda}=0$, we obtain 
    \begin{equation}\nonumber
        \begin{aligned}
            m+\frac{1}{m+z_1}=\avga{\pa{\Lambda-m-z_1}^{-1}}+\frac{1}{m+z_1}=-\sum_{l=2}^{\infty} \pa{m+z_1}^{-l-1}\avga{\Lambda^l}=\OO\pa{\avga{\Lambda^2}},
        \end{aligned}
    \end{equation}
which implies \eqref{shift_bound_localized_eigenvector}: 
    \begin{equation}\nonumber
        \begin{aligned}
            \absa{\Delta_{\txt{ev}}}\leq \absa{\frac{m+z_1}{m}}\absa{m+\frac{1}{m+z_1}}\lesssim \avga{\Lambda^2}.
        \end{aligned}
    \end{equation}
    To prove \eqref{regular_estimate_localized_eigenvector}, note that since $\sM_0$ is a scalar matrix and $\sM_1\in\{M,M^*\}$ satisfies the block translation symmetry, it suffices to show that
    \begin{equation}\label{eq:cancel_pf}
        \begin{aligned}
            \avg{\tLambda M}=\im m \cdot \OO\pa{\avga{\Lambda^2}}.
        \end{aligned}
    \end{equation}
    
We first estimate the distance between $m_{\txt{sc}}$ and $m$ as:     \begin{equation}\label{estimate_self_consistent_equation_m_in_semicircle}
\begin{aligned}
z_0+m+\frac{1}{m}&=z_1+m+\frac{1}{m}-\Delta_{\txt{ev}}=\ii \im \pa{z_1+m+\frac{1}{m}}=\ii\pa{ \eta+\im m-\frac{\im m}{\absa{m}^2}}\\
&=\ii \im m\bigg(\frac{1}{\avga{MM^*}}-\frac{1}{|m|^2}\bigg)=\im m \cdot \OO\pa{\avga{\Lambda^2}},
\end{aligned}
\end{equation}
where in the fourth step, we used the identity \eqref{equation_z_im_m}, and in the last step, we applied \eqref{estimate_replace_M_by_m}. Then, we get  
    \begin{equation}\nonumber
        \begin{aligned}
            \absa{m_{\txt{sc}}(z_0)-m(z_1)}\lesssim \frac{\im m(z_1) \cdot \avga{\Lambda^2}}{\im m_{\txt{sc}}(z_0)}\lesssim\avga{\Lambda^2},
        \end{aligned}
    \end{equation}
where in the first step, we used \eqref{estimate_self_consistent_equation_m_in_semicircle} and the stability of the self-consistent equation for $m_{sc}$, and in the second step, we applied \eqref{sim_m_sc_and_m}. Then, from the definition \eqref{def_G0} and identity $m=-(z_0+m)^{-1}$, we obtain 
        \begin{align}
   \abs{\avg{\tLambda M}}\sim          \abs{\avg{M_{\txt{sc}}\tLambda M}}&=\absa{\avga{M_{\txt{sc}}-M+\pa{m-m_{\txt{sc}}}M_{\txt{sc}}M}}=\absa{m-m_{\txt{sc}}}\absa{1-m_{\txt{sc}}m}\nonumber\\
  &\lesssim \absa{m-m_{\txt{sc}}}^2+\absa{m-m_{\txt{sc}}}\absa{1-m^2}     \lesssim \sqrt{\kappa+\eta}\avga{\Lambda^2}\sim \im m \cdot \avga{\Lambda^2},
        \end{align}
    where $\kappa:=|E^+-E|\wedge|E-E^-|$, and in the fourth step, we also used $\absa{1-m^2}\lesssim \sqrt{\kappa+\eta}$ by \eqref{de_ir_1}, along with the fact that $\avga{\Lambda^2} \lesssim\|A\|_{\HS}^2/N\leq N^{-1/3-2\varepsilon_A}k^{-2/3}\ll\sqrt{\kappa+\eta}$ by \eqref{eq:condA2} and \eqref{eq:kappa_gamma_k}. This concludes \eqref{eq:cancel_pf}, which further completes the proof of \eqref{regular_estimate_localized_eigenvector}. 
 \end{proof}

 \Cref{theorem_localized_eigenvector_localization} follows from the following two-resolvent estimate, whose proof is deferred to \Cref{subsection:proof_of_lemma_localized_eigenvector_local_law_and_lemma_localized_eigenvalue_local_law}.
\begin{lemma}\label{lemma_localized_eigenvector_local_law}
    In the setting of \Cref{theorem_localized_eigenvector_localization}, and under the definitions in \eqref{eq:z1} and \eqref{definition_shift_eigenvector}, the following estimate holds for some constant $C>0$ that does not depend on $\varepsilon$:   \begin{equation}\label{estimate_localized_eigenvector_local_law}
        \begin{aligned}
            \E\avg{\pa{\im G_0}\tLambda\pa{\im G_1}\tLambda}\prec N^{C\varepsilon}N^{-5/3}k^{2/3}\norm{A}_{\HS}^2\leq N^{-1-2\varepsilon_A+C\varepsilon} ,
        \end{aligned}
    \end{equation}
   where $G_0$ and $G_1$ denote $G_0:=\pa{H-z_0}^{-1}$ and $G_1:=\pa{H_{\Lambda}-z_1}$.
\end{lemma}

\begin{proof}[\bf Proof of \Cref{theorem_localized_eigenvector_localization}]
For ease of presentation, we will assume $D=2$ in the following proof. The argument for the general case of $D$ is similar and will be sketched at the end.

For any $k\le DN/2$, we denote the $k$-th eigenvector by $\bv_k=(\bu_k^\top, \bw_k^\top)^\top$, where $\bu_k,\bw_k\in \C^N$. 
Then, from the eigenvalue equation
\[
H \begin{pmatrix}
\mathbf{u}_k \\
\mathbf{w}_k
\end{pmatrix}=\begin{pmatrix}
H_1 & A \\
A^* & H_2
\end{pmatrix}
\begin{pmatrix}
\mathbf{u}_k \\
\mathbf{w}_k
\end{pmatrix}
=
\lambda_k
\begin{pmatrix}
\mathbf{u}_k \\
\mathbf{w}_k
\end{pmatrix},
\]
we can derive similar equations as in \eqref{eq:bwk}:   
\be\label{eq:bwbu}\bw_k = -\cG_2(\lambda_k-\Delta_{\txt{ev}})  \pa{A^*\bu_k-\Delta_{\txt{ev}}\bw_k}, \quad \bu_k=-{\cal G}_1(\lambda_k-\Delta_{\txt{ev}}) \left( A\bw_k-\Delta_{\txt{ev}} \bu_k \right).\ee
Now, given an arbitrarily small constant $\delta>0$, we define the following events:
\begin{equation}\nonumber
    \begin{aligned}
        &\mathscr E_1\equiv \mathscr E_1\pa{\delta}:=\left\{\dist(\lambda_k-\Delta_{\txt{ev}}, \spec(H_1))\ge N^{-2/3-\delta}k^{-1/3}\right\},\\
        &\mathscr E_2\equiv \mathscr E_2 \pa{\delta}:=\left\{\dist(\lambda_k-\Delta_{\txt{ev}}, \spec(H_2))\ge N^{-2/3-\delta}k^{-1/3}\right\}.
    \end{aligned}
\end{equation}
We claim that there exists a constant $\delta_0=\delta_0\pa{\delta}>0$ depending on $\delta$ such that 
\be\label{eq:probab12}
\P \left( \mathscr E_1\cup \mathscr E_2\right) = 1-\OO(N^{-\delta_0}).
\ee
To prove this claim, notice that 
$$ \P\left( (\mathscr E_1\cup \mathscr E_2)^c \right) \le \P \left( \exists i,j \in \qq{N} \text{ such that } |\lambda_i^{(1)}-\lambda_j^{(2)}|\le 2N^{-2/3-\delta}k^{-1/3}\right),$$
where $\lambda_i^{(1)}$ and $\lambda_j^{(2)}$ denote the eigenvalues of $H_1$ and $H_2$, respectively. Using the rigidity of eigenvalues for Wigner matrices (see, e.g., \cite[Theorem 2.2]{erdHos2012rigidity} or \eqref{eq:rigidity} in the case of $D=1$), we get
\be\label{rigidity12} |\lambda_i^{(1)}-\gamma_{i,N}^{\txt{sc}}|+|\lambda_i^{(2)}-\gamma_{i,N}^{\txt{sc}}| \prec N^{-2/3}\min(i,N+1-i)^{-1/3},\quad i\in \qq{N}, \ee
where $\gamma_{i,N}^{\txt{sc}}$, $i\in \qq{N}$, denote the quantiles of the semicircle law as defined in \eqref{eq:gammak}, but with $N$ particles: 
\be\label{eq:gammak2}
\gamma_{i,N}^{\txt{sc}}:=\sup_{x\in \R}\left\{\int_{x}^{+\infty}\rho_{\txt{sc}}(x)\dd x \geq \frac{i-1/(2D)}{N}\right\}.\ee
Note that $\gamma_{i,N}^{\txt{sc}}$ is related to $\gamma_{Di}^{\txt{sc}}$ in \eqref{eq:gammak} through the relation $\gamma_{i,N}^{\txt{sc}}=\gamma_{Di}^{\txt{sc}}$.

Next, we present a level-repulsion estimate. Given any constant $\delta>0$, there exists a constant $\delta_0=\delta_0\pa{\delta}>0$ such that the following estimate holds on the event \[A_{j,\tau}:=\Big\{\lambda_j^{\pa{2}}\in [\gamma_k^{\txt{sc}}-N^{-2/3+\tau}k^{-1/3},\gamma_k^{\txt{sc}}+N^{-2/3+\tau}k^{-1/3}]\Big\}\] 
for sufficiently small constant $\tau>0$ (depending on $\delta$ and $\delta_0$):
\be\label{Wegner12} \P \left( \exists i\in \qq{N}, \left. |\lambda_i^{(1)}-\lambda_j^{(2)}|\le 2N^{-2/3-\delta}k^{-1/3}, A_{j,\tau} \right| H_2\right) \le N^{-2\delta_0} \, .\ee
In fact, \cite[Lemmas B.1 and B.12]{BENIGNI2022108109} show \eqref{Wegner12} when $H_1$ is a Gaussian divisible matrix with a small Gaussian component of order \smash{$N^{-\delta'}$}, where $\delta'>0$ is a small constant. Then, by applying the comparison theorem in \cite[Proposition 2.10]{Bourgade_JEMS}, we can conclude \eqref{Wegner12}. 
By \eqref{modification_shift_eigenvector} in the appendix, we have $\gamma_k^{\txt{sc}}+\Delta_{\txt{ev}}=\gamma_k+\oo\pa{N^{-2/3}k^{-1/3}}$. Together with the eigenvalue rigidity estimate \eqref{eq:rigidity}, it implies that 
\begin{equation}\label{shift_rigidity_estimate}
    \begin{aligned}
        \absa{\lambda_k-\Delta_{\txt{ev}}-\gamma_k^{\txt{sc}}}\prec N^{-2/3}k^{-1/3}.
    \end{aligned}
\end{equation}
We denote $k_0=k/D$ so that $\gamma_{k}^{\txt{sc}}=\gamma_{k_0,N}^{\txt{sc}}$ (noting that the definition \eqref{eq:gammak2} remains valid even if $k_0$ is not an integer). Combining estimates \eqref{rigidity12}, \eqref{Wegner12}, and \eqref{shift_rigidity_estimate}, we obtain that for any constants $\tau,C>0$, 
\begin{align*}
\P\left( (\mathscr E_1\cup \mathscr E_2)^c \right) &\le \P \left( { \exists i,j \in \qq{k_0-N^\tau , k_0+ N^\tau}  \text{ such that } |\lambda_i^{(1)}-\lambda_j^{(2)}|\le 2N^{-2/3-\delta}k^{-1/3} },\ A_{j,\tau}\right) + N^{-C}\\
&\le \sum_{j\in \qq{k_0-N^\tau,k_0+ N^\tau}\cap\qq{1,N}} \P \left(\exists i\in \qq{N}, |\lambda_i^{(1)}-\lambda_j^{(2)}|\le 2N^{-2/3-\delta}k^{-1/3},\ A_{j,\tau}\right) + N^{-C} \lesssim N^{-2\delta_0+\tau}.
\end{align*}
If we take $\tau<{\delta_0}$, this concludes \eqref{eq:probab12}.


Now, without loss of generality, suppose the event $\mathscr E_1$ holds. Recall $z_0$ and $z_1$ defined in \eqref{eq:z1} and \eqref{definition_shift_eigenvector}, respectively, and recall that $G_0=\pa{H-z_0}^{-1}$ and $G_1=\pa{H_{\Lambda}-z_1}$. We claim the following estimate: 
\begin{equation}\label{eq:boundE1}
    \E\left(\| {\cal G}_1(\lambda_k-\Delta_{\txt{ev}}) \left( A\bw_k-\Delta_{\txt{ev}} \bu_k \right)\|^2;\mathscr E_1\right)\lesssim N^{2(\e+\delta)} \E \tr[\pa{\im G_0}\tLambda \pa{\im G_1} \tLambda] .
\end{equation}
To see why \eqref{eq:boundE1} holds, using the spectral decomposition of $\im G_1$, we obtain that  
\begin{align*}
    \E \tr[\pa{\im G_0} \tLambda \pa{\im G_1} \tLambda]&\geq\E \sum_{j\in \cI} \frac{\eta}{(\lambda_j-\gamma_k)^2+\eta^2}   \left( A\bw_j-\Delta_{\txt{ev}} \bu_j \right)^*  \im \cG_1(z_0) \left( A\bw_j-\Delta_{\txt{ev}} \bu_j \right)\\
&\gtrsim \eta^{-1} \E\qa{ \left( A\bw_k-\Delta_{\txt{ev}} \bu_k \right)^*  \im \cG_1(z_0) \left( A\bw_k-\Delta_{\txt{ev}} \bu_k \right)} ,
\end{align*}
where in the last step, we applied the rigidity of $\lambda_k$ as given by \eqref{eq:rigidity}. On the other hand, using the spectral decomposition of $\cG_1\pa{z_0}$, we find that on the event $\mathscr E_1$, the following estimate holds with high probability:
\begin{align*}
&~\eta^2\left\| {\cal G}_1(\lambda_k-\Delta_{\txt{ev}}) \left( A\bw_k-\Delta_{\txt{ev}} \bu_k \right)\right\|^2 = \sum_{j} \frac{\eta^2\big|(\bu_j^{(1)})^* \left( A\bw_k-\Delta_{\txt{ev}} \bu_k \right)\big|^2}{(\lambda_j^{(1)}-\lambda_k+\Delta_{\txt{ev}})^2}\\
\lesssim&~ N^{2(\e+\delta)}\sum_{j} \frac{\eta^2\big|(\bu_j^{(1)})^* \left( A\bw_k-\Delta_{\txt{ev}} \bu_k \right)\big|^2}{(\lambda_j^{(1)}-\lambda_k+\Delta_{\txt{ev}})^2+\eta^2}
\lesssim N^{2(\e+\delta)}\sum_{j} \frac{\eta^2\big|(\bu_j^{(1)})^* \left( A\bw_k-\Delta_{\txt{ev}} \bu_k \right)\big|^2}{(\lambda_j^{(1)}-\gamma_k+\Delta_{\txt{ev}})^2+\eta^2}\\
=&~ N^{2(\e+\delta)} \cdot \eta \left( A\bw_k-\Delta_{\txt{ev}} \bu_k \right)^*  \im \cG_1(z_0) \left( A\bw_k-\Delta_{\txt{ev}} \bu_k \right) ,
\end{align*}
where $\{\bu_j^{(1)}:j\in \qq{N}\}$ denote the eigenvectors of $H_1$, and we used the definition of $\mathscr E_1$ in the second step and the rigidity of $\lambda_k$ in the third step. Combining the above two estimates establishes \eqref{eq:boundE1}. 

Given any constant $c_0\in\pa{0,\varepsilon_A/2}$, we choose $\delta$ and $\varepsilon$ to be sufficiently small, depending on $c_0$, such that $c+(1+C/2)\e< c_0/2$, where $C>0$ is the constant in \eqref{estimate_localized_eigenvector_local_law}. Then, applying Markov's inequality, we can derive from \eqref{eq:bwbu}, \eqref{estimate_localized_eigenvector_local_law} and \eqref{eq:boundE1} that
\begin{equation}\label{eq:E1}
    \begin{aligned}
        \P\pa{\norm{\bu_k}\geq N^{-1/3+c_0}k^{1/3}\norma{A}_{\HS};\mathscr E_1} \leq N^{-c_0/2}.
    \end{aligned}
\end{equation}
By symmetry, a similar bound holds for $\norm{\bw_k}$ on $\mathscr{E}_2$. Together with \eqref{eq:probab12}, this concludes \Cref{theorem_localized_eigenvector_localization} for case $D= 2$.

For the general case with $D>2$, given a small constant $\delta>0$, we define 
$$\mathscr E_{(a)}\equiv \mathscr E_{(a)}\pa{\delta}:=\left\{\dist\pa{\lambda_k-\Delta_{\txt{ev}}, \cup_{b\in \qq{D}\setminus\{a\}}\spec(H_b)}\ge N^{-2/3-\delta}k^{-1/3}\right\},\quad a\in \qq D.$$
A similar argument to that used for \eqref{eq:probab12} shows that $\P\pa{\cup_{a=1}^D\scr E_{(a)}}\geq 1-N^{-\delta_0}$ for some $\delta_0=\delta_0\pa{\delta}$. Moreover, we can prove that for any $a\in \qq D$, 
\begin{equation}\label{localized_bound_eigenvector_general_D}
    \begin{aligned}
         \E \left(\|E_{a} \bv_k\|^2;\mathscr E_{a} \right)\lesssim N^{2(\e+\delta)} \E \tr\left[(\im G_0) \Lambda (\im G_1)\Lambda \right]. 
    \end{aligned}
\end{equation}
To see this, without loss of generality, suppose $\mathscr E_{(1)}$ holds. We partition the $j$-th eigenvector as \smash{$\bv_j=\begin{pmatrix}
    \bu_j^\top,\bw_j^\top
\end{pmatrix}^\top$} with \smash{$\bu_j\in \C^N$ and $\bw_j\in\C^{\pa{D-1}N}$}, and partition the first row of blocks of $H_\Lambda$ as $\p{H_1,\cal A}$ for a matrix \smash{$\cal A\in \C^{N\times\pa{D-1}N}$}. Then, we have the equation
\[H_1\bu_k+\cal A\bw_k=\lambda_k\bu_k\implies \bu_k=-\cG_1\pa{\lambda_k-\Delta_\txt{ev}}\p{\wt A\bw_k - \Delta_\txt{ev}\bu_k}.\] 
Using a similar argument to that for the $D=2$ case, we can derive \eqref{localized_bound_eigenvector_general_D} for $a=1$. Finally, by applying Markov's inequality along with the estimates \eqref{estimate_localized_eigenvector_local_law} and \eqref{localized_bound_eigenvector_general_D}, we conclude the proof of \Cref{theorem_localized_eigenvector_localization} for general $D>2$. 
\end{proof}

\subsection{Localized regime: eigenvalues}

For the proof of \Cref{theorem_localized_eigenvalue}, we introduce another shift:
\begin{equation}\label{definition_shift_eigenvalue}
    \begin{aligned}      \Delta_{\txt{e}}\equiv\Delta_{\txt{e}}\pa{k,\eta}:=\int_{0}^{1}\Delta(t) \r d t,\quad \text{where}\quad \Delta\pa{t}:=\frac{\avga{M_t(z_t)\Lambda M_t^{*}(z_t)}}{\avga{M_t(z_t)M_t^*(z_t)}}.
    \end{aligned}
\end{equation}
Here, $M_t$ is obtained by replacing $\Lambda$ with $t\Lambda$ in the definition of $M$, and $m_t$ and $\gamma_k(t)$ are defined in the sense of \Cref{def_moving_Lambda}. We set 
\be\label{eq:z1t}
z_t\equiv z_t(k,\e)=\gamma_k(t)+\r i \eta,\quad \text{with}\quad \eta = N^{-2/3+\varepsilon}k^{-1/3},
\ee
for a sufficiently small constant $\e>0$. 
It is important to emphasize that, although the notation $M_t$ here may coincide with some notations used in \Cref{sec:delocalized_case_eigenvector,sec:delocalized_case_eigenvalue}, all instances of $M_t,m_t,\gamma_k\pa{t}$, and $z_t$ in this section refer exclusively to the quantities defined above.
First, we claim the following bounds that correspond to the estimates \eqref{shift_bound_localized_eigenvector} and \eqref{regular_estimate_localized_eigenvector}.
\begin{lemma}\label{lem_regular_estimate_localized_eigenvalue}
    Under the assumptions of \Cref{theorem_localized_eigenvalue}, the following bounds hold uniformly in $t\in[0,1]$:
    \begin{align}\label{shift_bound_localized_eigenvalue}
    \Delta(t)&=\OO\pa{\avga{\Lambda^2}},\\       \label{regular_estimate_localized_eigenvalue}
\avg{\sM_0\hLambda_t\sM_1E_a}&=\OO\pa{\im m_t(z_t)\cdot \avga{\Lambda^2}}
        \end{align}
for any $a\in \qq{D}$ and $\sM_0,\sM_1\in\ha{M_t\pa{z_t},M_t^*\pa{z_t}}$, where $\hLambda_t$ is defined by $\hLambda_t=\Lambda-\Delta\pa{t}$.
\end{lemma}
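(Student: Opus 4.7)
The plan is to prove both estimates via the absolutely convergent geometric series
\begin{equation*}
M_t = -\sum_{l\ge 0} t^l \Lambda^l \alpha_t^{-l-1}, \qquad \alpha_t := z_t + m_t(z_t),
\end{equation*}
which is uniformly valid for $t\in[0,1]$ since $\|t\Lambda\|/|\alpha_t|\le \|A\|/|\alpha_t|\lesssim N^{-\delta_A}$; the analogous expansion for $M_t^*$ replaces $\alpha_t$ by $\bar\alpha_t$. The block-cyclic structure of $\Lambda$ gives $\langle\Lambda\rangle = 0$, $\langle\Lambda^2\rangle\sim \|A\|_{\HS}^2/N$, and the elementary trace bound $|\langle\Lambda^n\rangle|\le \|\Lambda\|^{n-2}\langle\Lambda^2\rangle\lesssim \|A\|^{n-2}\langle\Lambda^2\rangle$ for $n\ge 2$, so every contribution involving $\langle\Lambda^n\rangle$ with $n\ge 3$ is suppressed by at least one extra factor of $\|A\|\le N^{-\delta_A}$ relative to the $\langle\Lambda^2\rangle$ scale. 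For \eqref{shift_bound_localized_eigenvalue}, substituting the expansion gives $\langle M_t\Lambda M_t^*\rangle = O(\langle\Lambda^2\rangle)$ (the $l_1+l_2=0$ term vanishes because $\langle\Lambda\rangle=0$), while $\langle M_tM_t^*\rangle = |\alpha_t|^{-2} + O(\langle\Lambda^2\rangle) \sim 1$, giving $\Delta(t) = O(\langle\Lambda^2\rangle)$.

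For \eqref{regular_estimate_localized_eigenvalue}, I first observe that $\sM_0\hLambda_t\sM_1$ is invariant under conjugation by the block cyclic shift while $E_a\mapsto E_{a+1}$, so $\langle\sM_0\hLambda_t\sM_1 E_a\rangle$ is independent of $a$ and equals $D^{-1}\langle\sM_0\hLambda_t\sM_1\rangle$. The proof then splits into two cases. When $\{\sM_0,\sM_1\} = \{M_t,M_t^*\}$, the two matrices commute (both are functional-calculus expressions in the Hermitian $\Lambda$), so the combination $\langle M_t\Lambda M_t^*\rangle - \Delta(t)\langle M_tM_t^*\rangle$ vanishes identically by the very definition of $\Delta(t)$. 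When $\sM_0=\sM_1=M_t$ (the $M_t^*$ case follows by complex conjugation), applying the resolvent identity $t\Lambda M_t = I + \alpha_tM_t$ under the trace yields the closed forms $\langle M_t\Lambda M_t\rangle = (m_t + \alpha_t\langle M_t^2\rangle)/t$ and $\langle M_t\Lambda M_t^*\rangle = (\bar m_t + \alpha_t\langle M_tM_t^*\rangle)/t$, so that
\begin{equation*}
\langle M_t\Lambda M_t\rangle - \Delta(t)\langle M_t^2\rangle \;=\; \frac{m_t\langle M_tM_t^*\rangle - \bar m_t\langle M_t^2\rangle}{t\langle M_tM_t^*\rangle}.
\end{equation*}
Substituting the series expansions, the $\langle\Lambda^0\rangle$-order contribution to the numerator cancels exactly ($-\alpha_t^{-1}|\alpha_t|^{-2} + \bar\alpha_t^{-1}\alpha_t^{-2} = 0$), and the $\langle\Lambda^2\rangle$-order contribution factorizes via the identity $\alpha_t^{-1} - \bar\alpha_t^{-1} = -2\ii\,\im\alpha_t/|\alpha_t|^2$ as $-2\ii(\im\alpha_t)\,\alpha_t^{-4}\bar\alpha_t^{-2}\cdot t^2\langle\Lambda^2\rangle$, producing the extra factor $\im\alpha_t = \eta + \im m_t \sim \im m_t$; the last $\sim$ uses $\eta\ll\im m_t$ at the quantile $z_t$, which follows from \eqref{square_root_density} together with $\eta\sim N^{-2/3+\varepsilon}k^{-1/3}$.

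The main obstacle is identifying the algebraic cancellation in the $\sM_0=\sM_1=M_t$ case, and the definition $\Delta(t)=\langle M_t\Lambda M_t^*\rangle/\langle M_tM_t^*\rangle$ has been engineered precisely to enforce it: the conjugate pairing between numerator and denominator is what makes the $\langle\Lambda^2\rangle$-order contributions combine antisymmetrically under $\alpha_t\leftrightarrow\bar\alpha_t$, yielding the $\im\alpha_t$ factor without which the downstream two-resolvent bound \eqref{estimate_localized_eigenvector_local_law} would fail in the edge regime. The higher-order terms from the expansion (those with $\langle\Lambda^n\rangle$ for $n\ge 3$) automatically carry additional powers of $\|A\|\le N^{-\delta_A}$ and are therefore negligible relative to $\im m_t\langle\Lambda^2\rangle$. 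This mirrors the structure of the proof of Lemma~\ref{lem_regular_estimate_localized_eigenvector}, where the shift $\Delta_{\txt{ev}}=\re(z_1+m+1/m)$ plays the analogous role and produces the same type of $\im m$ improvement through an identical cancellation mechanism.
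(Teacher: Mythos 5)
Your reduction via block translation invariance, the exact cancellation for $\{\sM_0,\sM_1\}=\{M_t,M_t^*\}$ (which indeed vanishes identically by the definition of $\Delta\pa{t}$), and the derivation from $t\Lambda M_t=I+\alpha_t M_t$ of the closed form $\langle M_t\Lambda M_t\rangle-\Delta\pa{t}\langle M_t^2\rangle=\bigl(m_t\langle M_tM_t^*\rangle-\bar m_t\langle M_t^2\rangle\bigr)/\bigl(t\langle M_tM_t^*\rangle\bigr)$ are correct, and so is the order-$\langle\Lambda^2\rangle$ computation exhibiting the $\im\alpha_t$ factor. The paper's proof reaches the same cancellation by a more structural route that avoids both the $1/t$ division and the series: it adds and subtracts $\langle M_t\Lambda M_t\rangle\langle M_t^2\rangle$ so that
\begin{equation*}
\langle M_t\hLambda_t M_t\rangle=\frac{\langle M_t\Lambda M_t\rangle\bigl(\langle M_tM_t^*\rangle-\langle M_t^2\rangle\bigr)+\bigl(\langle M_t\Lambda M_t\rangle-\langle M_t\Lambda M_t^*\rangle\bigr)\langle M_t^2\rangle}{\langle M_tM_t^*\rangle},
\end{equation*}
reads off the $\im m_t$ factor from the exact identity $M_t-M_t^*=2\ii(\eta+\im m_t)M_tM_t^*$, and supplies the $\avga{\Lambda^2}$ factor from \eqref{add_one_more_Lambda} applied to $\langle M_t\Lambda M_t\rangle$ and $\langle M_t\Lambda M_tM_t^*\rangle$; the two factors come from two separate facts rather than from joint order-by-order bookkeeping.

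However, your handling of the $\langle\Lambda^n\rangle$ contributions with $n\ge 3$ has a genuine gap. You claim they carry an extra $\|A\|\le N^{-\delta_A}$ and are ``therefore negligible relative to $\im m_t\langle\Lambda^2\rangle$.'' That inference fails in exactly the regime this lemma is needed for: by \eqref{square_root_density} and \eqref{eq:kappa_gamma_k} one has $\im m_t\sim N^{-1/3}\fr\pa{k}^{1/3}$, so for, say, $k=\OO(1)$ and any $\delta_A<1/3$ the $n=3$ contribution $\OO(\|A\|\langle\Lambda^2\rangle)$ \emph{dominates} $\im m_t\langle\Lambda^2\rangle$, and the standing assumption is only $\delta_A>0$. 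What saves the argument is that every $\langle\Lambda^n\rangle$ coefficient of the numerator $m_t\langle M_tM_t^*\rangle-\bar m_t\langle M_t^2\rangle$ carries the very same $\im\alpha_t$ factor you explicitly extracted at $n=2$: if one sets $\bar\alpha_t^{-1}=\alpha_t^{-1}$ then $M_t^*=M_t$, $\langle M_tM_t^*\rangle=\langle M_t^2\rangle$ and $\bar m_t=m_t$, so the entire numerator vanishes identically; hence each coefficient, as a Laurent polynomial in $\alpha_t^{-1}$ and $\bar\alpha_t^{-1}$, is divisible by $\alpha_t^{-1}-\bar\alpha_t^{-1}$ and the $n$-th term is $\OO(\im\alpha_t\,\|A\|^{n-2}\langle\Lambda^2\rangle)$. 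With this observation the proof closes; without it, it does not establish \eqref{regular_estimate_localized_eigenvalue}.
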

\begin{proof}
The first bound \eqref{shift_bound_localized_eigenvalue} follows directly from the estimate \eqref{add_one_more_Lambda} in the appendix. For the bound \eqref{regular_estimate_localized_eigenvalue}, we consider the case $\sM_0 = \sM_1 = M_t$ as an illustrative example; the remaining cases can be shown with a similar argument. For simplicity of notation, we denote $M\equiv M_t$ and $m\equiv m_t$. 
Using the block translation invariance of $M$ and \smash{$\hLambda_t$}, we can write that
    \begin{equation}\nonumber
        \begin{aligned}
            \avg{M\hLambda_t M E_a}= {D}^{-1}\avg{M \hLambda_t M},\quad \forall a\in \qq{D}.
        \end{aligned}
    \end{equation}
Moreover, we have that
    \begin{equation}\nonumber
        \begin{aligned}
            \avg{M \hLambda_t M}&=\frac{1}{\avga{MM^*}}\pa{\avga{M\Lambda M}\avga{MM^*}-\avga{M\Lambda M^*}\avga{M M}}\\
            &= \frac{1}{\avga{MM^*}}\pa{\avga{M\Lambda M}\avga{MM^*}-\avga{M\Lambda M}\avga{MM}+\avga{M\Lambda M}\avga{MM}-\avga{M\Lambda M^*}\avga{M M}}\\
            &= \OO\pa{\im m(z_t)\cdot \avga{\Lambda^2}},
        \end{aligned}
    \end{equation}
    where we used $\im M=\pa{\eta+\im m}MM^*$ and \eqref{add_one_more_Lambda} in the last step. This concludes \eqref{regular_estimate_localized_eigenvalue}.
\end{proof}


\Cref{theorem_localized_eigenvalue} follows from the following two-resolvent estimate, whose proof is also deferred to \Cref{subsection:proof_of_lemma_localized_eigenvector_local_law_and_lemma_localized_eigenvalue_local_law}.
 
\begin{lemma}\label{lemma_localized_eigenvalue_local_law}
In the setting of \Cref{theorem_localized_eigenvector_localization}, the following estimate holds uniformly in $t\in \qa{0,1}$ for some constant $C>0$ that does not depend on $\varepsilon$:
\begin{equation}\label{estimate_localized_eigenvalue_local_law}
        \begin{aligned}
            \E\avg{\pa{\im G_t}\hLambda_t\pa{\im G_t}\hLambda_t}\prec N^{C\varepsilon}N^{-5/3}k^{2/3}\norm{A}_{\HS}^2\leq N^{-1-2\varepsilon_A+C\varepsilon}
        \end{aligned}
    \end{equation}
    where $G_t$ is defined by $G_t :=\pa{H+t\Lambda-z_t}^{-1}$ with $z_t$ defined in \eqref{eq:z1t}.
\end{lemma}

\begin{proof}[\bf Proof of \Cref{theorem_localized_eigenvalue}]
    Denote $H_{\Lambda}\pa{t}=H+t\Lambda$ and let the eigenvalues and corresponding eigenvectors of $H_{\Lambda}(t)$ be denoted by $\lambda_i(t)$ and ${\bf v}_i(t)$, $i\in \cI$.
    Then, for any $k\in \cI$, we have that 
    \begin{equation} \label{ygfyz}
        \lambda_k(1)-\lambda_k(0)-\Delta_{\txt{e}}
        =\int_0^1 \frac{\dd}{\dd t}\lambda_{k}(t) \dd t-\int_0^1 \Delta\pa{t} \dd t
        =\int_0^1  {\bf v}_{k}(t)^* \hLambda_t {\bf v}_{k}(t) \dd\theta.
    \end{equation}
    Applying the Cauchy-Schwarz inequality, we obtain that
    \begin{equation} \label{ygfyz2}
        \mathbb E\left|\lambda_k(1)-\lambda_k(0)-\Delta_{\txt{e}}\right|^2
        \le 
         \mathbb E\int_0^1 \abs{{\bf v}_{k}(t)^* \hLambda_t {\bf v}_{k}(t) }^2 \dd t
        = \int_0^1  \mathbb E \abs{ {\bf v}_{k}(t)^* \hLambda_t {\bf v}_{k}(t) }^2  \dd t.
    \end{equation}
    Using the spectral decomposition of $\im G_t$, we can derive that
    \begin{equation}\label{bdwgy}
        \begin{aligned}
    |{\bf v}_{k}^*\pa{t} \hLambda_{t}{\bf v}_{k}\pa{t}|^2  
        &\le  \frac{\left[(\lambda_k\pa{t}-\gamma_k\pa{t})^2+\eta^2\right]^2}{\eta^2} \tr\left[  (\operatorname{Im} G_t) \hLambda_t (\operatorname{Im} G_t) \hLambda_t \right]\\ 
        &\prec \eta^2 \tr\left[  (\operatorname{Im} G_t) \hLambda_t (\operatorname{Im} G_t) \hLambda_t \right] ,
        \end{aligned}
    \end{equation}
    where we used the rigidity of $\lambda_k\pa{t}$ as given by \eqref{eq:rigidity}. Together with \Cref{lemma_localized_eigenvalue_local_law}. this implies that
    \begin{equation}\nonumber
        \begin{aligned}
            \E|{\bf v}_{k}^*\pa{t} \hLambda_{t}{\bf v}_{k}\pa{t}|^2\prec \eta^2 \E  \tr\left[  (\operatorname{Im} G_t) \hLambda_t (\operatorname{Im} G_t) \hLambda_t \right] \prec N^{-2+\pa{C+2}\varepsilon}  \norma{A}_{\HS}^2 .
        \end{aligned}
    \end{equation}
    Since $\varepsilon$ is arbitrary, we conclude that
    \begin{equation}\label{perturbation_bound_1}
        \begin{aligned}
            \E|{\bf v}_{k}^*\pa{t} \hLambda_{t}{\bf v}_{k}\pa{t}|^2\prec {\norma{A}_{\HS}^2}/{N^2}.
        \end{aligned}
    \end{equation}
  Applying \eqref{perturbation_bound_1} to \eqref{ygfyz2}, we obtain that for any constant $\varepsilon\in (0,\varepsilon_A)$,
    \begin{equation}\label{eq:pfeigenvalue}
        \begin{aligned}
            \qa{\mathbb E\left|\lambda_k(1)-\lambda_k(0)-\Delta_{\txt{e}} \right|^2}^{1/2}\prec {\norma{A}_{\HS}}/{N}.
        \end{aligned}
    \end{equation}
On the other hand, by \eqref{modification_shift_eigenvalue} in the appendix, we know that $\gamma_k -\Delta_{\txt{ev}}$ is approximately equal to $\gamma_{k}^{\txt{sc}}$ up to a negligible error $\OO\big(\norma{A}_{\HS}^4/N^2+N^{-4/3+\varepsilon/2}k^{1/3}\norma{A}_{\HS}^2\big)\ll  {\norma{A}_{\HS}}/{N}$. Together with \eqref{eq:pfeigenvalue}, it implies that  
     \begin{equation}\nonumber
        \begin{aligned}
            \qa{\mathbb E\left|\pa{\lambda_k(1)-\gamma_k}-\pa{\lambda_k(0)-\gamma_{k}^{\txt{sc}}}\right|^2}^{1/2}\prec 
            {\norma{A}_{\HS}}/{N}.
        \end{aligned}
    \end{equation}
    Finally, applying Markov's inequality concludes the proof of \Cref{theorem_localized_eigenvalue}.
\end{proof}

\subsection{Proof of \Cref{lemma_localized_eigenvector_local_law,lemma_localized_eigenvalue_local_law}}\label{subsection:proof_of_lemma_localized_eigenvector_local_law_and_lemma_localized_eigenvalue_local_law}

We will focus on the proof of \Cref{lemma_localized_eigenvector_local_law}, while the proof for \Cref{lemma_localized_eigenvalue_local_law} is nearly identical, with only minor changes in notation. Before presenting the formal proof, we provide an overview of the proof strategy. For notational simplicity, we denote $M_0\equiv M_\tsc(z_0)$, $M_1\equiv M(z_1)$ and $m_0\equiv \avga{M_0}$, $m_1\equiv \avga{M_1}$.

Our basic idea is to iteratively expand the LHS of \eqref{estimate_localized_eigenvector_local_law} according to a carefully designed rule, so that each step yields terms that either satisfy a better bound or become more ``deterministic". 
Specifically, we will consider expansions of \smash{\(\E\avg{\sG_0\tLambda\sG_1\tLambda}\)}, 
where $\sG_0\in\ha{G_0,G_0^*}$ and $\sG_1\in\ha{G_1,G_1^*}$, into a sum of $\OO(1)$ many terms. These terms will either be smaller by a factor of $N^{-c}$ for some constant $c>0$ or will contain fewer resolvent entries, along with some error terms.
After this, we will utilize the polarization identity
\begin{equation}\label{expansion_of_im}
    \begin{aligned}
        -4\avg{\im G_0\cdot\tLambda\cdot\im G_1\cdot\tLambda}=\avg{G_0\tLambda G_1\tLambda}+\avg{G_0^*\tLambda G_1^*\tLambda}-\avg{G_0^*\tLambda G_1\tLambda}-\avg{G_0\tLambda G_1^*\tLambda}
    \end{aligned}
\end{equation}
to bound the terms from the expansions and establish \Cref{lemma_localized_eigenvector_local_law}. 
In the proof, we will refer to the normalized trace $\avg{\cdot}$ of an expression as a ``\emph{loop}". 
As an example, we demonstrate the expansions of the loop 
\begin{equation}\label{starting_point_of_expansion}
    \begin{aligned}
        \avg{G_0\tLambda G_1\tLambda}.
    \end{aligned}
\end{equation}
For clarity of presentation, we label these two $\tLambda$ as $\tLambda_1$ and $\tLambda_2$, respectively. We then select one of these matrices, say \smash{$\tLambda_1$}, and find the very first $G$ factor to its left. Using the identities in \eqref{eq:id_G0G1}, we can decompose the expression into two parts: the part with $M_0$ is more deterministic than \eqref{starting_point_of_expansion}, while the part with $-G_0\pa{H+m_{0}}M_0$ exposes an $H$ entry. For the latter part, we can apply the cumulant expansion formula \eqref{5.16} to get that: 
    \begin{align}
        &~-\E\avg{G_0\pa{H+m_{0}}M_0\tLambda_1 G_1\tLambda_2}=-m_{0}\E\avg{M_0\tLambda_1 G_1\tLambda_2 G_0}-\frac{1}{ND}\sum_{a=1}^D\sum_{\alpha,\beta\in \cI_a}\p{M_0\tLambda_1 G_1\tLambda_2 G_0}_{\alpha\beta}H_{\iba}\nonumber\\
        =&~-m_{0}\E\avg{M_0\tLambda_1 G_1\tLambda_2 G_0}-\frac{1}{ND}\sum_{a=1}^D\sum_{\alpha,\beta\in \cI_a}\sum_{1\leq p+q\leq l}\frac{1}{p!q!}\cC_{\iab}^{p,q+1}\E\qa{\partial_{\iab}^p\partial_{\iba}^{q}\p{M_0\tLambda_1 G_1\tLambda_2 G_0}_{\alpha\beta}}+R_{l+1}\nonumber\\
        =&~D\sum_{a=1}^D\E\avg{ E_a M_0\tLambda_1 G_1\tLambda_2 G_0}\avg{\pa{G_0-M_0}E_a}+D\sum_{a=1}^D\E\avg{M_0\tl_1 G_1E_a}\avg{E_a G_1\tl_2 G_0}\nonumber\\
        &~-\frac{1}{ND}\sum_{2\leq p+q\leq l}\sum_{a=1}^D\sum_{\alpha,\beta\in \cI_a}\frac{1}{p!q!}\cC_{\iab}^{p,q+1}\E\qa{\partial_{\iab}^p\partial_{\iba}^{q}\p{M_0\tLambda_1 G_1\tLambda_2 G_0}_{\alpha\beta}}+R_{l+1},\label{example_expansion_1}
    \end{align}
where $\partial_{\iab}$ denotes the derivative $\partial_{H_{\iab}}$, and we have used the following identity for $\sG\in \{G_0,G_1\}$:
\[\partial_{\iab}\sG=-\sG\Delta_{\iab}\sG,\quad  \text{with}\quad \pa{\Delta_{\iab}}_{ij}=\delta_{i\alpha}\delta_{j\beta}.\] 
Moreover, $R_{l+1}$ comes from the remainder term in \eqref{5.16} and can be bounded by $\opr{N^{-C}}$ for arbitrarily large constant $C>0$, provided $l$ is chosen sufficiently large. In the first summation on the RHS of \eqref{example_expansion_1}, the structure of the first loop, \smash{$\avg{ E_a M_0\tLambda_1 G_1\tLambda_2 G_0}$}, closely resembles that of \eqref{starting_point_of_expansion}, thus satisfying a similar bound. The second loop \smash{$\avg{\pa{G_0-M_0}E_a}$} is bounded by $\opr{1/\pa{N\eta}}=\OO( N^{-\varepsilon})$ by the averaged local law \eqref{eq:aver_local}. Consequently, the first summation achieves a better bound than \eqref{starting_point_of_expansion}.
In the second summation on the RHS of \eqref{example_expansion_1}, the number of $G$ factors associated with \smash{$\tl_1$} decreases, rendering this factor ``more deterministic" than \eqref{starting_point_of_expansion}\footnote{One may notice that the total number of $G$ factors in the two loops associated with $\tl_1$ and $\tl_2$ increases, but this will not affect our strategy.}. 
We remark that a key point in reducing the number of $G$ factors associated with \smash{$\tl_1$} is to keep $M_0$ adjacent to the chosen \smash{$\tl_1$}; specifically, we need to use the identity $G_0=M_0-G_0\pa{H+m_{0}}M_0$ rather than $G_0=M_0-M_0\pa{H+m_{0}}G_0$. 
Finally, for the last term on the RHS of \eqref{example_expansion_1}, the terms with $p+q\geq 3$ can be directly bounded. However, for those with $p+q=2$, further expansions are required, necessitating a more intricate expansion strategy, which we will describe in \Cref{subsection:proof_of_claim_estimate_remainder_terms}.

Inspired by the above discussion, we design the expansion strategy as follows: we first ignore all terms from the $p+q\geq 2$ cases in the cumulant expansions and iteratively expand the expressions using Gaussian integration by parts. This process continues until the terms become either small enough or ``deterministic enough" to be bounded directly using the cancellation in \eqref{regular_estimate_localized_eigenvector} or the polarization identity \eqref{expansion_of_im}. 
After this, we are left with the terms generated from the $p+q\geq 2$ cases in the cumulant expansions. 
Most of these terms can be bounded directly, while the remaining ``troublesome terms" require further expansions. Following one cumulant expansion of a troublesome term, all resulting expressions associated with the $p+q\geq 2$ cases can be bounded directly, while the remaining terms with $p+q=1$ (i.e., those obtained from Gaussian integration by parts) can be handled using a similar expansion strategy as described above.

\subsubsection{Proof of \Cref{lemma_localized_eigenvector_local_law}}

Now, we are ready to present the proof of \Cref{lemma_localized_eigenvector_local_law} following the above strategy. We start with expressions of the form
    \begin{equation}\label{eq;expT0}
        \begin{aligned}
            \avg{\sG_0\tl_1\sG_1\tl_2},\quad \forall \sG_i\in \ha{G_i,G_i^*},\ i\in \{0,1\}.
        \end{aligned}
    \end{equation}
Denote the deterministic limit of $\sG_i$ by $\sM_i$ and let $\mathsf{m}_i:=\avga{\sM_i}$. Consider a class of expressions of the form:
\begin{equation}\label{definition_class_of_expression_loop}
        \begin{aligned}
            \cT:\ c_{\cT}\cdot\cW^{\pa{u}}\cdot \Gamma_{n}^{\pa{\mm}},
        \end{aligned}
    \end{equation}
    where $c_{\cT}$ is a deterministic coefficient, $\cW^{\pa{u}}$ is a product of \emph{light weights} of the form $\avga{B\pa{\sG_{i}-\sM_{i}}}$ for a deterministic matrix $B$:
    \begin{equation}\nonumber
        \prod_{l=1}^u \avga{B_{l}\pa{\sG_{i_l}-\sM_{i_l}}},\quad \text{where}\quad i_l\in\ha{0,1},  
    \end{equation}
    and $\Gamma_n^{\pa{\mm}}$ is a product of loops taking one of the following two forms:
    \begin{align}\label{Gamma_Type_1}
            \textbf{Type \rmnum{1}: }\qquad &\avg{\cG^{\pa{k_1}}\tl_1\cG^{\pa{k_2}}\tl_2}\prod_{l=1}^{n-1}\scG_l\, ;\\
            \label{Gamma_Type_2}
             \textbf{Type \rmnum{2}: } \qquad    &\avg{\cG^{\pa{k_1}}\tl_1}\avg{\cG^{\pa{k_2}}\tl_2}\prod_{l=1}^{n-2}\scG_l.
    \end{align}     
    Here, each $\scG_l$ is a loop of the form
    \begin{equation}\label{eq:loop}
    \avga{\prod_{s=1}^{r_l}\pa{\sG_{i_s}B_{s}}}, \quad \text{where}\ \  i_s \in \{0,1\}, \ r_l\geq 2,
    \end{equation}
    and each $\cG^{\pa{k_i}}$ represents a product of resolvents of the form
    \begin{equation}\nonumber
B_{0}\prod_{s=1}^{k_i}\pa{\sG_{i_s}B_{s}},\quad  \text{where}\ \ i_s\in\ha{0,1}, \ k_i\geq 0 . 
    \end{equation}
In this context, every $B_s$ is a deterministic matrix consisting of a finite product of matrices $E_a$ and $\sM_i$. Moreover, $\mm$ denotes the total number of resolvents in \smash{$\Gamma_n^{\pa{\mm}}$}, i.e.,
    \begin{equation}\label{number_of_G_type_1}
        \begin{aligned}
         \text{Type \rmnum{1} expression}: \   k_1+k_2+\sum_{l=1}^{n-1} r_l=\mm,\quad \text{Type \rmnum{2} expression}: \ k_1+k_2+\sum_{l=1}^{n-2} r_l=\mm .
        \end{aligned}
    \end{equation}
    We denote the set of expressions of the form \eqref{definition_class_of_expression_loop} by $\mathscr{T}$. As we will see, following our expansion strategy, given an element of $\mathscr{T}$, the $p+q=1$ case in its cumulant expansion will always produce elements that are also in $\mathscr{T}$. 

    Now, we describe our expansion procedure. 
    Given any expression $\cT\in \scT$ of the form \eqref{definition_class_of_expression_loop}, if $k_1\geq 1$, we identify the loop containing \smash{$\tl_1$} and find the first $\sG$ factor to the left of \smash{$\tl_1$} in this loop. For example, for the loop \smash{$\avg{\sG_0\tl_1\sG_1\tl_2}$}, we choose $\sG_0$, and for the loop \smash{$\avg{\sM_0\tl_1\sG_1\tl_2}$}, we choose $\sG_1$. Then, we express $\cT$ as 
    \begin{equation}\label{explicit_cT}
        \cT=c_{\cT} \cdot \avg{ \sG B_1 \tLambda_1 \Pi_{k_{\#}-1}} W_1 \cdots W_u f^{(1)} \cdots f^{(n-1)}.
    \end{equation}
    Here, $\Pi_{k_{\#}-1}$ consists of the product of $(k_{\#}-1)$ factors of $\sG_i$, some factors of $E_{a}$ and $\sM_i$, and at most one $\tLambda$; $B$ consists of the product of finitely many factors of $E_{a}$ and $\sM_i$; $k_{\#}=k_1+k_2$ if $\cT$ is of Type \rmnum{1}, and $k_{\#}=k_1$ if $\cT$ is of Type \rmnum{2}; $W_1, \ldots, W_u$ represent light weights, and \smash{$f^{(1)}, \ldots, f^{(n-1)}$} denote other loops of the form \eqref{eq:loop} or \smash{$\avg{\cG^{\pa{k_2}}\tl_2}$}. For simplicity of notation, we denote 
    \begin{equation}\label{explain_product_entries}
        F=\sM B_1 \tLambda_1 \Pi_{k_\#-1}=: F_0F_1 \cdots F_t, \quad f^{(j)}=\avg{ f_0^{(j)} f_1^{(j)} \cdots f_{n_j}^{(j)}}, \quad  W_j=\left\langle\left(\sG_{w_j}-\sM_{w_j}\right) E_{x_j}\right\rangle.
    \end{equation}
    Here, in the first equation, we treat the $\sG_i$ factors as separating points, and write $F$ as \[B\sG B\sG\cdots B\sG B=:F_0F_1\cdots F_t,\] where $B$ and $\sG$ represent general deterministic matrices (specifically, $F_0$ contains \smash{$\tLambda_1$}) and some $G_i$ factors, respectively. We denote the $\sG_i$ factors in $F$ by \smash{$F_{i\pa{1}}, \ldots, F_{i\pa{k_{\#}-1}}$}. 
    In the second equation of \eqref{explain_product_entries}, we express the product in the loop \smash{$f^{\pa{j}}$} in the form \[B\sG B\sG \cdots B\sG=:f_0^{(j)} f_1^{(j)} \cdots f_{n_j}^{(j)},\] and denote the $\sG_i$ factors in it by $f_{i_j\pa{1}}^{(j)}, \ldots f_{i_j\pa{s_j}}^{(j)}$. 
    Now, using \eqref{eq:id_G0G1}, we expand $\sG$ as $\sG=\sM-\sG\pa{H+\smm}\sM$, and apply the cumulant expansion in \Cref{lem:complex_cumu} with respect to the entries of $H$ to obtain that 
        \begin{align}
            \cT\eq &~
            c_{\cT}\cdot \avg{ \sM B_1 \tLambda_1 \Pi_{k_{\#}-1} } W_1 \cdots W_u f^{(1)} \cdots f^{(n-1)}\nonumber\\
            +&~ c_{\cT} \cdot \left[D \sum_{x=1}^D \sum_{j=1}^{k_{\#}-1}\avg{ F_0 F_1 \cdots F_{i(j)} E_x}\left\langle E_x F_{i(j)} F_{i(j)+1} \cdots F_t \sG\right\rangle W_1 \cdots W_u f^{(1)} \cdots f^{(n-1)}\right.\nonumber \\ & +D \sum_{x=1}^D\left\langle F \sG E_x\right\rangle\left\langle\left(\sG-\sM\right) E_x\right\rangle W_1 \cdots W_u f^{(1)} \cdots f^{(n-1)} \label{expansion_for_integral_by_part_term}\\ & +\frac{1}{D N^2} \sum_{x=1}^D \sum_{j=1}^u\left\langle F \sG E_x \sG_{w_j} E_{x_j} \sG_{w_j} E_x\right\rangle f^{(1)} \cdots f^{(n-1)} \prod_{i \neq j} W_i\nonumber \\ & \left.+\frac{1}{D N^2} \sum_{x=1}^D \sum_{j=1}^{n-1} \sum_{r=1}^{s_j}\avg{ F \sG E_x f_{i_j\pa{r}}^{(j)} f_{i_j\pa{r}+1}^{(j)} \cdots f_{n_j}^{(j)} f_0^{(j)} f_1^{(j)} \cdots f_{i_j\pa{r}}^{(j)} E_x} W_1 \cdots W_u \prod_{i \neq j} f^{(i)}\right]+\cR_{\cT}.\nonumber
        \end{align}
    Here and below, we will use ``$\eq$'' to mean ``equal in expectation". The term $\cR_\cT$ is defined by
    \begin{equation}\label{reminder_terms_expansion}
        \begin{aligned}
            \cR_{\cT}=\frac{-c_{\cT}}{ND}\sum_{2\leq p+q\leq l}\sum_{a=1}^D\sum_{\alpha,\beta\in \cI_a}\frac{\cC_{\iab}^{p,q+1}}{p!q!}\partial_{\iab}^p\partial_{\iba}^q\qa{\p{\sM B_1 \tLambda_1 \Pi_{k_{\#}-1}\sG}_{\iab} W_1 \cdots W_u f^{(1)} \cdots f^{(n-1)}}+R_{l+1},
        \end{aligned}
    \end{equation}
    where $R_{l+1}$ comes from the remainder term in \eqref{5.16} for a large enough $l\in \N$. 
    Temporarily ignoring the term $\cR_\cT$, we see that the RHS of \eqref{expansion_for_integral_by_part_term} is a sum of terms in $\scT$. Inspired by the above expansion, we introduce the following five operations on $\scT$:
\medskip
    
    \begin{enumerate}
        \item[$\sreplace$:] This operation corresponds to replacing a resolvent $\sG_i$ by its deterministic limit $\sM_i$, i.e., 
    \begin{equation}\label{sreplace_def}
        \begin{aligned}
            \cT \to c_{\cT}\cdot \avg{ \sM B_1 \tLambda_1 \Pi_{k_{\#}-1} } W_1 \cdots W_u f^{(1)} \cdots f^{(n-1)}.
        \end{aligned}
    \end{equation}
        \item[$\scut{1}$:] This operation represents cutting at the first $\sG$ factor in the loop $\avg{ \sG B_1 \tLambda_1 \Pi_{k_{\#}-1}}$:
        \begin{equation}\label{scut1_def}
            \begin{aligned}
                \cT \to c_{\cT} \cdot D \sum_{x=1}^D\left\langle F \sG E_x\right\rangle\left\langle\left(\sG-\sM\right) E_x\right\rangle W_1 \cdots W_u f^{(1)} \cdots f^{(n-1)}.
            \end{aligned}
        \end{equation}
        \item[$\scut{2}$:] This operation represents cutting at a middle $\sG_i$ factor of the loop $\avg{ \sG B_1 \tLambda_1 \Pi_{k_{\#}-1}}$: 
        \begin{equation}\label{scut_2}
            \begin{aligned}
             \qquad    \cT \to c_{\cT} \cdot D \sum_{x=1}^D \sum_{j=1}^{k_{\#}-1}\left\langle F_0 F_1 \cdots F_{i(j)} E_x\right\rangle\left\langle E_x F_{i(j)} F_{i(j)+1} \cdots F_t \sG\right\rangle W_1 \cdots W_u f^{(1)} \cdots f^{(n-1)}.
            \end{aligned}
        \end{equation}
        
        \item[$\splug{1}$:] This operation involves cutting a light weight into a chain and plugging it into the loop $\avg{\sM  B_1 \tLambda_1 \Pi_{k_{\#}-1}\sG}$: 
        \begin{equation}\label{splug1_def}
            \begin{aligned}
                \cT \to c_{\cT} \cdot \frac{1}{D N^2} \sum_{x=1}^D \sum_{j=1}^u\left\langle F \sG E_x \sG_{w_j} E_{x_j} \sG_{w_j} E_x\right\rangle f^{(1)} \cdots f^{(n-1)} \prod_{i \neq j} W_i.
            \end{aligned}
        \end{equation}
        \item[$\splug{2}$:] This operation involves cutting a $\mathscr{G}_l$ loop into a chain and plugging it into the loop $\avg{\sM  B_1 \tLambda_1 \Pi_{k_{\#}-1}\sG}$:
        \begin{equation}\label{splug2_def}
            \begin{aligned}
          \qquad  \cT \to c_{\cT} \cdot \frac{1}{D N^2} \sum_{x=1}^D \sum_{j=1}^{n-1} \sum_{r=1}^{s_j}\avg{ F \sG E_x f_{i_j\pa{r}}^{(j)} f_{i_j\pa{r}+1}^{(j)} \cdots f_{n_j}^{(j)} f_0^{(j)} f_1^{(j)} \cdots f_{i_j\pa{r}}^{(j)} E_x} W_1 \cdots W_u \prod_{i \neq j} f^{(i)}.
            \end{aligned}
        \end{equation}
    \end{enumerate}

We have defined the expansion strategy when $k_1\ge 1$. When $k_1=0$ and $k_2\geq 1$, we find the loop containing \smash{$\tl_2$} and the first $\sG$ factor to the left of \smash{$\tl_2$} in this loop, then perform a similar expansion. This induces similar operations on $\scT$, and we refer to these operations by the same names. Finally, if $k_1=k_2=0$, we will not expand $\cT$.

Next, we define our stopping criteria for the expansion procedure to ensure it terminates in a finite number of steps. For $\cT=c_\cT\cdot\cW^{\pa{u}}\Gamma_n^{\pa{\mm}}$, we define its ``size'' as a pair:
    \begin{equation}\label{eq:sizeT}
        \begin{aligned}
            \isize\pa{\cT}:=\pa{S+u,\mm-n+u},
        \end{aligned}
    \end{equation}
    where $S$ denotes the number of $N^{-1}$ factors in $c_{\cT}$. Let $\isize\p{\cT}_2$ and $\isize\p{\cT}_1$ denote the first and second components of $\isize\p{\cT}$, respectively. Then, using the local laws from \Cref{lem_loc,lemma_necessary_estimates}, we get that
    \begin{equation}\label{eq:Tsmallthansize}
        \begin{aligned}
            \cT\prec N^{-\isize\pa{\cT}_1}\eta^{-\isize\pa{\cT}_2-1_{k_1=0}-1_{k_2=0}}\norm{A}^2.
        \end{aligned}
    \end{equation}
In addition, from the definitions of the above five operations, we see that
    \begin{equation}\nonumber
        \begin{aligned}
            &\isize\qa{\sreplace\pa{\cT}}= \isize\pa{\cT}+\pa{0,-1},\quad \isize\qa{\scut{1}\pa{\cT}}=\isize\pa{\cT}+\pa{1,1},\quad  \isize\qa{\scut{2}\pa{\cT}}=\isize\pa{\cT},\\  &\isize\qa{\splug{1}\pa{\cT}}=\isize\pa{\cT}+\pa{1,1},\quad \isize\qa{\splug{2}\pa{\cT}}=\isize\pa{\cT}+\pa{2,2}.
        \end{aligned}
    \end{equation}
    We now define the following stopping criteria, under which our expansion procedure will terminate after $\OO\pa{1}$ iterations. We will stop expanding an expression if it satisfies one of the following conditions:
    \begin{enumerate}
        \item\label{stpping_condition_1} The $\isize$ of the expression satisfies $N^{-\isize\p{\cT}_1}\eta^{-\isize\p{\cT}_2-2}\leq N^{-2}$;
        \item\label{stpping_condition_2} $k_1\pa{\cT}=k_2\pa{\cT}=0$.
    \end{enumerate}
    To show that our expansions will stop after $\OO\pa{1}$ iterations, we start with an expression $\cT_0$ of the form \eqref{eq;expT0},
    and consider a sequence of operations on it:
    \begin{equation}\label{operation_sequence}
        \begin{aligned}
            \cO_1,\cO_2,\ldots,\cO_T,\quad \text{with}\ \ \cO_i\in \ha{\sreplace,\scut{1},\scut{2},\splug{1},\splug{2}}.
        \end{aligned}
    \end{equation}
    Note that $N^{-\isize\pa{\cT}_1}\eta^{-\isize\pa{\cT}_2-1_{k_1=0}-1_{k_2=0}}$ is non-increasing during expansions by any of our five operations. Moreover, it is reduced by at least a factor of $(N\eta)^{-1}\lesssim N^{-\varepsilon}$ when the operations $\scut{1}$, $\splug{1}$, and $\splug{2}$ are applied. 
    Thus, ignoring the remainder terms $\cR_\cT$ from our expansions, the procedure will have terminated before completing these $T$ operations if there are more than $C_0/\varepsilon$ of them belonging to $\ha{\scut{1},\splug{1},\splug{2}}$ for a sufficiently large constant $C_0>0$. 
    Denote by $\cO_{i_1},\ldots,\cO_{i_s}$ all the operations in $\cO_1,\cO_2,\ldots,\cO_T$ that belong to $\ha{\scut{1},\splug{1},\splug{2}}$. For $1\leq l\leq s$, with the convention that $i_0=1$, we note that
    \begin{equation}\nonumber
        \begin{aligned}
            i_l-i_{l-1}-1\leq \mm\pa{\cO_{i_{l-1}}\circ\cdots\circ\cO_1\pa{\cT_0}},
        \end{aligned}
    \end{equation}
    because each operation $\sreplace$ or $\scut{2}$ reduces the number of $\sG$ factors in the (one or two) loops containing \smash{$\tl_1$ and $\tl_2$} by at least $1$. Combining the above observations, there exists a constant $T_0>0$ depending on $\varepsilon$ such that the sequence \[\cT_0,\ \cO_1\pa{\cT_0},\ \ldots, \ \cO_T\circ\cdots\circ\cO_1\pa{\cT_0}\] must terminate after at most $T\leq T_0$ steps. In other words, our procedure will stop in $\OO\pa{1}$ many steps.

    The above procedure produces a sum of expressions satisfying the stopping criteria, along with some remainder terms. We first state the following lemma, which asserts that all remainder terms generated during our procedure (which were ignored in the argument above) can be properly bounded. For any sequence of operations $\cO_1,\ldots,\cO_T$, we say this sequence is admissible if it acts on $\cT_0$ successively without stopping before time $T$. We postpone the proof of \Cref{claim_estimate_remainder_terms} to \Cref{subsection:proof_of_claim_estimate_remainder_terms}.
    
    \begin{lemma}\label{claim_estimate_remainder_terms}
        For any admissible sequence of operations $\cO_1,\ldots,\cO_T$, there exists a constant $C>0$ that does not depend on $\varepsilon$ such that
        \begin{equation}\nonumber
            \begin{aligned}
            \E\cR_{\cO_T\circ\cdots\circ\cO_1\pa{\cT_0}}\prec  N^{C\varepsilon}N^{-5/3}k^{2/3}\norma{A}_{\HS}^2, 
            \end{aligned}
        \end{equation}
        where $\cR_{\cO_T\circ\cdots\circ\cO_1\pa{\cT_0}}$ is defined as in \eqref{reminder_terms_expansion}. 
    \end{lemma}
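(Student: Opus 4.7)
My plan for proving \Cref{claim_estimate_remainder_terms} is to split the sum in the defining formula \eqref{reminder_terms_expansion} of $\cR_\cT$ according to the cumulant order $p+q$, since the contributions from $p+q\geq 3$ and from $p+q=2$ require fundamentally different treatments. The truncation error $\cR_{l+1}$ is made negligible by choosing $l$ sufficiently large, as noted in \Cref{estimate_reminder_terms_cumulant_expansion}, so I focus on the finite-order terms.

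For $p+q\geq 3$, direct estimation suffices. After expanding $\partial_{\iab}^p\partial_{\iba}^q$ by Leibniz's rule, each derivative acting on a resolvent entry produces two resolvent entries via $\partial_{\iab}(\sG)_{xy}=-(\sG)_{x\alpha}(\sG)_{\beta y}$. Combining the normalized cumulant bound $|\cC^{(p,q+1)}|\lesssim N^{-(p+q+1)/2}$ with the prefactor $(ND)^{-1}$ and the summation $\sum_{\iab\in\cI_a}1=N^2$, the scalar prefactor is at most $N^{(1-(p+q-1))/2}\lesssim N^{-1}$ for $p+q\geq 3$. The resulting sums over $\alpha,\beta$ are closed via iterated applications of Ward's identity \eqref{eq_Ward} and Cauchy--Schwarz, together with the anisotropic and averaged local laws \eqref{eq:aniso_local}--\eqref{eq:aver_local} and the multi-resolvent estimates in \Cref{lemma_necessary_estimates}. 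Since $\|\tLambda\|\lesssim\|A\|+|\Delta_{\txt{ev}}|\lesssim N^{-\delta_A}$ and the original loop structure provides a factor of $\|A\|_{\HS}^2$ from the two $\tl$ matrices, these contributions are bounded strictly below the target $N^{C\varepsilon}N^{-5/3}k^{2/3}\|A\|_{\HS}^2$.

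The critical case is $p+q=2$, where the cumulant prefactor alone only yields $N^{-1/2}$. I would split this contribution by where the two derivatives land: on a light weight $W_j$, on a loop $f^{(j)}$, or entirely on the main product $(\sM B_1\tl_1\Pi_{k_\#-1}\sG)_{\iab}$. When either derivative lands on a $W_j$ or $f^{(j)}$ factor, the derivative creates an additional trace-like structure that yields a gain of $(N\eta)^{-1}\prec N^{-\varepsilon}$ from the averaged local law, which combined with the $N^{-1/2}$ prefactor closes the estimate. When both derivatives fall on the main product, no such gain is immediately available, and I would perform a secondary expansion by applying $\sG=\sM-\sG(H+\smm)\sM$ to a resolvent adjacent to a $\tl$ factor, followed by another application of the cumulant expansion \eqref{5.16}. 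The new $p+q\geq 2$ contributions then carry a combined prefactor of at least $N^{-1}$ and are bounded directly, while the new $p+q=1$ Gaussian-integration-by-parts contributions reduce to expressions containing a factor of the form $\avg{\sM_0\tl\sM_1 E_a}$. Applying the regularity estimate \eqref{regular_estimate_localized_eigenvector} to each such factor extracts an extra $\im m\lesssim\sqrt{\kappa+\eta}$, which near the edge scales like $N^{-1/3}k^{1/3}$---precisely the missing smallness required to match the target.

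The main obstacle is the bookkeeping in this secondary expansion, because the ``main product" inside $\cT=\cO_T\circ\cdots\circ\cO_1(\cT_0)$ can take many structural shapes depending on the admissible history. However, every admissible operation preserves the class $\scT$ defined in \eqref{definition_class_of_expression_loop}, and the total length is uniformly bounded by $\OO(1)$ since the procedure terminates in $\OO(1)$ steps; hence only finitely many structural patterns arise and the argument can be executed case by case. The role of the shift $\Delta_{\txt{ev}}$ is indispensable here: without it one would only have $\avg{\sM_0\Lambda\sM_1 E_a}=O(\avg{\Lambda^2})$ rather than the improved $O(\im m\cdot\avg{\Lambda^2})$, so the diverging factor $N^{1/3}/k^{1/3}$ near the spectral edge could not be absorbed and the target bound would fail.
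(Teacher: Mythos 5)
Your high-level structure --- direct estimation when $p+q\geq 3$, a refined secondary expansion for $p+q=2$, and the regularity estimate \eqref{regular_estimate_localized_eigenvector} (via the shift $\Delta_{\txt{ev}}$) as the source of the crucial extra $\im m$ factor --- correctly captures the shape of the argument. However, there is a genuine gap in how you propose to dispose of part of the $p+q=2$ remainder. You assert that when a derivative $\partial_{\iab}$ or $\partial_{\iba}$ lands on a light weight $W_j=\avga{(\sG-\sM)E_{x_j}}$ or on a loop $f^{(j)}$, the resulting ``trace-like structure'' picks up a factor $(N\eta)^{-1}$ from the averaged local law and the case is closed. This reasoning fails: differentiating a normalized trace returns a matrix \emph{entry}, e.g.\ $\partial_{\iab}W_j=-(DN)^{-1}\bigl(\sG E_{x_j}\sG\bigr)_{\beta\alpha}$, to which the averaged local law \eqref{eq:aver_local} does not apply. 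In fact, the paper's bookkeeping in Table~\ref{table_Classification_of_Initial_Values} shows that when a derivative lands on a light weight the resulting $\cR_0$ has \emph{exactly the same} $\isize$ as when both derivatives stay on the main product (rows one, two, five and six all give $\Delta\isize=(0,0)$); there is no $(N\eta)^{-1}$ improvement, only at best an $\im m$ per light weight, which is a different (and weaker) mechanism that you have not verified suffices. Consequently that part of the $p+q=2$ case cannot be set aside, and the paper instead runs the whole $p+q=2$, $\sR=0$, $k_1,k_2\geq 1$ regime through the further-expansion machinery ($\freplace,\fcut{1},\fcut{2},\fplug{1},\fplug{2},\fmerge,\fslash{1},\fslash{2},\finsert{1},\finsert{2},\fexchange$) followed by an exhaustive classification of the terminal Type~\rmnum{1}--\rmnum{3} expressions.

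A secondary issue: even in the sub-regime you do send to a secondary expansion, the new $p+q\geq 2$ remainder contributions are not simply ``bounded directly'' by the combined $N^{-1}$ cumulant prefactor. Handling them constitutes a large portion of the paper's proof (Example~\ref{example_estimate_reminder_reminder_terms} and the ten ensuing cases) and requires Cauchy--Schwarz over the entry indices, Ward's identity to harvest $\im m$ factors, and repeated appeals to \eqref{regular_estimate_localized_eigenvector}. Without this more delicate control, your sketch does not yet establish the stated bound.
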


    \begin{remark}
We remark that, if the elements of the matrix $H$ are Gaussian, then \Cref{claim_estimate_remainder_terms} is trivial. This is because, for Gaussian random variables, all cumulants of order $\ge 3$ vanish, which implies that $\cR_\cT=0$ for any expression $\cT$. Moreover, for $H$ with symmetrically distributed elements, the proof of \Cref{claim_estimate_remainder_terms} can be significantly shortened. In this case, the third-order cumulants are zero, so the $p+q=2$ terms in \eqref{reminder_terms_expansion} will vanish. Consequently, the proof will primarily contain the \emph{direct estimation} part and will not require the \emph{further expansions} part, on which we will spend most of our efforts. 
    \end{remark}
    
    Now, we turn our attention to analyzing the expressions that satisfy the stopping criteria. Clearly, if a sequence of operations $\cO_1,\ldots,\cO_T$ stops due to the stopping criterion (i), then by \eqref{eq:Tsmallthansize}, the expression $\cO_T\circ\cdots\circ\cO_1\pa{\cT_0}$ will be bounded by $\opr{N^{-2}\norm{A}^2}=\opr{N^{-5/3}k^{2/3}\norm{A}_{\HS}^2}$. To analyze the terms generated by operation sequences that stop due to the stopping criterion (ii), we present the following table, which illustrates the effects of our five types of operations on the relevant characteristics of our expressions:
    \begin{table}[htbp]
        \caption{Effects of operations}
        \label{table_effect_of_operations}
        \begin{tabular}{|c|c|c|c|c|}
        \hline
        \diagbox{Operation}{Character} & $\mm$ & $n$ & $u$ & $S$ \\ \hline
        $\sreplace$ & $-1$ & $+0$ & $+0$ & $+0$ \\ \hline
        $\scut{1}$ & $+0$ & $+0$ & $+1$ & $+0$ \\ \hline
        $\scut{2}$ & $+1$ & $+1$ & $+0$ & $+0$ \\ \hline
        $\splug{1}$ & $+2$ & $+0$ & $-1$ & $+2$ \\ \hline
        $\splug{2}$ & $+1$ & $-1$ & $+0$ & $+2$ \\ \hline
        \end{tabular}
    \end{table}

With \Cref{table_effect_of_operations}, suppose $\cT=\cO_T\circ\cdots\circ\cO_1\pa{\cT_0}$ is a term generated by a sequence of operations that terminates due to criterion (ii). For such a term $\cT$, its characters satisfy $k_1=k_2=0$, and
      \begin{equation}\label{counting_character_of_term}
          \begin{aligned}
              & \mm=-\sR+\sC_2+2\sP_1+\sP_2+2,\quad n=\sC_2-\sP_2+1,\quad u=\sC_1-\sP_1,\quad S=2\sP_1+2\sP_2,
          \end{aligned}
      \end{equation}
      where $\sR,\sC_1,\sC_2,\sP_1,\sP_2$ respectively denote the number of operations $\sreplace,\scut{1},\scut{2},\splug{1},\splug{2}$ in the sequence $\cO_1,\ldots,\cO_T$. Moreover, by tracking the $\sG$ factors within the loops containing $\tl_1$ and $\tl_2$, we must have $\sR\geq 2$ when $k_1=k_2=0$. Thus, if $\cT$ is a Type \rmnum{1} expression, we have
      \begin{equation}\label{estimate_loop_type_1}
          \begin{aligned}
              \absa{\cT}\prec&~ N^{-S}\avga{\Lambda^2}\frac{\pa{\im m}^{n-1}}{\eta^{\mm-n+1}}\pa{\frac{1}{N\eta}}^{u}= N^{2-\sR} \avga{\Lambda^2} \pa{\im m}^{n-1} \pa{\frac{1}{N\eta}}^{\mm-n+u+1}\\
              \lesssim&~N^{1-\sR} \norma{A}_{\HS}^2\pa{k/N}^{\frac 1 3 (\mm+u)}.
          \end{aligned}
      \end{equation}
   In the first step of the estimate, we use \eqref{sim_m_sc_and_m} along with \Cref{lem_loc,lemma_necessary_estimates}. In the second step, we applied \eqref{counting_character_of_term}, and in the third step, we used
      \begin{equation}\label{light_weight_cancel_imaginary_part}
          \begin{aligned}
              \pa{\im m}^{n-1}\pa{\frac{1}{N\eta}}^{\mm-n+1}\lesssim&~ \pa{\frac{\sqrt{\kappa+\eta}}{N\eta}}^{n-1}\pa{\frac{1}{N\eta}}^{\mm-2n+2}\\
              \lesssim&~ \pa{{k}/{N}}^{\frac{2}{3}(n-1)}\pa{{k}/{N}}^{\frac{1}{3}(\mm-2n+2)}=\pa{{k}/{N}}^{\frac 1 3 \mm},
          \end{aligned}
      \end{equation}
      where we used \eqref{square_root_density} in the first step, and in the second step, we used \eqref{eq:kappa_gamma_k} together with the fact that $\mm\geq 2\pa{n-1}\geq 0$. Consequently, if $\mm+u\geq 2$ or $\sR\geq 3$, then from \eqref{estimate_loop_type_1} and \eqref{eq:condA2}, we conclude that
      \be\label{eq:easydirect}\cT\prec N^{-5/3}k^{2/3}\norma{A}_{\HS}^2\lesssim N^{-1-2\varepsilon_A} .\ee 
      In the remaining case, we must have $\sR=2$ and $\mm+u\leq 1$. These conditions imply $\sP_1=0$ and $\sC_1+\sC_2+\sP_2\leq 1$. By direct enumeration of the possible operations consistent with our procedure, we identify the only terms generated under these constraints:
      \begin{enumerate}
          \item $\sR=2$ and $\sC_1=\sC_2=\sP_1=\sP_2=0$, in which case we have:
          \begin{equation}\label{expansion_result_R_2_C_0_P_0}
              \begin{aligned}
                  \avg{\sM_0\tLambda\sM_1\tLambda};
              \end{aligned}
          \end{equation}
          \item $\sR=2$, $\sP_1=0$, and $\sC_1+\sC_2+\sP_2=1$, in which case we have:
          \begin{equation}\label{expansion_result_R_2_C_1_P_0}
              \begin{aligned}
                  D\dsum{a}\qa{\avg{\sM_0\tLambda\sM_1\tLambda\sM_0E_a}\avga{E_a\pa{\sG_0-\sM_0}}+\avg{\sM_1\tLambda\sM_0\tLambda\sM_1 E_a}\avga{E_a\pa{\sG_1-\sM_1}}}.
              \end{aligned}
          \end{equation}
      \end{enumerate}
      Plugging these contributions back into the polarization identity \eqref{expansion_of_im}, we analyze the resulting terms. The four terms of the form \eqref{expansion_result_R_2_C_0_P_0} contribute a factor 
      \begin{equation}\label{eq:polar_cancel}
          \begin{aligned}
              -4\avg{\pa{\im M_0}\tl\pa{\im M_1}\tl}\prec {\pa{\im m}^2 \avga{\Lambda^2}}\lesssim {N^{-5/3+\varepsilon}k^{2/3}\norma{A}_{\HS}^2}\lesssim N^{-1-2\varepsilon_A+\varepsilon},
          \end{aligned}
      \end{equation}
      using the identity $\im M_i=\pa{\im m_i+\eta}M_iM_i^*$, together with \eqref{sim_m_sc_and_m} and \eqref{two_loop_estimate} in the first step, and \eqref{square_root_density} in the second step.   Similarly, the four terms of the form \eqref{expansion_result_R_2_C_1_P_0} contribute the followin factor:  
      \begin{equation}\label{cancellation_im_example}
          \begin{aligned}
              D\dsum{a}\Big[&\pa{\avg{M_0\tLambda M_1\tLambda M_0E_a}\avga{E_a\pa{G_0-M_0}}+\avg{M_1\tLambda M_0\tLambda M_1 E_a}\avga{E_a\pa{G_1-M_1}}}\\
              +&\pa{\avg{M_0^*\tLambda M_1^*\tLambda M_0^*E_a}\avga{E_a\pa{G_0^*-M_0^*}}+\avg{M_1^*\tLambda M_0^*\tLambda M_1^* E_a}\avga{E_a\pa{G_1^*-M_1^*}}}\\
              -& \pa{\avg{M_0^*\tLambda M_1\tLambda M_0^*E_a}\avga{E_a\pa{G_0^*-M_0^*}}+\avg{M_1\tLambda M_0^*\tLambda M_1 E_a}\avga{E_a\pa{G_1-M_1}}}\\
              -&\pa{\avg{M_0\tLambda M_1^*\tLambda M_0 E_a}\avga{E_a\pa{G_0-M_0}}+\avg{M_1^*\tLambda M_0\tLambda M_1^* E_a}\avga{E_a\pa{G_1^*-M_1^*}}}\Big]\\
              =&~ \opr{\avga{\Lambda^2}\frac{\im m}{N\eta}}=\opr{\frac{k^{2/3}}{N^{5/3}}\norm{A}_{\HS}^2}\prec N^{-1-2\varepsilon_A}.
          \end{aligned}
      \end{equation}
  To bound these eight terms, we group them into four pairs and estimate them as in \eqref{eq:polar_cancel}:
      \begin{equation}\label{eq:polar2}
          \begin{aligned}
              &~\avg{M_0\tLambda M_1\tLambda M_0E_a}\avga{E_a\pa{G_0-M_0}}-\avg{M_0\tLambda M_1^*\tLambda M_0 E_a}\avga{E_a\pa{G_0-M_0}}\\
              =&~\avg{M_0\tLambda \pa{\im M_1}\tLambda M_0 E_a}\avga{E_a\pa{G_0-M_0}}=\opr{\avga{\Lambda^2}\frac{\im m}{N\eta}}=\opr{\frac{k^{2/3}}{N^{5/3}}\norm{A}_{\HS}^2},
          \end{aligned}
      \end{equation}
      where we again used $\im M_i=\pa{\im m_i+\eta}M_iM_i^*$, \eqref{sim_m_sc_and_m}, and \eqref{two_loop_estimate} in the second step.\footnote{Here, we do not exploit the fact that $M_0$ is a scalar matrix to simplify the estimates, since this simplification does not hold in the context of the proof of \Cref{lemma_localized_eigenvalue_local_law}.}
      	

      If $\cT$ is of Type \rmnum{2}, then by an argument analogous to that used in \eqref{estimate_loop_type_1} and \eqref{light_weight_cancel_imaginary_part}---together with the observation that $\mm\geq 2\pa{n-2}\geq 0$---we obtain the corresponding estimate
      \begin{equation}\nonumber
          \begin{aligned}
              \absa{\cT}\prec&~ N^{-S}\avga{\Lambda^2}^2\frac{\pa{\im m}^{n-2}}{\eta^{\mm-n+2}}\pa{\frac{1}{N\eta}}^{u}
              = N^{3-\sR} \avga{\Lambda^2}^2 \pa{\im m}^{n-2} \pa{\frac{1}{N\eta}}^{\mm-n+u+2}\\
              \lesssim&~ N^{2-\sR}\norm{A}_{\HS}^2N^{-1/3-2\varepsilon_A}k^{-2/3}\pa{k/N}^{\frac 1 3 (\mm+u)}.
          \end{aligned}
      \end{equation}
Now, if any of the following conditions hold: (i) $\mm+u\geq 4$, or (ii) $\sR=3, \mm+u\geq 1$, or (iii) $\sR\geq 4$, then we immediately deduce \eqref{eq:easydirect}.
It remains to analyze the two exceptional cases: (a) $\sR=3$ and $\mm+u=0$, or (b) $\sR= 2$ and  $1\leq \mm+u\leq 3$. Note that in order for a Type \rmnum{2} expression to be generated, it is necessary that $\sC_2\geq 1$. Furthermore, in the case  $\sR=2$, we must have $\sC_2\geq 2$. 
By direct enumeration of the possible operations consistent with our procedure, we identify the only terms generated under these constraints:
      \begin{enumerate}
          \item $\sR=3$, $\sC_1=\sP_1=\sP_2=0$, and $\sC_2=1$, in which case we have:
          \begin{equation}\label{sR_3_sC_2_2_sC_1_sP_1_sP_2_0}
              \begin{aligned}
                  D\dsum{a}\avg{\sM_0 \tLambda \sM_1 E_a}\avg{E_a \sM_1 \tLambda \sM_0};
              \end{aligned}
          \end{equation}
          \item $\sR=2$, $\sC_2=2$, and $\sC_1=\sP_1=\sP_2=0$, in which case we have:
          \begin{equation}\label{sR_2_sC_2_2_sC_1_sP_1_sP_2_0}
              \begin{aligned}
                  D^2\dsum{a,b}\avg{\sM_0\tLambda\sM_1 E_a}\avga{\sG_0 E_a\sG_1 E_b}\avg{\sM_1\tLambda\sM_0 E_b};
              \end{aligned}
          \end{equation}
          \item $\sR=2$ and $\sC_1+\sC_2+\sP_1+\sP_2=3$, in which case we have:
          \begin{equation}\label{sR_2_sC_2_2_sC_1_1_sP_1_sP_2_0}
              \begin{aligned}
                  D^3\dsum{a,b,c}\Big[&\avg{\sM_0\tLambda\sM_1 E_a}\avg{\sM_1 E_b \sM_1 \tLambda \sM_0 E_c}\avga{E_c\sG_0 E_a \sG_1}\avga{E_b\pa{\sG_1-\sM_1}}\\
                  +&\avg{\sM_0\tLambda\sM_1 E_a}\avg{\sM_0 E_b \sM_1\tLambda\sM_0 E_c}\avga{E_b \sG_0 E_a \sG_1}\avga{E_c\pa{\sG_0-\sM_0}}\\
                  +&\avg{\sM_1 E_a \sM_0 \tLambda \sM_1 E_b}\avg{\sM_1 \tLambda \sM_0 E_c}\avga{E_c \sG_0 E_a \sG_1}\avga{E_b\pa{\sG_1-\sM_1}}\\
                  +& \avg{\sM_0 E_a \sM_0 \tLambda \sM_1 E_b}\avga{E_a\pa{\sG_0-\sM_0}}\avg{\sM_1\tLambda\sM_0 E_c}\avga{E_c \sG_0 E_b \sG_1}\Big].
              \end{aligned}
          \end{equation}
      \end{enumerate}
  To bound these terms, we apply the key cancellation estimate \eqref{regular_estimate_localized_eigenvector}, along with \eqref{eq:aver_local}, \eqref{square_root_density}, \eqref{add_one_more_Lambda}, \eqref{entprodG}, and \eqref{sim_m_sc_and_m} to derive that 
      \begin{align}\label{eq:polar3}
              \eqref{sR_3_sC_2_2_sC_1_sP_1_sP_2_0}&\lesssim \pa{\im m}^2\avga{\Lambda^2}^2\lesssim N^{-2-2\varepsilon_A+\varepsilon}k^{2/3}\norma{A}_{\HS}^2\lesssim N^{-4/3-4\varepsilon_A+\varepsilon},\\
 \label{eq:polar4}
 \eqref{sR_2_sC_2_2_sC_1_sP_1_sP_2_0}&\prec {\pa{\im m}^2\avga{\Lambda^2}^2\frac{ \im m}{\eta}}\prec N^{-5/3-2\varepsilon_A+\varepsilon}k^{2/3}\norma{A}_{\HS}^2\leq N^{-1-4\varepsilon_A+\varepsilon},       \\   
\label{eq:polar5}
	\eqref{sR_2_sC_2_2_sC_1_1_sP_1_sP_2_0}&\prec{(\im m) \avga{\Lambda^2}^2 \frac{\im m}{\eta}\frac{1}{N\eta}}\prec N^{-5/3-2\varepsilon_A}k^{2/3}\norma{A}_{\HS}^2\leq N^{-1-4\varepsilon_A},
          \end{align}

Finally, by combining the estimates \eqref{eq:easydirect}, \eqref{eq:polar_cancel}, \eqref{cancellation_im_example}, and \eqref{eq:polar3}--\eqref{eq:polar5} with \Cref{claim_estimate_remainder_terms}, we conclude the proof of \Cref{lemma_localized_eigenvector_local_law}. The proof of \Cref{lemma_localized_eigenvalue_local_law} follows from the same argument.


\subsubsection{Proof of \Cref{claim_estimate_remainder_terms}}
\label{subsection:proof_of_claim_estimate_remainder_terms}

In this subsection, we present the proof of \Cref{claim_estimate_remainder_terms}, which follows a similar strategy to that of \Cref{lemma_localized_eigenvector_local_law}, albeit with more complex expansions and operations. 
We will consider an admissible sequence of operations $\cO_1,\ldots,\cO_T$
and estimate the term $\cR_{\cT}$ in \eqref{reminder_terms_expansion},  decomposed as
\begin{equation}\nonumber
    \begin{aligned}
        \cR_{\cT}=\sum_{2\leq p+q \leq l}\cR_{\cT}\pa{p,q}+R_{l+1},
    \end{aligned}
\end{equation}
where the terms $ \cR_{\cT}\pa{p,q}$ are defined as 
\begin{equation}\label{explicit_reminder_terms_before_expansions}
    \begin{aligned}
        \cR_{\cT}\pa{p,q}=-\frac{c_{\cT}}{ND}\sum_{a=1}^D\sum_{\alpha,\beta\in \cI_a}\frac{1}{p!q!}\cC_{\iab}^{p,q+1}\partial_{\iab}^p\partial_{\iba}^q\qa{\p{\sM B \tLambda_o \Pi_{k_{\#}-1}\sG}_{\iab} W_1 \cdots W_u f^{(1)} \cdots f^{(n-1)}}
    \end{aligned}
\end{equation}
and $o=1$ or $2$ depending on the structure of $\cT$. By choosing $l$ sufficiently large, we can ensure that the remainder term \(R_{l+1}\) is bounded by $\OO_{\prec}\p{N^{-2}\norm{A}_{\HS}^2}$. Therefore, in this context and in all subsequent cumulant expansions, we will omit the arguments used to control the remainder terms denoted by $R_{l+1}$.

%


The terms $\cR_{\cT}\pa{p,q}$ can be divided into two cases. Some of them can be bounded directly, while the remaining terms require a further expansion using a similar, but more refined, procedure. We begin by considering the first set of terms, which are easier to estimate.


\begin{proof}[\bf Proof of \Cref{claim_estimate_remainder_terms}: Direct Estimation]
We first consider all cases of the $\cR_{\cT}(p, q)$ terms that can be estimated directly. 
\medskip
    
\noindent(\rmnum{1}) Suppose that $\cT$ is of Type \rmnum{1}, with $k_1 \geq 1$, $k_2 \geq 1$, and at least one of the following conditions holds: $p+q\geq 3$, or $\sR\geq 1$. In this case, we have $o=1$, $k_{\#}=k_1+k_2$, and
        \begin{equation}\label{R_cT_typt_1_k_1_geq_1_k_2-geq_1_explicit_expression}
            \begin{aligned}
                \cR_{\cT}\p{p,q}=& -\frac{c_{\cT}}{ND}\sum_{\p{\txt{i}}}\sum_{a=1}^D\sum_{\alpha,\beta\in \cI_a}\frac{1}{p!q!}\cC_{\iab}^{p,q+1}\p{\sM B_1 \tl_1\Pi_{a_1}}_{\star\star}\p{\Pi_{a_2}}_{\star\star}\cdots\p{\Pi_{a_{s-2}}}_{\star\star}\p{\Pi_{a_{s-1}}\tl_2\Pi_{a_s}}_{\star\star}\\
                & \quad\times\prod_{l=1}^{u}\pa{\partial_{\iab}^{s_W\pa{l}}\partial_{\iba}^{t_W\pa{l}}W_l}\prod_{l=1}^{n-1}\pa{\partial_{\iab}^{s_f\pa{l}}\partial_{\iba}^{t_f\pa{l}}f^{\pa{l}}}\\
                &- \frac{c_{\cT}}{ND}\sum_{\pa{\txt{ii}}}\sum_{a=1}^D\sum_{\alpha,\beta\in \cI_a}\frac{1}{p!q!}\cC_{\iab}^{p,q+1}\p{\sM B_1 \tl_1\Pi_{a_1}\tl_2\Pi_{a_2}}_{\star\star}\p{\Pi_{a_3}}_{\star\star}\cdots\p{\Pi_{a_{s}}}_{\star\star}\\
                &\quad\times\prod_{l=1}^{u}\pa{\partial_{\iab}^{s_W\pa{l}}\partial_{\iba}^{t_W\pa{l}}W_l}\prod_{l=1}^{n-1}\pa{\partial_{\iab}^{s_f\pa{l}}\partial_{\iba}^{t_f\pa{l}}f^{\pa{l}}}.
            \end{aligned}
        \end{equation}
    Here, each $\star$ denotes either an $\alpha$ or a $\beta$; the quantities $s_W(l)$, $t_W(l)$, $s_f(l)$, and $t_f(l)$ represent certain non-negative integers; and $\Pi_{a_1}, \ldots, \Pi_{a_s}$ denote terms generated from the derivatives of \smash{$\p{\sM B_1 \tl_1 \Pi_{k_\#-1}\sG}_{\iab}$}, where $a_i$ indicates the number of $\sG$ factors in each term. 
    Note that we have 
    \be\label{eq:sumai}a_1+\cdots+a_s= k_1+k_2+s-2.\ee 
    The summations \smash{$\sum_{\pa{\txt{i}}}$ and $\sum_{\pa{\txt{ii}}}$} range over all possible structures generated by \smash{$\partial_{\iab}^p \partial_{\iba}^q$}. For simplicity of presentation, we also include the deterministic coefficients (of order $\OO(1)$) into the summations \smash{$\sum_{(\text{i})}$ and $\sum_{(\text{ii})}$}.
        
Using \Cref{lem_loc,lemma_necessary_estimates}, we get the bounds  
            \begin{align}
                & \absa{\cC_{\iab}^{p,q+1}}\lesssim N^{-\pa{p+q+1}/2},\quad \absa{\partial_{\iab}^{s_W\pa{l}}\partial_{\iba}^{t_W\pa{l}}W_l}\prec\frac{1}{N\eta},\quad \absa{\partial_{\iab}^{s_f\pa{l}}\partial_{\iba}^{t_f\pa{l}}f^{\pa{l}}}\prec \frac{\im m}{\eta^{r_l-1}},\nonumber\\
                & \abs{\p{\Pi_{a_{s-1}}\tl_2\Pi_{a_s}}_{\star\star}}\leq \norm{\bre_\star^{\top}\Pi_{a_{s-1}}\tl_2}\cdot\norma{\Pi_{a_s}\bre_\star}\prec \norm{\bre_\star^{\top}\Pi_{a_{s-1}}\tl_2}\cdot\sqrt{\frac{\im m}{\eta^{2a_s-1}}}, \label{bound_factors}\\
                & \abs{\p{\sM B \tl_1\Pi_{a_1}}_{\star\star}}\prec \norm{\bre_\star^\top\sM B \tl_1}\cdot\frac{1}{\eta^{a_1-1}},\quad \absa{\pa{\Pi_{a_l}}_{\star\star}}\prec \frac{1}{\eta^{a_l-1}} \ \ \txt{ for } \  \ 2\leq l\leq s-2,\nonumber
            \end{align}
            where recall that $r_l$ denotes the number of $\sG$ factors in $f^{\pa{l}}$. With these bounds, we can bound part (i) by
        \begin{equation}\label{concrete_bound_type_1_1_1}
            \begin{aligned}
                &~N^{-\pa{\mm-n+\sR-1}-1-\pa{p+q+1}/2}\cdot N\frac{\im m}{\eta^{k_1+k_2-1}}\norma{A}_{\HS}^2\cdot\pa{\frac{1}{N\eta}}^{u}\cdot\frac{\pa{\im m}^{n-1}}{\eta^{\mm-k_1-k_2-n+1}}\\
                \lesssim &~N^{1-\sR-\pa{p+q+1}/2}\norma{A}_{\HS}^2 \pa{k/N}^{\frac 1 3(\mm+u)}\leq N^{-5/3}k^{2/3}\norma{A}_{\HS}^2\leq N^{-1-2\varepsilon_A}.
            \end{aligned}
        \end{equation}
    To obtain the bound in the first line of \eqref{concrete_bound_type_1_1_1}, we also used the identity $S = \mm - n + \sR - 1$ from \eqref{counting_character_of_term}, the facts in \eqref{number_of_G_type_1} and \eqref{eq:sumai}, as well as the following bounds derived from the Cauchy–Schwarz inequality and \eqref{two_loop_estimate}:
        \begin{align}\label{Cauchy_example_1}
                \sum_{\star}\norm{\bre_\star^{\top}\Pi_{a_{s-1}}\tl_2}^2&=\tr\pa{\Pi_{a_{s-1}}\tl_2^2\Pi_{a_{s-1}}^*} \prec \norma{A}_{\HS}^2{\frac{\im m}{\eta^{2a_{s-1}-1}}},\\
        \label{Cauchy_example_2}
                \sum_{\star}\norm{\bre_\star^\top\sM B \tl_1}^2&=\tr\pa{ \sM B \tl_1^2B^* \sM^*}\lesssim \norma{A}_{\HS}^2.
            \end{align}
        In the first inequality of \eqref{concrete_bound_type_1_1_1}, we used \eqref{square_root_density}, \eqref{eq:kappa_gamma_k}, and arguments similar to those in \eqref{estimate_loop_type_1} and  \eqref{light_weight_cancel_imaginary_part}, together with the condition  $\mm-k_1-k_2\geq 2\pa{n-1}$. In the second step of \eqref{concrete_bound_type_1_1_1}, we used the assumption $p + q \geq 3$ or $\sR \geq 1$, along with the fact that $\mm + u \geq k_1 + k_2 \geq 2$. 
        For part (ii), we bound the corresponding factor as 
        \begin{equation}\label{R_cT_typt_1_k_1_geq_1_k_2-geq_1_operator_norm_bound}
            \begin{aligned}
                \abs{\p{\sM B_1 \tl_1\Pi_{a_1}\tl_2\Pi_{a_2}}_{\star\star}}\leq \norm{\bre_\star^\top\sM B \tl_1\Pi_{a_1}}\cdot\norm{\tl_2\Pi_{a_2}\bre_\star},
            \end{aligned}
        \end{equation}
       and estimate the remaining factors similarly to \eqref{bound_factors}. Then, we see that part (ii) is bounded in essentially the same way as \eqref{concrete_bound_type_1_1_1}:
        \begin{equation}\label{concrete_bound_type_1_1_2}
            \begin{aligned}
                &~N^{-\pa{\mm-n+\sR-1}-1-\pa{p+q+1}/2}\cdot N\frac{\im m}{\eta^{k_1+k_2-1}}\norma{\Lambda}_{\HS}^2\cdot\pa{\frac{1}{N\eta}}^{u}\cdot\frac{\pa{\im m}^{n-1}}{\eta^{\mm-k_1-k_2-n+1}} \lesssim  N^{-5/3}k^{2/3}\norma{A}_{\HS}^2 . 
            \end{aligned}
        \end{equation}

        \noindent(\rmnum{2}) Suppose that $\cT$ is of Type \rmnum{1} with $k_1=0$ and $ k_2\geq 1$. In this case, we have $o=2,\ k_{\#}=k_2$, $\sR\geq 1$, $\mm+u\geq k_2\geq 1$. The term $\cR_{\cT}(p, q)$ can then be written as
        \begin{equation}\nonumber
            \begin{aligned}
                \cR_{\cT}\pa{p,q}=&- \frac{c_{\cT}}{ND}\sum_{\pa{\txt{i}}}\sum_{a=1}^D\sum_{\alpha,\beta\in \cI_a}\frac{1}{p!q!}\cC_{\iab}^{p,q+1}\p{\sM B_1 \tl_2 B_2\tl_1\Pi_{a_1}}_{\star\star}\p{\Pi_{a_2}}_{\star\star}\cdots\p{\Pi_{a_{s}}}_{\star\star}\\
                &\quad \times\prod_{l=1}^{u}\pa{\partial_{\iab}^{s_W\pa{l}}\partial_{\iba}^{t_W\pa{l}}W_l}\prod_{l=1}^{n-1}\pa{\partial_{\iab}^{s_f\pa{l}}\partial_{\iba}^{t_f\pa{l}}f^{\pa{l}}},
            \end{aligned}
        \end{equation}
    using notation analogous to that in \eqref{R_cT_typt_1_k_1_geq_1_k_2-geq_1_explicit_expression}, where \smash{$\sum_{\text{(i)}}$} again denotes the summation over all possible structures generated by the derivatives \smash{$\partial_{\iab}^p\partial_{\iba}^q$}. 
        The factor \smash{$\p{\sM B_1 \tl_2 B_2\tl_1\Pi_{a_1}}_{\star\star}$} satisfies a bound similar to \eqref{R_cT_typt_1_k_1_geq_1_k_2-geq_1_operator_norm_bound}, and the remaining factors satisfy bounds analogous to those in \eqref{bound_factors}.
        Using these bounds and applying the Cauchy–Schwarz inequality as in \eqref{concrete_bound_type_1_1_1}, we can estimate $\cR_{\cT}(p, q)$ as
         \begin{equation}\label{concrete_bound_type_1_2_3}
             \begin{aligned}
                 \absa{\cR_{\cT}\pa{p,q}}\prec N^{-\pa{\mm-n+\sR-1}-1-\pa{p+q+1}/2}\cdot \pa{N\norma{A}_{\HS}^2\frac{1}{\eta^{k_1+k_2-1}}\sqrt{\frac{\im m}{\eta}}}\cdot \pa{\frac{1}{N\eta}}^{u}\cdot\frac{\pa{\im m}^{n-1}}{\eta^{\mm-k_1-k_2-n+1}}.
             \end{aligned}
         \end{equation}
         If at least one of the following conditions does {\bf not} hold: $\sR=1$, $p+q=2$, or $\mm+u=1$, then, following a similar argument to those in \eqref{estimate_loop_type_1} and \eqref{light_weight_cancel_imaginary_part}, we obtain that 
         \begin{equation}\nonumber
             \begin{aligned}
                 N^{1-\sR-\pa{p+q+1}/2}N^{1/2}\norma{A}_{\HS}^2\pa{k/N }^{\frac 1 3(\mm+u)}\leq N^{-5/3}k^{2/3}\norma{A}_{\HS}^2\leq N^{-1-2\varepsilon_A}.
             \end{aligned}
         \end{equation}
     On the other hand, if all three conditions hold---namely, $\sR = 1$, $p + q = 2$, and $\mm + u = 1$---then from \eqref{counting_character_of_term}, it follows that $\sC_1 = \sC_2 = \sP_1 = \sP_2 = 0$. Hence, $\cT$ must take the form $\cT = \avg{\sM_0 \tl_1 \sG_1 \tl_2}$, and the term $\cR_{\cT}(p, q)$ can be written as 
         \begin{equation}\label{direct_estimate_type_1_k_1_0_k_2_geq_1_sR_1_p_plus_q_2_m_plus_u_1}
             \begin{aligned}
                 \cR_{\cT}(p,q)=-\frac{c_\cT}{ND}\sum_{\pa{\txt{i}}}\bsum{a}{\alpha}{\beta}\frac{1}{p!q!}\cC_{\iab}^{p,q+1}\p{\sM_1\tl_2\sM_0\tl_1\sG_1}_{\star\star}\pa{\sG_1}_{\star\star}\pa{\sG_1}_{\star\star}.
             \end{aligned}
         \end{equation}
         Noting that there is only one $\sM_0$ factor, we apply the polarization identity from \eqref{expansion_of_im} to obtain a cancellation.  Specifically, by summing the contributions from the four terms on the right-hand side of \eqref{expansion_of_im}, we recover expressions similar to those in \eqref{direct_estimate_type_1_k_1_0_k_2_geq_1_sR_1_p_plus_q_2_m_plus_u_1}, where the $\sM_0$ factor is replaced by $\im \sM_0$, and $\sM_1$ either remains unchanged or is replaced by $\sM_1^*$. In summary, the total contribution from these terms can be bounded by 
         \begin{equation}\nonumber
             \begin{aligned}
                 N^{-5/2}\cdot N\norma{\Lambda}_{\HS}^2\sqrt{\frac{\im m}{\eta}}\cdot \im m\lesssim N^{-5/3+\varepsilon/2}k^{2/3}\norma{A}_{\HS}^2\lesssim N^{-1-2\varepsilon_A+\varepsilon/2}.
             \end{aligned}
         \end{equation}

         \noindent(\rmnum{3}) Suppose that $\cT$ is of Type \rmnum{2}, with $k_1\geq 1$ and $k_2\geq 1$. Moreover, at least one of the following conditions holds: $p+q\geq 3$ or $\sR\geq 1$. In this case, we have $o=1$, $k_{\#}=k_1$, and $k_2\geq 2$. The condition \(k_2 \geq 2\) arises because, when the second loop containing \smash{$\tl_2$} is generated, it must include at least two \(\sG\) factors. Furthermore, in subsequent expansions, no $\sreplace$ operation is applied to this loop, so the number of \(\sG\) factors within it does not decrease.  Using notation similar to that in \eqref{R_cT_typt_1_k_1_geq_1_k_2-geq_1_explicit_expression}, we can express $\cR_{\cT}\pa{p,q}$ as 
         \begin{align*}
                 \cR_{\cT}\pa{p,q}=& -\frac{c_{\cT}}{\pa{ND}^2}\sum_{\pa{\txt{i}}}\sum_{a=1}^D\sum_{\alpha,\beta\in \cI_a}\frac{1}{p!q!}\cC_{\iab}^{p,q+1}\p{\sM B_1 \tl_1\Pi_{a_1}}_{\star\star}\p{\Pi_{a_2}}_{\star\star}\cdots\p{\Pi_{a_{s-2}}}_{\star\star}\p{\Pi_{a_{s-1}}\tl_2\Pi_{a_s}}_{\star\star}\\
                &\quad\times\prod_{l=1}^{u}\pa{\partial_{\iab}^{s_W\pa{l}}\partial_{\iba}^{t_W\pa{l}}W_l}\prod_{l=1}^{n-2}\pa{\partial_{\iab}^{s_f\pa{l}}\partial_{\iba}^{t_f\pa{l}}f^{\pa{l}}}\\
                &-\frac{c_{\cT}}{ND}\sum_{\pa{\txt{ii}}}\sum_{a=1}^D\sum_{\alpha,\beta\in \cI_a}\frac{1}{p!q!}\cC_{\iab}^{p,q+1}\p{\sM B_1 \tl_1\Pi_{a_1}}_{\star\star}\p{\Pi_{a_2}}_{\star\star}\cdots\p{\Pi_{a_{s-1}}}_{\star\star}\avg{\tl_2\Pi_{a_s}}\\
                & \quad\times\prod_{l=1}^{u}\pa{\partial_{\iab}^{s_W\pa{l}}\partial_{\iba}^{t_W\pa{l}}W_l}\prod_{l=1}^{n-2}\pa{\partial_{\iab}^{s_f\pa{l}}\partial_{\iba}^{t_f\pa{l}}f^{\pa{l}}}.
             \end{align*}
         As in \eqref{concrete_bound_type_1_1_1}, under the assumption $p+q\geq 3$ or $\sR\geq 1$, and given that $\mm+u\geq k_1+k_2\geq 3$, we can bound part (i) as
         \begin{equation}\label{concrete_bound_type_2_1_1}
             \begin{aligned}
                 &~N^{-\pa{\mm-n+\sR-1} -2 -\pa{p+q+1}/2}\cdot N\frac{\im m}{\eta^{k_1+k_2-1}}\norma{A}_{\HS}^2\cdot \pa{\frac{1}{N\eta}}^{u}\cdot \frac{\pa{\im m}^{n-2}}{\eta^{\mm-k_1-k_2-n+2}}\\
                 \lesssim  &~ N^{1-\sR-\pa{p+q+1}/2}\norma{A}_{\HS}^2\pa{k/N}^{\frac 1 3(\mm+u)}\leq N^{-5/3}k^{2/3}\norma{A}_{\HS}^2\leq N^{-1-2\varepsilon_A},
             \end{aligned}
         \end{equation}
         and bound part (ii) as
         \begin{equation}\label{concrete_bound_type_2_1_2}
             \begin{aligned}
                 &~N^{-\pa{\mm-n+\sR-1} -1 -\pa{p+q+1}/2}\cdot N\frac{\im m}{\eta^{k_1+k_2-2}}\norma{A}_{\HS}^2\cdot \pa{\frac{1}{N\eta}}^{u}\cdot \frac{\pa{\im m}^{n-2}}{\eta^{\mm-k_1-k_2-n+2}}\\
                 \lesssim &~N^{1-\sR-\pa{p+q+1}/2}\norma{A}_{\HS}^2\pa{k/N }^{\frac 1 3\p{\mm+u-1}}\leq N^{-5/3}k^{2/3}\norma{A}_{\HS}^2\leq N^{-1-2\varepsilon_A}.
             \end{aligned}
         \end{equation}

\smallskip
         \noindent(\rmnum{4}) 
         Suppose that $\cT$ is of Type \rmnum{2}, with $k_1=0$ and $k_2\geq 1$. In this case, we have $o=2,\ k_{\#}=k_2$, and $\sR\geq 1$. Using notation similar to that in \eqref{R_cT_typt_1_k_1_geq_1_k_2-geq_1_explicit_expression}, we can express $\cR_{\cT}\pa{p,q}$ as 
         \begin{equation}\nonumber
             \begin{aligned}
                 \cR_{\cT}\pa{p,q}=&- \frac{c_{\cT}}{ND}\sum_{\pa{\txt{i}}}\sum_{a=1}^D\sum_{\alpha,\beta\in \cI_a}\frac{1}{p!q!}\cC_{\iab}^{p,q+1}\p{\sM B_1 \tl_2\Pi_{a_1}}_{\star\star}\p{\Pi_{a_2}}_{\star\star}\cdots\p{\Pi_{a_{s}}}_{\star\star}\avg{\tl_1 B_2}\\
                 &\times\prod_{l=1}^{u}\pa{\partial_{\iab}^{s_W\pa{l}}\partial_{\iba}^{t_W\pa{l}}W_l}\prod_{l=1}^{n-2}\pa{\partial_{\iab}^{s_f\pa{l}}\partial_{\iba}^{t_f\pa{l}}f^{\pa{l}}}.
             \end{aligned}
         \end{equation}
          If $k_2\geq 2$ and at least one of the following conditions does {\bf not} hold: $\sR=1,\ p+q=2$, or $ \mm+u=2$, then, similar to \eqref{concrete_bound_type_1_2_3}, we can bound $\cR_{\cT}\pa{p,q}$ by 
         \begin{equation}\label{concrete_bound_type_2_2_1}
             \begin{aligned}
                 &~ N^{-\pa{\mm-n+\sR-1}-1-\pa{p+q+1}/2}\cdot N^{3/2}\frac{\im m}{\eta^{k_1+k_2-1}}\norma{A}_{\HS}\avga{\Lambda^2}\cdot \pa{\frac{1}{N\eta}}^{u}\cdot \frac{\pa{\im m}^{n-2}}{\eta^{\mm-k_1-k_2-n+2}}\\
                 \lesssim &~ N^{3/2-\sR-\pa{p+q+1}/2}\cdot{N^{1/3-\varepsilon_A}k^{-1/3}}\norma{A}_{\HS}^2\pa{k/N}^{\frac 13 (\mm+u)}\leq N^{-5/3-\varepsilon_A}k^{2/3}\norma{A}_{\HS}^2\leq N^{-1-3\varepsilon_A}.
             \end{aligned}
         \end{equation}
         Conversely, if $k_2\ge 2$, $\sR=1$, $p+q=2$, and $\mm+u=2$, then, by \eqref{counting_character_of_term}, we have $\sC_1+\sC_2+\sP_1+\sP_2=1$. To obtain a Type \rmnum{2} expression in this case, we must have  $\sC_2=1$ and $\sC_1=\sP_1=\sP_2=0$. Consequently,  $\cT$ must take the following form:
         \begin{equation}\nonumber
             \begin{aligned}
                 \cT=D\dsum{a}\avg{\sM_0\tl_1\sM_1E_a}\avg{E_a\sG_1\tl_2\sG_0}.
             \end{aligned}
         \end{equation}
        Then, we can apply the estimate \eqref{regular_estimate_localized_eigenvector} to refine our bound as: 
         \begin{equation}\nonumber
             \begin{aligned}
                 \absa{\cR_{\cT}\pa{p,q}}\prec N^{-1 -\pa{p+q+1}/2}\cdot N^{3/2}\frac{\im m}{\eta}\norma{A}_{\HS}\cdot (\im m)\avga{\Lambda^2}\lesssim N^{-5/3-\varepsilon_A}k^{2/3}\norm{A}_{\HS}^2\leq N^{-1-3\varepsilon_A}.
             \end{aligned}
         \end{equation}
         When $k_2=1$, $\cR_\cT\pa{p,q}$ can be bounded as follows: 
         \begin{equation}\nonumber
             \begin{aligned}
                 & ~N^{-\pa{\mm-n+\sR-1}-1-\pa{p+q+1}/2}\cdot N^{3/2}\norma{\Lambda}_{\HS}\avga{\Lambda^2}\cdot \pa{\frac{1}{N\eta}}^{u}\cdot \frac{\pa{\im m}^{n-2}}{\eta^{\mm-n+1}}\\
                 \lesssim &~N^{3/2-\sR-\pa{p+q+1}/2}\cdot N^{1/3-\varepsilon_A}k^{-1/3}\norma{A}_{\HS}^2\pa{k/N}^{\frac 1 3 (\mm+u-1)}\leq N^{-5/3-\varepsilon_A}k^{2/3}\norma{A}_{\HS}^2\leq N^{-1-3\varepsilon_A},
             \end{aligned}
         \end{equation}
         unless one of the following scenarios occurs: (i) $\sR=1$, $p+q=3$, and $\mm+u\leq 2$; or (ii) $\sR=1$, $p+q=2$, and $\mm+u\leq 3$. A direct enumeration shows that, in scenario (i), the conditions $\sR=1$ and $\mm+u\leq 2$ imply $\sC_2=1$ and $\sC_1=\sP_1=\sP_2=0$, which contradicts $k_2=1$. Therefore, only scenario (ii) can occur, where $\cT$ must satisfy $\sC_1+\sC_2+\sP_1=2$ and $\sC_2\geq 1$. 
         Moreover, by reasoning similar to that used for the condition $k_2 \geq 2$ in case (\rmnum{3}), we again find that $k_2 \geq 2$ whenever $\sC_2 = 1$. This contradicts the assumption that $k_2 = 1$. Consequently, we must have $\sC_2 = 2$ and $\sC_1 = \sP_1 = \sP_2 = 0$, in which case $\cT$ must take the following form:
         \begin{equation}\nonumber
             \begin{aligned}
                 \cT=D^2\dsum{a,b}\avg{\sM_0\tl_1\sM_1 E_a}\avg{\sM_1\tl_2\sG_0 E_b}\avg{E_b\sG_0 E_a \sG_1}.
             \end{aligned}
         \end{equation}
         Then, we again apply the bound \eqref{regular_estimate_localized_eigenvector} to improve the estimate of $\cR_{\cT}\pa{p,q}$ as:
         \begin{equation}\nonumber
             \begin{aligned}
                 \absa{\cR_{\cT}\pa{p,q}}\prec N^{-1-\pa{p+q+1}/2}\cdot N^{3/2}\norma{\Lambda}_{\HS}\cdot (\im m)\avga{\Lambda^2}\cdot \frac{\im m}{\eta}\lesssim N^{-5/3-\varepsilon_A}k^{2/3}\norma{A}_{\HS}^2\leq N^{-1-3\varepsilon_A}.
             \end{aligned}
         \end{equation}

    Combining all the above Cases (\rmnum{1})-(\rmnum{4}) completes the first part of the proof of \Cref{claim_estimate_remainder_terms}.
\end{proof}

Based on the previous discussion, it remains to consider cases satisfying one of the following conditions: 
    \begin{enumerate}
        \item $\cT$ is of Type \rmnum{1}, with $\sR=0$, $k_1\geq 1$, $k_2\geq 1$, and $p+q=2$.
        \item $\cT$ is of Type \rmnum{2}, with $\sR=0$, $k_1\geq 1$, $k_2\geq 1$, and $p+q=2$.
    \end{enumerate}
To complete the proof of \Cref{claim_estimate_remainder_terms}, we need to perform further expansions on the terms satisfying these conditions.

\begin{proof}[\bf Proof of \Cref{claim_estimate_remainder_terms}: Further Expansions]\label{proof_further_expansion}
We begin by describing the expansion strategy for the two types of remainder terms that satisfy conditions (i) or (ii). We introduce a class of expressions that will appear from our expansions:
    \begin{equation}\nonumber
        \begin{aligned}
            \cR:\ c_{\cR}\cdot \cW^{\pa{u}}\cdot \Upsilon_n^{\pa{\mm}},
        \end{aligned}
    \end{equation}
    where $\cW^{\pa{u}}$ is defined as in \eqref{definition_class_of_expression_loop}, while $\Upsilon_n^{\pa{\mm}}$ has a structure given by one of the following forms:
        \begin{align}\label{further_expansion_expression_Type_1}
         \textbf{Type \rmnum{1}:}\quad       &-\frac{1}{ND}\bsum{a}{\alpha}{\beta}\frac{1}{p_0!q_0!}\cC_{\iab}^{p_0,q_0+1}\p{\sM B\tl_1 \Pi_{a_1}}_{\star\star}\p{\Pi_{a_2}\tl_2\Pi_{a_3}}_{\star\star}\p{\Pi_{a_4}}_{\star\star}\prod_{i=1}^{n-3}f^{\pa{i}};\\
         \label{further_expansion_expression_Type_2}
        \textbf{Type \rmnum{2}:}\quad        &-\frac{1}{ND}\bsum{a}{\alpha}{\beta}\frac{1}{p_0!q_0!}\cC_{\iab}^{p_0,q_0+1}\p{\sM B\tl_1 \Pi_{a_1}}_{\star\star}\p{\Pi_{a_2}}_{\star\star}\p{\Pi_{a_3}}_{\star\star}\avg{\tl_2\Pi_{a_4}}\prod_{i=1}^{n-4}f^{\pa{i}};    \\
       \label{further_expansion_expression_Type_3}
         \textbf{Type \rmnum{3}:}\quad       &-\frac{1}{ND}\bsum{a}{\alpha}{\beta}\frac{1}{p_0!q_0!}\cC_{\iab}^{p_0,q_0+1}\p{\sM B\tl_1 \Pi_{a_1}\tl_2\Pi_{a_2}}_{\star\star}\p{\Pi_{a_3}}_{\star\star}\p{\Pi_{a_4}}_{\star\star}\prod_{i=1}^{n-3}f^{\pa{i}}.
            \end{align}
Here, each $f^{(i)}$ denotes a loop, defined in the same manner as $f^{(j)}$ in \eqref{explain_product_entries}. The terms $\Pi_{a_i}$ are defined similarly to those in \eqref{R_cT_typt_1_k_1_geq_1_k_2-geq_1_explicit_expression}, where $a_i$ indicates the number of $\sG$ factors within $\Pi_{a_i}$; each $a_i$ is nonzero unless it appears in a factor containing \smash{$\tl$}. Every expression in \eqref{further_expansion_expression_Type_1}--\eqref{further_expansion_expression_Type_3} contains six $\star$ placeholders, representing three $\alpha$ indices and three $\beta$ indices. 
Then, $n$ denotes the number of factors in \smash{$\Upsilon_{n}^{\pa{\mm}}$}, and $\mm$ is the total number of $\sG$ entries in \smash{$\Upsilon_{n}^{\pa{\mm}}$}. 
If $\cR$ is of Type \rmnum{1} or Type \rmnum{2}, we denote by $k_1$ and $k_2$ the number of $\sG$ entries within the factors containing \smash{$\tl_1$} and \smash{$\tl_2$}, respectively.  If $\cR$ is of Type \rmnum{3}, then $k_1$ denotes the number of $\sG$ entries between \smash{$\tl_1$} and \smash{$\tl_2$}, and $k_2$ denotes the number of entries to the right of \smash{$\tl_2$}.
For simplicity of presentation, we refer to all factors of the form $(\cdot)_{\star\star}$ as \emph{heavy packages}, and denote the class of expressions of the forms \eqref{further_expansion_expression_Type_1}–\eqref{further_expansion_expression_Type_3} by $\scR$.
 

    We are now ready to describe the expansion procedure. Clearly, for any $p_0 + q_0 = 2$ and $\cT \in \scT$, we have $\cR_0 := \cR_{\cT}(p_0, q_0) \in \scR$. Given any $\cR \in \scR$, we choose a $\sG$ entry to expand according to the following rules:
    \begin{enumerate}
        \item (Right of \smash{$\tl_2$} in a heavy package) If \smash{$\tl_2$} is contained in a heavy package and there is a $\sG$ factor to the right of \smash{$\tl_2$} within this package, we choose the first such $\sG$.
        
        
        \item (Left of \smash{$\tl_2$} in a loop) If condition (i) does not apply, and \smash{$\tl_2$} is contained in a loop that includes at least one $\sG$ factor, we choose the first $\sG$ to the left of \smash{$\tl_2$} in that loop.
                
        
        \item (Right of \smash{$\tl_1$} in a heavy package) If neither (i) nor (ii) applies, and there is a $\sG$ factor to the right of \smash{$\tl_1$} within the heavy package containing \smash{$\tl_1$} (note that \smash{$\tl_1$} must be contained in a heavy package, and there is no $\sG$ to its left), we choose the first such $\sG$.


        \item (Left of \smash{$\tl_2$} in a heavy package) If none of (i)--(iii) applies, and there is a $\sG$ factor to the left of \smash{$\tl_2$} within the heavy package containing \smash{$\tl_2$} (note that if (ii) fails, then \smash{$\tl_2$} must be in a heavy package), we choose the first such $\sG$.
        
        \item If none of the above conditions (i)--(iv) holds, we stop expanding $\cR$.
        
    \end{enumerate}
    Next, we apply the expansion $\sG=\sM-\sM\pa{H+\smm}\sG$ if the selected $\sG$ is to the right of the considered \smash{$\tl_o$} for $o \in \{1, 2\}$,  and $\sG=\sM-\sG\pa{H+\smm}\sM$ if the selected $\sG$ is to the left of the corresponding \smash{$\tl_o$}. After this, we apply the cumulant expansion from \Cref{lem:complex_cumu} with respect to the entries of $H$. 
    

First, suppose that the considered $\tl_o$ is contained in a heavy package, and that $\cR$ is of Type \rmnum{1} or \rmnum{3}. As a representative example, assume $\cR$ is of Type \rmnum{1}, and there exists a $\sG$ factor to the right of \smash{$\tl_2$}. In this case, we can write $\cR$ as follows:
    \begin{equation}\label{explicit_reminder_term_1}
        \begin{aligned}
            \cR=-\frac{c_{\cR}}{ND}\bsum{a}{\alpha}{\beta}\frac{1}{p_0!q_0!}\cC_{\iab}^{p_0,q_0+1}\p{\Pi_1\tl_2B_1\sG\Pi_2}_{\star_1\star_2}\prod_{i=1}^{2}g^{\pa{i}}\cdot \prod_{i=1}^{n-3}f^{\pa{i}}\cdot \prod_{i=1}^{u}W_i,
        \end{aligned}
    \end{equation}
where $B_1$ denotes the product of deterministic matrices between \smash{$\tl_2$} and the first $\sG$ to its right; $\Pi_1$ and $\Pi_2$ denote the products of matrices to the left and right of \smash{$\tl_2 B_1 \sG$}, respectively; and $g^{(i)}$, for $i \in \{1,2\}$, denote other heavy packages within $\cR$. We then expand the $\sG$ factor to the right of \smash{$\tl_2$} and apply the cumulant expansion, yielding Gaussian integration by parts terms as well as a remainder term \smash{$\cE_\cR^{(2)}$} involving higher-order cumulants:
        \begin{align}\label{concrete_expansion_tl_2_heavy_package_right}
             \cR\eq  -&\frac{c_{\cR}}{ND}\bsum{a}{\alpha}{\beta}\frac{1}{p_0!q_0!}\cC_{\iab}^{p_0,q_0+1}\p{\Pi_1\tl_2B_1\sM\Pi_2}_{\star_1\star_2}\prod_{i=1}^{2}g^{\pa{i}}\cdot\prod_{i=1}^{n-3}f^{\pa{i}}\cdot\prod_{i=1}^{u}W_i\notag\\
            -&   \frac{c_{\cR}}{N} \sum_{x=1}^D \sum_{j=1}^{n_{F}}\bsum{a}{\alpha}{\beta}\frac{1}{p_0!q_0!}\cC_{\iab}^{p_0,q_0+1}\p{F_0\cdots F_{i\pa{j}}E_x\sG\Pi_2}_{\star_1\star_2}\notag\\
            &\qquad\times \avga{E_xF_{i\pa{j}}F_{i\pa{j}+1}\cdots F_{s}}\prod_{i=1}^{2}g^{\pa{i}}\cdot\prod_{i=1}^{n-3}f^{\pa{i}}\cdot\prod_{i=1}^{u}W_i\notag \\
            -& \frac{c_\cR}{N} \sum_{x=1}^D \sum_{j=n_F+1}^{n_F+m_F}\bsum{a}{\alpha}{\beta}\frac{1}{p_0!q_0!}\cC_{\iab}^{p_0,q_0+1}\p{F_0\cdots F_{t} E_x F_{i\pa{j}}F_{i\pa{j}+1}\cdots F_{s+t}}_{\star_1\star_2}\notag\\
            &\qquad\times \avga{E_x\sG F_{s+1}\cdots F_{i\pa{j}}} \prod_{i=1}^{2}g^{\pa{i}}\cdot\prod_{i=1}^{n-3}f^{\pa{i}}\cdot\prod_{i=1}^{u}W_i\notag\\
            -& \frac{c_\cR}{N} \sum_{x=1}^D\bsum{a}{\alpha}{\beta}\frac{1}{p_0!q_0!}\cC_{\iab}^{p_0,q_0+1}\p{\Pi_1\tl_2B_1\sM E_x \sG\Pi_2}_{\star_1\star_2}\avga{E_x\pa{\sG-\sM}}\prod_{i=1}^{2}g^{\pa{i}}\cdot\prod_{i=1}^{n-3}f^{\pa{i}}\cdot\prod_{i=1}^{u}W_i \notag\\
            -& \frac{c_\cR}{D^2 N^3} \sum_{x=1}^D \sum_{j=1}^u\bsum{a}{\alpha}{\beta}\frac{1}{p_0!q_0!}\cC_{\iab}^{p_0,q_0+1}\p{\Pi_1\tl_2B_1\sM E_x\sG_{w_j}E_{x_j}\sG_{w_j}E_x \sG\Pi_2}_{\star_1\star_2}\prod_{i=1}^{2}g^{\pa{i}}\cdot\prod_{i=1}^{n-3}f^{\pa{i}}\cdot \prod_{i \neq j} W_i \notag\\
            -& \frac{c_\cR}{D N^2} \sum_{x=1}^D \sum_{j=1}^2 \sum_{r=1}^{t_j}\bsum{a}{\alpha}{\beta}\frac{1}{p_0!q_0!}\cC_{\iab}^{p_0,q_0+1}\p{\Pi_1\tl_2B_1\sM E_x g_{i_{g,j}\pa{r}}^{\pa{j}} g_{i_{g,j}\pa{r}+1}^{\pa{j}}\cdots g_{n_{g,j}}^{\pa{j}}}_{\star_1\star_4}\notag\\
            &\qquad\times \p{ g_{0}^{\pa{j}} g_{1}^{\pa{j}}\cdots g_{i_{g,j}\pa{r}}^{\pa{j}} E_x \sG\Pi_2}_{\star_3\star_2} \prod_{i\neq j}g^{\pa{i}}\cdot\prod_{i=1}^{n-3}f^{\pa{i}}\cdot\prod_{i=1}^{u}W_i\notag\\
            -& \frac{c_\cR}{D^2 N^3} \sum_{x=1}^D \sum_{j=1}^{n-3} \sum_{r=1}^{s_j}\bsum{a}{\alpha}{\beta}\frac{1}{p_0!q_0!}\cC_{\iab}^{p_0,q_0+1}(\Pi_1\tl_2B_1\sM E_x f_{i_{f,j}\pa{r}}^{\pa{j}} f_{i_{f,j}\pa{r}+1}^{\pa{j}}\cdots f_{n_{f,j}}^{\pa{j}}\notag\\
            &\qquad\times f_{0}^{\pa{j}} f_{1}^{\pa{j}}\cdots f_{i_{f,j}\pa{r}}^{\pa{j}} E_x \sG\Pi_2)_{\star_1\star_2} \prod_{i=1}^{2}g^{\pa{i}}\cdot\prod_{i\neq j}f^{\pa{i}}\cdot\prod_{i=1}^{u}W_i+\cE_{\cR}^{\pa{2}},
        \end{align}
    where the corresponding factors are denoted using notation similar to that in \eqref{explain_product_entries}: 
    \begin{equation}\nonumber
        \begin{aligned}
        &\Pi_{1}\tLambda_2 B_1\sM =: F_0 \cdots F_s,\quad \Pi_2=F_{s+1}\cdots F_{s+t},\quad W_j=\left\langle\left(\sG_{w_j}-\sM_{w_j}\right) E_{x_j}\right\rangle, \\
        &f^{(j)}=\avg{ f_0^{(j)} f_1^{(j)} \cdots f_{n_{f,j}}^{(j)}},\quad g^{(j)}=( g_0^{(j)} g_1^{(j)} \cdots g_{n_{g,j}}^{(j)})_{\star_3\star_4},
        \end{aligned}
    \end{equation}
Moreover, suppose $F_{i(1)}, \ldots, F_{i(n_F)}$ and $F_{i(n_F+1)}, \ldots, F_{i(n_F + m_F)}$ denote the $\sG$ factors in the products $F_0 \cdots F_s$ and $F_{s+1} \cdots F_{s+t}$, respectively. Similarly, let \smash{$f_{i_{f,j}\pa{1}}^{(j)}, \ldots f_{i_{f,j}\pa{s_j}}^{(j)}$} and \smash{$g_{i_{g,j}\pa{1}}^{(j)}, \ldots g_{i_{g,j}\pa{t_j}}^{(j)}$} denote the $\sG$ factors in $f^{(j)}$ and $g^{(j)}$, respectively. All remaining factors represent deterministic matrices, which are products involving $\sM$, $E_a$, and \smash{$\tl_i$}. The remainder term in \eqref{concrete_expansion_tl_2_heavy_package_right} is given by
        \begin{align*}
            \cE_{\cR}^{\pa{2}}=&\frac{c_{\cR}}{ND}\bsum{a}{\alpha}{\beta}\frac{1}{p_0!q_0!}\cC_{\iab}^{p_0,q_0+1}\sum_{2\leq p+q\leq l}\sum_{a=1}^D\sum_{i,j\in \cI_a}\frac{1}{p!q!}\cC_{ij}^{p,q+1}\partial_{ij}^p\partial_{ji}^q\left[\p{\Pi_1\tl_2B_1\sM}_{\star_1j}\p{\sG\Pi_2}_{i\star_2}\right.\\
            &\left.\times \prod_{r=1,2}g^{\pa{r}}\cdot \prod_{r=1}^{n-3}f^{\pa{r}}\cdot \prod_{r=1}^{u}W_r\right]+R_{l+1}^{\pa{2}},
        \end{align*}
where \smash{$R_{l+1}^{(2)}$} is a sufficiently small error term when $l$ is chosen to be sufficiently large, as previously discussed. In general, the expansion of heavy packages always yields an expression of the same structure as in \eqref{concrete_expansion_tl_2_heavy_package_right}.
    
    
 Second, suppose $\cR$ is of Type \rmnum{2} and a $\sG$ entry in the loop containing $\tl_2$ is chosen. We can express $\cR$ as
    \begin{equation}\nonumber
        \begin{aligned}
            \cR=-\frac{c_{\cR}}{ND}\bsum{a}{\alpha}{\beta}\frac{1}{p_0!q_0!}\cC_{\iab}^{p_0,q_0+1}\avg{\sG B_2\tl_2\Pi_1}\prod_{i=1}^{3}g^{\pa{i}}\cdot \prod_{i=1}^{n-4}f^{\pa{i}}\cdot \prod_{i=1}^{u}W_i,
        \end{aligned}
    \end{equation}
with the notations defined similarly to those in \eqref{explicit_reminder_term_1}. By expanding the $\sG$ factor to the left of \smash{$\tl_2$} and applying the cumulant expansion, we obtain an expression analogous to \eqref{concrete_expansion_tl_2_heavy_package_right}: 
        \begin{align}
            \cR\eq -&\frac{c_{\cR}}{ND}\bsum{a}{\alpha}{\beta}\frac{1}{p_0!q_0!}\cC_{\iab}^{p_0,q_0+1}\avg{\sM B_2\tl_2\Pi_1}\prod_{i=1}^{3}g^{\pa{i}}\cdot\prod_{i=1}^{n-4}f^{\pa{i}}\cdot \prod_{i=1}^{u}W_i\notag\\
            -&   \frac{c_{\cR}}{N} \sum_{x=1}^D \sum_{j=1}^{n_{F}}\bsum{a}{\alpha}{\beta}\frac{1}{p_0!q_0!}\cC_{\iab}^{p_0,q_0+1}\left\langle F_0 F_1 \cdots F_{i_j} E_x\right\rangle\left\langle E_x F_{i_j} F_{i_j+1} \cdots F_t \sG\right\rangle\prod_{i=1}^{3}g^{\pa{i}}\cdot\prod_{i=1}^{n-4}f^{\pa{i}}\cdot\prod_{i=1}^{u}W_i\notag \\
            -& \frac{c_{\cR}}{N} \sum_{x=1}^D\bsum{a}{\alpha}{\beta}\frac{1}{p_0!q_0!}\cC_{\iab}^{p_0,q_0+1}\avg{\sM B_2 \tl_2 \Pi_1 \sG E_x}\avga{E_x\pa{\sG-\sM}}\prod_{i=1}^{3}g^{\pa{i}}\cdot\prod_{i=1}^{n-4}f^{\pa{i}}\cdot\prod_{i=1}^{u}W_i\notag \\
            -& \frac{c_{\cR}}{D^2 N^3} \sum_{x=1}^D \sum_{j=1}^u\bsum{a}{\alpha}{\beta}\frac{1}{p_0!q_0!}\cC_{\iab}^{p_0,q_0+1}\avg{\sM B_2\tl_2\Pi_1\sG E_x\sG_{w_j}E_{x_j}\sG_{w_j}E_x}\prod_{i=1}^{3}g^{\pa{i}}\prod_{i=1}^{n-4}f^{\pa{i}} \prod_{i \neq j} W_i\notag \\
            -& \frac{c_{\cR}}{D^2 N^3} \sum_{x=1}^D \sum_{j=1}^3 \sum_{r=1}^{t_j}\bsum{a}{\alpha}{\beta}\frac{1}{p_0!q_0!}\cC_{\iab}^{p_0,q_0+1}\p{g_{0}^{\pa{j}} g_{1}^{\pa{j}}\cdots g_{i_{g,j}\pa{r}}^{\pa{j}} E_x\sM B_2\tl_2 \Pi_1 \sG E_x g_{i_{g,j}\pa{r}}^{\pa{j}} g_{i_{g,j}\pa{r}+1}^{\pa{j}}\cdots g_{n_{g,j}}^{\pa{j}}}_{\star_1\star_2}\notag\\
            &\qquad\times \prod_{i\neq j}g^{\pa{i}}\cdot\prod_{i=1}^{n-4}f^{\pa{i}}\cdot\prod_{i=1}^{u}W_i\notag\\
            -& \frac{c_{\cR}}{D^2 N^3} \sum_{x=1}^D \sum_{j=1}^{n-4} \sum_{r=1}^{s_j}\bsum{a}{\alpha}{\beta}\frac{1}{p_0!q_0!}\cC_{\iab}^{p_0,q_0+1}\avg{\sM B_2\tl_2 \Pi_1 \sG E_x f_{i_{f,j}\pa{r}}^{\pa{j}} f_{i_{f,j}\pa{r}+1}^{\pa{j}}\cdots f_{n_{f,j}}^{\pa{j}}f_{0}^{\pa{j}} f_{1}^{\pa{j}}\cdots f_{i_{f,j}\pa{r}}^{\pa{j}} E_x}\notag\\
            &\qquad\times\prod_{i=1}^{3}g^{\pa{i}}\cdot\prod_{i\neq j}f^{\pa{i}}\cdot\prod_{i=1}^{u}W_i+\cE_{\cR}^{\pa{2}}.\label{concrete_expansion_tl_2_loop}
        \end{align}
    This includes the Gaussian integration by parts terms, as well as a remainder term \smash{$\cE_\cR^{(2)}$} involving higher-order cumulants, where the relevant factors are denoted using notations similar to that in \eqref{explain_product_entries}: 
    \begin{equation}\nonumber
        \begin{aligned}
        &\sM B_2\tl_2 \Pi_{1} =: F_0 \cdots F_t, \quad W_j=\left\langle\left(\sG_{w_j}-\sM_{w_j}\right) E_{x_j}\right\rangle,\\
        & f^{(j)}=\avg{ f_0^{(j)} f_1^{(j)} \cdots f_{n_{f,j}}^{(j)}},\quad g^{(j)}=( g_0^{(j)} g_1^{(j)} \cdots g_{n_{g,j}}^{(j)})_{\star_1\star_2}.
        \end{aligned}
    \end{equation}
    Moreover, \smash{$F_{i\pa{1}}, \ldots, F_{i\pa{n_F}}$} represent the $\sG$ factors in $F_0\cdots F_{t}$, and  \smash{$f_{i_{f,j}\pa{1}}^{(j)}, \ldots, f_{i_{f,j}\pa{s_j}}^{(j)}$} and \smash{$g_{i_{g,j}\pa{1}}^{(j)}, \ldots, g_{i_{g,j}\pa{t_j}}^{(j)}$} denote the $\sG$ factors in $f^{(j)}$ and $g^{\pa{j}}$, respectively. The remainder term is expressed as
\begin{align*}
            \cE_{\cR}^{\pa{2}}=\frac{c_{\cR}}{N^2D^2}\bsum{a}{\alpha}{\beta}&\frac{1}{p_0!q_0!}\cC_{\iab}^{p_0,q_0+1} \sum_{2\leq p+q\leq l}\sum_{b=1}^D\sum_{i,j\in \cI_b}\frac{1}{p!q!}\cC_{ij}^{p,q+1}\\
            \times&\partial_{ij}^p\partial_{ji}^q\left[\p{\sM B_2\tl_2 \Pi_{1} \sG}_{ij} \prod_{r=1}^{3}g^{\pa{r}}\cdot \prod_{r=1}^{n-4}f^{\pa{r}}\cdot \prod_{r=1}^{u}W_r\right] + R_{l+1}^{\pa{2}},
        \end{align*}
    where \smash{$R_{l+1}^{(2)}$} again indicates a sufficiently small error term for sufficiently large $l$. 
    
    To proceed with the proof, we define the following operations derived from the expressions \eqref{concrete_expansion_tl_2_heavy_package_right}--\eqref{concrete_expansion_tl_2_loop}:
    \begin{enumerate}\centering
        \item[$\freplace$:] the first term in \eqref{concrete_expansion_tl_2_heavy_package_right}, and the first term in \eqref{concrete_expansion_tl_2_loop};
        \item[$\fcut{1}$:] the third term in \eqref{concrete_expansion_tl_2_loop};
        \quad $\fcut{2}$: the second term in \eqref{concrete_expansion_tl_2_loop};
        \item[$\fplug{1}$:] the fourth  term in \eqref{concrete_expansion_tl_2_loop};
        \quad$\fplug{2}$: the sixth term in \eqref{concrete_expansion_tl_2_loop};
        \item[$\fmerge$:] the fifth term in \eqref{concrete_expansion_tl_2_loop};
        \item[$\fslash{1}$:] the fourth term in \eqref{concrete_expansion_tl_2_heavy_package_right};
        \quad$\fslash{2}$: the second and third terms in \eqref{concrete_expansion_tl_2_heavy_package_right};
        \item[$\finsert{1}$:] the fifth term in \eqref{concrete_expansion_tl_2_heavy_package_right};
        \quad$\finsert{2}$: the seventh term in \eqref{concrete_expansion_tl_2_heavy_package_right};
        \item[$\fexchange$:] the sixth term in \eqref{concrete_expansion_tl_2_heavy_package_right}.
    \end{enumerate}
We then summarize how these operations affect the characters of our expressions in the following \Cref{table_effect_of_operations_2}.

    \begin{table}[htbp]
        \caption{Effects of operations}
        \label{table_effect_of_operations_2}
        \begin{tabular}{|c|c|c|c|c|}
        \hline
        \diagbox{Operation}{Character} & $\mm$ & $n$ & $u$ & $S$ \\ \hline
        $\freplace$ & $-1$ & $+0$ & $+0$ & $+0$ \\ \hline
        $\fcut{1}$ & $+0$ & $+0$ & $+1$ & $+0$ \\ \hline
        $\fcut{2}$ & $+1$ & $+1$ & $+0$ & $+0$ \\ \hline
        $\fplug{1}$ & $+2$ & $+0$ & $-1$ & $+2$ \\ \hline
        $\fplug{2}$ & $+1$ & $-1$ & $+0$ & $+2$ \\ \hline
        $\fmerge$ & $+1$ & $-1$ & $+0$ & $+2$ \\ \hline
        $\fslash{1}$ & $+0$ & $+0$ & $+1$ & $+0$ \\ \hline
        $\fslash{2}$ & $+1$ & $+1$ & $+0$ & $+0$ \\ \hline
        $\finsert{1}$ & $+2$ & $+0$ & $-1$ & $+2$ \\ \hline
        $\finsert{2}$ & $+1$ & $-1$ & $+0$ & $+2$ \\ \hline
        $\fexchange$ & $+1$ & $+0$ & $+0$ & $+1$ \\ \hline
        \end{tabular}
    \end{table}

    Recall that $\cT$ is generated by $\cT=\cO_{T}\circ\cdots\circ\cO_1\pa{\cT_0}$, where $\cT_0$ is an expression of the form \eqref{eq;expT0}, and $\cO_1, \ldots, \cO_T$ is an admissible sequence of operations defined as in \eqref{sreplace_def}--\eqref{splug2_def}.  
    Our goal is to estimate $\cR_0=\cR_\cT\pa{p_0,q_0}$, as defined in  \eqref{explicit_reminder_terms_before_expansions}, under the assumptions $p_0+q_0=2$ and $\sR=0$. 
    Depending on which $\sG$ factor the derivative $\partial_{\iab}$ or $\partial_{\iba}$ acts upon, we obtain different relations between the characters of $\cT$---denoted by $\mm_\cT, n_\cT, u_\cT,$ and $S_\cT$---and those of the expression \smash{$\cR_0=c_{\cR_0}\cdot \cW^{\pa{u_0}}\Upsilon_{n_0}^{\pa{\mm_0}}$}, whose characters are denoted by $\mm_0,\ n_0, \ u_0,$ and $S_0$. These relations are summarized in \Cref{table_Classification_of_Initial_Values}. 
    We emphasize that $S_0$ includes only the $N^{-1}$ factors appearing in the coefficient $c_{\cR_0}$, and not those arising from the expressions \eqref{further_expansion_expression_Type_1}--\eqref{further_expansion_expression_Type_3}.

    \begin{table}[htbp]
        \caption{Classification of initial values for the characters of $\cR_0$}
        \label{table_Classification_of_Initial_Values}
        \begin{tabular}{|c|c|c|c|c|}
        \hline
        \diagbox{Action positions of $\partial_{\iab},\partial_{\iba}$}{Differences in characters} & $\mm_0-\mm_\cT$ & $n_0-n_\cT$ & $u_0-u_\cT$ & $S_0-S_\cT$ \\ \hline
        Both on heavy packages & $+2$ & $+2$ & $+0$ & $+0$ \\ \hline
        One on heavy packages, one on light weights & $+3$ & $+2$ & $-1$ & $+1$ \\ \hline
        One on heavy packages, one on loops & $+2$ & $+1$ & $+0$ & $+1$ \\ \hline
        One on light weights, one on loops & $+3$ & $+1$ & $-1$ & $+2$ \\ \hline
        Two on different light weights & $+4$ & $+2$ & $-2$ & $+2$ \\ \hline
        Both on the same light weight & $+3$ & $+2$ & $-1$ & $+1$ \\ \hline
        Two on different loops & $+2$ & $+0$ & $+0$ & $+2$ \\ \hline
        Both on the same loop & $+2$ & $+1$ & $+0$ & $+1$ \\ \hline
        \end{tabular}
    \end{table}

    Next, suppose we have an expression $\cR=\fO_{T'}\circ\cdots\circ \fO_{1}\pa{\cR_0}$, generated by applying a sequence of operations $\fO_{1},\ldots,\fO_{T'}$. Denote by $\fR,\ \fC_1,\ \fC_2,\ \fP_1,\ \fP_2,\ \fM,\ \fS_1,\ \fS_2,\ \fI_1,\ \fI_2,\ \fE$ the number of operations of type $\freplace,\ \fcut{1},\ \fcut{2},\ \fplug{1},\ \fplug{2},\ \fmerge,\ \fslash{1},\ \fslash{2},\ \finsert{1},\ \finsert{2},\ \fexchange$, respectively, in this sequence. We write \smash{$\cR=c_\cR\cdot \cW^{\pa{u}}\Upsilon_{n}^{\pa{\mm}}$}, with characters denoted by $\mm, n, u$, and $S$. 
    From \Cref{table_effect_of_operations_2}, we obtain the corresponding character relations:
    \begin{equation}\label{counting_characters_2}
        \begin{aligned}
            &\mm=-\fR+\fC_2+2\fP_1+\fP_2+\fM+\fS_2+2\fI_1+\fI_2+\fE+\mm_0,\\
            &n=\fC_2-\fP_2-\fM+\fS_2-\fI_2+n_0,\\
            &u=\fC_1-\fP_1+\fS_1-\fI_1+u_0,\\
            &S=2\fP_1+2\fP_2+2\fM+2\fI_1+2\fI_2+\fE+S_0.
        \end{aligned}
    \end{equation} 
On the other hand, recall that the characters $\mm_\cT,\ n_\cT,\ u_\cT,$ and $S_\cT$  of the expression $\cT$ satisfy \eqref{counting_character_of_term}. Together with \eqref{counting_characters_2} and the initial values in \Cref{table_Classification_of_Initial_Values}, we obtain the following character identities:
    \begin{align}\label{character_relation_further_expansion}
                S-\mm+n&=S_0-\mm_0+n_0+\fR=S_\cT-\mm_\cT+n_\cT+\fR=\sR+\fR-1=\fR-1,\\
                \mm+u&=\mm_0+u_0+\fC_1+\fC_2+\fP_1+\fP_2+\fM+\fS_1+\fS_2+\fI_1+\fI_2+\fE-\fR \nonumber\\
                &=2+\mm_\cT+u_\cT+\fC_1+\fC_2+\fP_1+\fP_2+\fM+\fS_1+\fS_2+\fI_1+\fI_2-\fR \nonumber\\
                & =2+\fC_1+\fC_2+\fP_1+\fP_2+\fM+\fS_1+\fS_2+\fI_1+\fI_2+\fE-\fR+\sC_1+\sC_2+\sP_1+\sP_2+2-\sR \nonumber\\
                & =\fC_1+\fC_2+\fP_1+\fP_2+\fM+\fS_1+\fS_2+\fI_1+\fI_2+\fE+\sC_1+\sC_2+\sP_1+\sP_2+4-\fR. \label{character_relation_further_expansion2}
\end{align}

Now, we show that our expansion procedure terminates after $\OO\pa{1}$ steps. Similar to \eqref{eq:sizeT}, we define the ``size'' of \smash{$\cR=c_\cR\cdot \cW^{\pa{u}}\Upsilon_n^{\pa{\mm}}$} as a pair:
    \begin{equation}\nonumber
        \begin{aligned}
            \isize'\pa{\cR}:=\pa{S+u,\mm-n+u},
        \end{aligned}
    \end{equation}
and denote its first and second components by $\isize'(\cR)_1$ and $\isize'(\cR)_2$, respectively. By applying the local laws from \Cref{lem_loc,lemma_necessary_estimates}, we obtain a bound similar to that in \eqref{eq:Tsmallthansize}: 
    \begin{equation}\nonumber
        \begin{aligned}
            \cR\prec N^{-\isize'\pa{\cR}_1}\eta^{-\isize'\pa{\cR}_2-1_{k_1=0}-1_{k_2=0}}\norm{A}^2.
        \end{aligned}
    \end{equation}
Using the same stopping criteria as those defined above \eqref{operation_sequence}, it follows that our expansion procedure will terminate after $\OO\pa{1}$ steps, following the same argument as below \eqref{operation_sequence}. 
    Then, similar to the proof in \Cref{subsection:proof_of_lemma_localized_eigenvector_local_law_and_lemma_localized_eigenvalue_local_law}, we need to estimate the terms resulting from expansions that stop under criterion (ii), i.e., when $k_1(\cR)=k_2(\cR)=0$. Note that these expressions have $\fR\geq 2$ and $\sR=0$. We classify them into the following three cases (i)--(iii). For ease of presentation, we will adopt the notations in \eqref{further_expansion_expression_Type_1}--\eqref{further_expansion_expression_Type_3} throughout the discussion below.
    
    \medskip
        \noindent(i) Suppose that $\cR$ is a Type \rmnum{1} expression as in \eqref{further_expansion_expression_Type_1}: 
        \begin{equation}\label{further_expansion_Type_1}
        	\begin{aligned}
        		\cR=-\frac{1}{ND}\bsum{a}{\alpha}{\beta}\frac{1}{p_0!q_0!}\cC_{\iab}^{p_0,q_0+1}\p{\sM B\tl_1 \Pi_{a_1}}_{\star_1\star_2}\p{\Pi_{a_2}\tl_2\Pi_{a_3}}_{\star_3\star_4}\p{\Pi_{a_4}}_{\star_5\star_6}\prod_{i=1}^{n-3}f^{\pa{i}},
        	\end{aligned}
        \end{equation}
    where $\mm\geq 1$, and the six $\star$ placeholders represent three $\alpha$-indices and three $\beta$-indices. 
     Considering a heavy package of the form \smash{$\p{B_1\tl B_2}_{\star_1\star_2}$} with $\star_1,\star_2\in\cI_a$ for some $a\in \qq D$, where $B_1$ and $B_2$ are deterministic matrices representing products of matrices $E_a$ and $\sM_i$. Here, each $\sM_i$ is either a scalar matrix or can be expanded as         
        \begin{equation}\label{further_expansion_first_order_expansion}
            \begin{aligned}
                \sM_i (z) =-\frac{1}{\smm_i (z) +z}-\Lambda\wt \sM_i  (z) ,\quad \text{where}\quad  \wt \sM_i (z) =\sum_{l=0}^{\infty}\pa{\smm_i (z) +z}^{-l-2}\Lambda^l.
            \end{aligned}
        \end{equation}
        By applying expansion \eqref{further_expansion_first_order_expansion} to all non-scalar $\sM_i$ factors in $B_1$ and $B_2$, we obtain that 
        \begin{equation}\label{eq:addoneLambda}
            \begin{aligned}
                \p{B_1\tl B_2}_{\star_1\star_2}=\pa{B_1\Lambda B_2}_{\star_1\star_2}-\Delta_{\txt{ev}}\pa{B_1 B_2}_{\star_1\star_2}\lesssim \norm{\Lambda B_1'\bre_{\star_1}}\norm{\Lambda B_2'\bre_{\star_2}}+\avga{\Lambda^2}.
            \end{aligned}
        \end{equation}
        In the above derivation, we also used \eqref{shift_bound_localized_eigenvector} and the fact that \smash{$\pa{E_{a_0}\Lambda E_{a_1}}_{\star_1\star_2}=0$} for any $\star_1,\star_2\in \cI_a$ and $a_0,a_1\in\qq D$. The matrices $B_1'$ and $B_2'$ denote deterministic matrices satisfying $\norma{B_1'}+\norm{B_2'}=\OO\pa{1}$. With the above estimate \eqref{eq:addoneLambda}, we can bound the two heavy packages involving \smash{$\tl_1$ and $\tl_2$} in \eqref{further_expansion_Type_1} by
        \begin{equation}\nonumber
            \begin{aligned}
                \pa{\norm{\Lambda B_1 \bre_{\star_1}}\norm{\Lambda B_2 \bre_{\star_2}}+\avga{\Lambda^2}}\pa{\norm{\Lambda B_3 \bre_{\star_3}}\norm{\Lambda B_4 \bre_{\star_4}}+\avga{\Lambda^2}},
            \end{aligned}
        \end{equation}
        where $B_j$, for $j\in\{1,2,3,4\}$, are deterministic matrices with $\norm{B_j}=\OO\pa{1}$. Since the six $\star$ indices consist of exactly three $\alpha$'s and three $\beta$'s, there must exist two indices, say $\star_{j_1}=\star_{j_2}$, that are identical, while at least one of the remaining indices, denoted $\star_{j_3}$, differs from both $\star_{j_1}$ and $\star_{j_2}$. The last remaining index is denoted as $\star_{j_4}$. 
Using the bound $\norm{\Lambda B_{j_4} \bre_{\star_{j_4}}}\lesssim \norm{\Lambda}$ and applying the Cauchy-Schwarz inequality with respect to $\star_{j_1},\star_{j_2},\star_{j_3}$,  we obtain that
        \begin{equation}\label{expanding_results_type_1_further_expansion}
            \begin{aligned}
                \absa{\cR}\prec N^{-1-S}\cdot N^{-3/2}\cdot N^{1/2}\norma{A}_{\HS}^3\norma{A}\cdot\frac{\pa{\im m}^{n-3}}{\eta^{\mm-n+2}}\cdot\pa{\frac{1}{N\eta}}^{u}.
            \end{aligned}
        \end{equation}
    The other factors in \eqref{further_expansion_Type_1} are estimated similarly, following the approach in \eqref{bound_factors}. 
Then, applying an argument analogous to \eqref{light_weight_cancel_imaginary_part}, and using $\mm\geq 2\pa{n-3}+1\geq 1$, together with the identities \eqref{character_relation_further_expansion} and \eqref{character_relation_further_expansion2}, we can bound \eqref{expanding_results_type_1_further_expansion} by
        \begin{equation}\nonumber
            \begin{aligned}
                &~N^{-\pa{S-\mm+n}}\cdot N^{1/3-\varepsilon_A}k^{-1/3}\cdot\norm{A}_{\HS}^2\cdot\pa{k/N }^{\frac 1 3(\mm+u-1)}= N^{1-\fR}\cdot N^{1/3-\varepsilon_A}k^{-1/3}\cdot\norm{A}_{\HS}^2\cdot\pa{k/N}^{\frac13(\mm+u-1)}.
            \end{aligned}
        \end{equation}
        As a consequence, if $\fR\geq 3$, then $\absa{\cR}\prec N^{-5/3-\varepsilon_A}k^{-1/3}\norm{A}_{\HS}^2$. If $\fR=2$, $\mm+u\geq 4$, then $\absa{\cR}\prec N^{-5/3}k^{2/3}\norm{A}_{\HS}^2$. Finally, if $\fR=2$ and $\mm+u\leq 3$, we see from \eqref{character_relation_further_expansion} and \eqref{character_relation_further_expansion2} that
        \begin{equation}\nonumber
            \begin{aligned}
                \fC_1+\fC_2+\fP_1+\fP_2+\fM+\fS_1+\fS_2+\fI_1+\fI_2+\fE+\sC_1+\sC_2+\sP_1+\sP_2\leq 1.
            \end{aligned}
        \end{equation}
    We can directly verify that no such $\cR$ exists.
        
        \medskip
        \noindent(ii) Suppose $\cR$ is of Type \rmnum{2}, as in \eqref{further_expansion_expression_Type_2}, where we must have $\mm\geq 2$. Using \eqref{add_one_more_Lambda} below, together with a similar bound as in \eqref{eq:addoneLambda}, we can derive that 
         \begin{align}
                \absa{\cR}&\prec N^{-1-S}\cdot N^{-3/2}\cdot N\norma{A}_{\HS}^2\cdot \avga{\Lambda^2}\cdot \frac{\pa{\im m}^{n-4}}{\eta^{\mm-n+2}}\cdot \pa{\frac{1}{N\eta}}^{u} \nonumber\\
                &\lesssim N^{1/2-\fR}\cdot N^{2/3-2\varepsilon_A}k^{-2/3}\norm{A}_{\HS}^2\cdot \pa{k/N}^{\frac13(\mm+u-2)}, \label{eq:weak_further_expansion2}
            \end{align}
        where in the first step, we estimate the remaining factors in \eqref{further_expansion_expression_Type_2} similarly to the approach used in \eqref{bound_factors}. In the second step, we apply the identities \eqref{character_relation_further_expansion} and \eqref{character_relation_further_expansion2}, along with an argument analogous to \eqref{light_weight_cancel_imaginary_part}, noting that $\mm\geq 2\pa{n-4}+2$.
        If $\fR\geq 3$, we get $\absa{\cR}\prec N^{-11/6-2\varepsilon_A}\norm{A}_{\HS}^2\le {N^{-5/3}k^{2/3}\norm{A}_{\HS}^2}$. 
        If $\fR=2$ and $\mm+u\geq 5$, we get $\absa{\cR}\prec N^{-11/6-2\varepsilon_A}k^{1/3}\norm{A}_{\HS}^2\le {N^{-5/3}k^{2/3}\norm{A}_{\HS}^2}$. If $\fR=2$ and $\mm+u\leq 4$, we have
        \begin{equation}\nonumber
            \begin{aligned}
                \fC_1+\fC_2+\fP_1+\fP_2+\fM+\fS_1+\fS_2+\fI_1+\fI_2+\fE+\sC_1+\sC_2+\sP_1+\sP_2\leq 2.
            \end{aligned}
        \end{equation}
        Moreover, to generate an additional loop containing only $\tl_2$ (without $\tl_1$), we must have $\sC_2+\fC_2+\fS_2\geq 1$, which implies $\mm+u\geq 3$. If $\mm+u=3$, we have
        \begin{equation}\nonumber
            \begin{aligned}
                \fC_1+\fC_2+\fP_1+\fP_2+\fM+\fS_1+\fS_2+\fI_1+\fI_2+\fE+\sC_1+\sC_2+\sP_1+\sP_2=1.
            \end{aligned}
        \end{equation}
        Since $\fR=2$, and all $\sG$ factors in the heavy packages or loops containing $\tl_1$ or $\tl_2$ must be removed, the loop containing \smash{$\tl_2$} must have been generated by a $\fslash{2}$ operation. This further implies $a_2\vee a_3\geq 2$, noting that the ``slashed" heavy package must contain at least two $\sG$ factors. Moreover, the loop containing \smash{$\tl_2$} must take the form \smash{$\avg{\sM_0\tl_2\sM_1 E_x}$}, since otherwise it would contain at least three $\sM_i$ factors, which contradicts the assumptions $\sR=0$ and $\fR=2$.     
        Together with \eqref{regular_estimate_localized_eigenvector}, this allows us to improve the estimate \eqref{eq:weak_further_expansion2} to 
        \begin{equation}\nonumber
            \begin{aligned}
                \absa{\cR}&\prec N^{-1-S}\cdot N^{-3/2}\cdot N\norma{A}_{\HS}^2\cdot \p{\im m} \avga{\Lambda^2}\cdot \frac{\pa{\im m}^{n-3}}{\eta^{\mm-n+2}}\cdot \pa{\frac{1}{N\eta}}^{u}\\
                &\lesssim N^{1/2-\fR}\cdot N^{2/3-2\varepsilon_A}k^{-2/3}\norm{A}_{\HS}^2\cdot\pa{k/N}^{\frac13\p{\mm+u}}\leq N^{-11/6-2\varepsilon_A}k^{1/3}\norm{A}_{\HS}^2.
            \end{aligned}
        \end{equation}
        If $\mm+u=4$, we have the relation
        \begin{equation}\nonumber
            \begin{aligned}
                \fC_1+\fC_2+\fP_1+\fP_2+\fM+\fS_1+\fS_2+\fI_1+\fI_2+\fE+\sC_1+\sC_2+\sP_1+\sP_2=2.
            \end{aligned}
        \end{equation}
        By a similar argument as above, the loop containing \smash{$\tl_2$} must again take the form \smash{$\avg{\sM_0\tl_2\sM_1 E_x}$}. 
        Hence, the estimate \eqref{eq:weak_further_expansion2} can improved as
        \begin{equation}\nonumber
            \begin{aligned}
                \absa{\cR}&\prec N^{-1-S}\cdot N^{-3/2}\cdot N\norma{A}_{\HS}^2\cdot \p{\im m} \avga{\Lambda^2}\cdot \frac{\pa{\im m}^{n-4}}{\eta^{\mm-n+2}}\cdot \pa{\frac{1}{N\eta}}^{u}\\
                &\lesssim N^{1/2-\fR}N^{2/3-2\varepsilon_A}k^{-2/3}\norm{A}_{\HS}^2\pa{k/N}^{\frac13(\mm+u-1)}\leq N^{-11/6-2\varepsilon_A}k^{1/3}\norm{A}_{\HS}^2. 
            \end{aligned}
        \end{equation}
        
        \medskip
        \noindent(iii)  Suppose $\cR$ is of Type \rmnum{3}, as in \eqref{further_expansion_expression_Type_3}, where we must have $\mm\geq 2$. Then, we can obtain that
        \begin{align}
                \absa{\cR}\prec& N^{-1-S}\cdot N^{-3/2}\cdot N\norma{A}_{\HS}^2\cdot \frac{\pa{\im m}^{n-3}}{\eta^{\mm-n+1}}\cdot \pa{\frac{1}{N\eta}}^u\lesssim N^{1/2-\fR}\norm{A}_{\HS}^2\pa{k /N }^{\frac 13(\mm+u-2)},
            \end{align}
       by using the estimates in \eqref{bound_factors} and \eqref{R_cT_typt_1_k_1_geq_1_k_2-geq_1_operator_norm_bound}. If $\fR\geq 3$, we have $\absa{\cR}\prec N^{-5/2}\norm{A}_{\HS}^2\le N^{-5/3}k^{2/3}\norm{A}_{\HS}^2$. If $\fR=2$ and $\mm+u\geq 3$, we have $\absa{\cR}\prec N^{-11/6}k^{1/3}\norm{A}_{\HS}^2\le {N^{-5/3}k^{2/3}\norm{A}_{\HS}^2}$. If $\fR=2$ and $\mm+u\leq 2$, then we must have $\mm+u=2$, and
        \begin{equation}\nonumber
            \begin{aligned}
                \fC_1+\fC_2+\fP_1+\fP_2+\fM+\fS_1+\fS_2+\fI_1+\fI_2+\fE+\sC_1+\sC_2+\sP_1+\sP_2=0.
            \end{aligned}
        \end{equation}
       Then, by a straightforward enumeration, we observe that $\cR$ must take the following specific form:
        \begin{equation}\label{expansion_result_type_3_fr_2_m_plus_u_2}
            \begin{aligned}
                -\frac{1}{ND}\bsum{a}{\alpha}{\beta}\cC_{\iab}^{p_0,q_0+1}\p{\sM_0\tl_1\sM_1\tl_2\sM_0}_{\star\star}\p{\sG_0}_{\star\star}\p{\sG_0}_{\star\star},
            \end{aligned}
        \end{equation}
        which comes from the expression
        \begin{equation}\nonumber
            \begin{aligned}
                -\frac{1}{ND}\sum_{a=1}^D\sum_{\alpha,\beta\in \cI_a}\frac{1}{p_0!q_0!}\cC_{\iab}^{p_0,q_0+1}\partial_{\iab}^{p_0}\partial_{\iba}^{q_0}\pa{\sM_0\tl_1\sG_1\tl_2\sG_0}_{\iab}.
            \end{aligned}
        \end{equation}
        Since there is only one $\sM_1$ factor in \eqref{expansion_result_type_3_fr_2_m_plus_u_2}, we can exploit a cancellation by applying the polarization identity \eqref{expansion_of_im} (see \eqref{direct_estimate_type_1_k_1_0_k_2_geq_1_sR_1_p_plus_q_2_m_plus_u_1} for a similar argument).  Specifically, subtracting $\cR$ in \eqref{expansion_result_type_3_fr_2_m_plus_u_2} from the corresponding expression, where $\sM_1$ is replaced by $\sM_1^*$, yields an additional $\im m$ factor. This improvement allows us to strengthen the estimate as follows:
        \begin{equation}\nonumber
            \begin{aligned}
                N^{-5/2}\cdot (\im m)\cdot N\norma{A}_{\HS}^2\lesssim N^{-11/6}k^{1/3}\norm{A}_{\HS}^2\leq N^{-5/3}k^{2/3}\norm{A}_{\HS}^2.
            \end{aligned}
        \end{equation}

\medspace

 Now, to complete the proof of \Cref{claim_estimate_remainder_terms}, it remains to bound the remainder terms arising from the expansions of $\cR$, namely the terms \smash{$\cE_{\cR}^{\pa{2}}$} in \eqref{concrete_expansion_tl_2_heavy_package_right} and \eqref{concrete_expansion_tl_2_loop}. 
 Our argument below again relies on the inequalities previously used in the first part of the proof of \Cref{claim_estimate_remainder_terms} (specifically, the argument following \eqref{R_cT_typt_1_k_1_geq_1_k_2-geq_1_explicit_expression}). 
 The main difference here is the presence of factors of the form $\pa{\cdot}_{\alpha j}$ or $\pa{\cdot}_{i \beta}$. To handle these, we apply the Cauchy-Schwarz inequality, Ward's identity, and the simple bound 
    \begin{equation}\label{light_weights_to_im_m}
        \begin{aligned}
            \sqrt{{\im m}/{\eta}}\lesssim N^{1/2}\im m
        \end{aligned}
    \end{equation}
to extract additional $\im m$ factors. We first present a fully detailed example in  \Cref{example_estimate_reminder_reminder_terms}, which illustrates the estimation process for the remainder terms. For the remaining cases, we provide only the final estimates, omitting the full derivations. The estimation in each case follows a similar case-by-case analysis as shown in \Cref{example_estimate_reminder_reminder_terms}.


    \begin{example}\label{example_estimate_reminder_reminder_terms}
As an example, we take $ \cT=\avg{\sG_0\tl_1\sG_1\tl_2}$ and consider the expression
        \begin{equation}\nonumber
            \begin{aligned}
                \cR_0=-\frac{1}{ND}\bsum{a}{\alpha}{\beta}\cC_{\iab}^{p_0,q_0+1}\p{\sM_0\tl_1\sG_1}_{\star_1\star_2}\p{\sG_1\tl_2\sG_0}_{\star_3\star_4}\p{\sG_0}_{\star_5\star_6}.
            \end{aligned}
        \end{equation}
In this setting, we have $p_0+q_0=2$, and the six $\star$'s in $\cR_0$ represent exactly three $\alpha$ indices and three $\beta$ indices. Following our expansion strategy, we select the package containing \smash{$\tl_2$} and expand the $\sG_0$ factor within it. Then, the resulting reminder term is given by
        \begin{equation}\nonumber
            \begin{aligned}
                \cE_{\cR_0}^{\pa{2}}:=\sum_{2\leq p+q\leq l}\cE_{\cR_0}^{\pa{2}}\pa{p,q}+R_{l+1}^{\pa{2}},
            \end{aligned}
        \end{equation}
        where \smash{$\cE_{\cR_0}^{\pa{2}}\pa{p,q}$} is defined as 
        \begin{equation}\nonumber
            \begin{aligned}
                \cE_{\cR_0}^{\pa{2}}\pa{p,q}
                =-\frac{1}{ND}\bsum{a}{\alpha}{\beta}&\frac{1}{p_0!q_0!}\cC_{\iab}^{p_0,q_0+1}\sum_{a=1}^D\sum_{i,j\in \cI_a}\frac{1}{p!q!}\cC_{ij}^{p,q+1}\\
                &\times\partial_{ij}^p\partial_{ji}^q\left[\p{\sG_{1}\tl_2\sM_0}_{\star_3j}\p{\sG_0}_{i\star_4}\p{\sM_0\tl_1\sG_1}_{\star_1\star_2}\p{\sG_0}_{\star_5\star_6}\right].
            \end{aligned}
        \end{equation}
        We then expand the derivatives $\partial_{ij}^p\partial_{ji}^q$ and estimate the resulting terms on a case-by-case basis.

        \medskip
             \noindent (i) If none of the derivatives act on the factor $\p{\sG_{1}\tl_2\sM_0}_{\star_3 j}$, then we obtain
            \begin{equation}\nonumber
                \begin{aligned}
               {\cE_{\cR_0}^{\pa{2}}\pa{p,q}}\prec N^{-5/2-\pa{p+q+1}/2}\sum_{\alpha,\beta}\sum_{i,j}\abs{\p{\sG_{1}\tl_2\sM_0}_{\star_3j}}\cdot\abs{\p{\sG_0}_{\#_1 \star_4}}\cdot\norm{\bre_{\star_1}^\top\sM_0\tl_1},
                \end{aligned}
            \end{equation}
            where each $\#$ denotes either an $i$ or a $j$ index. Applying the Cauchy-Schwarz inequality with respect to $j$ and $\#_1$, we derive that 
            \begin{align}
                {\cE_{\cR_0}^{\pa{2}}\pa{p,q}} &\prec N^{-5/2-\pa{p+q+1}/2}\sum_{\alpha,\beta} N \norm{\bre_{\star_3}^\top\sG_1\tl_2\sM_0}\cdot\norm{\sG_0\bre_{\star_4}}\cdot\norm{\bre_{\star_1}^\top\sM_0\tl_1} \nonumber\\
                  &\prec N^{-1-\pa{p+q+1}/2}\p{\im m}\sum_{\alpha,\beta}\norm{\bre_{\star_3}^\top\sG_1\tl_2\sM_0}\cdot\norm{\bre_{\star_1}^\top\sM_0\tl_1},\label{eq:further_expad_remain1}
                \end{align}
            where we also used \eqref{entprodG} below and the following bound from Ward's identity combined with \eqref{light_weights_to_im_m}: 
            \begin{equation}\nonumber
                \begin{aligned}
                    \norm{\sG_0 \bre_{\star_4}}=\pa{\bre_{\star_4}^\top \sG_0^*\sG_0\bre_{\star_4}}^{1/2}\prec \sqrt{{\im m}/{\eta}}\lesssim N^{1/2}\im m.
                \end{aligned}
            \end{equation}
            A further application of the Cauchy-Schwarz inequality to \eqref{eq:further_expad_remain1}, this time with respect to $\star_1$ and $\star_3$, yields:
          \begin{align*}
                    {\cE_{\cR_0}^{\pa{2}}\pa{p,q}}&\prec N^{-1-\pa{p+q+1}/2}(\im m)\cdot N\norm{\sG_1\tl_2\sM_0}_{\txt{HS}}\norm{\tl_2}_{\txt{HS}}\\
                    &\prec N^{-\pa{p+q+1}/2}(\im m)\cdot N^{1/2}\p{\im m}\norma{A}_{\txt{HS}}^2\lesssim N^{-5/3+\varepsilon}k^{2/3}\norm{A}_{\HS}^2\leq N^{-1-2\varepsilon_A+\varepsilon}.
                \end{align*}
            In the second step, we again utilize \eqref{two_loop_estimate} and \eqref{light_weights_to_im_m}.
            
            
            \medspace
            
            \noindent (ii) If some of the derivatives act on the factor $\p{\sG_{1}\tl_2\sM_0}_{\star_3 j}$, then we obtain that
    \begin{align*}
                {\cE_{\cR_0}^{\pa{2}}\pa{p,q}}\prec N^{-5/2-\pa{p+q+1}/2}\sum_{\alpha,\beta}\sum_{i,j}\abs{\pa{\sG_1}_{\star_3\#_1}}\cdot\abs{\p{\sG_{1}\tl_2\sM_0}_{\#_2j}}\cdot\abs{\p{\sG_0}_{\#_3 \star_4}}\cdot\norm{\bre_{\star_1}^\top\sM_0\tl_1}.
                \end{align*}
            If the indices $\star_1$, $\star_3$, and $\star_4$ are not all identical, there are three possible cases to consider. In the first case where $\star_1=\star_3\neq \star_4$, applying the Cauchy–Schwarz inequality, along with the bounds \eqref{entprodG} and \eqref{light_weights_to_im_m}, we obtain that
            \begin{align*}
                     {\cE_{\cR_0}^{\pa{2}}\pa{p,q}} &\prec N^{-5/2-\pa{p+q+1}/2}\sum_{\star_4}\sum_{i,j}N^{1/2}\p{\im m}\norma{A}_{\HS}\norm{\tl_2\sM_0\bre_j}\cdot\abs{\p{\sG_0}_{\#_3 \star_4}}\\
                    &\prec N^{-5/2-\pa{p+q+1}/2}\cdot N^{1/2}\p{\im m}\norma{A}_{\HS}\cdot N^{5/2}\p{\im m}\norma{A}_{\HS}\lesssim N^{-5/3+\varepsilon}k^{2/3}\norm{A}_{\HS}^2\leq N^{-1-2\varepsilon_A+\varepsilon}.
                \end{align*}
            The $\star_1=\star_4\neq \star_3$ case can be treated similarly. Finally, for the $\star_3=\star_4\neq \star_1$ case, we again apply the Cauchy-Schwarz inequality, the bound \eqref{entprodG} below, and \eqref{light_weights_to_im_m} to derive that 
            \begin{align*}
              {\cE_{\cR_0}^{\pa{2}}\pa{p,q}}&\prec N^{-5/2-\pa{p+q+1}/2}\sum_{\star_1}\sum_{j}N^{2}\pa{\im m}^2\norm{\tl_2\sM_0\bre_j}\cdot \norm{\bre_{\star_1}^\top\sM_0\tl_1}\\
           & \prec N^{-5/2-\pa{p+q+1}/2}\cdot N^{2}\pa{\im m}^2\cdot N\norma{A}_{\HS}^2\lesssim N^{-5/3+\varepsilon}k^{2/3}\norm{A}_{\HS}^2\leq N^{-1-2\varepsilon_A+\varepsilon}.
                \end{align*}
            
 It remains to consider the $\star_1=\star_3=\star_4$ case, where we must have $\star_1\neq \star_2$. In this case, if none of the derivatives acts on the factor $\p{\sM_0\tl_1\sG_1}_{\star_1\star_2}$, then we obtain that 
            \begin{equation}\nonumber
                \begin{aligned}
                 {\cE_{\cR_0}^{\pa{2}}\pa{p,q}}&\prec N^{-5/2-\pa{p+q+1}/2}\sum_{\alpha,\beta}\sum_{i,j}\norm{\tl_2\sM_0\bre_j}\cdot\abs{\pa{\sG_0}_{\#_3 \star_4}}\cdot\abs{\p{\sM_0\tl_1\sG_1}_{\star_1\star_2}}\\
                    &\prec N^{-5/2-\pa{p+q+1}/2}\sum_{\alpha,\beta}N^{3/2}\norma{A}_{\HS}(\im m)\cdot  \abs{\p{\sM_0\tl_1\sG_1}_{\star_1\star_2}}\\
                    &\prec N^{-5/2-\pa{p+q+1}/2}\cdot N^{3/2}\norma{A}_{\HS}(\im m)\cdot N\norm{\sM_0\tl_1\sG_1}_{\HS}\\
                    &\prec N^{-\pa{p+q}/2}\pa{\im m}^2\norma{A}_{\HS}^2\lesssim N^{-5/3+\varepsilon}k^{2/3}\norm{A}_{\HS}^2\leq N^{-1-2\varepsilon_A+\varepsilon},
                \end{aligned}
            \end{equation}
            by using the Cauchy-Schwarz inequality, \Cref{lemma_necessary_estimates}, and \eqref{light_weights_to_im_m} again. If instead some derivatives act on this factor, then we obtain that
                \begin{align}
                {\cE_{\cR_0}^{\pa{2}}\pa{p,q}}&\prec N^{-5/2-\pa{p+q+1}/2}\sum_{\alpha,\beta}\sum_{i,j}\norm{\tl_2\sM_0\bre_j}\cdot\abs{\p{\sM_0\tl_1\sG_1}_{\star_1\#_1}}\cdot \abs{\p{\sG_1}_{\#_2 \star_2}}\nonumber\\
                    &\prec N^{-5/2-\pa{p+q+1}/2}\sum_{i,j}N^{3/2}(\im m)\cdot \norm{\tl_2\sM_0\bre_j}\cdot \norm{\sM_0\tl_1\sG_1\bre_{\#_1}}\nonumber\\
                   & \prec N^{-5/2-\pa{p+q+1}/2}\cdot N^{3/2}(\im m)\cdot N\norma{A}_{\HS}\norm{\sM_0\tl_1 \sG_1}_{\HS}\prec N^{-5/3+\varepsilon}k^{2/3}\norm{A}_{\HS}^2\leq N^{-1-2\varepsilon_A+\varepsilon}\nonumber,
                \end{align}
            through an analogous argument. This concludes \Cref{example_estimate_reminder_reminder_terms}. 
    \end{example}
    
   Now, adopting the notations in \eqref{further_expansion_expression_Type_1}--\eqref{further_expansion_expression_Type_3}, and following similar arguments as in \Cref{example_estimate_reminder_reminder_terms}, we can estimate all possible cases one by one as follows.
    
    
    \medskip
        \noindent(1) If $\cR$ is of Type \rmnum{1} and $a_1,a_2,a_3\geq 1$, then $\mm\geq 4$ and we choose the first $\sG$ factor to the right of $\tl_2$. In this case, the remainder term takes the following form:
        \begin{equation}\label{explicit_reminder_reminder_terms_example_1}
            \begin{aligned}
                \cE_{\cR}^{\pa{2}}=&\frac{c_{\cR}}{ND}\bsum{a}{\alpha}{\beta}\frac{1}{p_0!q_0!}\cC_{\iab}^{p_0,q_0+1}\sum_{2\leq p+q\leq l}\sum_{a=1}^D\sum_{i,j\in \cI_a}\frac{1}{p!q!}\cC_{ij}^{p,q+1}\partial_{ij}^p\partial_{ji}^q\Bigg[\p{\Pi_{a_2}\tl_2B_1\sM}_{\star j}\p{\sG\wt\Pi_{a_3}}_{i\star}\\
            &\left.\times \p{\wt\sM B\tl_1\Pi_{a_1}}_{\star\star}\p{\Pi_{a_4}}_{\star\star}\cdot \prod_{r=1}^{n-3}f^{\pa{r}}\cdot \prod_{r=1}^{u}W_r\right]+R_{l+1}^{\pa{2}}=:\sum_{2\leq p+q\leq l}\cE_{\cR}^{\pa{2}}\pa{p,q}+R_{l+1}^{\pa{2}},
            \end{aligned}
        \end{equation}
        where $\wt\sM$ denotes the $\sM$ factor appearing in \eqref{further_expansion_expression_Type_1}, the expression \smash{$\tl_2\Pi_{a_3}$} is factorized as \smash{$\tl_2\Pi_{a_3}=:\tl_2B_1\sG \wt\Pi_{a_3}$}, and $B_1$ is the deterministic matrix between \smash{$\tl_2$} and $\sG$. Noting that these remainder terms are structurally similar to those in \Cref{example_estimate_reminder_reminder_terms}, we can derive the following estimate using the same argument: 
        \begin{equation}\label{concrete_estimate_remainder_remainder_terms_example_1}
            \begin{aligned}
               {\cE_{\cR}^{\pa{2}}\pa{p,q}}&\prec N^{-1-S-3/2-\pa{p+q+1}/2}\cdot N^{3}\pa{\im m}^2\norma{A}_{\HS}^2\cdot \frac{\pa{\im m}^{n-3}}{\eta^{\mm-n-1}}\cdot \pa{\frac{1}{N\eta}}^{u}\\
                &\lesssim  N^{-1-\fR}\norm{A}_{\HS}^2\cdot N^{\varepsilon}\pa{k/N }^{\frac13\p{\mm+u-2}}\leq N^{-5/3+\varepsilon}k^{2/3}\norm{A}_{\HS}^2.
            \end{aligned}
        \end{equation}

        \noindent(2) If $\cR$ is of Type \rmnum{1} and $a_1\geq 1,\ a_2\geq 1,\ a_3=0$, then $\mm\geq 3$, $\fR\geq 1$, and we select the first $\sG$ factor to the right of $\tl_1$. In this case, the remainder term takes the form
        \begin{equation}\nonumber
            \begin{aligned}
                \cE_{\cR}^{\pa{2}}=&\frac{c_{\cR}}{ND}\bsum{a}{\alpha}{\beta}\frac{1}{p_0!q_0!}\cC_{\iab}^{p_0,q_0+1}\sum_{2\leq p+q\leq l}\sum_{a=1}^D\sum_{i,j\in \cI_a}\frac{1}{p!q!}\cC_{ij}^{p,q+1}\partial_{ij}^p\partial_{ji}^q\Bigg[\p{\wt\sM B\tl_1B_1\sM}_{\star j}\p{\sG\wt\Pi_{a_1}}_{i\star }\\
            &\left.\times \p{\Pi_{a_2}\tl_2\Pi_{a_3}}_{\star \star }\p{\Pi_{a_4}}_{\star \star }\cdot \prod_{r=1}^{n-3}f^{\pa{r}}\cdot \prod_{r=1}^{u}W_r\right]+R_{l+1}^{\pa{2}}=:\sum_{2\leq p+q\leq l}\cE_{\cR}^{\pa{2}}\pa{p,q}+R_{l+1}^{\pa{2}},
            \end{aligned}
        \end{equation}
    with analogous notation as in \eqref{explicit_reminder_reminder_terms_example_1}.
    Then, applying the Cauchy-Schwarz inequality to a product of the following form
        \begin{equation}\nonumber
            \begin{aligned}
                \sum_{\alpha,\beta,i,j}\abs{\p{\wt\sM B\tl_1B_1\sM}_{\star j}}\cdot \abs{\pa{\Pi_0}_{\# \star}}\cdot \norm{\bre_{\star}^\top\Pi_{a_2}\tl_2},
            \end{aligned}
        \end{equation}
        where $\Pi_0$ arises from the derivatives of $\p{\sG\wt\Pi_{a_1}}_{i\star}$ and contains at least one $\sG$ factor, we obtain that
        \begin{equation}\nonumber
            \begin{aligned}
          {\cE_{\cR}^{\pa{2}}\pa{p,q}}&\prec N^{-5/2-S-\pa{p+q+1}/2}\cdot N^{2}\p{\im m}\norma{A}_{\HS}^2\cdot \frac{\pa{\im m}^{n-3}}{\eta^{\mm-n+1}}\cdot \pa{\frac{1}{N\eta}}^{u}\\
                &\lesssim N^{-\fR}\norm{A}_{\HS}^2\pa{k /N }^{\frac13\p{\mm+u-1}}\leq N^{-5/3+\varepsilon}k^{2/3}\norm{A}_{\HS}^2.
            \end{aligned}
        \end{equation}

        \noindent(3) If $\cR$ is of Type \rmnum{1} and $a_1=0,\ a_2\geq 1,\ a_3=0$, then $\mm\geq 2$, $\fR\geq 2$, and we choose the first $\sG$ factor on the left of $\tl_2$. In this case, the remainder term takes the form
        \begin{equation}\nonumber
            \begin{aligned}
                \cE_{\cR}^{\pa{2}}=&\frac{c_{\cR}}{ND}\bsum{a}{\alpha}{\beta}\frac{1}{p_0!q_0!}\cC_{\iab}^{p_0,q_0+1}\sum_{2\leq p+q\leq l}\sum_{a=1}^D\sum_{i,j\in \cI_a}\frac{1}{p!q!}\cC_{ij}^{p,q+1}\partial_{ij}^p\partial_{ji}^q\Bigg[\p{\wt\Pi_{a_2}\sG}_{\star j}\p{\sM B_1\tl_2 \Pi_{a_3}}_{i\star }\\
            &\left.\times \p{\wt\sM B\tl_1\Pi_{a_1}}_{\star \star }\p{\Pi_{a_4}}_{\star \star }\cdot \prod_{r=1}^{n-3}f^{\pa{r}}\cdot \prod_{r=1}^{u}W_r\right]+R_{l+1}^{\pa{2}}=:\sum_{2\leq p+q\leq l}\cE_{\cR}^{\pa{2}}\pa{p,q}+R_{l+1}^{\pa{2}},
            \end{aligned}
        \end{equation}
         with analogous notation as in \eqref{explicit_reminder_reminder_terms_example_1}. Applying the Cauchy-Schwarz inequality to a product of the form
        \begin{equation}\nonumber
            \begin{aligned}
                \sum_{\alpha,\beta,i,j}\abs{\p{\Pi_0}_{\star \#}}\cdot \abs{\p{\sM B_1\tl_2 \Pi_{a_3}}_{i\star }}\cdot \norm{\tl_1\Pi_{a_1}\bre_\star },
            \end{aligned}
        \end{equation}
        where $\Pi_0$ is generated from the derivatives of $\p{\wt\Pi_{a_2}\sG}_{\star j}$ and contains at least one $\sG$ factor, we obtain that
        \begin{equation}\nonumber
            \begin{aligned}
          {\cE_{\cR}^{\pa{2}}\pa{p,q}}&\prec N^{-1-S-3/2-\pa{p+q+1}/2}\cdot N^{5/2}\p{\im m}\norma{A}_{\HS}^2\cdot \frac{\pa{\im m}^{n-3}}{\eta^{\mm-n+1}}\cdot \pa{\frac{1}{N\eta}}^{u}\\
               & \lesssim N^{1/2-\fR}\norm{A}_{\HS}^2\pa{k/N }^{\frac13\p{\mm+u-1}}\leq N^{-11/6}k^{1/3}\norm{A}_{\HS}^2\leq N^{-5/3}k^{2/3}\norm{A}_{\HS}^2.
            \end{aligned}
        \end{equation}

\smallskip

        \noindent(4) All remaining cases in which $\cR$ is of Type \rmnum{1} are not possible. 

\smallskip

        \noindent(5) If $\cR$ is of Type \rmnum{2} with $a_1\geq 1$ and $a_4\geq 1$, then $\mm\geq 4$, and we choose the first $\sG$ factor to the left of \smash{$\tl_2$}. Moreover, to generated a loop containing \smash{$\tl_2$}, we must have 
        \begin{equation}\label{sum_geq_1_because_of_a_loop_containing_tl_2}
            \begin{aligned}
                \fC_1+\fC_2+\fP_1+\fP_2+\fM+\fS_1+\fS_2+\fI_1+\fI_2+\fE+\sC_1+\sC_2+\sP_1+\sP_2\geq 1,
            \end{aligned}
        \end{equation}
    which, combined with \eqref{character_relation_further_expansion} and \eqref{character_relation_further_expansion2}, implies that $\mm + u \geq 5 - \fR$. The remainder term in this case takes a form
        \begin{equation}\nonumber
            \begin{aligned}
                \cE_{\cR}^{\pa{2}}=&\frac{c_{\cR}}{N^2D^2}\bsum{a}{\alpha}{\beta}\frac{1}{p_0!q_0!}\cC_{\iab}^{p_0,q_0+1}\sum_{2\leq p+q\leq l}\sum_{b=1}^D\sum_{i,j\in \cI_b}\frac{1}{p!q!}\cC_{ij}^{p,q+1}\partial_{ij}^p\partial_{ji}^q\Bigg[\p{\sM B_1\tl_2 \wt\Pi_{a_4} \sG}_{ij}\\
                &\left.\times\p{\wt\sM B\tl_1 \Pi_{a_1}}_{\star\star}\p{\Pi_{a_2}}_{\star\star} \p{\Pi_{a_3}}_{\star\star}\cdot \prod_{r=1}^{n-4}f^{\pa{r}}\cdot\prod_{r=1}^{u}W_r\right] + R_{l+1}^{\pa{2}}=:\sum_{2\leq p+q\leq l}\cE_{\cR}^{\pa{2}}\pa{p,q}+R_{l+1}^{\pa{2}},
            \end{aligned}
        \end{equation}
    with notation understood similarly as in \eqref{explicit_reminder_reminder_terms_example_1}. Then, applying the Cauch-Schwarz inequality to a product of the form
        \begin{equation}\nonumber
            \begin{aligned}
                \sum_{\alpha,\beta,i,j}\norm{\bre_i^\top B_1 \tl_2}\cdot\norm{\bre_{\star}^\top \wt \sM B \tl_1},
            \end{aligned}
        \end{equation}
        we obtain the following rough bound if one of the following conditions holds: (i) $\fR\geq 1$, or (ii) $\mm+u\geq 6$: 
        \begin{equation}\nonumber
            \begin{aligned}
        {\cE_{\cR}^{\pa{2}}\pa{p,q}}&\prec N^{-2-S-3/2-\pa{p+q+1}/2}\cdot N^{3}\norma{A}_{\HS}^2\cdot \frac{\pa{\im m}^{n-4}}{\eta^{\mm-n}}\cdot \pa{\frac{1}{N\eta}}^{u}\\
                &\lesssim N^{-1-\fR}\norm{A}_{\HS}^2\pa{k/N}^{\frac13\p{\mm+u-4}}\leq N^{-5/3}k^{2/3}\norm{A}_{\HS}^2.
            \end{aligned}
        \end{equation}
        It remains to consider the case $\fR=0$ and $\mm+u=5$, in which case we must have
        \begin{equation}\nonumber
            \begin{aligned}
                \fC_1+\fC_2+\fP_1+\fP_2+\fM+\fS_1+\fS_2+\fI_1+\fI_2+\fE+\sC_1+\sC_2+\sP_1+\sP_2= 1.
            \end{aligned}
        \end{equation}
    Here, it is straightforward to verify that $a_i \geq 2$ for at least one index $i$, since when the loop containing $\tl_2$ is generated, either the loop itself or the portion produced by the operations $\scut{}$, $\fcut{}$, or $\fslash{}$ must contain at least two $\sG$ factors. This allows us to gain an additional $\im m$ factor via \eqref{entprodG}, which improves the above rough estimate to: 
        \begin{equation}\nonumber
            \begin{aligned}
         {\cE_{\cR}^{\pa{2}}\pa{p,q}}&\prec N^{-2-S-3/2-\pa{p+q+1}/2}\cdot N^{3}\norma{A}_{\HS}^2\cdot \frac{\pa{\im m}^{n-3}}{\eta^{\mm-n}}\cdot \pa{\frac{1}{N\eta}}^{u}\\
       & \lesssim N^{-1-\fR+\varepsilon/2}\norm{A}_{\HS}^2\pa{k/N }^{\frac 13 \p{\mm+u-3}}\leq N^{-5/3+\varepsilon/2}k^{2/3}\norm{A}_{\HS}^2.
            \end{aligned}
        \end{equation}

\smallskip

        \noindent(6) If $\cR$ is of Type \rmnum{2} with $a_1\geq 1$ and $a_4=0$, then $\mm\geq 3$, $\fR\geq 1$. In this case, we select the first $\sG$ factor to the right of $\tl_1$. The corresponding remainder term takes a form
        \begin{equation}\nonumber
            \begin{aligned}
                \cE_{\cR}^{\pa{2}}=&\frac{c_{\cR}}{ND}\bsum{a}{\alpha}{\beta}\frac{1}{p_0!q_0!}\cC_{\iab}^{p_0,q_0+1}\sum_{2\leq p+q\leq l}\sum_{a=1}^D\sum_{i,j\in \cI_a}\frac{1}{p!q!}\cC_{ij}^{p,q+1}\partial_{ij}^p\partial_{ji}^q\Bigg[\p{\wt \sM B \tl_1 B_1 \sM}_{\star j}\p{\sG \wt \Pi_{a_1}}_{i\star }\\
            &\left.\times \p{\Pi_{a_2}}_{\star \star }\p{\Pi_{a_3}}_{\star \star }\avg{\tl_2\Pi_{a_4}}\prod_{r=1}^{n-3}f^{\pa{r}}\cdot \prod_{r=1}^{u}W_r\right]+R_{l+1}^{\pa{2}}=:\sum_{2\leq p+q\leq l}\cE_{\cR}^{\pa{2}}\pa{p,q}+R_{l+1}^{\pa{2}},
            \end{aligned}
        \end{equation}
        with notation understood similarly as in \eqref{explicit_reminder_reminder_terms_example_1}. Then, applying the Cauchy-Schwarz inequality to a product of the form
        \begin{equation}\nonumber
            \begin{aligned}
                \sum_{\alpha,\beta,i,j}\abs{\p{\wt \sM B \tl_1 B_1 \sM}_{\star j}}\cdot \abs{\p{\Pi_0}_{\# \star }},
            \end{aligned}
        \end{equation}
        where $\Pi_0$ arises from the derivatives of $\p{\sG \wt \Pi_{a_1}}_{i\star }$ and contains at least one $\sG$ factor, we obtain the following rough bound  if one of the following conditions holds: (i) $\fR\geq 2$, or (ii) $\mm+u\geq 5$:
        \begin{equation}\nonumber
            \begin{aligned}
         {\cE_{\cR}^{\pa{2}}\pa{p,q}}&\prec N^{-1-S-3/2-\pa{p+q+1}/2}\cdot N^{2}\p{\im m}\norma{A}_{\HS}^3\cdot \frac{\pa{\im m}^{n-4}}{\eta^{\mm-n+1}}\cdot \pa{\frac{1}{N\eta}}^{u}\\
& \lesssim N^{-\fR+\varepsilon}\norm{A}_{\HS}^3\pa{k/N}^{\frac 13 \p{\mm+u-2}}\leq N^{-5/3+\varepsilon}k^{2/3}\norm{A}_{\HS}^2.
            \end{aligned}
        \end{equation}
        It remains to consider the exceptional case $\fR=1$ and $\mm+u\leq 4$. However, by an argument similar to that used in \eqref{sum_geq_1_because_of_a_loop_containing_tl_2}, we have $\mm+u\geq 5-\fR=4$, and
        \begin{equation}\nonumber
            \begin{aligned}
                \fC_1+\fC_2+\fP_1+\fP_2+\fM+\fS_1+\fS_2+\fI_1+\fI_2+\fE+\sC_1+\sC_2+\sP_1+\sP_2= 1.
            \end{aligned}
        \end{equation}
    A direct enumeration under this constraint shows that no such term exists.

\smallskip

        \noindent(7) All other cases in which $\cR$ is of Type \rmnum{2} are impossible. 

\smallskip

        \noindent(8) If $\cR$ is of Type \rmnum{3} with $a_1\geq 1$ and $a_2\geq 1$, then $\mm\geq 4$, and we choose the first $\sG$ factor to the right of $\tl_2$. The corresponding remainder term takes a form
            \begin{align}
                \cE_{\cR}^{\pa{2}}=&\frac{c_{\cR}}{ND}\bsum{a}{\alpha}{\beta}\frac{1}{p_0!q_0!}\cC_{\iab}^{p_0,q_0+1}\sum_{2\leq p+q\leq l}\sum_{a=1}^D\sum_{i,j\in \cI_a}\frac{1}{p!q!}\cC_{ij}^{p,q+1}\partial_{ij}^p\partial_{ji}^q\Bigg[\p{\wt \sM B \tl_1 \Pi_{a_1}\tl_2 B_1 \sM}_{\star j}\p{\sG \wt \Pi_{a_2}}_{i\star }\nonumber\\
            &\left.\times \p{\Pi_{a_3}}_{\star \star }\p{\Pi_{a_4}}_{\star \star }\cdot \prod_{r=1}^{n-3}f^{\pa{r}}\cdot \prod_{r=1}^{u}W_r\right]+R_{l+1}^{\pa{2}}=:\sum_{2\leq p+q\leq l}\cE_{\cR}^{\pa{2}}\pa{p,q}+R_{l+1}^{\pa{2}}\nonumber,
            \end{align}
           with notation understood similarly as in \eqref{explicit_reminder_reminder_terms_example_1}. If at least one derivative acts on $\Pi_{a_1}$, we apply the Cauchy-Schwarz inequality to a product of the form
        \begin{equation}\nonumber
            \begin{aligned}
                \sum_{\alpha,\beta,i,j}\abs{\p{\wt \sM B \tl_1 \Pi_0}_{\star\#}}\cdot \norm{\tl_2 B_1 \sM\bre_j}\cdot \abs{\p{\Pi_1}_{\# \star}},
            \end{aligned}
        \end{equation}
        where $\Pi_0$ and $ \Pi_1$ are generated from the derivatives of $\Pi_{a_1}$ and $\sG\wt\Pi_{a_2}$, respectively, and each contains at least one $\sG$ factor. This gives that 
        \begin{equation}\nonumber
            \begin{aligned}
         {\cE_{\cR}^{\pa{2}}\pa{p,q}} &\prec N^{-1-S-3/2-\pa{p+q+1}/2}\cdot N^{3}\pa{\im m}^2\norma{A}_{\HS}^2\cdot \frac{\pa{\im m}^{n-3}}{\eta^{\mm-n-1}}\cdot \pa{\frac{1}{N\eta}}^{u}\\
     & \lesssim N^{-1-\fR+\varepsilon}\norm{A}_{\HS}^2\pa{k/N}^{\frac13\p{\mm+u-2}}\leq N^{-5/3+\varepsilon}k^{2/3}\norm{A}_{\HS}^2.
            \end{aligned}
        \end{equation}
        If none of the derivatives acts on $\Pi_{a_1}$, we instead apply the Cauchy-Schwarz inequality to a product of the form
        \begin{equation}\nonumber
            \begin{aligned}
                \sum_{\alpha,\beta,i,j}\norm{{\bre_{\star}^\top\wt \sM B \tl_1}}\cdot \norm{\tl_2 B_1 \sM\bre_j}\cdot \abs{\p{\Pi_0}_{\# \star}},
            \end{aligned}
        \end{equation}
        where $\Pi_0$ is generated from the derivatives of $\sG\Pi_{a_2}$ and contains at least one $\sG$ factor. This yields that
        \begin{equation}\nonumber
            \begin{aligned}
   {\cE_{\cR}^{\pa{2}}\pa{p,q}}&\prec N^{-1-S-3/2-\pa{p+q+1}/2}\cdot N^{3}\p{\im m}\norma{A}_{\HS}^2\cdot \frac{\pa{\im m}^{n-3}}{\eta^{\mm-n-1}}\cdot \pa{\frac{1}{N\eta}}^{u}\\
& \lesssim N^{-1-\fR+\varepsilon}\norm{A}_{\HS}^2\pa{k/N}^{\frac13\p{\mm+u-3}}\leq N^{-5/3+\varepsilon}k^{2/3}\norm{A}_{\HS}^2,
            \end{aligned}
        \end{equation}
       provided that at least one of the following conditions holds: (i) $\fR\geq 1$, or (ii) $\mm+u\geq 5$. It remains to consider the exceptional case $\fR=0$ and $\mm+u\leq 4$. From \eqref{character_relation_further_expansion} and \eqref{character_relation_further_expansion2}, this implies the identity 
        \begin{equation}\nonumber
            \begin{aligned}
                \fC_1+\fC_2+\fP_1+\fP_2+\fM+\fS_1+\fS_2+\fI_1+\fI_2+\fE+\sC_1+\sC_2+\sP_1+\sP_2= 0.
            \end{aligned}
        \end{equation}
        In this case, $\cR$ can only take the form
        \begin{equation}\nonumber
            \begin{aligned}
                -\frac{1}{ND}\bsum{a}{\alpha}{\beta}\cC_{\iab}^{p_0,q_0+1}\p{\sM_0\tl_1\sG_1\tl_2\sG_0}_{\star\star}\p{\sG_0}_{\star\star}\p{\sG_0}_{\star\star}.
            \end{aligned}
        \end{equation}
    Under the assumption that no derivatives act on the $\sG_1$ factor, we can extract an additional $\im m$ factor by applying the polarization identity \eqref{expansion_of_im} and using the resulting cancellation. This improves the previous estimate to the sharper bound
        \begin{equation}\nonumber
            \begin{aligned}
    {\cE_{\cR}^{\pa{2}}\pa{p,q}}&\prec N^{-1-S-3/2-\pa{p+q+1}/2}\cdot N^{3}\pa{\im m}^2\norma{A}_{\HS}^2\cdot \frac{\pa{\im m}^{n-3}}{\eta^{\mm-n-1}}\cdot \pa{\frac{1}{N\eta}}^{u}\\
               & \lesssim N^{-1-\fR+\varepsilon}\norm{A}_{\HS}^2\pa{k/N }^{\frac13\p{\mm+u-2}}\leq N^{-5/3+\varepsilon}k^{2/3}\norm{A}_{\HS}^2.
            \end{aligned}
        \end{equation}

        \noindent(9) If $\cR$ is of Type \rmnum{3} with $a_1\geq 1$ and $a_2=0$, then $\mm\geq 3$, $\fR\geq 1$, and we choose the first $\sG$ factor to the right of \smash{$\tl_1$}. In this case, the remainder term takes the form
            \begin{align}
                \cE_{\cR}^{\pa{2}}=&\frac{c_{\cR}}{ND}\bsum{a}{\alpha}{\beta}\frac{1}{p_0!q_0!}\cC_{\iab}^{p_0,q_0+1}\sum_{2\leq p+q\leq l}\sum_{a=1}^D\sum_{i,j\in \cI_a}\frac{1}{p!q!}\cC_{ij}^{p,q+1}\partial_{ij}^p\partial_{ji}^q\Bigg[\p{\wt \sM B \tl_1 B_1 \sM}_{\star j}\p{\sG \wt \Pi_{a_1}\tl_2\Pi_{a_2}}_{i\star}\nonumber\\
            &\left.\times \p{\Pi_{a_3}}_{\star\star}\p{\Pi_{a_4}}_{\star\star}\cdot \prod_{r=1}^{n-3}f^{\pa{r}}\cdot \prod_{r=1}^{u}W_r\right]+R_{l+1}^{\pa{2}}=:\sum_{2\leq p+q\leq l}\cE_{\cR}^{\pa{2}}\pa{p,q}+R_{l+1}^{\pa{2}},\nonumber
            \end{align}
        with notation understood similarly as in \eqref{explicit_reminder_reminder_terms_example_1}. Then, applying the Cauchy-Schwarz inequality to a product of the form
        \begin{equation}\nonumber
            \begin{aligned}
                \sum_{\alpha,\beta,i,j}\abs{\p{\wt \sM B \tl_1 B_1 \sM}_{\star j}}\cdot \abs{\p{\Pi_0\tl_2 \Pi_{a_2}}_{\# \star}},
            \end{aligned}
        \end{equation}
        where $\Pi_0$ is generated from the derivatives of $\p{\sG \wt \Pi_{a_1}\tl_2\Pi_{a_2}}_{i*}$ and contains at least one $\sG$ factor, we obtain 
        \begin{equation}\nonumber
            \begin{aligned}
        {\cE_{\cR}^{\pa{2}}\pa{p,q}}&\prec N^{-1-S-3/2-\pa{p+q+1}/2}\cdot N^{5/2}\p{\im m}\norma{A}_{\HS}^2\cdot \frac{\pa{\im m}^{n-3}}{\eta^{\mm-n}}\cdot \pa{\frac{1}{N\eta}}^{u}\\
               & \lesssim N^{-1/2-\fR+\varepsilon}\norm{A}_{\HS}^2\pa{k /N }^{\frac13\p{\mm+u-2}}\leq N^{-11/6+\varepsilon}k^{1/3}\norm{A}_{\HS}^2\leq N^{-5/3+\varepsilon}k^{2/3}\norm{A}_{\HS}^2.
            \end{aligned}
        \end{equation}

\smallskip
        \noindent(10) All remaining cases in which $\cR$ is of Type \rmnum{3} are impossible. 

\medskip
Combining all the above cases completes the proof of \Cref{claim_estimate_remainder_terms}, and thereby concludes the proof of \Cref{lemma_localized_eigenvector_local_law}. 
\end{proof}

\appendix

\section{Auxiliary estimates}\label{appendix}

In this appendix, we collect some auxiliary estimates that have been extensively used in our main proofs. 

\subsection{Some deterministic estimates}
First, we provide some basic estimates for the matrices $M$ and \smash{$\wh M$} defined in \Cref{defn_Mm} and \Cref{def_wtM}, respectively.

\begin{lemma}\label{lemma_usual_properties_of_m_and_mu}
 Let $A$ be an arbitrary $N\times N$ deterministic matrix with $\|A\|=\OO(N^{-\delta_A})$. Recall that $\qa{E^-,E^+}$ is the support of $\rho_N$. For any constant $\tau>0$, the following estimates hold uniformly for all $z=E+\ii\eta $ with $|z|\le \tau^{-1}$ and $\eta > 0$. 
    
    \begin{enumerate}
        
        \item For $x\in \qa{E^-,E^+}$ and $z=E+\ii \eta$, we have
        \begin{equation}\label{square_root_density}
            \begin{aligned}
                \rho_N(x)\sim \sqrt{\pa{E^+-x}\pa{x-E_-}},\quad\im m(z)\sim
                \begin{cases}
                    \sqrt{\kappa+\eta} & \txt{ for } E\in\qa{E^-,E^+}\\
                    \frac{\eta}{\sqrt{\kappa+\eta}} & \txt{ for } E\notin \qa{E^-,E^+},
                \end{cases}
            \end{aligned}
        \end{equation}
        where recall that $\kappa$ denotes $\kappa:=\abs{E-E^-}\wedge\abs{E-E^+}$. Moreover, we have that        \begin{equation}\label{edge_close_to_edge_sc}
            \begin{aligned}
                \absa{2-E^+}+\absa{2+E^-}=\OO\pa{\|A\|}.
            \end{aligned}
        \end{equation}

        \item  We have the following identity for any $z=E+\ii \eta\in \C_+$:
        \begin{equation}\label{equation_z_im_m}
            \begin{aligned}
                \avga{M(z)M^* (z) }=\frac{\im m (z) }{\im m (z) +\eta}.
            \end{aligned}
        \end{equation}
        In particular, for $E\in\qa{E^-,E^+}$, it gives that
    \begin{equation}\label{equation_E_in_spectrum}
            \begin{aligned}
                \avga{M(E)M^*(E)}=1 . 
            \end{aligned}
        \end{equation}

        \item We have that
        \begin{equation}\label{eq:msc}
            \begin{aligned}
                \absa{m (z) -m_{\txt{sc}} (z) }\lesssim \norma{A}^{1/2}, \quad \norma{M (z) -m_{\txt{sc}} (z)I }\lesssim \norma{A}^{1/2}.
            \end{aligned}
        \end{equation}

        \item Given any polynomial $P$ with maximum degree and coefficients of order $\OO\pa{1}$, we have that
    \begin{equation}\label{estimate_replace_M_by_m}
            \begin{aligned}
                \avga{P\pa{M (z) ,M^* (z) }}-P\pa{m (z) ,\Bar{m} (z) }=\OO\pa{\avga{\Lambda^2}} .
            \end{aligned}
        \end{equation}

        \item For any fixed $k\in \N$, $\pa{s_1,\ldots,s_{k-1}}\in \ha{\emptyset,*}^{k-1}$, and $\pa{a_1,\ldots,a_{k}}\in \qq{D}^{k}$, we have that
        \begin{equation}\label{add_one_more_Lambda}
            \begin{aligned}
                \avga{\pa{\prod_{i=1}^{k-1}M^{s_i}(z)E_{a_i}}\Lambda E_{a_{k}}}=\OO\pa{\avga{\Lambda^2}},
            \end{aligned}
        \end{equation}
        where we adopt the convention that $M^{\emptyset}(z)\equiv M(z)$.

        \item $\wh M$ defined in \eqref{def_ML} is translationally invariant, which means that $\wh M_{ab}(z_1,z_2)=\wh M_{a'b'}(z_1,z_2)$ whenever $a-b = a'-b' \mod D$.

        \item  For any $z_1 = \bar z_2\in \{z,\bar z\}$, we have that
        \begin{align}
            \big\|[1-\wh M(z_1,z_2)]^{-1}\big\|& = \frac{\im m(z)+\eta} {\eta }\lesssim \frac{\im m (z) }{\eta}.\label{1-M}
        \end{align}

        \item For any $z_1 = z_2\in \{z,\bar z\}$ with $\eta/\im m (z) \sim N^{-\varepsilon_g}$ for a constant $\varepsilon_g\in (0, \delta_A/4)$, we have that
        \begin{align}
            \big\|[1-\wh M(z_1,z_2)]^{-1}\big\|&\lesssim \pa{\im m (z) }^{-1}\wedge N^{\varepsilon_g},\label{1-M-2}\\
            \label{1-M-3}
                \left|1-\avga{M(z_1)M(z_2)}\right|^{-1}&\lesssim \pa{\im m (z) }^{-1}\wedge N^{\varepsilon_g}.
            \end{align}

        \item 
        For any $z_1, z_2\in \{z,\bar z\}$ with $\eta=\oo(1)$, we have that
        \be\label{flat_M}
        \max_{a,b,a',b'\in \qq D}\left|\left[\big(1-\wh M_{(1,2)}\big)^{-1}\wh M_{(1,2)} \right]_{ab}-\left[\big(1-\wh M_{(1,2)}\big)^{-1}\wh M_{(1,2)} \right]_{a'b'}\right|\lesssim \frac{N }{\|A\|_{\HS}^2},
        \ee
        where $\wh M_{(1,2)}$ denotes $\wh M_{(1,2)}\equiv \wh M(z_1,z_2)$.

        \item For $z=E+\ii \eta$ with $E\in \qa{E^-,E^+}$, we have that
        \begin{equation}\label{de_ir_0}
            \begin{aligned}
    \qquad \quad \im m (z) \lesssim \absa{1-\avga{M^2 (z) }}\lesssim \im m (z) +\avga{\Lambda^2},\quad \im m (z) \lesssim\absa{1-m^{2} (z) }\lesssim \im m (z) +\avga{\Lambda^2}.
            \end{aligned}
        \end{equation}
        In particular, when $E=\gamma_k$ and $\norma{A}_{\HS}$ satisfies \eqref{eq:condA2}, the estimates  \eqref{de_ir_0} and \eqref{eq:kappa_gamma_k} give that
        \begin{equation}\label{de_ir_1}
            \begin{aligned}
                \absa{1-\avga{M^2 (z) }}\sim \absa{1-m^2 (z) }\sim \sqrt{\kappa+\eta} \sim \fr\pa{k}^{1/3}/N^{1/3} +\sqrt{\eta}.
            \end{aligned}
        \end{equation}

        \item For any $z_1 = \bar z_2\in \{z,\bar z\}$, the leading eigenvalue of $\wh M\pa{z_1,z_2}$ is given by
        \be\label{sumwtM}
        d_1:=\sum_{b=1}^D  \wh M(z,\bar{z})_{1b}=\frac{\im m(z) }{\im m(z)+\eta}, 
        \ee
        which is the Perron–Frobenius eigenvalue of $\wh M(z_1,z_2)$ with $(1,\ldots, 1)^\top$ as the corresponding eigenvector. The other eigenvalues of $\wh M\pa{z_1,z_2}$ satisfy 
        \be\label{eq:otherM}
        d_l = d_1 - a_l - \ii b_l, \quad l=2,3,\ldots,D,
        \ee
        where $a_l,b_l\in \R$ satisfy that  
        \be\label{eq:akbk}
        a_l\ge 0,\quad a_l+|b_l|=\oo(1).
        \ee

        \item For any $z_1 = z_2\in \{z,\bar z\}$ with $\kappa+\eta=\oo\pa{1}$, we can arrange the eigenvalues of $\wh M\pa{z_1,z_2}$ as $\wh d_1,\ldots,\wh d_D$, such that
        \begin{equation}\nonumber
            \begin{aligned}
                \wh{d}_1=\avga{M^2 (z) },\quad \text{and}\quad \wh{d}_l=\wh{d}_1+\oo\pa{1}, \ \ l=2,3,\ldots, D.
            \end{aligned}
        \end{equation}
        Furthermore, we have 
        \be\label{eq:otherM2}
        \wh d_l = d_1 - \wh a_l - \ii \wh b_l, \quad l=1,2,\ldots,D,
        \ee
        where $\wh a_k,\wh b_k\in \R$ satisfy that 
        \be\label{eq:akbk2}
        \wh a_k\ge 0,\quad \wh a_k+|\wh b_k|=\oo(1).
        \ee
        
    \end{enumerate}
\end{lemma}

\begin{proof}
Note that $\rho_N$ is the free convolution of the empirical spectral measure of $\Lambda$ and the semicircle law, whose properties have been extensively studied in the literature. 
First, since $\norm{\Lambda}\lesssim N^{-\delta_A}\ll 1$, we obtain the estimates in \eqref{square_root_density} by applying \cite[Lemma 4.3]{lee2015edge}. 
Second, the estimate  \eqref{edge_close_to_edge_sc} follows directly from equation \eqref{derivative_of_edge} below and the estimate \eqref{add_one_more_Lambda}. 
Third, the identity \eqref{equation_z_im_m} can be derived by taking the imaginary part of both sides of \eqref{self_m}. Then,  \eqref{equation_E_in_spectrum} is an immediate consequence if $E\in \pa{E^-,E^+}$, and this extends to the boundary $E\in \{E^-,E^+\}$ by continuity. Fourth, the first estimate in \eqref{eq:msc} follows from the stability of the self-consistent equation for $m_{\txt{sc}}(z)$, while the second estimate can be easily derived from the estimate 
\[-(m(z)+z)^{-1}-m_{sc}(z)=-(m_{\txt{sc}}(z)+z)^{-1}-m_{\txt{sc}}(z)+\OO(|m(z)-m_{sc}(z)|)=\OO(|m(z)-m_{sc}(z)|),\]
and the Taylor expansion
    \begin{equation}\label{taylor_expansion_M}
        M (z) =\pa{\Lambda-z-m (z) }^{-1}=-\sum_{l=0}^{\infty} \pa{m (z) +z}^{-l-1}\Lambda^l.
    \end{equation}

To show \eqref{estimate_replace_M_by_m}, we first note that by taking the trace of \eqref{taylor_expansion_M} and using $\avga{\Lambda}=0$, we obtain 
\be\label{eq:tracem} m (z) =\avga{M (z) } = -(m(z)+z)^{-1}+\OO\pa{\avga{\Lambda^2}}.\ee
Then, we substitute \eqref{taylor_expansion_M} into $P\pa{M (z) ,M^* (z) }$ and observe that the constant terms cancel, resulting in an error of order $\OO(\langle \Lambda^2\rangle)$ due to \eqref{eq:tracem}. In addition, the contributions from the first-order terms in $\Lambda$ also vanish due to $\avga{\Lambda}=0$, which leads to \eqref{estimate_replace_M_by_m}. 
The estimate \eqref{add_one_more_Lambda} can also be proved by substituting \eqref{taylor_expansion_M} into the LHS and noting that $\avga{\Lambda E_a}=0$ for any $a\in \qq D$.
The translation invariance of \smash{$\wh M$} in part (vi) follows easily from the block translation symmetry of $M$. For part (vii), note that \smash{$\wh M$} is a matrix with non-negative entries when $z_1=\overline z_2$. Thus, with the Perron-Frobenius theorem and the following identity by \eqref{equation_z_im_m}:
    \begin{equation}\nonumber
        \begin{aligned}
            \sum_{b=1}^D\wh M_{ab}\pa{z_1,z_2}=D\avga{M(z_1)E_aM(z_2)}=\avga{M (z) M (z) ^*}=\frac{\im m (z) }{\im m (z) +\eta},
        \end{aligned}
    \end{equation}
    we can conclude that the largest eigenvalue of $\wh M\pa{z_1,z_2}$ is $\im m (z) /\pa{\im m (z) +\eta}$, which implies \eqref{1-M}.

To show the estimate \eqref{1-M-2}, we can assume without loss of generality that $z_1=z_2=z$ and abbreviate $M=M(z)$, $m=m (z)$, \smash{$\wh M(z_1,z_2)=\wh M$}. Using \eqref{taylor_expansion_M}, we find that
    \begin{align}\label{wh_M_approx}
            \wh M_{ab}-\pa{m+z}^{-2}\delta_{ab}&=\OO\pa{\norma{A}},\\
            \label{im_wh_M_approx}
            \im \wh M_{ab}-\im [\pa{m+z}^{-2}]\delta_{ab}&=\OO\pa{\im m\cdot \norma{A}}.
        \end{align}
Then, we decompose $1-\wh M$ as
    \begin{equation}\label{1-wh_M_approx}
        \begin{aligned}
            1-\wh M=\big(1-\pa{m+z}^{-2}\big)-\big(\wh M-\pa{m+z}^{-2}\big).
        \end{aligned}
    \end{equation}
    When $\absa{\re \pa{m+z}}\geq 1/10$, we have
        \(|\im \q{\pa{m+z}^{-2}}|\gtrsim \im\pa{m+z}\geq \im m\),
    while $\im  \wh M - \im[\pa{m+z}^{-2}]$ gives an error by \eqref{im_wh_M_approx}. 
    Hence, for any $\wh \lambda\in \mathrm{Spec}\p{\wh M}$, we have $\im \wh \lambda\gtrsim \im m$. Then, using \eqref{1-wh_M_approx}, we obtain 
    \begin{equation}\nonumber
        \begin{aligned}
            \big\|\big(1-\wh M\big)^{-1}\big\|\lesssim \pa{\im m}^{-1}.
        \end{aligned}
    \end{equation}
    On the other hand, if $\abs{\re \pa{m+z}}\leq 1/10$, then by \eqref{edge_close_to_edge_sc} and \eqref{eq:msc}, we have $E\notin\qa{-2-\kappa_0,-2+\kappa_0}\cup\qa{2-\kappa_0,2+\kappa_0}$ for some small constant $\kappa_0>0$. This gives that
    \begin{equation}\nonumber
        \begin{aligned}
            \abs{1-\pa{m+z}^{-2}}\geq \abs{1-\pa{m_{\txt{sc}} (z) +z}^{-2}}-\oo\pa{1}\gtrsim 1,
        \end{aligned}
    \end{equation}
    which, together with \eqref{1-wh_M_approx} and \eqref{wh_M_approx}, implies 
    \begin{equation}\nonumber
        \begin{aligned}
            \norm{\p{1-\wh M}^{-1}}\lesssim 1\lesssim \pa{\im m}^{-1}.
        \end{aligned}
    \end{equation}
    It remains to show that $\norm{\p{1-\wh M}^{-1}}\lesssim N^{\varepsilon_g}$. By \eqref{eq:msc}, we have  
     \begin{equation}\label{first_appro_1_wh_M}
         \begin{aligned}
             \p{1-\wh M}_{ab}=\pa{1-m^2 (z) }\delta_{ab}+\OO\p{N^{-\delta_A/2}},\quad \forall a,b\in \qq{D}.
         \end{aligned}
     \end{equation}
     Then, using \eqref{equation_z_im_m}, \eqref{eq:msc}, and the condition $\eta/\im m (z) \sim N^{-\varepsilon_g}$ for a constant $\varepsilon_g\in (0, \delta_A/4)$, we obtain 
     \begin{equation}\nonumber
         \begin{aligned}
       \big\|1-\wh M\big\|&\gtrsim 
      1-\absa{m (z)}^2+\OO\p{N^{-\delta_A/2}}\geq 1-\avga{M (z) M^* (z) }+\OO\p{N^{-\delta_A/2}}\\
        & =\frac{\eta}{\im m (z) +\eta}+\OO\p{N^{-\delta_A/2}}\gtrsim N^{-\varepsilon_g},
         \end{aligned}
     \end{equation}
     which implies $\norm{\p{1-\wh M}^{-1}}\lesssim N^{\varepsilon_g}$. 
     This concludes the proof of \eqref{1-M-2}. The estimate \eqref{1-M-3} can be proved using exactly the same argument.

For the estimate \eqref{flat_M}, since $\widehat{M}\pa{z_1,z_2}$ is translationally invariant, its eigenvectors are given by $\mathbf{u}_l$ with $u_l(a)=D^{-1 / 2} \exp (2 \pi \ii (l-1)(a-1) / D)$ for $a,l\in\qq D$. The corresponding eigenvalue can be expressed as
     \begin{equation}\label{eq_lambdak}
     \wh \lambda_l=\sum_{b=1}^D \widehat{M}_{1 b}\left(z_1, z_2\right) e^{2 \pi \ii (l-1)(b-1) / D}.
     \end{equation}
With the spectral decomposition of \smash{$\wh M_{(1,2)}$}, we obtain that
     \begin{equation}\nonumber
         \left(K_{(1,2)}\right)_{a b}\equiv  \left[\big(1-\wh M_{(1,2)}\big)^{-1}\wh M_{(1,2)} \right]_{ab} =\frac{1}{D}\frac{\widehat{\lambda}_1}{1-\widehat{\lambda}_1}+\frac{1}{D} \sum_{l=2}^D \frac{\widehat{\lambda}_l}{1-\widehat{\lambda}_l} e^{2 \pi \ii (l-1)(a-b) / D},
    \end{equation}
    from which we derive that
    \begin{equation}\nonumber
        \bigg|\left(K_{(1,2)}\right)_{a b}-\frac{1}{D}\frac{\widehat{\lambda}_1}{1-\widehat{\lambda}_1}\bigg| \lesssim \max _{2 \leq l \leq D}\big|1-\widehat{\lambda}_l\big|^{-1}.
    \end{equation}
    Hence, it suffices to estimate $1-\wh \lambda_l$ for $l\neq 1$. In fact, the estimate \eqref{flat_M} has been proved in \cite[Lemma A.1]{stone2024randommatrixmodelquantum} when $\kappa\gtrsim 1$. It remains to consider the case where $\kappa\le c$ for a sufficiently small constant $c>0$. Without loss of generality, suppose $E$ is sufficiently close to $E^+$ and $z_1=z_2=z$; the other cases can be shown similarly. In this scenario, we have 
    \be\label{eq:rem+z}
    \operatorname{Re}[(m+z)^{-4}]>0,\quad \text{and}\quad \operatorname{Re}[(m+z)^{-4}] \sim 1. 
    \ee
 Using the expansion \eqref{taylor_expansion_M}, we can write 
    \begin{equation}\label{wh_M_expansion}
        \begin{aligned}
            \widehat{M}(z, z)_{1 b}= & \left(\frac{1}{(m+z)^2}+\frac{2(1+\mathbf 1_{D>2})}{(m+z)^4} \cdot \frac{\|A\|_{\HS}^2}{N}\right) \delta_{1 b} \\
            &+\frac{1}{(m+z)^{4}}\cdot \frac{\|A\|_{\HS}^2} {N}\left(\delta_{2 b}+  \delta_{D b}\mathbf 1_{D>2}\right)+\oo\left(\frac{\|A\|_{\HS}^2} {N}\right).
        \end{aligned}
    \end{equation}
Moreover, applying the identity \eqref{equation_z_im_m} and the Cauchy-Schwarz inequality, we obtain that 
    \begin{equation}\label{sum_wh_M_bound}
        \begin{aligned}
|\wh\lambda_l|\le \sum_{b=1}^D\abs{\wh M_{1b}}\le \frac{1}{N}\sum_{i\in \cI_1}\sum_{j}\absa{M_{ij}}^2=\frac{\im m}{\im m+\eta}<1,\quad \forall   l \in \qq{D}.
        \end{aligned}
    \end{equation}    
Then, when $D>2$, applying \eqref{eq_lambdak}, \eqref{eq:rem+z},  \eqref{wh_M_expansion}, and \eqref{sum_wh_M_bound}, we find that for any $l\ge 2$,
    \begin{equation}\label{eq:whlambdal}
        \begin{aligned}
            \abs{1-\wh \lambda_l}&\geq 1-\abs{\re \wh \lambda_l}\geq 
            \sum_{b\in\{2,D\}}|\operatorname{Re} \widehat{M}_{1 b}|\left[1-\left|\cos (2 \pi(l-1)(b-1) / D)\right|\right] 
            \gtrsim \norma{A}_{\HS}^2/N.
        \end{aligned}
    \end{equation}
    For the case $D=2$, from \eqref{eq_lambdak}, we see that $\widehat{\lambda}_2=\widehat{\lambda}_1-2 \widehat{M}_{12}$. By \eqref{eq:rem+z}, \eqref{wh_M_expansion}, and \eqref{sum_wh_M_bound}, we have that $|\wh\lambda_1|< 1$, $\operatorname{Re} \widehat{M}_{12} > 0$, and \smash{$\operatorname{Re} \widehat{M}_{12} \gtrsim  \norma{A}_{\HS}^2/N.$} Thus, we get that 
\begin{equation}\label{eq:whlambdal2}
\begin{aligned}
&~|1-\widehat{\lambda}_2| =\qa{(1-\operatorname{Re} \widehat{\lambda}_1+2 \operatorname{Re} \widehat{M}_{12})^2+(\operatorname{Im} \widehat{\lambda}_2)^2}^{1/2} \geq 2\operatorname{Re} \widehat{M}_{12}  \gtrsim \|A\|_{H S}^2/N \, .
\end{aligned}
\end{equation}
Combining \eqref{eq:whlambdal} and \eqref{eq:whlambdal2}, we conclude \eqref{flat_M}.

For the estimate \eqref{de_ir_0}, we can assume $E\geq 0$ without loss of generality. We have
     \begin{equation}\nonumber
         \begin{aligned}
             \absa{1-m^2 (z) }\sim \absa{1+m (z) }\sim\absa{1+\re m (z) }+\im m (z) \sim  \big|1-\pa{\re m (z) }^2\big| +\im m (z) ,
         \end{aligned}
     \end{equation}
     where, in the first and third steps, we used that $\absa{1-m (z) }=\absa{1-m_{\txt{sc}} (z) }+\oo\pa{1}\sim 1$ and $\absa{1-\re m (z) }=\absa{1-\re m_{\txt{sc}} (z) }+\oo\pa{1}\sim 1$ for $z=E+\ii \eta$ with $E\geq 0$. Then, we obtain that 
     \begin{equation}\nonumber
         \begin{aligned}
             &\big|1-\pa{\re m (z) }^2\big|+\im m (z) \leq \big|1-\pa{\re m (z) }^2-\pa{\im m (z) }^2\big|+\im m (z) +\pa{\im m (z) }^2\\
             \sim& \big|1-\absa{m (z) }^2\big|+\im m (z) \lesssim \absa{1-\avga{M (z) M^* (z) }}+\im m (z) +\avga{\Lambda^2}\lesssim \im m (z) +\avga{\Lambda^2},
         \end{aligned}
     \end{equation}
     where we applied \eqref{estimate_replace_M_by_m} in the third step and used \eqref{equation_z_im_m} and \eqref{square_root_density} in the last step. From the two estimates above, we derive:
     \begin{equation}\nonumber
         \begin{aligned}
             \im m (z) \lesssim\absa{1-m^2 (z) }\lesssim \im m (z) +\avga{\Lambda^2}.
         \end{aligned}
     \end{equation}
Together with \eqref{estimate_replace_M_by_m}, this gives us $\absa{1-\avga{M^2 (z) }}=\absa{1-m^2 (z) }+ \OO\pa{\avga{\Lambda^2}}\lesssim \im m (z) +\avga{\Lambda^2}$.
    On the other hand, using a similar approach as in the proof of \eqref{1-M-3}, we can show that \[\absa{1-\avga{M^2 (z) }}\gtrsim \im m.\] 
    This concludes the proof of \eqref{de_ir_0}. Next, applying \eqref{eq:kappa_gamma_k} and \eqref{square_root_density}, we obtain that $\im m (z)\sim \sqrt{\kappa+\eta} \gg \avga{\Lambda^2}$, which implies \eqref{de_ir_1}.

 For part (xi), we assume without loss of generality that $z_1=\bar{z}_2=z$. Using \eqref{taylor_expansion_M}, we can derive 
\begin{equation}\label{wh_M_z_bar_z_expansion}
    \begin{aligned}
        \widehat{M}(z, \bar{z})_{1 b}= & \bigg(\frac{1}{|m+z|^2}+\frac{2\left(1+\mathbf{1}_{D>2}\right)\re [\pa{m+z}^{-2}]}{|m+z|^2} \cdot \frac{\|A\|_{H S}^2}{N}\bigg) \delta_{1 b} \\ & +\frac{1}{|m+z|^{4}}\cdot \frac{\|A\|_{H S}^2}{N}\left(\delta_{2 b}+\delta_{D b}\mathbf{1}_{D>2} \right)+\oo\left(\frac{\|A\|_{H S}^2}{N}\right).
    \end{aligned}
\end{equation}
Since both $\widehat{M}$ and $\widehat{d}$ are real, the estimate \eqref{eq:akbk} follows easily by taking the real part of \eqref{eq_lambdak} and applying \eqref{wh_M_z_bar_z_expansion}.
Finally, for part (xii), we assume $z_1=z_2=z$ without loss of generality. By using the translation invariance, we obtain that 
$$
\sum_{b=1}^D \widehat{M}(z, z)_{1 b}=\frac{1}{D} \sum_{a, b=1}^D \widehat{M}(z, z)_{a b}=\frac{1}{D N} \sum_{i, j} M_{i j}(z) M_{j i}(z)=\left\langle M^2(z)\right\rangle,
$$
which gives that $\widehat{d}_1=\left\langle M^2 (z) \right\rangle$. From \eqref{wh_M_approx}, we have $|\widehat{d}_k-\widehat{d}_1|=\oo(1)$ for $2\le k\le D$. Additionally, we observe that \smash{$\widehat{d}_1=1+\oo(1)$} when $\kappa+\eta=\oo(1)$ by \eqref{de_ir_0}.  Moreover, from \eqref{sum_wh_M_bound}, we have\smash{$\operatorname{Re} \widehat{d}_k \leq d_1$}. These results conclude the proofs of \eqref{eq:otherM2} and \eqref{eq:akbk2}. 
\end{proof}

In the above proof, we have used the following differential equation for $E_t^\pm$.

\begin{lemma}
    In the setting of \Cref{def_moving_Lambda}, suppose $\Lambda_t=f\pa{t}\Lambda$ for some differentiable function $f\in C^1(\qa{a,b})$. Then, we have    \begin{equation}\label{derivative_of_edge}
        \begin{aligned}
            \partial_t E_t^{\pm}=f'\pa{t}\avga{\Lambda M_t^2(E_t^{\pm})},\quad \forall t\in\qa{a,b}.
        \end{aligned}
    \end{equation}
\end{lemma}

\begin{proof}
    Without loss of generality, we only prove the differential equation for $E_t^+$. Taking the derivative of both sides of
    \begin{equation}\nonumber
        \begin{aligned}
     m_t\p{E_t^+}=\big\langle\pa{\Lambda_t-E_t^+-m_t\p{E_t^+}}^{-1}\big\rangle,
        \end{aligned}
    \end{equation}
    we obtain that     \begin{equation}\label{derivative_of_eq_of_m_E_t}
        \begin{aligned}
            \partial_t m_t\p{E_t^+}=\avga{\pa{\partial_t E_t^+ +\partial_t m_t\p{E_t^+}-f'\p{t}\Lambda} M_t^2\p{E_t^+}}.
        \end{aligned}
    \end{equation}
    By equation \eqref{equation_E_in_spectrum}, we have
    \begin{equation}\nonumber
        \begin{aligned}
            \avga{M_t^2\p{E_t^+}}=\avga{ M_t\p{E_t^{+}} M_t^*\p{E_t^{+}}}=1.
        \end{aligned}
    \end{equation}
    Applying it to \eqref{derivative_of_eq_of_m_E_t}, we get \eqref{derivative_of_edge}.
\end{proof}

\subsection{Estimates on deterministic shifts}
Second, we show that the two shifts $\Delta_{\txt{ev}}$ (defined in \eqref{definition_shift_eigenvector}) and $\Delta_{\txt{e}}$ (defined in \eqref{definition_shift_eigenvalue}) indeed represent the shift of the quantiles up to some negligible error. 

\begin{lemma}
\label{lemma_modification_of_shift}
For any $1\le k\leq DN/2$, suppose that $\norma{A}_{\txt{HS}}$ satisfies the condition \eqref{eq:condA2}. Defined $z_1$ and $\Delta_{\txt{ev}}$ as in \eqref{eq:z1} and \eqref{definition_shift_eigenvector} for a constant $\varepsilon\in\pa{0,1}$.
Then, we have 
\begin{equation}\label{modification_shift_eigenvector}
            \begin{aligned}
                \Delta_{\txt{ev}}(z_1)=\gamma_k-\gamma_{k}^{\txt{sc}}+\OO\pa{\avga{\Lambda^2}^2+\avga{\Lambda^2}\sqrt{\kappa(\gamma_k)+\eta}}=\gamma_k-\gamma_{k}^{\txt{sc}}+\OO\bigg(\frac{\norma{A}_{\HS}^4}{N^{2}}+\frac{k^{ 1/ 3}\norma{A}_{\HS}^2}{N^{4/ 3-{\varepsilon}/{2}}}\bigg).
            \end{aligned}
        \end{equation}
where $\kappa(\gamma_k)=|E^+-\gamma_k|\wedge|\gamma_k-E^-|\sim {k^{2/3}}/{N^{2/3}}$ by \eqref{eq:kappa_gamma_k}.
The shift $\Delta_{\txt{e}}$ in \eqref{definition_shift_eigenvalue} satisfies a similar bound:         \begin{equation}\label{modification_shift_eigenvalue}
\begin{aligned}
\Delta_{\txt{e}}(k,\eta)=\gamma_k-\gamma_{k}^{\txt{sc}}+\OO\pa{\avga{\Lambda^2}^2+\avga{\Lambda^2}\sqrt{\kappa(\gamma_k)+\eta}}=\gamma_k-\gamma_{k}^{\txt{sc}}+\OO\bigg(\frac{\norma{A}_{\HS}^4}{N^{2}}+\frac{k^{ 1/ 3}\norma{A}_{\HS}^2}{N^{4/ 3-{\varepsilon}/{2}}}\bigg). 
\end{aligned}
\end{equation}
        In particular, if we take $\varepsilon<\varepsilon_A$, the errors in these two estimates are bounded by $N^{-2/3-\e_A}k^{-1/3}$. 
        The corresponding results also hold for $DN/2<k\le DN$.
\end{lemma}

\begin{proof}
We always assume $k\leq DN/2$ throughout the following proof. We begin with the proof of \eqref{modification_shift_eigenvalue}. First, we can replace $z_t=\gamma_k(t)+\ii \eta$ in the definition of $\Delta(t)$ with its real part $\gamma_k\pa{t}$ and establish that:    \begin{equation}\label{estimate_change_definition_of_shift_eigenvalue}    
        \begin{aligned}        
            \absa{\frac{\avga{M_t(z_t)\Lambda M_t^{*}(z_t)}}{\avga{M_t(z_t)M_t^{*}(z_t)}}-\avga{M_t(\gamma_k(t))\Lambda M_t^{*}(\gamma_k(t))}}\lesssim \avga{\Lambda^2} \frac{\eta}{\sqrt{\kappa_t+\eta}},
        \end{aligned}
    \end{equation}
    where $\kappa_t=\absa{\gamma_k(t)-E_t^+}\wedge\absa{\gamma_k(t)-E_t^-}$ (recall \Cref{def_moving_Lambda}). 
    Without loss of generality, we assume $t=1$ for the proof, and we abbreviate $M_1(z_1)\equiv M$, $m_1(z_1)\equiv m$, and $\gamma_k=\gamma_k(1)$. 
    Since $\absa{1-\avga{MM^{*}}}=\eta/\pa{\im m+\eta}\lesssim \eta/\sqrt{\kappa_1+\eta}$ by \eqref{equation_z_im_m} and \eqref{square_root_density}, and $\avga{M\Lambda M^{*}}=\OO(\avg{\Lambda^2})$ by \eqref{add_one_more_Lambda}, we have
    \begin{equation}\label{first_modification_shift_eigenvalue}
        \begin{aligned}
            \absa{\frac{\avga{M(z_1)\Lambda M^{*}(z_1)}}{\avga{M(z_1)M^{*}(z_1)}}-\avga{M(z_1)\Lambda M^{*}(z_1)}}\lesssim \avga{\Lambda^2} \frac{\eta}{\sqrt{\kappa+\eta}}.
        \end{aligned}
    \end{equation}
Next, by \eqref{taylor_expansion_M}, we have the decomposition for any $z\in \C$:
\begin{equation}\label{first_order_decomposition_M}
        \begin{aligned}
            M(z)=-\pa{m (z) +z}^{-1}-\Lambda \tilde{M} (z) ,
        \end{aligned}
    \end{equation}
    where \smash{$\tilde{M} (z)$} is defined as
    \begin{equation}\nonumber
        \begin{aligned}
            \tilde{M} (z) :=\sum_{l=0}^{\infty}\pa{m (z) +z}^{-l-2}\Lambda^{l}.
        \end{aligned}
    \end{equation}
    Furthermore, we have that   \begin{equation}\label{eliminate_imaginary_part_m}
        \begin{aligned}
            \absa{m(z_1)-m\pa{\gamma_k}}&=\absa{\int_{0}^{\eta} m'\pa{\gamma_k+\r i s} \rd s}= \absa{\int_{0}^{\eta} \frac{\avga{M^2\pa{\gamma_k+\r i s} }}{1-\avga{M^2\pa{\gamma_k+\r i s}}} \rd s} \\
            &\lesssim  \int_{0}^{\eta} \frac{1}{\sqrt{\kappa+s}}\r d s\lesssim \frac{\eta}{\sqrt{\kappa+\eta}},
        \end{aligned}
    \end{equation}
    where in the second step, we used the equation in \eqref{derivative_m_t_z_with_t_and_z} below, and in the third step, we applied \eqref{de_ir_1}. From \eqref{eliminate_imaginary_part_m}, we derive that
\begin{equation}\label{eliminate_imaginary_part_tilde_M}
        \begin{aligned}
            \norm{\tilde{M}(z_1)-\tilde{M}\p{\gamma_k}}&=\bigg\|{\sum_{l=0}^{\infty}\pa{\pa{m(z_1)+z_1}^{-l-2}-\pa{m(\gamma_k)+\gamma_k}^{-l-2}}\Lambda^{l}}\bigg\|\\
            &\lesssim \pa{\absa{m(z_1)-m(\gamma_k)}+\absa{z_1-\gamma_k}}\sum_{l=0}^{\infty}\pa{C\norma{\Lambda}}^l\lesssim \frac{\eta}{\sqrt{\kappa+\eta}}.
        \end{aligned}
    \end{equation}
Additionally, with \eqref{first_order_decomposition_M}, we can express that
    \begin{equation}\nonumber
        \begin{aligned}
            \avga{M (z) \Lambda M^{*} (z) }=\avg{\tilde{M} (z) \Lambda^3\tilde{M}^{*} (z) }+\frac{1}{m (z) +z}\avg{\Lambda^2 \tilde{M}^{*} (z) }+\frac{1}{\Bar{m} (z) +\Bar{z}}\avg{\Lambda^2 \tilde{M} (z) },
        \end{aligned}
    \end{equation}
    which, together with \eqref{eliminate_imaginary_part_tilde_M}, implies that
    \begin{equation}\label{second_modification_shift_eigenvalue}
        \begin{aligned}
            \absa{\avga{M(z_1)\Lambda M^{*}(z_1)}-\avga{M\p{\gamma_k}\Lambda M^{*}\p{\gamma_k}}}\lesssim \avga{\Lambda^2}\frac{\eta}{\sqrt{\kappa+\eta}}.
        \end{aligned}
    \end{equation}
Finally, combining \eqref{first_modification_shift_eigenvalue} and \eqref{second_modification_shift_eigenvalue}, we conclude \eqref{estimate_change_definition_of_shift_eigenvalue}.
    
The estimate \eqref{modification_shift_eigenvalue} follows directly from the equation:
\begin{equation}\label{derivative_estimate_modification_shift_eigenvalue}
        \begin{aligned}
            \frac{\rd}{\rd t} \gamma_k\p{t}-\avga{M_t\p{\gamma_k(t)}\Lambda M_t^{*}\p{\gamma_k(t)}}=\OO\pa{\avga{\Lambda^2}\sqrt{\kappa_t}+\avga{\Lambda^2}^2}.
        \end{aligned}
    \end{equation}
In fact, noting that $\gamma_k\pa{0}=\gamma_k^{\txt{sc}}$, we can integrate equation \eqref{derivative_estimate_modification_shift_eigenvalue} and apply \eqref{estimate_change_definition_of_shift_eigenvalue} to complete the proof of \eqref{modification_shift_eigenvalue}. 
During this process, we also utilize the estimates \smash{$\kappa_t\sim \kappa(\gamma_k)\sim k^{2/3}/N^{2/3}$, $\eta=N^{-2/3+\varepsilon}k^{-1/3}$}, and \smash{$\avga{\Lambda^2}\lesssim N^{-1/3-2\varepsilon_A}k^{-2/3}$}.

For the proof of \eqref{derivative_estimate_modification_shift_eigenvalue}, we take the derivative of both sides of 
    \begin{equation}\nonumber
        \begin{aligned}
            m_t (z) =\big\langle{\pa{t\Lambda-m_t (z) -z}^{-1}}\big\rangle
        \end{aligned}
    \end{equation}
    with respect to $t$ or $z$, yielding: \begin{equation}\label{derivative_m_t_z_with_t_and_z}
        \begin{aligned}
            \partial_t m_t (z) =-\frac{\avga{\Lambda M_t^2 (z) }}{1-\avga{M_t^2 (z) }},\quad
            \partial_z m_t (z) =\frac{\avga{M_t^2 (z) }}{1-\avga{M_t^2 (z) }}.
        \end{aligned}
    \end{equation}
   This gives the identities 
    \begin{equation}\label{replace_derivative}
        \begin{aligned}
            \partial_t m_t (z) =-\partial_z m_t (z) \frac{\avga{\Lambda M_t^2 (z) }}{\avga{ M_t^2\pa
            z}},\quad \text{and}\quad \partial_z M_t (z) =\partial_z \pa{t\Lambda-m_t (z) -z}^{-1}=\frac{M_t^2 (z) }{1-\avga{M_t^2 (z) }}.
        \end{aligned}
    \end{equation}
    By the definition of $\gamma_k\pa{t}$ (recall \eqref{eq:gammak}), we have 
    \begin{equation}\label{def_gamma_k_t}
        \begin{aligned}
            \frac{1}{\pi}\int_{\gamma_k\p{t}}^{E_t^+} \im m_t\pa{x} \rd x=\frac{k-1/2}{DN}.
        \end{aligned}
    \end{equation}
    Taking the derivative of both sides of this equation with respect to $t$ and using $\im m_t\p{E_t^+}=0$, we obtain:   
        \begin{align}
            &~\gamma_k'\pa{t}\im m_t\p{\gamma_k\p{t}}=\im \int_{\gamma_k\p{t}}^{E_t^+}\partial_t m_t\pa{x}\rd x=-\im \int_{\gamma_k\p{t}}^{E_t^+}\partial_x m_t\p{x}\frac{\avga{\Lambda M_t^2\p{x}}}{\avga{ M_t^2\p
            x}}\rd x\label{equation_derivative_gamma_k_t}\\
            =&~\im \bigg(m_t\p{\gamma_k\pa{t}}\frac{\avga{\Lambda M_t^2\p{\gamma_k\p{t}}}}{\avga{ M_t^2\p{
            \gamma_k\p{t}}}}-m_t\p{E_t^+}\frac{\avga{\Lambda M_t^2\p{E_t^+} } }{\avga{M_t^2\p{E_t^+}}} \bigg)+ \im \int_{\gamma_k\p{t}}^{E_t^+} m_t\p{x}\partial_x\bigg(\frac{\avga{\Lambda M_t^2\p{x}}}{\avga{ M_t^2\p{x}}}\bigg)\rd x\nonumber,
        \end{align}
    where we used \eqref{replace_derivative} in the second step and applied integration by parts in the third step.  
    Using \eqref{replace_derivative}, \eqref{de_ir_0}, \eqref{add_one_more_Lambda}, and \eqref{square_root_density}, we can get the estimate:    \begin{equation}\label{estimate_derivative_1_modification_shift}
        \begin{aligned}
            \partial_x\bigg({\frac{\avga{\Lambda M_t^2\pa{x}}}{\avga{ M_t^2\pa{x}}}}\bigg)=\OO\bigg(\frac{\avga{\Lambda^2}}{\sqrt{E_t^+-x}}\bigg).
        \end{aligned}
    \end{equation}
    Also, combining \eqref{de_ir_0} with the fact that $\absa{1-m_t\p{x}}=\absa{1-m_{\txt{sc}} (x) }+\oo\pa{1}\sim 1$ for $x\in[\gamma_k\p{t},E_t^+]$, we get 
    \begin{equation}\label{bound_1_plus_m_t_x}
        \begin{aligned}
            \absa{1+m_t\p{x}}\lesssim \sqrt{E_t^+-x}+\avga{\Lambda^2}.
        \end{aligned}
    \end{equation}
    Applying  \eqref{estimate_derivative_1_modification_shift} and \eqref{bound_1_plus_m_t_x} to \eqref{equation_derivative_gamma_k_t}, we obtain that
        \begin{align}
            &\gamma_k'\p{t}\im m_t\p{\gamma_k\p{t}}\nonumber\\
            =&\im \pa{m_t\p{\gamma_k\p{t}}\frac{\avga{\Lambda M_t^2\p{\gamma_k\p{t}}}}{\avga{ M_t^2\p{\gamma_k\p{t}}}}-m_t\p{E_t^+}\frac{\avga{\Lambda M_t^2\p{E_t^+} } }{\avga{M_t^2\p{E_t^+}}} -  \int_{\gamma_k\p{t}}^{E_t^+}\partial_x\pa{\frac{\avga{\Lambda M_t^2\p{x}}}{\avga{ M_t^2\p{x}}}}\rd x} +\OO\pa{\avga{\Lambda^2}^2\sqrt{\kappa_t}+\avga{\Lambda^2}\kappa_t}\nonumber\\
            =& \im \pa{\qa{1+m_t\p{\gamma_k\p{t}}}\frac{\avga{\Lambda M_t^2\p{\gamma_k\pa{t}}}}{\avga{ M_t^2\p{\gamma_k\pa{t}}}}-\qa{1+m_t\p{E_t^+}}\frac{\avga{\Lambda M_t^2\p{E_t^+} } }{\avga{M_t^2\p{E_t^+}}}}+\OO\pa{\avga{\Lambda^2}^2\sqrt{\kappa_t}+\avga{\Lambda^2}\kappa_t}\nonumber\\
            =& \re\qa{1+m_t\p{\gamma_k\p{t}}}\im \pa{\frac{\avga{\Lambda M_t^2\p{\gamma_k\p{t}}}}{\avga{ M_t^2\p{\gamma_k\p{t}}}}} +\re\pa{\frac{\avga{\Lambda M_t^2\p{\gamma_k\p{t}}}}{\avga{ M_t^2\p{\gamma_k\p{t}}}}}\im m_t\p{\gamma_k\p{t}} +\OO\pa{\avga{\Lambda^2}^2\sqrt{\kappa_t}+\avga{\Lambda^2}\kappa_t}\nonumber\\
            =& \avga{M_t\p{\gamma_k\p{t}}\Lambda M_t^*\p{\gamma_k\p{t}}}\im m_t\p{\gamma_k\p{t}}+\OO\pa{\avga{\Lambda^2}^2\sqrt{\kappa_t}+\avga{\Lambda^2}\kappa_t},  \label{eq:derivegammak}
        \end{align}
    where in the third step, we used that $m_t\p{E_t^+}$ is real and $M_t\p{E_t^+}$ is a Hermitian matrix, and in the fourth step, we used \eqref{bound_1_plus_m_t_x} and that
    \begin{equation}\label{bring_in_im_m}
        \begin{aligned}
            &\frac{\avga{\Lambda M_t^2\p{\gamma_k\p{t}}}}{\avga{ M_t^2\p{\gamma_k\p{t}}}}-\avga{M_t\p{\gamma_k\p{t}}\Lambda M_t^*\p{\gamma_k\p{t}}}\\
            =& \frac{\avga{\Lambda M_t^2\p{\gamma_k\p{t}}}}{\avga{ M_t^2\p{\gamma_k\p{t}}}}-\frac{\avga{M_t\p{\gamma_k\p{t}}\Lambda M_t^*\p{\gamma_k\p{t}}}}{\avga{M_t\p{\gamma_k\p{t}}M_t^*\p{\gamma_k\p{t}}}}=\OO\pa{\avga{\Lambda^2}^2+\avga{\Lambda^2}\sqrt{\kappa_t}}.
        \end{aligned}
    \end{equation}
   In the derivation, we have used \eqref{equation_E_in_spectrum}, \eqref{add_one_more_Lambda}, and noted that $M_t(\gamma_k(t))-M_t^*(\gamma_k(t))=2\ii \im m_t(\gamma_k(t))\cdot  M_t(\gamma_k(t))M_t^*(\gamma_k(t))$, where $\im m_t(\gamma_k(t))\sim \sqrt{\kappa_t}$ by \eqref{square_root_density}. From \eqref{eq:derivegammak} and using \eqref{square_root_density}, we obtain \eqref{derivative_estimate_modification_shift_eigenvalue}.

For the proof of \eqref{modification_shift_eigenvector}, we again consider the flow in \Cref{def_moving_Lambda} with $\Lambda_t=t\Lambda$, $t\in \qa{0,1}$, and denote
    \begin{equation}\label{def_shift_f_t}
        \begin{aligned}
            f\p{t}:=\re \pa{z_t+m_t\pa{z_t}+\frac{1}{m_t\pa{z_t}}}.
        \end{aligned}
    \end{equation}
Note that $\Delta_{\txt{ev}}=f\p{1}$ and $f\p{0}=0$. Thus, it suffices to prove the following estimate that is similar to \eqref{derivative_estimate_modification_shift_eigenvalue}:
    \begin{equation}\nonumber
        \begin{aligned}
            f{'}\p{t}-\gamma_k'\p{t}=\OO\pa{\avga{\Lambda^2}^2+\avga{\Lambda^2}\sqrt{\kappa_t+\eta}},\quad \forall t\in [0,1].
        \end{aligned}
    \end{equation}
    First, taking the derivative of $f\p{t}-\gamma_k(t)=f\p{t}-\re z_t$ gives us    \begin{equation}\label{derivative_f_t_minus_gamma_k_t}
        \begin{aligned}
            f{'}\p{t}-\gamma_k'\p{t}=\re \pa{\frac{\rd m_t\p{z_t}}{\rd t}\pa{1-\frac{1}{m_t^2\p{z_t}}}}.
        \end{aligned}
    \end{equation}
    Next, taking the derivative of both sides of 
            \(m_t\p{z_t}=\avg{\p{t\Lambda-m_t\p{z_t}-z_t}^{-1}}\)
    with respect to $t$, and using \smash{$\partial_t z_t=\gamma_k'\pa{t}$}, we get that 
    \begin{equation}\nonumber
        \begin{aligned}
            \frac{\rd m_t\p{z_t}}{\rd t}=\frac{\gamma_k'\p{t}\avg{M_t^2\p{z_t}}-\avg{\Lambda M_t^2\p{z_t}}}{1-\avg{M_t^2\p{z_t}}}.
        \end{aligned}
    \end{equation}
    Plugging this equation into \eqref{derivative_f_t_minus_gamma_k_t} and using \eqref{de_ir_1}, we deduce:
    \begin{equation}\nonumber
        \begin{aligned}
            \absa{f'\p{t}-\gamma_k'\p{t}}\lesssim \absa{\gamma_k'\p{t}\avg{M_t^2\p{z_t}}-\avga{\Lambda M_t^2\p{z_t}}}.
        \end{aligned}
    \end{equation}
With a similar argument as in \eqref{bring_in_im_m} above, we obtain that 
    \begin{equation}\nonumber
        \begin{aligned}
            \frac{\avga{M_t\p{z_t}\Lambda M_t^{*}\p{z_t}}}{\avga{M_t\p{z_t}M_t^{*}\p{z_t}}}-\frac{\avga{M_t\p{z_t}\Lambda M_t\p{z_t}}}{\avga{M_t\p{z_t}M_t\p{z_t}}}=\OO\pa{\avg{\Lambda^2}\sqrt{\kappa_t+\eta}}.
        \end{aligned}
    \end{equation}
    Combining this with \eqref{estimate_change_definition_of_shift_eigenvalue} and \eqref{derivative_estimate_modification_shift_eigenvalue}, we get
    \begin{equation}\nonumber
        \begin{aligned}
            \gamma_k'\p{t}\avga{M_t^2\p{z_t}}-\avga{\Lambda M_t^2\p{z_t}}=\OO\pa{\avga{\Lambda^2}^2+\avga{\Lambda^2}\sqrt{\kappa_t+\eta}},
        \end{aligned}
    \end{equation}
    which completes the proof of \eqref{modification_shift_eigenvector}.
\end{proof}

\subsection{Some multi-resolvent estimates}
Finally, we provide some multi-resolvent estimates that follow from the anisotropic local law \eqref{eq:aniso_local}.

\begin{lemma}[Estimates on resolvents]\label{lemma_necessary_estimates}
For any fixed integer $p\in \N$, let $(\Lambda_i)_{1\leq i \leq p}$ be an arbitrary sequence of $D\times D$ block matrices similar in form to $\Lambda$, consisting of $N\times N $ deterministic blocks $A_i$ and $A_i^*$ with $\|A_i\|=\oo(1)$. Let $(B_i)_{1\leq i \leq p}$ be an arbitrary sequence of deterministic matrices satisfying $\|B_i\|\le 1$.  
Furthermore, consider a sequence of spectral parameters $z_i=E_i+\ii \eta_i\in \C_+$ for $i\in\qq{p}$, satisfying $\absa{z_i}\leq \tau^{-1}$ for a small constant $\tau>0$.  
Suppose the anisotropic local law \eqref{eq:aniso_local} holds for all $G_i-M_i$, where $G_i\equiv  G(z_i,H,\Lambda_i)$ and $M_i\equiv M(z_i,\Lambda_i)$ is the deterministic limit of $G_i$ as defined in \Cref{defn_Mm} with parameter $z_i$. 
Moreover, denote $m_i\equiv \avga{M_i}$ and assume that $ N\eta_i \im m_i (z_i) \gtrsim 1$ for all $i\in \qq{p}$. Then, for any deterministic unit vectors $\bu, \bv\in\mathbb{C}^{DN}$ and $s_i\in \{\emptyset,*\}^{p}$, the following estimates hold:
    \begin{equation}\label{entprodG}
        \begin{aligned}
                \bu^*\pa{\prod_{i=1}^p G_i^{s_i} B_i}\bv\prec \frac{\pa{\max_{1\leq i\leq p} \im m_i}^{1_{p\geq 2}}}{\eta^{p-1}},\qquad \avga{\prod_{i=1}^p G_i^{s_i} B_i}\prec \frac{\pa{\max_{1\leq i\leq p} \im m_i}^{1_{p\geq 2}}}{\eta^{p-1}},
        \end{aligned}
    \end{equation}
    where we denote $\eta:=\min_{i}\eta_i$ and adopt the convention that $G_i^\emptyset=G_i$. 
    
    We denote by $\Pi_l$ a product consisting of $l$ elements in $\ha{G_i^{s}:i\in \qq{p},s\in \{\emptyset,*\}}$, along with some elements from $\ha{M_i}$ and \smash{$\ha{E_a}_{a=1}^D$}. Moreover, suppose all $\Lambda_i$ have the form $\Lambda_i=c_i\Lambda$ for some deterministic coefficients $c_i$ of order $\OO(1)$. Then, we have the following estimates.
    \begin{enumerate}
        \item A loop containing one factor of $\Lambda$ satisfies that
    \begin{equation}\label{one_loop_estimate}
    \left\langle\Pi_l \Lambda\right\rangle \prec \begin{cases} N^{-1}\norma{\Lambda}_{\txt{HS}}^2=D\avga{\Lambda^2}, & \text { if } l=0, \\ N^{-1 / 2}\norm{\Lambda}_{\txt{HS}}\cdot \pa{\max_{1\leq i\leq p} \im m_i }^{\mathbf 1_{l\geq 2}}\cdot \eta^{-(l-1)}, & \text { if } l \geq 1.\end{cases}
    \end{equation}
    \item A loop containing two factors of $\Lambda$ satisfies that
    \begin{equation}\label{two_loop_estimate}
        \left\langle\Pi_{l_1} \Lambda \Pi_{l_2} \Lambda\right\rangle \prec \begin{cases}N^{-1}\norm{\Lambda}_{\txt{HS}}^2=D\avga{\Lambda^2}, & \text { if } l_1+l_2=0, \\ N^{-1}\norm{\Lambda}_{\txt{HS}}^2\cdot \pa{\max_{1\leq i\leq p} \im m_i}^{\mathbf 1_{l_1+l_2\geq 2}} \cdot \eta^{-(l_1+l_2-1)},  & \text { if } l_1+l_2 \geq 1.\end{cases}
    \end{equation}
    \end{enumerate}
    The same estimates also hold if some $\Lambda$ factors on the LHS of \eqref{one_loop_estimate} and \eqref{two_loop_estimate} are replaced by $\tLambda$ (defined in \Cref{lem_regular_estimate_localized_eigenvector}) or $\hLambda_t$ (defined in \Cref{lem_regular_estimate_localized_eigenvalue}) for $t\in\qa{0,1}$.
\end{lemma}

\begin{proof}
    When $p=1$, the estimate \eqref{entprodG} is an immediate consequence of the anisotropic local law \eqref{eq:aniso_local}. For $p\geq 2$, using the trivial bound $\|G_i\|\le \eta_i^{-1}\le \eta$, we find that for any deterministic unit vectors $\bu,\bv\in \C^N$, 
    \begin{equation}\label{eq:multilocal}
        \begin{aligned}
            \bu^*\pa{\prod_{i=1}^p G_i^{s_i} B_i}\bv\lesssim \norma{\bu^*G_1^{s_1}}\cdot\norma{G_p^{s_p}B_p\bv}\cdot \eta^{-\pa{p-2}}.
        \end{aligned}
    \end{equation}
On the other hand, for any deterministic unit vector $\bv\in \C^N$, we have
    \begin{equation}\label{eq:multilocal2}
        \begin{aligned}            \norma{G_i\bv}=\sqrt{\bv^*G_i^*G_i\bv}=\sqrt{\frac{\im (\bv^* G_i\bv)}{\eta}}\prec \sqrt{\frac{\im m_i}{\eta}},
        \end{aligned}
    \end{equation}
    where in the second step, we used Ward's identity \eqref{eq_Ward}, and in the third step, we applied the anisotropic local law \eqref{eq:aniso_local} along with the condition $N\eta_i\im m_i\gtrsim 1$. Plugging \eqref{eq:multilocal2} into \eqref{eq:multilocal} yields the first estimate in \eqref{entprodG}. The second estimate in \eqref{entprodG} follows immediately from the first.
    
    
    When $l=0$, \eqref{one_loop_estimate} follows directly from the expansion of $M$ in \eqref{taylor_expansion_M} and the fact that $\avga{\Lambda E_{a}}=0$ for any $a\in \qq{D}$. For the case $l\ge 1$, we can prove it by applying the eigendecomposition of $\Lambda$ and utilizing \eqref{entprodG}. 
    For \eqref{two_loop_estimate}, the case $l_1+l_2=0$ case is trivial, so we only need to consider the case $l_1+l_2\geq 1$. If $l_1,l_2\geq 1$, using the Cauchy-Schwarz inequality, we get that 
    \begin{equation}\nonumber
        \begin{aligned}
            \absa{\left\langle\Pi_{l_1} \Lambda \Pi_{l_2} \Lambda\right\rangle}\leq \left\langle\Pi_{l_1} \Lambda^2 \Pi_{l_1}^*\right\rangle^{1/2}\left\langle\Pi_{l_2} \Lambda^2 \Pi_{l_2}^*\right\rangle^{1/2}.
        \end{aligned}
    \end{equation}
    Then, applying the eigendecomposition of $\Lambda^2$ and using \eqref{entprodG}, we obtain \eqref{two_loop_estimate}. 
    Next, suppose $l_1=0$ or $l_2=0$. Assume $l_2=0$ without loss of generality. Then, $\Pi_{l_2}$ is a product of some elements from $\ha{M_i}$ and \smash{$\ha{E_a}_{a=1}^D$}. We apply the decomposition \eqref{first_order_decomposition_M} to $M_i$'s in $\Pi_{l_2}$, use the singular value decompositions of $A^2$ and $AA^*$, and apply the estimate \eqref{entprodG} to conclude the proof of \eqref{two_loop_estimate} (for more details, readers can refer to \cite[equation (8.25)-(8.31)]{stone2024randommatrixmodelquantum}).
   Finally, when some $\Lambda$ factors are replaced by \smash{$\tLambda$ or $\hLambda_t$}, 
   we only need to use \eqref{shift_bound_localized_eigenvector}, \eqref{shift_bound_localized_eigenvalue}, and \eqref{entprodG} to bound the additional terms generated by the shifts $\Delta_{\txt{ev}}$ or $\Delta\pa{t}$.
\end{proof}


\end{document}